\documentclass[twoside,11pt]{amsart}

\usepackage[foot]{amsaddr}

\author{Yuliy Baryshnikov}
\address[Baryshnikov]{Departments of Mathematics and Electrical and Computer Engineering, University of Illinois, Urbana, IL 61801, USA}

\author{Matthew D. Kvalheim}
\address[Kvalheim]{Department of Mathematics, University of Michigan, Ann Arbor, MI 48109}

\email{ymb@illinois.edu, kvalheim@umich.edu}

\title[Flux in tilted potential systems: negative resistance and persistence]{Flux in tilted potential systems: negative resistance and persistence}

\usepackage{import} 

\usepackage[utf8]{inputenc}
\usepackage[T1]{fontenc}
\usepackage{lmodern}
\usepackage{amsmath}
\usepackage{amssymb}
\usepackage{amsthm}
\usepackage{stmaryrd} 
\usepackage{mathtools}
\usepackage{tikz-cd}
\usepackage{subfig}

\usepackage{thmtools}
\usepackage{thm-restate} 
\usepackage{enumerate}

\newcommand{\concept}[1]{\textbf{#1}}

\newcommand{\ip}[2]{\langle #1, #2 \rangle}
\newcommand{\dist}[2]{\textnormal{dist}( #1, #2 )}
\newcommand{\distT}[3]{\textnormal{dist}_{#3}( #1, #2 )}

\def\comp{\Gamma_m}
\def\bv{\mathbf{v}}
\def\tM{\tilde{M}}

%

\newcommand{\compd}{\overrightarrow{\Gamma}_m}
\newcommand{\Ed}{\overrightarrow{E}_m}
\newcommand{\Eu}{E_m}

\newcommand{\N}{\mathbb{N}}
\newcommand{\Z}{\mathbb{Z}}
\newcommand{\R}{\mathbb{R}}

\newcommand{\Ws}{W^s}
\newcommand{\Wu}{W^u}

\newcommand{\slot}{\,\cdot\,} 

\newcommand{\id}{\textnormal{id}}
\newcommand{\interior}{\textnormal{int}}
\newcommand{\cl}{\textnormal{cl}}
\newcommand{\image}{\textnormal{im}}
\newcommand{\cfo}{\alpha}
\newcommand{\cft}{\beta}
\newcommand{\dom}{\textnormal{dom}}
\newcommand{\Hdr}{H_{\textnormal{dR}}}
\newcommand{\Hom}{H}
\newcommand{\cC}{C}
\newcommand{\Hess}{\textnormal{Hess}}
\newcommand{\CRST}{\mathsf{CRST}}
\newcommand{\RST}{\mathsf{RST}}
\newcommand{\ST}{\mathsf{ST}}
\newcommand{\cycle}{\mathsf{cycle}}
\newcommand{\E}{\mathbb{E}}
\newcommand{\Prob}{\mathbb{P}}
\newcommand{\tProbe}{\tilde{\mathbb{P}}^\varepsilon}
\newcommand{\dV}{dx}
\newcommand{\vtx}{V}
\newcommand{\tvtx}{\tilde{\vtx}}
\newcommand{\tv}{\tilde{v}}
\newcommand{\tx}{\tilde{x}}
\newcommand{\Aut}{\textnormal{Aut}}
\newcommand{\edg}{E}
\newcommand{\gph}{\Gamma}
\newcommand{\gh}{\gph_{\Pi}}
\newcommand{\Eh}{\edg_\Pi}

\newcommand{\cm}{J}
\newcommand{\cme}{\cm_{\varepsilon}}

\newcommand{\dm}{\rho}
\newcommand{\dme}{\dm_{\varepsilon}}

\newcommand{\vf}{\mathfrak{X}}
\newcommand{\flux}{\mathcal{F}}
\newcommand{\fluxe}{\flux_\varepsilon}
\newcommand{\fluxec}{\flux_{\varepsilon,c}}
\newcommand{\bw}{\mathbf{w}}
\newcommand{\bu}{\mathbf{u}}
\newcommand{\dft}{\bv}
\newcommand{\dfte}{\dft_\varepsilon}
\newcommand{\tdft}{\tilde{\dft}}

\newcommand{\af}{\mathcal{S}}
\newcommand{\ind}{\textnormal{ind}}
\newcommand{\Cont}{C}
\newcommand{\vp}{\varphi}
\newcommand{\length}{\textnormal{length}}

\newcommand{\qp}{Q}
\newcommand{\tqp}{\tilde{\qp}}
\newcommand{\qpd}{\qp_{\dft}}
\newcommand{\qptd}{\qp_{\tdft}}

\newcommand{\qptdz}{\qp_{\tdft, z}}

\newcommand{\tqpd}{\tilde{\qp}_{\dft}}
\newcommand{\tqptd}{\tilde{\qp}_{\tdft}}
\newcommand{\height}{\textnormal{height}}
\newcommand{\heightc}{\height_c}
\newcommand{\heightz}{\height_0}
\newcommand{\trace}{\textnormal{trace}}

\newcommand{\src}{\mathfrak{s}}
\newcommand{\tgt}{\mathfrak{t}}
\newcommand{\st}{\src\tgt}
\newcommand{\tor}{\mathbb{T}}
\newcommand{\sph}{\mathbb{S}}

\DeclarePairedDelimiter\norm{\lVert}{\rVert}

\newcommand{\thistheoremname}{}
\newtheorem*{genericthm}{\thistheoremname}
{\renewcommand{\thistheoremname}{Theorem~\ref{#1}$'$}%
	\begin{genericthm}}
	{\end{genericthm}}

\theoremstyle{definition}
\newtheorem{Th}{Theorem}[section]
\newtheorem{Def}[Th]{Definition}
\newtheorem*{Def*}{Definition}
\newtheorem{Lem}[Th]{Lemma}
\newtheorem{Co}[Th]{Corollary}
\newtheorem{Prop}[Th]{Proposition}
\newtheorem{Ex}[Th]{Example}
\newtheorem{Rem}[Th]{Remark}
\newtheorem{Assump}[Th]{Assumption}
\newtheorem{Quest}[Th]{Question}

\usepackage{marvosym}
\usepackage[
hypertexnames,%
citecolor=blue,%
colorlinks=true,%
linkcolor=red%
]{hyperref}
\usepackage[all]{hypcap}

\setlength{\textwidth}{7in}
\setlength{\textheight}{0.8 \paperheight}
\setlength{\topmargin}{-0.03in}
\setlength{\oddsidemargin}{0 in}
\setlength{\evensidemargin}{0 in}
\setlength{\marginparsep}{-0.5 in}
\setlength{\marginparwidth}{1.9 in}

\begin{document}
	
	\maketitle
	\begin{abstract}	
	Many real-world systems are well-modeled by Brownian particles subject to gradient dynamics plus noise arising, e.g., from the thermal fluctuations of a heat bath.
	Of central importance to many applications in physics and biology (e.g., molecular motors) is the net steady-state particle current or ``flux'' enabled by the noise and an additional driving force.
	However, this flux cannot usually be calculated analytically.
	Motivated by this, we investigate the steady-state flux generated by a nondegenerate diffusion process on a general compact manifold; such fluxes are essentially equivalent to the stochastic intersection numbers of Manabe (1982).
	In the case that noise is small and the drift is ``gradient-like'' in an appropriate sense, we derive a graph-theoretic formula for the small-noise asymptotics of the flux using Freidlin-Wentzell theory.
	When additionally the drift is a local gradient sufficiently close to a generic global gradient, there is a natural flux equivalent to the entropy production rate---in this case our graph-theoretic formula becomes Morse-theoretic, and the result admits a description in terms of persistent homology.
	As an application, we provide a mathematically rigorous explanation of the paradoxical ``negative resistance'' phenomenon in Brownian transport discovered by Cecchi and Magnasco (1996).
	\end{abstract}	
	
	\tableofcontents
\section{Introduction}\label{sec:intro}
To quote \cite{reimann2001giant}: \begin{quote}
Thermal diffusion in a tilted periodic potential plays a prominent role in Josephson junctions, rotating dipoles in external fields, superionic conductors, charge density waves, synchronization phenomena, diffusion on crystal surfaces, particle separation by electrophoresis, and biophysical processes such as intracellular transport, to name just a few.
\end{quote}
Relevant references may be found in \cite{reimann2001giant, risken1996fokker,reimann2002brownian}.
It is useful to describe many such systems by a ``microscopic'' model of the form ($\dot{x} = dx/dt$)
\begin{equation}\label{eq:sde}
\dot{x} = \dft(x) + \sqrt{2 \varepsilon} \xi(t), \qquad x(t), \xi(t) \in \R^n, \quad \varepsilon > 0,
\end{equation}
where $\dft$ is a vector field on $\R^n$, $\xi$ is a Gaussian white noise process, and \eqref{eq:sde} is interpreted rigorously as a stochastic differential equation (SDE) \cite{ikeda1981stochastic,gardiner2004handbook, oksendal2013stochastic}.
Diffusion in a ``tilted periodic potential'' refers to the case that, after a coordinate rescaling,
\begin{equation}\label{eq:tilt}
\dft(x) = -\nabla U(x) + F = -\nabla \underbrace{(U(x)-F\cdot x)}_{\tilde{U}(x)}, \qquad F\in \R^n, \quad \forall k\in \Z^n\colon U(x+k)\equiv U(x)
\end{equation}
for a constant driving force $F$.
The potential $U$ is spatially periodic, but the effective potential $\tilde{U}$ is not; the landscape $\text{graph}(\tilde{U})\coloneqq \{(x,\tilde{U}(x))\}\subset \R^{n+1}$ is the result of ``tilting'' $\text{graph}(U)$ in the direction of $F$.
The position $x(t)$ of a particle subject to \eqref{eq:sde} and \eqref{eq:tilt} can be imagined as the projection of a particle performing stochastic gradient descent on the landscape $\text{graph}(\tilde{U})$.

Since $\dft$ in \eqref{eq:tilt} is spatially periodic, it is natural to view \eqref{eq:sde} as an SDE on the cube $[0,1]^n$ with opposite boundary faces identified, the flat $n$-torus $\tor^n$, so that the components of $x$ are defined modulo $1$. 
In this way we are led to consider SDEs on a torus; alternatively, for many systems the components of $x$ may be angular variables, so that the state space is naturally a torus.\footnote{It is amusing to imagine $x(t)$ as the projected position of a particle performing stochastic gradient descent on an ``impossible landscape'' over $\tor^n$ in the sense of \cite{penrose1958impossible,penrose1986escher,penrose1992cohomology}. The landscape is ``impossible'' since $\dft$ is not globally the gradient of any function on $\tor^n$ if $F\neq 0$, but it is \emph{locally} a gradient in a neighborhood of every point.
}	
(We consider more general state spaces later.)

For a variety of applications it is useful to imagine many particles moving independently according to \eqref{eq:sde}.
In the continuum limit the normalized macroscopic particle density $\rho$ satisfies, in steady-state, the stationary Fokker-Planck equation
\begin{equation}\label{eq:fp-1}
0 = \nabla \cdot \underbrace{(\rho \dft - \varepsilon \nabla \rho)}_{J} =  \nabla \cdot J.
\end{equation} 
Alternatively, the steady-state probability density of a particle moving according to \eqref{eq:sde} satisfies \eqref{eq:fp-1}, so we refer to $J$ as the \concept{steady-state probability current}.
The net steady-state current or \concept{flux} in, say, the direction $(1,0,\ldots, 0)$ is defined by the standard flux integral of $J$ through any hypersurface $\{x^1 = a\}$, 
\begin{equation}\label{eq:flux-int-1}
\begin{split}
\flux &\coloneqq \int_0^1\cdots \int_0^1 J^1(a,x^2,\ldots, x^n) dx^2\cdots dx^n,
\end{split}
\end{equation} 
where $x = (x^1,\ldots, x^n)$ and $J = (J^1,\ldots, J^n)$.
Since $\nabla \cdot J = 0$, the divergence theorem implies that the right side of \eqref{eq:flux-int-1} is indeed independent of $a\in [0,1]$, so it coincides with its average over $a\in [0,1]$:
\begin{equation}\label{eq:flux-int-2}
\begin{split}
\flux & = \int_{[0,1]^n}J^1(x) dx = \int_{[0,1]^n}\dft^1(x)\rho(x) dx
\overset{\textnormal{a.s.}}{=} \lim_{t\to\infty} \frac{x^1(t)}{t},
\end{split}
\end{equation} 
where $\dft = (\dft^1,\ldots, \dft^n)$, ``a.s.'' means ``almost surely'' (with probability one), the preceding text explains the first equality, and the remaining two equalities are explained now.
The definition $J \coloneqq \rho \dft - \varepsilon \nabla \rho$ implies that $J^1 = \rho \dft^1 - \varepsilon \partial \rho/ \partial x^1$, and periodicity of $\rho$ implies that the integral of the second term vanishes, yielding the second equality in \eqref{eq:flux-int-2}.
A result of Manabe involving ergodicity implies the remaining (a.s.) equality \cite[Thm~4.1]{manabe1982stochastic}.
The right side of \eqref{eq:flux-int-2} is a ``microscopic'' quantity, while the other quantities in \eqref{eq:flux-int-1}, \eqref{eq:flux-int-2} are ``macroscopic''.
It is quite interesting that, for a typical (Morse) periodic potential $U$ and sufficiently small forcing $F$, the steady-state flux is nonzero in some direction if and only if \emph{both} $F$ and the noise intensity are nonzero (Prop.~\ref{prop:flux-positive-c1f-case}). 
Hence flux can be ``harvested'' from the noise if there is a biasing force, in the sense that flux vanishes if the noise vanishes.

In the same way that knowing the voltage-current characteristics of an electrical conductor is important, for applications it seems important to understand how the driving force $F$ affects flux.
However, to compute the flux via, e.g., \eqref{eq:flux-int-1} one needs to obtain $J$ by solving \eqref{eq:fp-1} for $\rho$, and this cannot be done analytically except for the $1$-dimensional case or in special situations like $F = 0$ (in which case $J\equiv 0$ \cite[p.~279, Thm~4.6]{ikeda1981stochastic}). 
In spite of this technical difficulty, several surprising properties of the flux have been demonstrated.
For example, when the driving force $F = (c,0,\ldots,0)$ for $c\in \R$, one might expect the flux $\flux(c)$ to increase monotonically as a function of $c$.
Indeed, Cecchi and Magnasco showed that this is true in the $1$-dimensional case $n=1$ \cite{cecchi1996negative}; however, they presented an example with $n = 2$ in which $\flux(c)$ is numerically demonstrated to decrease as $c$ is increased within a certain range.
In other words, they found a ``Brownian conductor'' with \concept{negative resistance} (or ``conductance'', or ``mobility'').\footnote{In the example in \cite{cecchi1996negative} the dynamics are periodic only in one spatial direction, so the natural reduced state space is a cylinder rather than a torus; we will not dwell on this technical detail.}

The goal of the present paper is to rigorously approximate flux for a broad class of stochastic systems with sufficient accuracy to enable rigorous predictions of negative resistance in Brownian conductors.  
In \S\ref{sec:nr-torus} we achieve this in a concrete example of the type \eqref{eq:sde}, \eqref{eq:tilt} with $n = 2$.
To make the problem tractable we restrict attention to the applications-relevant case of small noise, and we content ourselves with seeking small-noise asymptotics of the flux in the sense of large deviations \cite{freidlin2012random, varadhan2016large}.
That is, we seek $\psi (c) >0$ so that
\begin{equation}\label{eq:psi-intro-1}
\lim_{\varepsilon\to 0}(-\varepsilon \ln \flux(c)) = \psi(c),
\end{equation}
where for now we assume that $\flux(c)> 0$ for all $\varepsilon > 0$.
Note that $c$ is fixed while $\varepsilon \to 0$, with asymptotics in $c$ beyond the scope of this paper.
Given $c_1 < c_2$ with $\psi(c_1)< \psi(c_2)$,  \eqref{eq:psi-intro-1} implies that $\flux(c_2) < \flux(c_1)$ for all $\varepsilon$ sufficiently small: there is negative resistance.
Rather than restrict our attention to SDEs on the torus, we will accomplish this goal in the more general setting of diffusion processes on compact manifolds.
 
\subsection{General setting of the paper}\label{sec:general-setting}
In this paper smooth always means $\Cont^\infty$.
Let $M$ be a closed (i.e., compact and boundaryless) connected smooth $n$-dimensional manifold ($1\leq n < \infty$).
We henceforth switch notation from $x(t)$ to $X_t$.
For every $\varepsilon > 0$ let $(X_t^\varepsilon, \Prob^\varepsilon_x)$ be a diffusion process on $M$ (see \cite{mckean2005stochastic,ikeda1981stochastic,hsu2002stochastic}), and suppose that in any system of smooth local coordinates $(x^i)$ its infinitesimal generator $L_\varepsilon$ can be written in the form\footnote{There are two reasons for allowing $b^i_\varepsilon$, $\dfte$ to depend on $\varepsilon$. First, $\varepsilon$-dependence is cheap: removing it creates no simplifications whatsoever in any of our proofs. The second reason is more fundamental: $\varepsilon$-dependence of $b^i_\varepsilon$ depends on the choice of local coordinates (cf. \cite[pp.~135--136]{freidlin2012random}); alternatively, $\dfte$ in \eqref{eq:generator-laplacian} will typically still be $\varepsilon$-dependent if $b^i_\varepsilon\equiv b^i$ is $\varepsilon$-independent, and vice versa. 
Thus, generality is added for free and awkwardness is avoided by allowing $\varepsilon$-dependence of both $b^i_\varepsilon$ and $\dfte$.\label{footnote:epsilon-dependent-drift}}
\begin{equation}\label{eq:generator}
L_\varepsilon = \sum_i b^{i}_{\varepsilon}(x)\frac{\partial}{\partial x^i} + \varepsilon \sum_{i,j}a^{ij}(x)\frac{\partial^2}{\partial x^i \partial x^j},
\end{equation}
where $x\mapsto (a^{ij}(x))$ takes values in the symmetric and strictly positive-definite matrices.
For simplicity we assume that the coefficients $a^{ij}$ are smooth and that the coefficients $b^{i}_{\varepsilon}$ are smooth for all $\varepsilon > 0$.
A direct computation shows that the inverse matrices $(a^{-1})_{ij}$ are the coordinate representations of a smooth Riemannian metric $G$ on $M$.
Denoting by $\Delta\coloneqq \nabla \cdot \nabla \coloneqq \textnormal{div}_G \textnormal{ grad}_G$ the associated Laplace-Beltrami operator, a direct coordinate computation shows that $L_\varepsilon$ takes the form
\begin{equation}\label{eq:generator-laplacian}
L_\varepsilon = \dfte + \varepsilon \Delta
\end{equation}
for a well-defined family of smooth vector fields $\dfte$ (cf. \cite[pp.~273--274]{ikeda1981stochastic}).\footnote{The divergence operator $\nabla \cdot$ and hence also the Laplace-Beltrami operator $\Delta = \nabla \cdot \nabla$ are well-defined even on a nonorientable Riemannian manifold $(M,G)$ \cite[Ex.~16.31]{lee2013smooth}. We also remark that, when $\dfte \equiv 0$ and $\varepsilon = \frac{1}{2}$, the diffusion associated to $L_\varepsilon$ is the Brownian motion associated to the metric $G$ \cite[p.~271, Def.~4.2]{ikeda1981stochastic}.}
Here $\dfte$ is viewed as a differential operator via identification with its Lie derivative in the standard way.
We assume that $\dfte \to \dft$ uniformly as $\varepsilon \to 0$, where $\dft$ is a $\Cont^1$ vector field on $M$.\footnote{Note that the solution of the SDE \eqref{eq:sde} with spatially periodic smooth coefficients defines an example of such a diffusion process on $\tor^n$ with $L_\varepsilon = \dft + \varepsilon \Delta$, where $\Delta$ is the Laplace-Beltrami operator on the flat $n$-torus \cite[Sec.~4.3]{mckean2005stochastic}. \label{footnote:diffusion-as-SDE}}
For each $\varepsilon > 0$ the diffusion has a  $\Cont^\infty$ probability density $\dme$ on $M$ (with respect to the Riemannian density $\dV$ of $G$ \cite[p.~432]{lee2013smooth}) uniquely solving the stationary Fokker-Planck equation
\begin{equation}\label{eq:fp-cont}
0 = \nabla \cdot (\dme \dfte) - \varepsilon \Delta \dme  =  \nabla \cdot \underbrace{(\dme \dfte - \varepsilon \nabla \dme)}_{\cme} = \nabla \cdot \cme,
\end{equation}
where $\cme$ is the \concept{steady-state probability current}.

Let $\cfo$ be a closed one-form on $M$ with de Rham cohomology class $[\cfo]\in \Hdr^1(M)$ \cite[Ch.~17]{lee2013smooth}.
In \S \ref{sec:flux} we argue that the following equalities hold, are independent of the closed form representing $[\cfo]$, and constitute the correct way to generalize \eqref{eq:flux-int-1}, \eqref{eq:flux-int-2} to define the steady-state \concept{$[\cfo]$-flux} $\fluxe([\cfo])$:
\begin{equation}\label{eq:flux-gen-intro}
\fluxe([\cfo])\coloneqq \int_M \cfo(\cme)\, \dV \overset{\textnormal{a.s.}}{=} \lim_{t\to\infty} \frac{1}{t} \int_{X^\varepsilon_{[0,t]}} \cfo,
\end{equation}
where the quantity on the right involves the line integral of $\cfo$ along the sample path $X^\varepsilon_{[0,t]}$ of the diffusion up to time $t$ \cite[Sec.~VI.6]{ikeda1981stochastic}.
This implies that the flux is a linear map $\fluxe\colon \Hdr^1(M)\to \R$.
When $[\cfo]$ is Poincar\'{e} dual to a transversely oriented closed hypersurface $N$, \eqref{eq:flux-gen-intro} coincides with the flux integral of $\cme$ over $N$, as explained in \S\ref{sec:flux}.
Note that the flux $\flux$ in \eqref{eq:flux-int-1}, \eqref{eq:flux-int-2} is the same thing as $\fluxe([d\theta^1])$, where $\theta^1$ is the first circle-valued coordinate on $\tor^n$ viewed as $[0,1]^n$ with opposite boundary faces identified.

\subsection{Contributions and organization of the paper}\label{sec:contributions}
\subsubsection{Summary of the first main result}\label{sec:summary-first-main-result}
One of our main contributions is to rigorously prove that negative resistance occurs for a broad class of diffusion processes.
We accomplish this using a special case (Theorem~\ref{th:qualititative-special-case-intro}) of our first main result (Theorem~\ref{th:qualitative}), which we now describe.
Recall that $\dfte \to \dft$ uniformly as $\varepsilon \to 0$ and assume that
\begin{equation}\label{eq:dft-decomp-intro}
\dft = - \nabla U + c\cft^\sharp, \qquad c > 0,
\end{equation}
where $U\in \Cont^2(M) = \Cont^2(M,\R)$ and $\cft^\sharp$ is the metric dual via $G$ of a $\Cont^1$ closed one-form  $\cft$.\footnote{This generalizes the situation of \eqref{eq:tilt} with $F = (c,0,\ldots, 0)$, with $\cft$ corresponding to the one-form $d\theta^1$ on $\tor^n$.}
Vector fields of this type are studied in the Morse-Novikov theory \cite{novikov1982hamiltonian,farber2004topology,pajitnov2006circle}.
One of our other results (Prop.~\ref{prop:flux-positive-c1f-case}) suggests that the contribution of the term $c\cft^\sharp$ in \eqref{eq:dft-decomp-intro} should be interpreted as ``pushing'' the flux of Theorem~\ref{th:qualititative-special-case-intro} in the ``direction'' of $\cft$, analogous to $F$ in \eqref{eq:tilt} and \eqref{eq:flux-int-1}, \eqref{eq:flux-int-2}.
A generic function $U\in \Cont^2(M)$ satisfies the following \cite[Rem.~6.11]{banyaga2006lectures}.\footnote{A critical point of $U$ is a zero of $\nabla U$.
A function $U$ is Morse if its Hessian at every critical point is a nondegenerate bilinear form.
The (Morse) \concept{index} of a critical point $p$ of a Morse function $U$ is the number of negative eigenvalues of the Hessian of $U$ at $p$.}
\begin{restatable}[]{Assump}{AssumpMorseSmaleIntro}\label{assump:morse-smale-intro} $U$ has a unique global minimizer, $U$ takes distinct values on distinct index-$1$ critical points, and $U$ is Morse-Smale ($U$ is a Morse function and all pairwise intersections of (un)stable manifolds of zeros of $\nabla U$ are transverse).
\end{restatable}

	\begin{figure}
		\centering
		\def\svgwidth{0.7\linewidth}
\begingroup%
  \makeatletter%
  \providecommand\color[2][]{%
    \errmessage{(Inkscape) Color is used for the text in Inkscape, but the package 'color.sty' is not loaded}%
    \renewcommand\color[2][]{}%
  }%
  \providecommand\transparent[1]{%
    \errmessage{(Inkscape) Transparency is used (non-zero) for the text in Inkscape, but the package 'transparent.sty' is not loaded}%
    \renewcommand\transparent[1]{}%
  }%
  \providecommand\rotatebox[2]{#2}%
  \newcommand*\fsize{\dimexpr\f@size pt\relax}%
  \newcommand*\lineheight[1]{\fontsize{\fsize}{#1\fsize}\selectfont}%
  \ifx\svgwidth\undefined%
    \setlength{\unitlength}{1372.13241511bp}%
    \ifx\svgscale\undefined%
      \relax%
    \else%
      \setlength{\unitlength}{\unitlength * \real{\svgscale}}%
    \fi%
  \else%
    \setlength{\unitlength}{\svgwidth}%
  \fi%
  \global\let\svgwidth\undefined%
  \global\let\svgscale\undefined%
  \makeatother%
  \begin{picture}(1,0.47561052)%
    \lineheight{1}%
    \setlength\tabcolsep{0pt}%
    \put(0.26137981,0.46730484){\makebox(0,0)[lt]{\lineheight{1.25}\smash{\begin{tabular}[t]{l}graph of $f$, where $df = p^*(-\dft^\flat)$\end{tabular}}}}%
    \put(0,0){\includegraphics[width=\unitlength,page=1]{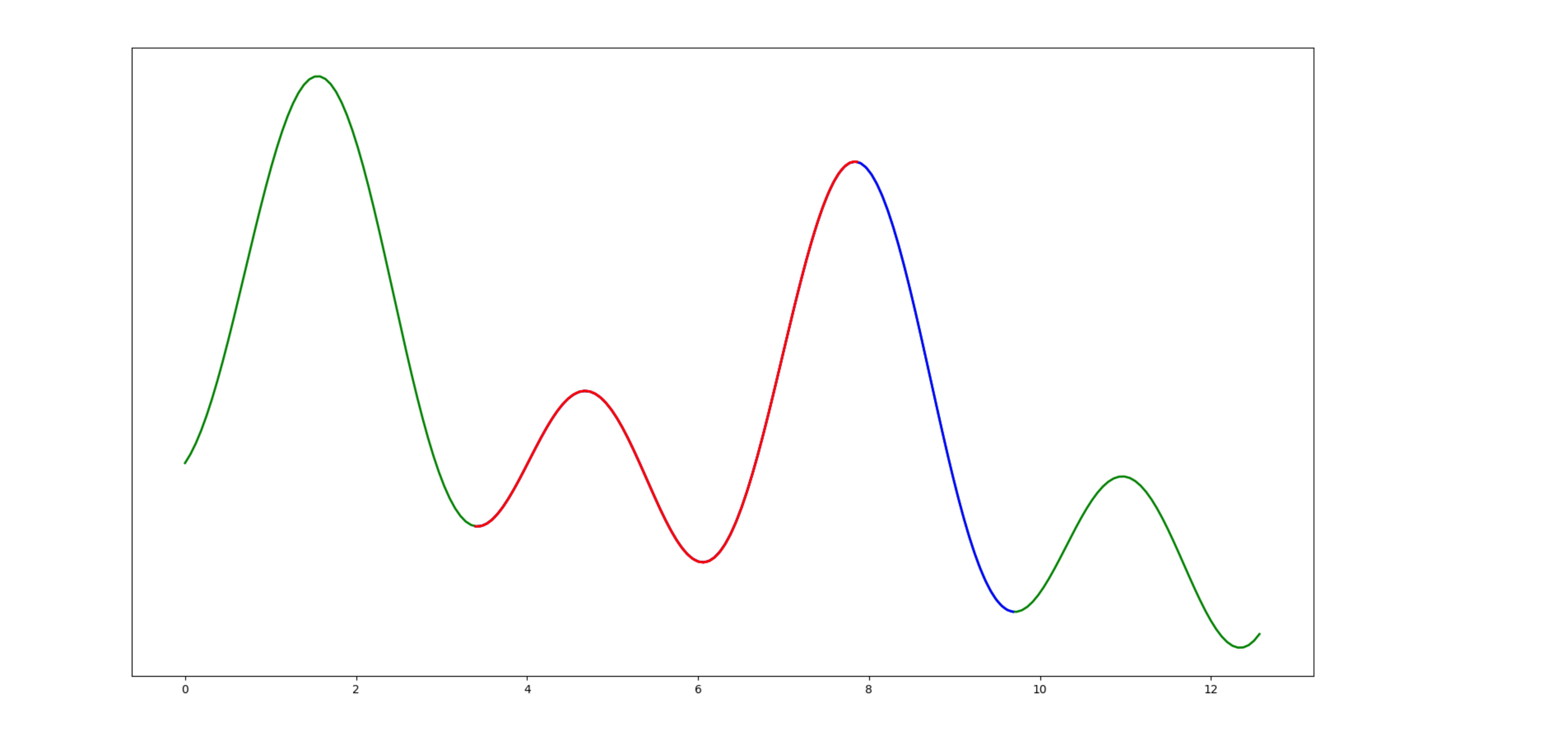}}%
    \put(-0.00090743,0.2421078){\makebox(0,0)[lt]{\lineheight{1.25}\smash{\begin{tabular}[t]{l}$f(x)$\end{tabular}}}}%
    \put(0.43199537,0.00160669){\makebox(0,0)[lt]{\lineheight{1.25}\smash{\begin{tabular}[t]{l}$x$\end{tabular}}}}%
    \put(0,0){\includegraphics[width=\unitlength,page=2]{height-illustration.pdf}}%
    \put(0.88824598,0.22883362){\color[rgb]{1,0,0}\makebox(0,0)[lt]{\lineheight{1.25}\smash{\begin{tabular}[t]{l}$\heightc(\gamma)$\end{tabular}}}}%
    \put(0.49922625,0.01714585){\color[rgb]{0,0,1}\makebox(0,0)[lt]{\lineheight{1.25}\smash{\begin{tabular}[t]{l}$\tilde{\gamma}$\end{tabular}}}}%
    \put(0,0){\includegraphics[width=\unitlength,page=3]{height-illustration.pdf}}%
  \end{picture}%
\endgroup%

		\caption{An illustration of the definition \eqref{eq:height-loop} of the height of a loop $\gamma$. Here $M$ is the circle $\sph^1$ viewed as the interval $[0,2\pi]$ with ends identified, $p\colon \R\to \sph^1$ is the universal cover $p(x)=x \mod 2 \pi$, and $\tilde{\gamma}$ (blue horizontal curve) is a lift of $\gamma$ to $\tM$. Imagining $\tilde{\gamma}$ as lifted so as to travel along $\textnormal{graph}(f)$, only the red portion contributes to $\heightc(\gamma)$.}\label{fig:heights}
	\end{figure}

Under this assumption, the implicit function theorem yields a unique $\Cont^1$ curve $c\mapsto v_*(c)$ of $c$-dependent asymptotically stable zeros of $\dft$, defined for $c$ in some nondegenerate interval $c\in [0,c_0)$, such that $v_*(0)$ is the global minimizer of $U$.
Given $p\in M$, it is expedient to define the \concept{loop space} $\Omega_p M$ to be the set of continuous paths $\gamma\colon [0,1]\to M$ satisfying $\gamma(0)=\gamma(1)=p$.
Consider the closed one-form $\dft^\flat = -dU + c\cft$ and define, for any $\gamma\in \Omega_p M$, the \concept{height} of the loop $\gamma\in \Omega_p M$ via\footnote{Recall that \emph{closed} one-forms can be integrated over merely continuous paths \cite[p.~163]{farber2004topology}.}
\begin{equation}\label{eq:height-loop}
\heightc(\gamma)\coloneqq \sup_{t\in [0,1]} \int_{\gamma|_{[0,t]}}(-\dft^\flat).
\end{equation}
See Fig.~\ref{fig:heights}.
Using \eqref{eq:height-loop}, for any $c\geq 0$ we define
\begin{equation}\label{eq:h-star-def}
h_*(c)\coloneqq \inf \{\heightc(\gamma)\colon \gamma\in \Omega_{v_*(c)} M \textnormal{ and } \int_\gamma \cft > 0\}.
\end{equation}
\begin{Rem}[The height is positive: $h_*(c) > 0$]\label{rem:h-star-greater-than-zero}
Since $\dft^\flat$ is closed, $v_*(c)$ has a homologically trivial open neighborhood $W$ on which $\dft|_W = -\nabla f$ for some $f\in \Cont^2(W)$. 
Since $v_*(c)$ is asymptotically stable for $\dft$, $f$ attains a local minimum at $v_*(c)$.
This, \eqref{eq:height-loop}, and $-\dft^\flat|_W = df$ imply the existence of $k > 0$ such that $\heightc(\gamma) \geq k$ for any loop $\gamma\in \Omega_{v_*(c)}$ whose image is not contained in $W$.
The image of any loop $\gamma\in \Omega_{v_*(c)}$ satisfying $\int_\gamma \cft > 0$ is not contained in the homologically trivial set $W$, so the preceding sentence and \eqref{eq:h-star-def} imply that $h_*(c)\geq k > 0$.
\end{Rem}
As we show in \S \ref{sec:th-2-implies-th-1}, the following is a special case of Theorem~\ref{th:qualitative}; it and Cor.~\ref{co:qualitative-intro-nr} are illustrated in an example in \S \ref{sec:nr-torus}.
Here we use the notation $\fluxec$ rather than $\fluxe$ to emphasize the dependence on $c$.
\begin{restatable}[]{Th}{ThmQualitativeSpecial}\label{th:qualititative-special-case-intro} Consider for each $\varepsilon > 0$ the diffusion process with generator $\dfte + \varepsilon \Delta$ on the closed connected Riemannian manifold $M$, where each $\dfte$ is a smooth vector field and $\dfte\to \dft \coloneqq -\nabla U + c\cft^\sharp$ uniformly as $\varepsilon \to 0$.
Assume that $U\in \Cont^2(M)$ satisfies Assumption~\ref{assump:morse-smale-intro} and that the $\Cont^1$ one-form $\cft$ is closed but not exact.
Then for sufficiently small $c > 0$, the steady-state $[\cft]$-flux of the diffusion with generator $\dfte + \varepsilon \Delta$  satisfies $\fluxec([\cft])>0$ for all sufficiently small $\varepsilon > 0$, and
\begin{equation}\label{eq:qualititative-special-case intro}
\lim_{\varepsilon\to 0}(-\varepsilon \ln \fluxec([\cft]))=h_*(c).
\end{equation}
\end{restatable}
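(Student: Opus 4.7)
The strategy is to derive Theorem~\ref{th:qualititative-special-case-intro} as a corollary of the general graph-theoretic formula Theorem~\ref{th:qualitative}, simplifying the combinatorial minimization in the presence of the Morse--Novikov structure of the drift $\dft = -\nabla U + c\cft^\sharp$.

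First, I would verify the hypotheses of Theorem~\ref{th:qualitative}. Under Assumption~\ref{assump:morse-smale-intro}, the gradient flow of $-\nabla U$ is Morse--Smale on the closed manifold $M$ and hence structurally stable. Thus for sufficiently small $c>0$, the perturbation $\dft$ is topologically conjugate to $-\nabla U$: its zeros are in bijection with the critical points of $U$ with matching indices (via the implicit function theorem), its chain-recurrent set consists of these finitely many hyperbolic equilibria (in particular no new periodic orbits arise), and the Morse--Smale intersection property persists. In particular $v_*(c)$ is the continuation of the unique global minimizer $v_*(0)$ of $U$.

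Next I would invoke Theorem~\ref{th:qualitative}, which expresses $\lim_{\varepsilon\to 0}(-\varepsilon\ln \fluxec([\cft]))$ as the minimum of sums of Freidlin--Wentzell quasipotential costs over a combinatorial family of configurations (oriented graphs/cycles) on the asymptotically stable zeros of $\dft$. The main task is then to identify this minimum with $h_*(c)$. Since $v_*(0)$ is the unique global minimum of $U$, for $c$ small the quasipotential to reach $v_*(c)$ from any other asymptotically stable zero is strictly smaller than the reverse barrier, so the stationary distribution concentrates exponentially at $v_*(c)$ and the combinatorial minimization collapses onto cycles consisting of a single winding excursion starting and ending at $v_*(c)$ with positive period against $\cft$ (such a loop exists since $[\cft]\neq 0$).

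The key identification is that the quasipotential cost of such an excursion equals $h_*(c)$. Since $\dft^\flat$ is closed, $\dft$ is locally the gradient of an effective potential, and a standard computation generalizing the reversible gradient case shows that the Freidlin--Wentzell action against $\dft$ along any path $\gamma$ equals the cumulative gain $\int_\gamma(-\dft^\flat)$, while motion along integral curves of $\dft$ contributes zero action. Concatenating cheap downhill arcs with forced uphill excursions, the minimal action to travel from $v_*(c)$ to any point $q$ is $\inf_\gamma \sup_t \int_{\gamma|_{[0,t]}}(-\dft^\flat)$, and further minimizing over closed loops with $\int_\gamma \cft > 0$ yields exactly $h_*(c)$ as in \eqref{eq:h-star-def}. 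Strict positivity $h_*(c)>0$ is Remark~\ref{rem:h-star-greater-than-zero}, and $\fluxec([\cft])>0$ for small $\varepsilon$ follows from Proposition~\ref{prop:flux-positive-c1f-case}.

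The main obstacle is the quasipotential identification just described, since the diffusion is non-reversible when $c>0$. The cleanest approach is to lift to the universal cover $\tM$, on which $\dft$ pulls back to a genuine gradient $-\nabla \tilde U$ of a single-valued function $\tilde U$; the classical gradient quasipotential formula applies on $\tM$, and one argues that optimal paths descend to $M$ as alternating sequences of uphill saddle-to-saddle climbs and downhill flow segments whose total action is computed by $\heightc$. A secondary issue is checking that more elaborate combinatorial configurations in Theorem~\ref{th:qualitative} (multi-loop or multi-basin tours) cannot undercut the single winding loop contribution for small $c$, which reduces to a comparison of quasipotential barriers against the barrier $h_*(c)$.
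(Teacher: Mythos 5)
Your overall strategy---reduce Theorem~\ref{th:qualititative-special-case-intro} to Theorem~\ref{th:qualitative} by verifying its hypotheses for $\cfo = -dU + c\cft$ with $c$ small---is exactly the paper's approach, and your continuation/structural-stability argument for why $\dft$ inherits the Morse--Smale properties is sound. But there are several genuine gaps.

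First, you misstate what Theorem~\ref{th:qualitative} delivers: you say it ``expresses $\lim_{\varepsilon\to 0}(-\varepsilon\ln\fluxec([\cft]))$ as the minimum of sums of Freidlin--Wentzell quasipotential costs over a combinatorial family of configurations.'' That is the content of the more primitive Theorem~\ref{th:flux-manifold-mc-CRST-ld}, not of Theorem~\ref{th:qualitative}. Theorem~\ref{th:qualitative} already gives the limit as $h_*$ in the Morse-graph sense \eqref{eq:h-star-def-morse} (heights of directed Morse edges relative to the unique minimizing rooted spanning tree). Because of this misreading, most of your proposal---lifting to $\tilde M$, computing Freidlin--Wentzell actions of piecewise-orbit paths, showing the stationary measure concentrates at $v_*(c)$, ruling out multi-basin tours---is re-deriving machinery that is internal to the proof of Theorem~\ref{th:qualitative} rather than doing the task that actually remains.

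Second, the work that does remain you either skip or handle too loosely. (a) You do not verify the spanning-tree uniqueness hypothesis~\eqref{eq:th-qualitative-minimizer}; this is where the ``distinct values at distinct index-1 critical points'' clause of Assumption~\ref{assump:morse-smale-intro} enters, via the classical fact that a weighted graph with distinct edge weights has a unique minimal spanning tree. (b) You do not reconcile the two definitions of $h_*$: Theorem~\ref{th:qualitative} outputs the Morse-graph quantity \eqref{eq:h-star-def-morse}, whereas the statement you are proving asserts the loop-space quantity $h_*(c)$ of \eqref{eq:h-star-def}. This identification is Lemma~\ref{lem:h-star-defs-coincide}; its proof is a purely combinatorial--topological argument about loops in the $1$-skeleton and the minimizing rooted spanning tree $T_*$, using a transversality step to restrict attention to stable manifolds of index-$\le 1$ zeros and a flow-monotonicity computation, not a quasipotential computation. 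Your phrase ``concatenating cheap downhill arcs with forced uphill excursions, the minimal action... yields exactly $h_*(c)$'' gestures at this but is not a proof and silently uses the uniqueness of $T_*$. One also needs to check that the root of $T_*$ coincides with the continuation $v_*(c)$ of the global minimizer; the paper does this via identity~\eqref{eq:comparison-h-star-gain-sum} at $c=0$ plus continuity. (c) Theorem~\ref{th:qualitative} gives asymptotics for $\fluxec([\cfo])$ with $\cfo = -dU + c\cft$; the target is $\fluxec([\cft])$. You need the linearity argument $\fluxec([\cfo]) = c\,\fluxec([\cft])$ (since $[-dU]=0$ in $\Hdr^1$) so that the two have the same exponential decay rate; you never state this step, and your appeal to Prop.~\ref{prop:flux-positive-c1f-case} for positivity of $\fluxec([\cft])$ is not directly applicable since that proposition assumes $\dfte = \cfo^\sharp$ itself is dual to a closed form, whereas here only the $\varepsilon\to 0$ limit of the drift is; the positivity claim instead comes from Theorem~\ref{th:qualitative}.
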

\begin{Co}\label{co:qualitative-intro-nr}
Assume the hypotheses of Theorem~\ref{th:qualititative-special-case-intro}, and assume that the map $c\mapsto h_*(c)$ is strictly increasing on some nonempty interval of the form $(0,c_0)$.
Then for all sufficiently small $c_2 > c_1 > 0$ and all sufficiently small $\varepsilon > 0$, $\flux_{\varepsilon,c_2}([\cft])< \flux_{\varepsilon,c_1}([\cft])$: there is \concept{negative resistance}.
\end{Co}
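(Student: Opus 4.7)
The plan is to deduce the corollary directly from Theorem~\ref{th:qualititative-special-case-intro}: the logarithmic asymptotic $\lim_{\varepsilon\to 0}(-\varepsilon \ln \flux_{\varepsilon,c}([\cft])) = h_*(c)$ translates strict monotonicity of $h_*$ into eventual strict reversed ordering of the fluxes themselves. Essentially all the mathematical content lies in Theorem~\ref{th:qualititative-special-case-intro}; the corollary is just a careful comparison of two limits.

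Concretely, I would first fix a common range $(0, c_0')$ by taking $c_0'$ to be the minimum of $c_0$ (the upper endpoint of the interval of strict monotonicity provided by hypothesis) and the upper bound on $c$ guaranteed by Theorem~\ref{th:qualititative-special-case-intro}, so that on $(0,c_0')$ both sets of conclusions hold. For any $c_1 < c_2$ in $(0, c_0')$, strict monotonicity gives $h_*(c_2) > h_*(c_1)$; set $\eta := \tfrac{1}{3}(h_*(c_2) - h_*(c_1)) > 0$. Applying the theorem separately to each $c_i$, there exist $\varepsilon_1, \varepsilon_2 > 0$ such that for every $\varepsilon \in (0,\varepsilon_i)$ we have $\flux_{\varepsilon,c_i}([\cft]) > 0$ and $\bigl|{-\varepsilon \ln \flux_{\varepsilon,c_i}([\cft]) - h_*(c_i)}\bigr| < \eta$. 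Taking $\varepsilon_0 := \min(\varepsilon_1, \varepsilon_2)$, for all $\varepsilon \in (0, \varepsilon_0)$,
\begin{equation*}
-\varepsilon \ln \flux_{\varepsilon,c_2}([\cft]) > h_*(c_2) - \eta > h_*(c_1) + \eta > -\varepsilon \ln \flux_{\varepsilon,c_1}([\cft]).
\end{equation*}
Dividing both sides by the negative number $-\varepsilon$ (which reverses the inequality) and exponentiating (which preserves it) then yields $\flux_{\varepsilon,c_2}([\cft]) < \flux_{\varepsilon,c_1}([\cft])$, which is exactly the negative-resistance conclusion.

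The only point worth flagging---rather than a genuine obstacle---is that $\varepsilon_1$ and $\varepsilon_2$, and hence $\varepsilon_0$, depend on the chosen pair $c_1, c_2$. This is precisely why the corollary's phrasing allows ``sufficiently small $\varepsilon$'' to depend on that pair; upgrading to a uniform-in-$c$ statement would require a uniform rate of convergence in Theorem~\ref{th:qualititative-special-case-intro}, which is not claimed and not needed for the stated corollary.
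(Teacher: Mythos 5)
Your proof is correct and matches the paper's approach: the paper does not give an explicit proof of this corollary but sketches the same logic in the introduction around \eqref{eq:psi-intro-1}, treating it as a direct consequence of the large-deviations limit in Theorem~\ref{th:qualititative-special-case-intro}. Your write-up is a careful and complete version of exactly that argument, including the (correct) observation that $\varepsilon_0$ may depend on the pair $(c_1,c_2)$, which is consistent with the corollary's phrasing.
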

In the course of proving  Theorems~\ref{th:qualititative-special-case-intro} and \ref{th:qualitative}, we also obtain a result (Theorem~\ref{th:qualitative-measure}) on the small-noise asymptotics of the invariant measure of the diffusion.
	\begin{figure}
		\centering
		\def\svgwidth{1.0\linewidth}
\begingroup%
  \makeatletter%
  \providecommand\color[2][]{%
    \errmessage{(Inkscape) Color is used for the text in Inkscape, but the package 'color.sty' is not loaded}%
    \renewcommand\color[2][]{}%
  }%
  \providecommand\transparent[1]{%
    \errmessage{(Inkscape) Transparency is used (non-zero) for the text in Inkscape, but the package 'transparent.sty' is not loaded}%
    \renewcommand\transparent[1]{}%
  }%
  \providecommand\rotatebox[2]{#2}%
  \newcommand*\fsize{\dimexpr\f@size pt\relax}%
  \newcommand*\lineheight[1]{\fontsize{\fsize}{#1\fsize}\selectfont}%
  \ifx\svgwidth\undefined%
    \setlength{\unitlength}{1301.01454727bp}%
    \ifx\svgscale\undefined%
      \relax%
    \else%
      \setlength{\unitlength}{\unitlength * \real{\svgscale}}%
    \fi%
  \else%
    \setlength{\unitlength}{\svgwidth}%
  \fi%
  \global\let\svgwidth\undefined%
  \global\let\svgscale\undefined%
  \makeatother%
  \begin{picture}(1,0.44388436)%
    \lineheight{1}%
    \setlength\tabcolsep{0pt}%
    \put(0,0){\includegraphics[width=\unitlength,page=1]{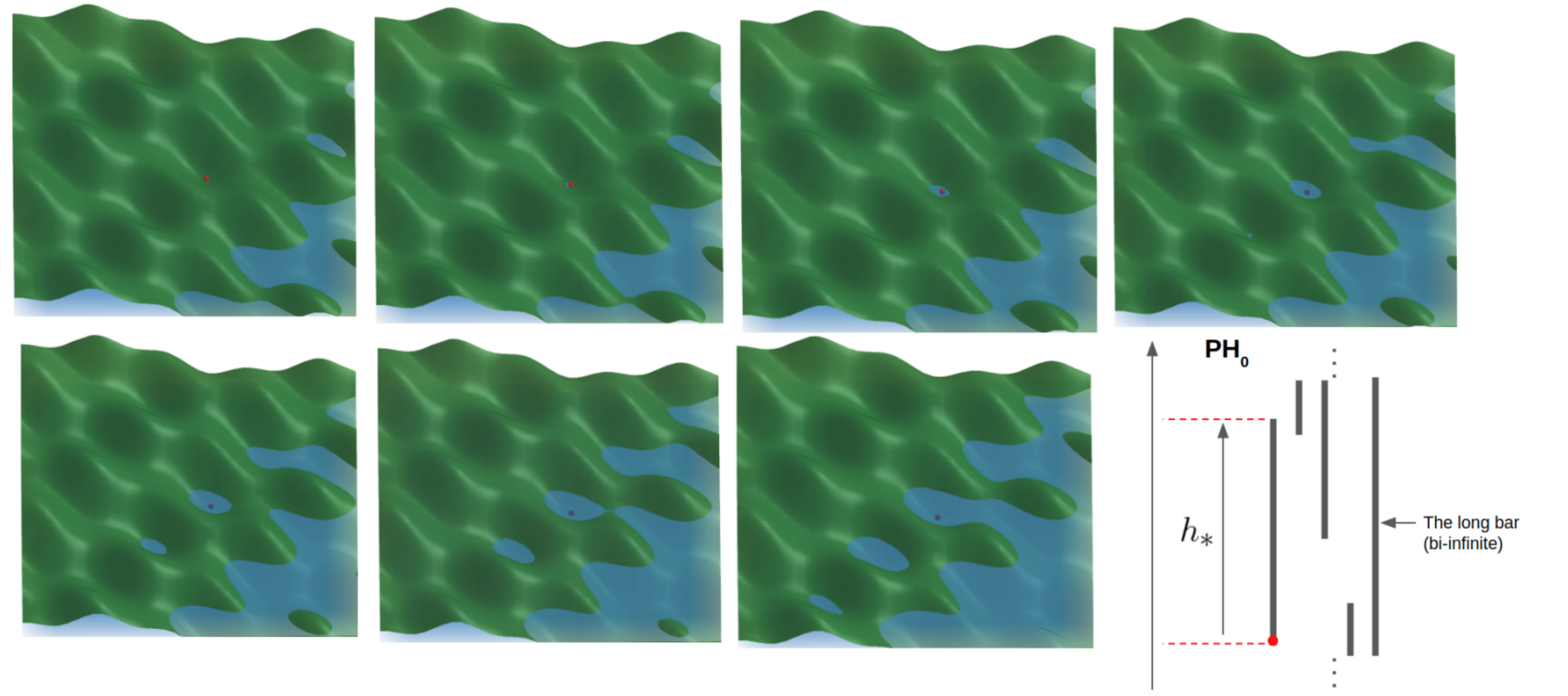}}%
    \put(0.71400327,0.20283642){\makebox(0,0)[lt]{\lineheight{1.25}\smash{\begin{tabular}[t]{l}$a$\end{tabular}}}}%
    \put(0,0){\includegraphics[width=\unitlength,page=2]{persistent-homology-remark.pdf}}%
    \put(0.74167395,0.09791822){\makebox(0,0)[lt]{\lineheight{1.25}\smash{\begin{tabular}[t]{l}$h_*(c)$\end{tabular}}}}%
    \put(0.78087426,0.21206037){\makebox(0,0)[lt]{\lineheight{1.25}\smash{\begin{tabular}[t]{l}$P\Hom_0$\end{tabular}}}}%
    \put(0.91115708,0.10483614){\makebox(0,0)[lt]{\lineheight{1.25}\smash{\begin{tabular}[t]{l}exceptional \\bar\end{tabular}}}}%
    \put(0,0){\includegraphics[width=\unitlength,page=3]{persistent-homology-remark.pdf}}%
  \end{picture}%
\endgroup%

		\caption{An illustration of Rem.~\ref{rem:persistence} with $M = \tor^2$ and $\tM = \R^2$.
		The green landscape represents $\textnormal{graph}(f)$, the blue ocean is depicted at various sea levels $a$, and the red point is the lift to $\textnormal{graph}(f)$ of a point in $p^{-1}(v_*(c))$.
		For the specific example shown, the nonexceptional bar corresponding to the red point experiences only one merge: into the exceptional bar.}\label{fig:persistent-homology}
	\end{figure}

\begin{Rem}
Although the definition \eqref{eq:h-star-def} of $h_*(c)$ in Theorem~\ref{th:qualititative-special-case-intro} involves infinitely many loops $\gamma$, $h_*(c)$ can be computed from finitely many numbers associated to the Morse graph ($1$-skeleton) of $\dft$; see \S \ref{sec:main-results-morse} (Lem.~\ref{lem:h-star-defs-coincide}).
\end{Rem}

\begin{Rem}[Persistent homology interpretation; see Fig.~\ref{fig:persistent-homology}]\label{rem:persistence}
One interpretation of $h_*(c)$ is via the zeroth persistent homology ($P\Hom_0$) ``barcode'' \cite{ghrist2008barcodes} or the closely related merge tree \cite[pp.~149--150]{edelsbrunner2010computational}.
Let $p\colon \tilde{M}\to M$ be any smooth covering such that the pullback $p^*\cft$ is exact, so that there is $f\in \Cont^2(\tilde{M})$ with $p^*(\dft^\flat) = -df$.
Consider the bi-infinite filtration defined by the increasing family of sublevel sets $\{f< a\}$ with $a\in \R$.
Since $f$ is unbounded on the noncompact $\tilde{M}$, the $P\Hom_0$ barcode consists of infinitely many ``bars'' each representing a connected component of some $\{f<a\}$ and, with the exception of precisely one bar, indexed by the local minimum of $f$ at which the corresponding component is ``born''.
The exceptional bar is the only bar of infinite extent, and it is bi-infinitely so; imagining $\{f<a\}$ as the projected portion of the landscape $\textnormal{graph}(f)$ covered by an ocean with sea level $a$, the exceptional bar corresponds to the only noncompact component of the ocean. 
When two components merge as a result of increasing $a$, the corresponding bar born at the larger $a$ value terminates, and one imagines that this ``younger'' bar merges into the ``older'' bar.
In this way, each nonexceptional bar experiences a finite sequence of merges until it merges into the exceptional bar at some finite relative height $[a_{\text{(exceptional merge)}}- a_{\text{(nonexceptional birth)}}]$.
All bars beginning at a point in $p^{-1}(v_*(c))$ experience their exceptional merge at the same relative height: $h_*(c)$.
\end{Rem}

\begin{Rem}[Formulation in the language of partial differential equations]\label{rem:pde-pov}
As mentioned, for $\varepsilon > 0$ there is a unique positive solution $\dme$ to the ``advection-diffusion'' PDE with smooth coefficients
\begin{equation}\label{eq:pde-pov}
0 = \nabla \cdot (u\dfte) - \varepsilon \Delta u = \nabla \cdot \underbrace{(u \dfte - \varepsilon \nabla u)}_{J(u)}
\end{equation}
satisfying $\int_M \dme \dV = 1$ \cite[Thm~3]{zeeman1988stability}.
(All solutions to \eqref{eq:pde-pov} are smooth \cite[p.~139, Thm~4.8]{wells1980differential}.)  
For \emph{any} positive solution $u_\varepsilon$ to \eqref{eq:pde-pov}, linearity and uniqueness imply that $u_\varepsilon = \left(\int_M u_\varepsilon \dV\right)\dme$.
It follows that $J(u_\varepsilon)\coloneqq u_\varepsilon \dfte - \varepsilon \nabla u_\varepsilon$ satisfies $J(u_\varepsilon) = \left(\int_M u_\varepsilon \dV\right) \cme $ where $\cme = J(\dme)$, so Theorem~\ref{th:qualititative-special-case-intro} can be reformulated as a result on singularly perturbed PDE: given a family of positive solutions $(u_\varepsilon)_{\varepsilon > 0}$ to \eqref{eq:pde-pov} such that $\lim_{\varepsilon \to 0} \varepsilon \ln\left(\int_M u_\varepsilon \dV\right) = 0$ and with $U$, $\dft$, $\dfte$, $c$, and $\cft$ satisfying the hypotheses of Theorem~\ref{th:qualititative-special-case-intro}, 
$$\lim_{\varepsilon\to 0}\left(-\varepsilon \ln \int_M \cft(J(u_\varepsilon))\dV\right) = h_*(c).$$
Similar remarks can be made for Theorems~\ref{th:qualitative}, \ref{th:qualitative-measure}, \ref{th:flux-manifold-mc-CRST-ld} and Prop.~\ref{prop:one-minimizer} (cf., e.g.,  \cite[p.~178, Thm~5.4]{freidlin2012random}).
\end{Rem}

\subsubsection{Brief overview of the second main result}\label{sec:brief-overview-second-main-result}
Our second main contribution (Theorem~\ref{th:flux-manifold-mc-CRST-ld}) is to compute the small-noise large deviations of the flux when $\dft$ belongs to the more general class of vector fields which are ``gradient-like'' in the sense that their chain recurrent set consists of a finite number of hyperbolic zeros (see Def.~\ref{def:chain-recurrent} or \cite[pp.~36--37]{conley1978isolated}). 
In particular this means that every $\dft$-integral curve converges to a zero of $\dft$ in both forward and backward time, but this condition also precludes the existence of heteroclinic cycles.
To prove Theorem~\ref{th:flux-manifold-mc-CRST-ld} we approximate the diffusion by a countable Markov chain, a natural idea \cite[Sec.~6]{chernyak2009noneq}, but one which requires estimates to carry out rigorously---we do this using Freidlin-Wentzell theory \cite{freidlin2012random}.
In the directed graph underlying this Markov chain there are a finite number of vertices corresponding to the index-$0$ zeros of $\dft$, but an infinite number of edges corresponding to path homotopy classes between vertices, and these edges are decorated by the corresponding line integrals of $-\cfo$.
This (or at least some homological data) is needed for the Markov chain to contain enough information to determine the large deviations of the flux, and it motivates our introduction of a path-homotopical refinement of the standard Freidlin-Wentzell quasipotential.
We use Theorem~\ref{th:flux-manifold-mc-CRST-ld} as a tool in our proof of Theorem~\ref{th:qualitative}.

\subsubsection{Outline of the sequel}\label{sec:outline-sequel}
The remainder of the paper is organized as follows.
After discussing related work, in \S \ref{sec:nr-torus} we illustrate Theorem~\ref{th:qualititative-special-case-intro} by using it to rigorously demonstrate the occurrence of negative resistance in a concrete example.
In \S \ref{sec:flux} we motivate the definition of flux, relate it to prior literature, and establish some of its basic properties. 
In \S \ref{sec:main-results-morse} we introduce some Morse-theoretic concepts before stating our main result (Theorem~\ref{th:qualitative}) on small-noise flux asymptotics when the limiting drift is dual to a closed one-form close to a generic exact one-form.
Also in \S \ref{sec:main-results-morse} is a result (Theorem~\ref{th:qualitative-measure}) concerning the small-noise asymptotics of the invariant measure under the same assumptions, a strengthening of Theorem~\ref{th:qualitative} in a certain special case (Prop.~\ref{prop:one-minimizer}), a proof of Theorem~\ref{th:qualititative-special-case-intro}, and a proof that the definitions of $h_*$ given in \eqref{eq:h-star-def} and \eqref{eq:h-star-def-morse} coincide. 
Our main result for the more general situation that the chain recurrent set of the drift vector field consists of a finite number of hyperbolic zeros (Theorem~\ref{th:flux-manifold-mc-CRST-ld}) is stated in \S \ref{sec:drift-finite-hyperbolic-chain} after first introducing the Freidlin-Wentzell action functional and a path-homotopical refinement of the Freidlin-Wentzell quasipotential.
The proof of Theorem~\ref{th:flux-manifold-mc-CRST-ld} is carried out in \S \ref{sec:proof-th-flux-manifold-mc-CRST-ld}, and the proofs of Theorems~\ref{th:qualitative} and \ref{th:qualitative-measure} and Prop.~\ref{prop:one-minimizer} are carried out in \S \ref{sec:proofs-morse}.
(Theorem~\ref{th:flux-manifold-mc-CRST-ld} plays a crucial role in our proof of  Theorem~\ref{th:qualitative}, but the latter theorem does not seem to trivially follow from the former.) 
In \S \ref{sec:main-results-morse} there arise natural questions as to whether certain hypotheses in Theorems~\ref{th:qualitative} and \ref{th:qualitative-measure} can be removed (Rem.~\ref{rem:th-qualitative-sufficiently-close}), and \S \ref{sec:counterexamples} contains several counterexamples which yield negative answers to these questions.
In \S \ref{sec:conclusion} we summarize our  contributions, speculate on their potential impact, and discuss prospects for future work.
Finally, the proofs of certain intermediate results are deferred to an appendix (App.~\ref{app:proofs}) in an attempt to improve the flow of the paper.

\subsection{Related work}\label{sec:related-work}
Understanding properties of nonequilibrium steady states---those for which the steady-state current $\cme$ of \eqref{eq:fp-cont} is not identically zero---is one of the most important problems of statistical physics and thermodynamics \cite{degroot1962non,kubo1985statistical,gallavotti1995dynamical,ruelle1999smoothnoneq,dorfman1999intro, zwanzig2001nonequilibrium, ottinger2005beyond,ge2012landscapes,zhang2012stochastic}.
The flux of \eqref{eq:flux-gen-intro} and \S \ref{sec:flux} is one such property which, in the case that $M$ is $1$-dimensional, has received significant attention in the physics literature \cite{magnasco1993forced,risken1996fokker,reimann2001giant,reimann2002brownian,reimann2002diffusion,hanggi2009artificial,seifert2012stochastic,faggionato2012representation,cheng2015long,proesmans2019large}, usually under names such as ``mean velocity'' and ``particle current'' and with notation such as $\langle \dot{x} \rangle$.
Flux in the case that $M$ is $2$-dimensional (typically a torus or cylinder obtained from symmetry reduction of a system on $\R^2$) has also been studied in the physics literature; in addition to the review \cite[Sec.~5.9]{reimann2002brownian}, we also mention \cite{qian1998vector,kostur2000numerical} and the discovery of negative resistance in \cite{cecchi1996negative} which played a significant role in motivating the present work.
We mention that the negative resistance (or conductance, or mobility) referred to here is ``differential'', in contradistinction to the ``absolute'' negative resistance arising, e.g., from time-inhomogeneous noise \cite{eichhorn2002brownian} or driving forces periodic in space and time \cite{collet2008asymptotic,joubad2015langevin}. 

Of the physics-oriented literature of which we are aware, the topological viewpoint of flux in \cite{chernyak2009noneq} (referred to as ``stochastic currents'' or ``topological currents'' therein)  most closely matches our own viewpoint; the reader desiring a discussion (with picture) supplementing \S\ref{sec:flux} is referred to \cite[Sec.~3]{chernyak2009noneq}. 
Though only discrete state spaces are considered, another work informing our viewpoint on flux is \cite{chernyak2013algebraic}; flux is also studied for discrete state spaces in \cite{ren2011duality,wachtel2015fluctuating,altaner2015fluctuating}.

In the mathematical literature, Manabe introduced and studied the basic properties of asymptotic stochastic intersection (or rotation) numbers and proved that these are essentially equivalent to our definition of flux; see Rem.~\ref{rem:flux-relationship-with-jiang-manabe} and see \cite[Sec.~5.4]{jiang2004mathematical} for a textbook reference.
Using the ``intersection number'' point of view, we mention that there are higher-dimensional generalizations of flux as asymptotic stochastic intersection numbers of higher-dimensional objects evolving under the diffusion \cite{catanzaro2016stochastici,catanzaro2016stochasticii}.
Our definition of flux in \eqref{eq:flux-def} mirrors Schwartzman's analogous notion of asymptotic cycles for deterministic systems \cite{schwartzman1957asymptotic}; see Rem.~\ref{rem:flux-relationship-with-asymptotic-cycles} for more details and references.

To the best of our knowledge, the  small-noise asymptotics of the steady-state flux has not been studied except possibly in the case that $M$ is $1$-dimensional (see Ex.~\ref{ex:tilt-pot-circle}), although the large-time asymptotics of ``transient'' flux has been considered in several of the references just mentioned.
To study these small-noise asymptotics, we make use of Freidlin-Wentzell theory.
The most relevant references on the subject for our purposes are \cite{ventsel1970small, freidlin2012random}, and especially \cite[Ch.~6]{freidlin2012random}; some introductions to Freidlin-Wentzell theory we have found useful are \cite{varadhan1984large,varadhan2001prob,berglund2013kramers,touchette2018intro}.
Another introduction to Freidlin-Wentzell theory is \cite[Ch.~6]{bovier2015meta}; we mention that the potential-theoretic and Witten Laplacian \cite{witten1982supersymmetry} approaches respectively taken in \cite{bovier2015meta} and  \cite{lepeutrec2013precise} are two alternative approaches to metastability which have produced asymptotics for certain quantities that are sharper than those of the Freidlin-Wentzell approach.
As in \cite[Ch.~6]{freidlin2012random}, rooted spanning trees in certain directed graphs play an important role in our approach, but so do the cycle-rooted spanning trees studied in \cite{pitman2018tree}.

Especially relevant for the results in \S \ref{sec:main-results-morse} and their proofs are the basic ideas of Morse theory \cite{milnor1963morse,milnor1965hcob,nicolaescu2011invitation}, such as an understanding of the $1$-skeleton of the Morse-Thom-Smale-Witten complex \cite[Ch.~6]{pajitnov2006circle}, and some basic ideas of Novikov's generalization of Morse theory to circle-valued potentials and more generally closed one-forms \cite{novikov1982hamiltonian,farber2004topology,pajitnov2006circle}.
Under the more general assumptions of \S \ref{sec:drift-finite-hyperbolic-chain}, the role of Morse-Novikov theory is replaced by Conley theory \cite{conley1978isolated}. 
Due to their presence in the very definition of flux, closed one-forms play a central role in our approach to studying stochastic dynamics in this paper.
We conclude this section by mentioning that closed one-forms have also played roles in studying deterministic dynamical systems: in addition to the roles they play in Schwartzman's theory of asymptotic cycles and in Morse-Novikov theory mentioned above, ``Lyapunov one-forms'' have been used in \cite{farber2004lyap,farber2004smooth} to formulate a refinement of Conley's ``fundamental theorem of dynamical systems'' \cite{norton1995fundamental,robinson1999dynamical,kvalheim2021conley} and in \cite{byrnes2007differential,byrnes2010topological,kvalheim2021families} to prove existence of periodic orbits.

\section{Illustrating Theorem~\ref{th:qualititative-special-case-intro}: a negative resistance example}\label{sec:nr-torus}
In this section we use Cor.~\ref{co:qualitative-intro-nr} to rigorously establish the existence of negative resistance in a concrete example.
(This example is revisited in Ex.~\ref{ex:nr-example-2} to illustrate Theorem~\ref{th:qualitative} and Prop.~\ref{prop:one-minimizer}.)
	\begin{figure}
		\centering
		\def\svgwidth{1.0\linewidth}
\begingroup%
  \makeatletter%
  \providecommand\color[2][]{%
    \errmessage{(Inkscape) Color is used for the text in Inkscape, but the package 'color.sty' is not loaded}%
    \renewcommand\color[2][]{}%
  }%
  \providecommand\transparent[1]{%
    \errmessage{(Inkscape) Transparency is used (non-zero) for the text in Inkscape, but the package 'transparent.sty' is not loaded}%
    \renewcommand\transparent[1]{}%
  }%
  \providecommand\rotatebox[2]{#2}%
  \newcommand*\fsize{\dimexpr\f@size pt\relax}%
  \newcommand*\lineheight[1]{\fontsize{\fsize}{#1\fsize}\selectfont}%
  \ifx\svgwidth\undefined%
    \setlength{\unitlength}{1166.01745605bp}%
    \ifx\svgscale\undefined%
      \relax%
    \else%
      \setlength{\unitlength}{\unitlength * \real{\svgscale}}%
    \fi%
  \else%
    \setlength{\unitlength}{\svgwidth}%
  \fi%
  \global\let\svgwidth\undefined%
  \global\let\svgscale\undefined%
  \makeatother%
  \begin{picture}(1,0.48781433)%
    \lineheight{1}%
    \setlength\tabcolsep{0pt}%
    \put(0,0){\includegraphics[width=\unitlength,page=1]{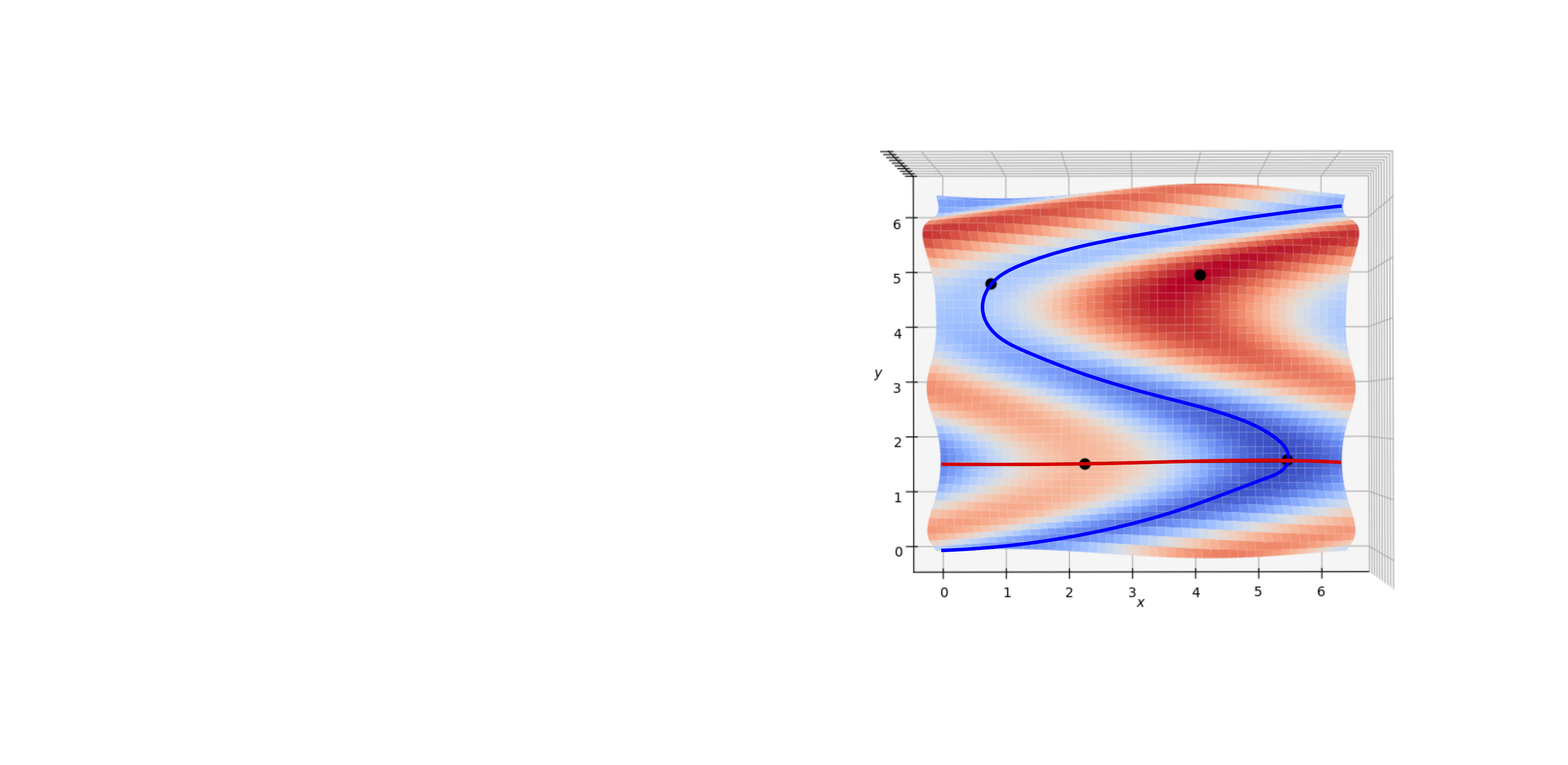}}%
    \put(0.77463709,0.28510094){\makebox(0,0)[lt]{\lineheight{1.25}\smash{\begin{tabular}[t]{l}$m$\end{tabular}}}}%
    \put(0.83381288,0.1641765){\makebox(0,0)[lt]{\lineheight{1.25}\smash{\begin{tabular}[t]{l}$v$\end{tabular}}}}%
    \put(0.64213479,0.28124165){\makebox(0,0)[lt]{\lineheight{1.25}\smash{\begin{tabular}[t]{l}$s_1$\end{tabular}}}}%
    \put(0.70002414,0.1680358){\makebox(0,0)[lt]{\lineheight{1.25}\smash{\begin{tabular}[t]{l}$s_2$\end{tabular}}}}%
    \put(0,0){\includegraphics[width=\unitlength,page=2]{NR-ex.pdf}}%
  \end{picture}%
\endgroup%

		\caption{Two views of the graph of the Morse function $U(x,y)\coloneqq 3-\sin y-2\cos(x-y-4\cos(y-a))$ on the flat $2$-torus viewed as the square $[0,2\pi]\times [0,2\pi]$ with opposite boundary faces identified, where $a \coloneqq \arccos(1/4)$. 
		There are precisely four critical points: one minimum $v$, one maximum $m$, and two saddles $s_1$, $s_2$.
		The blue and red curves are hand-drawn approximations of the unstable manifolds of the two saddles with respect to $-\nabla U$.
		Since $U(s_2) > U(s_1)$ and $s_1$ is not in the closure of the unstable manifold of $s_2$, $U$ satisfies Assumption~\ref{assump:morse-smale-intro}.
		Note: the unstable manifolds of the two saddles coincide with the two edges of the \emph{undirected Morse graph} of $-\nabla U$ with vertex set $\{v\}$, to be defined in \S \ref{sec:setup-morse}; the \emph{directed Morse graph} has edge set consisting of two oppositely oriented copies of each of these undirected edges, with the same vertex set $\{v\}$.
		}\label{fig:NR-ex-1}
	\end{figure}
	
Let $M$ be the flat $2$-torus $\tor^2$ viewed as the square $[0,2\pi]^2$ with opposite boundary faces identified, coordinates $(x,y)$, and Riemannian metric given by the restriction of the Euclidean metric to $[0,2\pi]^2$.	
Consider $U\in \Cont^\infty(M)$ defined by
\begin{equation}\label{eq:ex-torus}
U(x,y)\coloneqq  3-\sin y-2\cos(x-y-4\cos(y-a)), \qquad a \coloneqq \arccos(1/4).
\end{equation} 
This function was chosen because its graph contains the S-shaped ``valley'' shown in blue in Fig.~\ref{fig:NR-ex-1}.
Using the notations $\partial_x$ and $\partial_y$ for partial derivatives, we compute
\begin{equation}\label{eq:ex-torus-U-partial-derivs}
\partial_x U(x,y) = 2\sin(x-y-4\cos(y-a)), \qquad \partial_y U(x,y) =[4\sin(y-a)-1]\partial_x U(x,y) -\cos y.
\end{equation}
Setting $\partial_x U = \partial_y U = 0$ and solving for $(x,y)$ using \eqref{eq:ex-torus-U-partial-derivs}, we find that $U$ has four critical points:
\begin{alignat*}{2}
v &= (\pi/2 + \sqrt{15}, \pi/2),  \qquad &&m = (5\pi/2-\sqrt{15}, 3\pi/2),\\
s_1 &= (3\pi/2-\sqrt{15}, 3\pi/2), \qquad &&s_2= (\sqrt{15}-\pi/2,\pi/2),
\end{alignat*}
at which $4\cos(y-a) = \sqrt{15}\sin y$ and $4\sin(y-a)=\sin y$, so
\begin{alignat*}{2}
U(v) &= 0, \qquad &&U(m) = 6,\\
U(s_1) &= 2, \qquad && U(s_2) = 4.
\end{alignat*}
Next, again using that $4\cos(y-a) = \sqrt{15}\sin y$ and $4\sin(y-a)=\sin y$ at critical points, for any $(x_0,y_0)\in \{v,s_1,s_2,m\}$ we calculate
\begin{alignat*}{2}
\partial_x^2 U(x_0,y_0) &= 2\cos(x_0-y_0-\sqrt{15}\sin y_0), \qquad && \partial_y^2 U= \sin y_0 + (1-\sin y_0 )^2\partial_x^2 U(x_0,y_0),\\
\partial_x \partial_y U(x_0,y_0) & = (\sin y_0-1)\partial_x^2 U(x_0,y_0) , \qquad && \partial_y \partial_x U(x_0,y_0) = \partial_x \partial_y U(x_0,y_0).
\end{alignat*}
From the first expression it follows that $\partial_x^2 U(v) = 2$, $\partial_x^2 U (m) = -2$, $\partial_x^2 U(s_1) = 2$, and $\partial_x^2 U(s_2) = -2$.
From this and the above, it follows that the Hessian matrices of $U$ (containing the second derivatives) at the critical points are
\begin{alignat*}{2}
\Hess_v U &= \begin{bmatrix}
2 & 0\\
0 & 1
\end{bmatrix}, \qquad && \Hess_m U = \begin{bmatrix}
-2 & 4\\
4 & -9
\end{bmatrix},\\
\Hess_{s_1} U &= \begin{bmatrix}
2 & -4\\
-4 & 7
\end{bmatrix}, \qquad && \Hess_{s_2}U = \begin{bmatrix}
-2 & 0\\
0 & 1
\end{bmatrix}.
\end{alignat*}
We see by inspection that both eigenvalues of $\Hess_v U$ are positive and that $\Hess_{s_2} U$ has one positive and one negative eigenvalue. 
Since $\det(\Hess_m U) > 0$,  $\trace(\Hess_m U) < 0$, and $\det(\Hess_{s_1} U) < 0$, and since the eigenvalues of symmetric matrices are real, it follows that both eigenvalues of $\Hess_m U$ are negative and that $\Hess_{s_1} U$ has one positive and one negative eigenvalue.
Hence $U$ is a Morse function with one minimum $v$, two index-$1$ saddle points $s_1, s_2$, and one maximum $m$.
Since $U(s_2) > U(s_1)$ and $s_1$ is not in the closure of the unstable manifold of $s_2$ (see Fig.~\ref{fig:NR-ex-1}), $U$ satisfies Assumption~\ref{assump:morse-smale-intro}.

Letting $\cft = dx$ with (Euclidean) metric dual given by the coordinate vector field $\cft^\sharp = \partial_x$, we now consider a diffusion process on $\tor^2$ with generator $L_\varepsilon = \dft + \varepsilon \Delta$, where
\begin{equation*}
\dft = -\nabla U + c \partial_x
\end{equation*}
for $c \geq 0$ as in \eqref{eq:dft-decomp-intro} and $\Delta = \partial_x^2 + \partial_y^2$.\footnote{As observed in Footnote~\ref{footnote:diffusion-as-SDE}, this diffusion process can be constructed as the solution of the SDE $dX_t = \dft(X_t)dt + \sqrt{2\varepsilon}dW_t$, where $W_t$ is Brownian motion \cite[Sec.~4.3]{mckean2005stochastic}.}
Note that $v_*(0)$ as defined in \S \ref{sec:contributions} satisfies $v_*(0) = v$.
Referring to \eqref{eq:height-loop} and \eqref{eq:h-star-def}, the hues in Fig.~\ref{fig:NR-ex-1} show that the infimum
\begin{equation*}
\begin{split}
h_*(0) \coloneqq& \inf \{\heightz(\gamma)\colon \gamma\in \Omega_{v_*(0)} \tor^2 \textnormal{ and } \int_\gamma dx > 0\}\\	
=& \inf \left\{\sup_{t\in [0,1]}U(\gamma(t))\colon \gamma\in \Omega_{v_*(0)} \tor^2 \textnormal{ and } \int_\gamma dx > 0.\right\}
\end{split}	
\end{equation*}
is attained by any loop $\gamma\in \Omega_{v_*(0)}$ which starts at $v_*(0)$ (darkest blue in Fig.~\ref{fig:NR-ex-1}), ascends (as measured by $U$) while traveling up and to the left within the blue valley to $s_1$ (lightest blue in Fig.~\ref{fig:NR-ex-1}), then descends while traveling up and to the right within the blue valley back to $v_*(0)$.  
Since $U(v_*(0)) = 0$, it follows that $h_*(0) = U(s_1)$.
When $c > 0$ we can write $\dft|_{(0,2\pi)\times [0,2\pi]} = -\nabla \tilde{U}_c$, where the effective ``tilted potential'' $\tilde{U}_c(x,y)\coloneqq U(x,y)-cx$ is smooth when restricted to the image of $(0,2\pi)\times [0,2\pi]$ in $\tor^2$, but $\tilde{U}_c$ does not extend to a continuous function on $\tor^2$ when $c > 0$. 
Let us write $s_1 = (x_1,y_1)$ and $v = (x_v, y_v)$.
Since (i) the graph of $\tilde{U}_c$ is obtained simply by adding a linear downward ``tilt'' in the direction of increasing $x$ to $\textnormal{graph}(U)$ in Fig.~\ref{fig:NR-ex-1} 
and (ii) there are smooth curves $c\mapsto s_1(c)$ and $c\mapsto v_*(c)$ of $c$-dependent hyperbolic zeros of $\dft$ satisfying $s_1(0) = s_1$ and $v_*(0) = v$ by the implicit function theorem, we see that $h_*(c) = \tilde{U}_c(s_1(c))- \tilde{U}_c(v_*(c))$ for sufficiently small $c> 0$.
Thus, the $c$-derivative $h_*'(0)$ satisfies 
\begin{align*}
h_*'(0) &= \frac{d}{dc}[\tilde{U}_c(s_1(c))- \tilde{U}_c(v_*(c))]|_{c=0}\\
& = \ip{\underbrace{\nabla U(s_1)}_{0}}{s_1'(0)} - \ip{\underbrace{\nabla U(v)}_{0}}{v_*'(0)} + \underbrace{(x_v-x_1)}_{> 0} > 0.
\end{align*}
Thus, $c\mapsto h_*(c)$ is a strictly increasing function for $c$ sufficiently small.
Cor.~\ref{co:qualitative-intro-nr} then implies that, for sufficiently small $c_2 > c_1 > 0$ and all sufficiently small $\varepsilon > 0$, $\flux_{\varepsilon,c_2}([dx])< \flux_{\varepsilon, c_1}([dx])$: there is negative resistance.

\section{Flux}\label{sec:flux}
\subsection{General setup}\label{sec:setup-flux}
As explained in \S \ref{sec:general-setting}, we may assume that the $\varepsilon$-family of nondegenerate diffusion processes $(X^\varepsilon_t, \Prob_x^\varepsilon)$ on the closed connected smooth $n$-dimensional manifold $M$ has generator
\begin{equation}\label{eq:diff-gen}
L_\varepsilon = \dfte + \varepsilon \Delta,
\end{equation}
where $\Delta = \nabla \cdot \nabla$ is the Laplace-Beltrami operator of some smooth Riemannian metric $G$ on $M$, the \concept{drift} vector field $\dfte$ is smooth for each $\varepsilon> 0$, and $\dfte\to \dft$ uniformly as $\varepsilon \to 0$. 
In the sequel we mostly use the notations $\ip{\slot}{\slot}$ and $\norm{\slot}$ instead of $g(\slot,\slot)$. 
Compactness of $M$ implies the existence of a unique \concept{stationary (probability) density} $\dme\in \Cont^\infty(M)$ satisfying the stationary Fokker-Planck equation
\begin{equation}\label{eq:fokker-planck}
0 = \nabla \cdot \underbrace{(\dme \dfte - \varepsilon \nabla \dme)}_{\cme} \eqqcolon  \nabla \cdot \cme,
\end{equation}
and to which all initial probability densities converge as $t\to\infty$ in the $\Cont^0$ topology \cite[Thm~3]{zeeman1988stability}.
The $\Cont^\infty$ divergence-free vector field $\cme \coloneqq  \dme \dfte - \varepsilon \nabla \dme$  on $M$ is the \concept{steady-state (probability) current.}

\begin{Rem}\label{rem:vanishing-current-iff-gradient}
The following chain of equivalent statements concerning the diffusion process with generator \eqref{eq:diff-gen} may provide some intuition concerning $\cme$ \cite{kent1978time, ikeda1981stochastic}.
The ``detailed balance'' condition $\dme \dfte = \varepsilon\nabla \dme$ holds \cite[p.~825]{kent1978time} $\iff$ $\cme \equiv 0$ $\iff$ $\dfte = \varepsilon\nabla \ln(\dme)$ $\iff$ $\dfte = -\nabla U_\varepsilon$ for some $U_\varepsilon\in \Cont^\infty(M)$ (in which case $\dme \propto e^{-\frac{1}{\varepsilon}U_\varepsilon}$) $\iff$ the diffusion is ``reversible'' \cite[p.~827]{kent1978time} $\iff$ the diffusion is ``$\dme$-symmetric'' in the sense that $\dme(x)\rho_\varepsilon(t,x,y) = \dme(y)\rho_\varepsilon(t,y,x)$ for all $x,y\in M$, where $\rho_\varepsilon(t,x,y)$ is the probability density of a transition from $x$ to $y$ at time $t$ \cite[p.~824]{kent1978time} $\iff$ the generator $L_\varepsilon$ is ``symmetrizable'' in the sense that there exists a Borel measure $\mu_\varepsilon$ on $M$ with $\int_M (L_\varepsilon f)gd\mu_\varepsilon = \int_M f(L_\varepsilon g)d\mu_\varepsilon$ for all $f,g\in \Cont^\infty(M)$, in which case $d\mu_\varepsilon(x) = \dme(x)dx$ \cite[pp.~275--276]{ikeda1981stochastic}.
\end{Rem}

\subsection{Motivation of flux and its basic properties}\label{sec:motivation-flux-properties}
 
To motivate our general definition of steady-state flux, let $N\subset M$ be a closed hypersurface equipped with a transverse orientation determined by a choice $\hat{n}$ of unit normal vector field.\footnote{Recall that we are not assuming $M$ is orientable. If $M$ happens to be orientable, then $N$ is transversely orientable if and only if $N$ is orientable, and all of the differential forms appearing in the proof of Prop.~\ref{prop:poinc-dual-special-case} can be assumed to be the usual kind (forms of even type \cite[Ch.~2]{derham1984differentiable}).} 
The usual flux of $\cme$ through $N$ is $\int_{N}\ip{\cme}{\hat{n}}dy$, where $dy$ is the Riemannian density \cite[p.~432]{lee2013smooth} of the metric restricted to $N$.
The following result follows from the nonorientable version of Poincar\'{e} duality \cite[p.~87,~Thm~7.8]{bott1982differential}.
\begin{Prop}\label{prop:poinc-dual-special-case}
Let $M$ be a closed Riemannian manifold and $N\subset M$ be a closed hypersurface equipped with a smooth unit normal vector field $\hat{n}$.
Then there exists a unique cohomology class $[\cfo]\in \Hdr^1(M)$ such that, for any $\cfo\in [\cfo]$ and any divergence-free smooth vector field $J$ on $M$, 
\begin{equation}\label{eq:prop:poinc-dual-special-case-1}
\int_{N}\ip{J}{\hat{n}}dy = \int_M \cfo(J)\dV.
\end{equation}
\end{Prop}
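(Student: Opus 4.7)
The plan is to recast the flux integral over $N$ as the integration of a closed $(n-1)$-form of odd type, and then invoke the nonorientable Poincar\'{e} duality of \cite{bott1982differential} to produce the Poincar\'{e} dual class of $(N,\hat n)$.

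First I would observe that the contraction map $J \mapsto \iota_J \dV$ is a $\Cont^\infty(M)$-linear pointwise isomorphism from smooth vector fields on $M$ onto smooth $(n-1)$-forms of odd type on $M$ (the Riemannian density $\dV$ being itself an odd-type $n$-form when $M$ is nonorientable), and Cartan's magic formula gives the standard identity $d(\iota_J \dV) = (\nabla\cdot J)\,\dV$. Hence the map restricts to a bijection between divergence-free smooth vector fields and closed odd-type $(n-1)$-forms. Next, at any $y \in N$, decomposing $\dV$ as the wedge of the unit normal covector dual to $\hat n(y)$ with the induced Riemannian density of $N$ yields the pointwise identity $i^*(\iota_J \dV) = \ip{J}{\hat n}\, dy$, so the left side of \eqref{eq:prop:poinc-dual-special-case-1} equals $\int_N i^*(\iota_J \dV)$.

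Next I would invoke the nondegenerate Poincar\'{e} duality pairing between $\Hdr^1(M)$ and the cohomology of closed odd-type $(n-1)$-forms modulo exact ones, given by $([\cfo],[\beta])\mapsto \int_M \cfo\wedge\beta$. The transversely oriented hypersurface $(N,\hat n)$ defines a linear functional $[\beta]\mapsto \int_N i^*\beta$ on the latter space (this is where the transverse orientation enters), so nondegeneracy produces a unique $[\cfo]\in\Hdr^1(M)$ with $\int_N i^*\beta = \int_M \cfo\wedge\beta$ for every closed odd-type $(n-1)$-form $\beta$ and every representative $\cfo$. Taking $\beta = \iota_J \dV$ and using the algebraic identity $\cfo\wedge \iota_J \dV = \cfo(J)\,\dV$ (obtained by contracting the identically zero $(n+1)$-form $\cfo\wedge \dV$ with $J$ via the graded Leibniz rule) yields \eqref{eq:prop:poinc-dual-special-case-1}. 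Uniqueness of $[\cfo]$ is immediate from the same nondegeneracy together with the surjectivity of $J\mapsto \iota_J\dV$ onto closed odd-type $(n-1)$-forms established in the previous step.

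The main subtlety will be the orientation bookkeeping when $M$ is nonorientable: verifying that restricting the odd-type density $\dV$ to $N$ using the normal $\hat n$ reproduces the (unoriented) Riemannian density $dy$ on $N$, and that the functional $[\beta]\mapsto \int_N i^*\beta$ is indeed the image of $(N,\hat n)$ under the relevant Poincar\'{e} duality. Both can be verified locally in Fermi normal coordinates along $N$. A cleaner alternative, which I might prefer to avoid type bookkeeping entirely, is to pull everything back to the orientation double cover $\widehat M \to M$, work there with ordinary top-degree forms, and descend the resulting identity by $\Z/2$-equivariance.
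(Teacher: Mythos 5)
Your argument is correct and follows essentially the same route as the paper's: both reduce the flux integral to $\int_N i^*(\iota_J\,\dV)$, observe that $\iota_J\,\dV$ is a closed odd-type $(n-1)$-form (you write $\iota_J\,\dV$, the paper writes $*(J^\flat)$ — these agree), and invoke the nonorientable Poincar\'{e} duality pairing together with the identity $\cfo\wedge\iota_J\,\dV = \cfo(J)\,\dV$. The one genuine addition is that you make the uniqueness of $[\cfo]$ explicit: the paper simply takes $[\cfo]$ to be the Poincar\'{e} dual of $N$ and leaves uniqueness to the reader, whereas you observe that uniqueness follows from nondegeneracy of the duality pairing combined with the surjectivity of $J\mapsto\iota_J\,\dV$ onto closed odd-type $(n-1)$-forms (which in turn comes from the divergence identity $d(\iota_J\,\dV)=(\nabla\cdot J)\,\dV$); that is a worthwhile point to spell out.
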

\begin{proof}
Let $\lrcorner$ denote the interior product of vector fields and forms.
Since the pullback to $N$ of $J\lrcorner \dV$ via the inclusion $N\hookrightarrow M$ is $\ip{J}{\hat{n}}dy$, the left side of \eqref{eq:prop:poinc-dual-special-case-1} is equal to $\int_N J\lrcorner \dV$. 
Here $J \lrcorner \dV$ is an $(n-1)$-form twisted by the orientation bundle \cite{bott1982differential}---or, alternatively, an $(n-1)$-form of odd type \cite[Ch.~2]{derham1984differentiable}---which can be integrated over a transversely oriented hypersurface \cite[Sec.~3.4(b)]{frankel2012geometry}.
Now let $[\cfo]\in \Hdr^1(M)$ be the unique de Rham cohomology class Poincar\'{e} dual to $N$ \cite[p.~87,~Thm~7.8]{bott1982differential}.
We compute
\begin{equation}\label{eq:flux-poinc-duality}
\int_N J \lrcorner dx = \int_N  *(J^\flat) = \int_M \cfo \wedge *(J^\flat) = \int_M \cfo(J) \dV, 
\end{equation}
where $(\slot)^\flat$ is the inverse to $(\slot)^\sharp$ sending a vector field to its dual one-form via the metric $G$, and $*$ is the Hodge star mapping $k$-forms of even type to $(n-k)$-forms of odd type \cite[p.~101]{derham1984differentiable}.
The first equality follows since $J \lrcorner \dV = *(J^\flat)$, the second equality follows by the definition of Poincar\'{e} duality\footnote{Here we are using a sign convention for the Poincar\'{e} dual of $M$ which is different from the one in \cite[p.~51]{bott1982differential}; the cited convention would stipulate that $\int_N *(J^\flat) = \int_M *(J^\flat) \wedge \cfo =  (-1)^{n-1} \int_M \cfo \wedge  *(J^\flat) $.} and the fact that $d*(J^\flat) = 0$ since $\nabla \cdot J = 0$, and the third equality follows since $\cfo\wedge *\cft = \cfo(\cft^\sharp)\dV$ for any one-forms $\cfo$ and $\cft$ (of even type). 
\end{proof}
Motivated by Prop.~\ref{prop:poinc-dual-special-case}, we define the (steady-state) \concept{flux} of the diffusion with generator \eqref{eq:diff-gen} to be the linear map
\begin{equation}\label{eq:flux-def}
\fluxe\colon \Hdr^1(M)\to \R, \qquad \fluxe([\cfo])\coloneqq \int_M \cfo(\cme)\dV,
\end{equation}
and we refer to $\fluxe([\cfo])$ as the (steady-state) \concept{$[\cfo]$-flux}.
This map is well-defined since $\nabla \cdot \cme = 0$ implies that the integral does not depend on the representative $\cfo$ of the cohomology class.
If $\cfo$ happens to be the unique harmonic representative of $[\cfo]$, then $\int_M \cfo(\nabla \dme)\, \dV = 0$, so the alternative formula $\fluxe([\cfo]) = \int_{M}\cfo(\dfte)\dme \dV$ also holds in this special case (cf. the second integral in \eqref{eq:flux-int-2}).
Manabe proved that the following equality holds with probability one \cite[Thm~4.1]{manabe1982stochastic}:\footnote{Actually, in \cite{manabe1982stochastic} it is assumed that $M$ is orientable, but the claim for nonorientable $M$ follows by considering the lifted diffusion process $(\hat{X}^\varepsilon_t,  \hat{\Prob}^{\varepsilon}_x)$ on the orientation (double) covering $\hat{\pi}\colon \hat{M}\to M$ \cite[p.~394]{lee2013smooth}.}
\begin{equation}\label{eq:flux-micro}
\fluxe([\cfo]) \overset{\textnormal{a.s.}}{=} \lim_{t\to\infty} \frac{1}{t}\int_{X^{\varepsilon}_{[0,t]}}\cfo.
\end{equation}
Here $\int_{X^{\varepsilon}_{[0,t]}}\cfo$ is the line integral of $\cfo$ along the diffusion process restricted to the time interval $[0,t]$ \cite[Sec.~2]{manabe1982stochastic}, \cite[Ch.~VI.6]{ikeda1981stochastic}.
For the torus example of \S \ref{sec:intro}, the expressions in \eqref{eq:flux-def} and \eqref{eq:flux-micro} correspond to the second and fourth expressions in \eqref{eq:flux-int-2} by taking $\cfo = dx^1$.

\begin{Rem}\label{rem:flux-relationship-with-jiang-manabe}
The right side of \eqref{eq:flux-micro} is called a ``rotation number'' in \cite[Sec.~5.4]{jiang2004mathematical} and is essentially equivalent to a ``stochastic intersection number'' as defined in \cite{manabe1982stochastic}.
\end{Rem}
\begin{Rem}\label{rem:flux-relationship-with-asymptotic-cycles}
Our definition \eqref{eq:flux-def} of flux and its characterization \eqref{eq:flux-micro} are closely related to Schwartzman's notion of the \emph{asymptotic cycle} of a continuous flow on a compact metric space \cite{schwartzman1957asymptotic}; see also \cite[App.~16.3]{arnold1968ergodic}.
In fact, if $\dft_0$ is a smooth vector field generating a flow which is ergodic with respect to a smooth invariant probability density $\dm_0$, and $\flux_0\colon \Hdr^1(M)\to \R$ is defined by setting $\varepsilon = 0$ in \eqref{eq:fokker-planck} and \eqref{eq:flux-def}, then the corresponding equality \eqref{eq:flux-micro} holds for $\dm_0$-almost every initial condition, and $\flux_0\colon \Hdr^1(M)\to \R$ coincides with Schwartzman's $(\dm_0\dV)$-asymptotic cycle \cite[Sec.~3]{athanassopoulos1995some}.
A similar observation was made in \cite[p.~122]{jiang2004mathematical}.
The terminology ``asymptotic cycle''  comes from the fact that, since $\Hdr^1(M)$ is the dual space of the first real singular homology $H_1(M;\R)$ by the de Rham and universal coefficient theorems, $\flux_0$ can be identified with a real homology class.
A similar perspective on $\fluxe$ with $\varepsilon > 0$ was taken by Manabe \cite{manabe1982stochastic} who referred to the ``asymptotic homological position'' of the diffusion $X^\varepsilon_t$, which coincides with $\fluxe$ when the latter is identified with a real homology class. 
\end{Rem}

Much of the motivation for the present paper concerns the case that $\dfte \equiv \cfo^\sharp$ is dual to a smooth closed one-form $\cfo$ (though the main results of \S \ref{sec:main-results-morse} require only that $\dfte\to \cfo^\sharp$ uniformly as $\varepsilon \to 0$; cf. Footnote~\ref{footnote:epsilon-dependent-drift}).
For this case the next result, for a fixed $\varepsilon > 0$, shows that the flux $\fluxe([\cfo])$ is nonnegative and vanishes if and only if $\cfo$ is exact.
This is achieved by establishing \eqref{eq:flux-witten-fisher} which, unlike \eqref{eq:flux-def} and \eqref{eq:flux-micro} (and, e.g., \eqref{eq:flux-int-1} and \eqref{eq:flux-int-2}), establishes steady-state $[\cfo]$-flux as a manifestly nonnegative quantity when $\dfte = \cfo^\sharp$.

\begin{Prop}\label{prop:flux-positive-c1f-case}
Under the assumptions of \S\ref{sec:setup-flux}, the drift vector field $\dfte$, stationary density $\dme$, and steady-state current $\cme$ satisfy
\begin{equation}\label{eq:J-squared-int}
\int_M \ip{\dfte}{\cme}\dV = \int_M \frac{\norm{\cme}^2}{\dme} \dV,
\end{equation}	
and these quantities are zero if and only if $\dfte = -\nabla U$ for some $U\in \Cont^\infty(M)$.
In particular, if $\cfo$ is the smooth one-form dual to $\dfte = \cfo^\sharp$ so that  $\ip{\dfte}{\cme}=\cfo(\cme)$, 
\begin{equation}\label{eq:flux-witten-fisher}
\fluxe([\cfo]) = \int_M \frac{\norm{\cme}^2}{\dme} \dV \geq 0
\end{equation}
with equality if and only if $\cfo$ is exact.
\end{Prop}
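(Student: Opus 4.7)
The plan is to first derive \eqref{eq:J-squared-int} by rewriting the drift in terms of $\cme/\dme$ and $\nabla\log\dme$, then killing a cross term via the divergence theorem using $\nabla\cdot\cme = 0$. The remaining claims should then reduce easily to this identity together with Remark~\ref{rem:vanishing-current-iff-gradient}. The only mildly nontrivial input will be positivity $\dme > 0$, which is standard for stationary densities of nondegenerate diffusions on compact manifolds and already implicit in the cited \cite{zeeman1988stability}.

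\medskip

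Concretely, using $\dme > 0$ I would rearrange $\cme = \dme\dfte - \varepsilon\nabla\dme$ to
\begin{equation*}
\dfte = \frac{\cme}{\dme} + \varepsilon\nabla\log\dme,
\end{equation*}
pair pointwise with $\cme$, and integrate over $M$ to obtain
\begin{equation*}
\int_M \ip{\dfte}{\cme}\,\dV = \int_M \frac{\norm{\cme}^2}{\dme}\,\dV + \varepsilon\int_M \ip{\nabla\log\dme}{\cme}\,\dV.
\end{equation*}
The key step is then to kill the cross term: on the closed manifold $M$ the divergence theorem gives $\int_M \ip{\nabla f}{\cme}\,\dV = -\int_M f\,(\nabla\cdot\cme)\,\dV$ for every smooth $f$, and $\nabla\cdot\cme = 0$ by \eqref{eq:fokker-planck}, so applying this with $f = \log\dme$ yields \eqref{eq:J-squared-int}.

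\medskip

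For the ``if and only if'' clause I will argue that, since $\norm{\cme}^2/\dme$ is continuous and nonnegative with $\dme > 0$, the common value in \eqref{eq:J-squared-int} vanishes iff $\cme \equiv 0$; by Remark~\ref{rem:vanishing-current-iff-gradient} this is in turn equivalent to $\dfte = -\nabla U$ for some $U\in C^\infty(M)$ (indeed $U = -\varepsilon\log\dme$ works directly from $\cme = 0$). Finally, when $\dfte = \cfo^\sharp$ with $\cfo$ a closed one-form, metric duality gives $\cfo(\cme) = \ip{\cfo^\sharp}{\cme} = \ip{\dfte}{\cme}$, so the definition \eqref{eq:flux-def} combined with \eqref{eq:J-squared-int} yields
\begin{equation*}
\fluxe([\cfo]) = \int_M \cfo(\cme)\,\dV = \int_M \frac{\norm{\cme}^2}{\dme}\,\dV \geq 0,
\end{equation*}
which is \eqref{eq:flux-witten-fisher}. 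Equality will hold iff $\cme \equiv 0$, hence iff $\dfte = -\nabla U$, hence iff $\cfo = \dfte^\flat = -dU$ is exact. I do not anticipate any serious obstacle in this proof beyond correctly identifying the substitution that turns the divergence-freeness of $\cme$ into the needed cancellation.
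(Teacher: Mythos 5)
Your argument is correct and essentially the same as the paper's: rewrite the drift as $\dfte = \cme/\dme + \varepsilon\nabla\ln\dme$, pair with $\cme$, integrate, and kill the cross term using $\nabla\cdot\cme = 0$ together with the divergence theorem on the closed manifold, then invoke Remark~\ref{rem:vanishing-current-iff-gradient} for the equivalence. The only cosmetic difference is that the paper phrases the starting identity as $\norm{\cme}^2/\dme = \ip{\dfte - \varepsilon\nabla(\ln\dme)}{\cme} = \ip{\dfte}{\cme} - \varepsilon\,\nabla\cdot[(\ln\dme)\cme]$ and then integrates, which is the same computation read in the opposite order.
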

\begin{Rem}\label{rem:entropy-prod}
Since the right side of \eqref{eq:J-squared-int} is equal to 
\begin{equation}\label{eq:entropy-prod}
\int_M \norm{\cme/\dme}^2\dme\dV = \int_M \norm{\dfte-\nabla (\ln \dme)}^2\dme\dV,
\end{equation}
by \cite[Thm~5.3.6]{jiang2004mathematical} it coincides with the \emph{entropy production rate} \cite[Def.~5.3.4]{jiang2004mathematical}.
\end{Rem}

\begin{Rem}\label{rem:loc-symm}
In the terminology of \cite[p.~276]{ikeda1981stochastic}, the case that $\dfte = \cfo^\sharp$ is dual to a closed one-form is precisely the case that the diffusion is \emph{locally symmetrizable} \cite[p.~279,~Thm~4.6(ii)]{ikeda1981stochastic}.
\end{Rem}

\begin{proof}
    Since $\nabla \cdot \cme = 0$, it follows that
    \begin{equation}
    \frac{\norm{\cme}^2}{\dme} = \ip{\dfte - \nabla(\ln \dme)}{\cme} = \ip{\dfte}{\cme} - \nabla \cdot [(\ln \dme)\cme].
    \end{equation}
    Since $\partial M = \varnothing$, the divergence theorem (which holds even if $M$ is not orientable \cite[Thm~16.48]{lee2013smooth}) implies that the integral of $\nabla \cdot [(\ln \dme)\cme]$ over $M$ vanishes.
    This proves \eqref{eq:J-squared-int}, and the quantities in that equation are zero if and only if $\cme$ is identically zero.
    By Rem.~\ref{rem:vanishing-current-iff-gradient}, this is the case if and only if $\dfte = -\nabla U$ for some $U\in \Cont^\infty(M)$.\footnote{Here is a proof. If $\cme \equiv 0$ then $\dfte = \nabla (\ln \dme)$ by definition of $\cme$. Conversely, if $\dfte = -\nabla U$ for some $U\in \Cont^\infty(M)$, then $e^{-\frac{1}{\varepsilon}U}\nabla U + \varepsilon\nabla(e^{-\frac{1}{\varepsilon}U}) = e^{-\frac{1}{\varepsilon}U}\nabla U  - \varepsilon\frac{1}{\varepsilon}e^{-\frac{1}{\varepsilon}U}\nabla U = 0$, so the unique solution $\dme$ to \eqref{eq:fokker-planck} is a scalar multiple of $e^{-\frac{1}{\varepsilon}U}$, and the computation just performed shows that the corresponding probability current $\cme \propto e^{-\frac{1}{\varepsilon}U}\nabla U + \varepsilon\nabla(e^{-\frac{1}{\varepsilon}U}) \equiv 0$.}
    Eq.~\eqref{eq:flux-witten-fisher} and the statement thereafter follow immediately.
\end{proof}

\section{Tilted potentials: drifts dual to Morse-Smale closed one-forms}\label{sec:main-results-morse}
\subsection{Setup}\label{sec:setup-morse} 
As in \S \ref{sec:setup-flux}, let $M$ be a closed connected Riemannian manifold of positive dimension.
In this section we will also assume that $\dft = \cfo^\sharp$ is the metric dual of a $\Cont^1$ closed one-form $\cfo$ and that all zeros of $\dft$ are hyperbolic.
Each zero $z\in \dft^{-1}(0)$ then has well-defined stable and unstable manifolds $\Ws(z)$ and $\Wu(z)$ for the flow of $\dft$ and a well-defined \concept{(Morse) index} $\ind(z)\coloneqq \dim(\Wu(z))$.
We will also assume that $\dft$ is ``close to being a generic gradient'' in the following sense.
\begin{Assump}\label{assump:morse-smale}
Every initial condition converges to a zero of $\dft$ under the flow of $\dft$, and all stable and unstable manifolds of zeros of $\dft$ intersect transversely.
\end{Assump}
\begin{Rem}\label{rem:equivalent-assump-chain-recurrent}
Equivalent statements of Assumption~\ref{assump:morse-smale} are: (i) $\dft$ is a Morse-Smale\footnote{We recall that a \concept{Morse-Smale vector field} $\dft$ is one whose chain recurrent set consists of finitely many equilibria and periodic orbits with (un)stable manifolds having only pairwise transverse intersections \cite[pp.~118--119]{palis1980geometric}. 
A function $U\in \Cont^2(M)$ is \concept{Morse-Smale} if the vector field $-\nabla U$ is Morse-smale (this depends on the Riemannian metric).\label{footnote:morse-smale-def}} vector field without nonstationary periodic orbits, and (ii) $\dft$ is a Morse-Smale vector field with a finite chain recurrent set (Def.~\ref{def:chain-recurrent}). 
\end{Rem}
These assumptions result in the standard Morse-Thom-Smale-Witten complex \cite[Ch.~6]{pajitnov2006circle}, from which we extract the \emph{undirected Morse graph} (see Fig.~\ref{fig:NR-ex-1} for an example).
\begin{Def}\label{def:morse-graph-undirected}
The \concept{undirected Morse graph} $\comp = (V,\Eu,\st)$ is the undirected graph with vertex set $V$ consisting of the index-$0$ zeros of $\dft$ and edge set $\Eu$ consists of the unstable manifolds of index-$1$ zeros of $\dft$.
The map $\st\colon \Eu\to \{S\in  2^V\colon \#(S)=2\}$ sends an edge $e$ to the two-element subset containing its end vertices.
\end{Def}
Note that $\vtx$ coincides with the set of zeros which are stable under the flow of $\dft$.
\begin{Rem}\label{rem:multiple-loop-edges}
$\comp$ may be naturally viewed as a $1$-dimensional CW complex embedded in $M$.
Note also that $\comp$ may contain loop edges at a single vertex and multiple edges between the same pair of vertices; similarly for the \emph{directed Morse graph} defined now (see Fig.~\ref{fig:NR-ex-1} and its caption for an example). 
\end{Rem}

\begin{Def}\label{def:morse-graph-directed}
The \concept{directed Morse graph} $\compd = (V,\Ed, \src, \tgt)$ is the directed graph consisting of two oppositely directed copies of each edge in $\comp$, with the same vertex set as $\comp$.
The maps $\src,\tgt\colon \Ed\to V$ send directed edges to their source and target vertices.
We denote by $q\colon \Ed\to \Eu$ the two-to-one map sending a directed edge to its undirected version in $\Eu$; note that $\{\src(e),\tgt(e)\} = \st(q(e))$ for all $e\in \Ed$.  
\end{Def}
The restriction of $\cfo$ to any edge of $\comp$ is exact, with a primitive that increases from the ends toward a unique global maximizer in the interior of the edge (an index-$1$ zero of $\dft$).
Thus, the following definition makes sense.
\begin{Def}\label{eq:gains-morse} 
For each $e\in \Ed$, we define the \concept{gain} $g(e) > 0$ to be the integral of $-\cfo$ along any path in $q(e)$ from $\src(e)$ to the unique index-$1$ zero of $\dft$ in $q(e)$.
\end{Def}
\begin{figure}
	\centering
	\def\svgwidth{1.0\linewidth}
\begingroup%
  \makeatletter%
  \providecommand\color[2][]{%
    \errmessage{(Inkscape) Color is used for the text in Inkscape, but the package 'color.sty' is not loaded}%
    \renewcommand\color[2][]{}%
  }%
  \providecommand\transparent[1]{%
    \errmessage{(Inkscape) Transparency is used (non-zero) for the text in Inkscape, but the package 'transparent.sty' is not loaded}%
    \renewcommand\transparent[1]{}%
  }%
  \providecommand\rotatebox[2]{#2}%
  \newcommand*\fsize{\dimexpr\f@size pt\relax}%
  \newcommand*\lineheight[1]{\fontsize{\fsize}{#1\fsize}\selectfont}%
  \ifx\svgwidth\undefined%
    \setlength{\unitlength}{825.3896175bp}%
    \ifx\svgscale\undefined%
      \relax%
    \else%
      \setlength{\unitlength}{\unitlength * \real{\svgscale}}%
    \fi%
  \else%
    \setlength{\unitlength}{\svgwidth}%
  \fi%
  \global\let\svgwidth\undefined%
  \global\let\svgscale\undefined%
  \makeatother%
  \begin{picture}(1,0.2909419)%
    \lineheight{1}%
    \setlength\tabcolsep{0pt}%
    \put(0,0){\includegraphics[width=\unitlength,page=1]{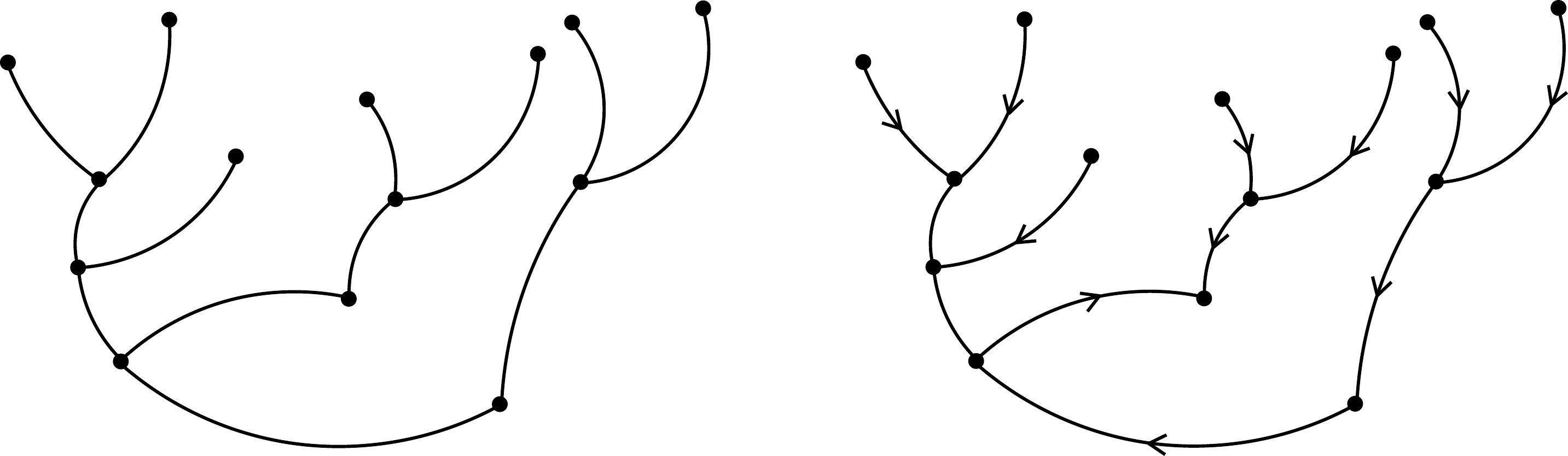}}%
    \put(0.7549608,0.08183485){\color[rgb]{0,0,0}\makebox(0,0)[lt]{\lineheight{1.25}\smash{\begin{tabular}[t]{l}$v$\end{tabular}}}}%
    \put(0,0){\includegraphics[width=\unitlength,page=2]{st-rst.pdf}}%
  \end{picture}%
\endgroup%

	\caption{Left: a spanning tree in an undirected graph. Right: a rooted spanning tree with root $v$ in a directed graph. All vertices in both graphs are shown. In both types of graphs we allow loop edges at a single vertex and multiple edges between the same pair of vertices, but edges other than those forming the spanning trees are not shown above.}\label{fig:st-rst}
\end{figure}

Given $v\in \vtx$, we denote by $\RST(\compd;v)\subset 2^{\Ed}$ the set of \concept{rooted spanning trees} with edges directed toward the root $v$ \cite{pitman2018tree}, and we define $\RST(\compd)\coloneqq \bigcup_{v\in \vtx}\RST(\compd;v)$.
We denote by $\ST(\comp)$ the set of \concept{undirected spanning trees} in $\comp$.
See Fig.~\ref{fig:st-rst}.
We assume for now that there is a unique minimizer
\begin{equation}\label{eq:morse-rst-minimizer}
T_*\in \arg\min_{T\in \RST(\compd)} \sum_{e\in T}g(e),
\end{equation}
and let $v_*\in \vtx$ be the root of $T_*$. 
\begin{Def}\label{def:morse-heights}
The \concept{height} $h(v)$ of a vertex $v\in \vtx$ is defined to be the integral of $-\cfo$ along the unique path in $q(T_*)$ leading from $v_*$ to $v$.
For $e\in \Ed$, we define the \concept{height} $h(e)\coloneqq h(\src(e)) + g(e)$.
\end{Def}
 
\begin{Rem}\label{rem:edge-height-homology}
Modulo choices of orientations, the first homology group $\Hom_1(\comp)$ of $\comp$ has a natural basis enumerated by $\Eu\setminus q(T_*)$; to any such edge $e$ one associates the unique simple cycle in $\{e\}\cup q(T_*)$.
From this observation and Def.~\ref{def:morse-heights} we see that, for any edge $e\in q(T_*)$ with $\{e_1,e_2\} \coloneqq q^{-1}(e)$, $h(e_1)=h(e_2)$.
Furthermore, if the cycle in $\Hom_1(\comp)$ corresponding to an edge $e\in \Eu\setminus q(T_*)$ maps to zero in $\Hom_1(M)$, then $h(e_1)=h(e_2)$.
More generally, the integral of $\cfo$ along the cycle in $\Hom_1(\comp)$ corresponding to an edge $e\in \Eu\setminus q(T_*)$ is nonzero if and only if $h(e_1) \neq h(e_2)$, where $\{e_1,e_2\} \coloneqq q^{-1}(e)$.
\end{Rem}
\begin{Lem}\label{lem:h-values-positive}
$h(v) \geq 0$ for all $v\in \vtx$ and $h(e)>0$ for all $e\in \Ed$.
\end{Lem}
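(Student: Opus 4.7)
The plan is to establish $h(v) \ge 0$ for every $v \in \vtx$ by a tree-exchange argument; the inequality $h(e) > 0$ will then follow immediately from $h(e) = h(\src(e)) + g(e) \ge g(e) > 0$.

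First I would dispatch the trivial case $v = v_*$ (the defining path is empty, so $h(v_*) = 0$) and fix $v \neq v_*$. A preliminary single-edge identity records how the two gains on an undirected edge combine: given $e \in \Ed$ with reverse $\bar e$, splitting the integral of $-\cfo$ along $q(e)$ at the unique interior index-$1$ zero gives, by Def.~\ref{eq:gains-morse} applied to both $e$ and $\bar e$,
\[
\int_{q(e),\,\src(e) \to \tgt(e)} (-\cfo) \;=\; g(e) - g(\bar e).
\]
Let $\tilde e_1, \dots, \tilde e_k \in T_*$ be the $T_*$-directed path from $v$ to $v_*$ (so $\src(\tilde e_1) = v$ and $\tgt(\tilde e_k) = v_*$). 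The path in $q(T_*)$ from $v_*$ to $v$ that defines $h(v)$ traverses each $q(\tilde e_i)$ in the sense opposite to $\tilde e_i$, so summing the single-edge identity yields
\[
h(v) \;=\; \sum_{i=1}^{k} \bigl(g(\bar{\tilde e}_i) - g(\tilde e_i)\bigr).
\]

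Next I would run the exchange. Set $T' \coloneqq \bigl(T_* \setminus \{\tilde e_i\}_{i=1}^{k}\bigr) \cup \{\bar{\tilde e}_i\}_{i=1}^{k}$. Its undirected shadow $q(T') = q(T_*)$ remains a spanning tree of $\comp$, and a direct out-degree check---each interior path vertex $u_i$ simply swaps its outgoing edge $\tilde e_{i+1}$ for $\bar{\tilde e}_i$, while $v$ loses its only outgoing edge and $v_*$ acquires $\bar{\tilde e}_k$---confirms that every vertex of $T'$ has out-degree $1$ except $v$, whose out-degree is $0$. Hence $T' \in \RST(\compd; v) \subset \RST(\compd)$. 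Comparing total gains,
\[
\sum_{e \in T'} g(e) \;=\; \sum_{e \in T_*} g(e) - \sum_{i=1}^{k} g(\tilde e_i) + \sum_{i=1}^{k} g(\bar{\tilde e}_i) \;=\; \sum_{e \in T_*} g(e) + h(v).
\]
If $h(v) < 0$, then $T'$ strictly undercuts the minimum in \eqref{eq:morse-rst-minimizer}, a contradiction. Therefore $h(v) \ge 0$, and then $h(e) = h(\src(e)) + g(e) > 0$ follows for every $e \in \Ed$ because $g(e) > 0$.

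The only step requiring genuine care is the bookkeeping that certifies $T' \in \RST(\compd; v)$, and this is routine graph theory (reversing an oriented root-path in a rooted tree produces a rooted tree at the other endpoint). The conceptual content is the usual local-exchange argument familiar from optimality proofs for minimum spanning trees, adapted here to rooted spanning trees with variable root.
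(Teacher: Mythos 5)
Your argument is correct and is exactly the paper's argument, just with the bookkeeping written out: the paper also proves $h(v)\geq 0$ by reversing the $T_*$-path from $v$ to $v_*$ to obtain a competing tree in $\RST(\compd;v)$ with total gain $\sum_{e\in T_*}g(e)+h(v)$, contradicting minimality if $h(v)<0$, and then concludes $h(e)=g(e)+h(\src(e))>0$ from $g(e)>0$. Your explicit derivation of the single-edge identity $\int_{q(e),\,\src(e)\to\tgt(e)}(-\cfo)=g(e)-g(\bar e)$, the resulting formula $h(v)=\sum_i\bigl(g(\bar{\tilde e}_i)-g(\tilde e_i)\bigr)$, and the out-degree check certifying $T'\in\RST(\compd;v)$ are all correct and merely make precise what the paper leaves implicit.
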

\begin{proof}
If $h(v)< 0$ for some $v\in \vtx$, reversing the edges along the path in $T_*$ connecting $v$ to $v_*$ would yield $T\in \RST(\compd;v)$ satisfying $\sum_{e\in T}g(e)< \sum_{e\in T_*}g(e)$, contradicting \eqref{eq:morse-rst-minimizer}.
If $e\in \Ed$, it follows that $h(e) = g(e)+h(\src(e)) > h(\src(e))\geq 0$.
\end{proof}
Given a directed edge $e\in \Ed$, we denote by $\bar{e}\in \Ed$ its \concept{reversal}: $q(\bar{e})=q(e)$, but $\bar{e}$ has the opposite orientation.
Using Def.~\ref{def:morse-heights} we define, in the case that $\cfo$ is not exact,
\begin{equation}\label{eq:h-star-def-morse}
\begin{split}
h_* &\coloneqq \min \{h(e)\colon e\in \Ed \textnormal{ and } h(e) \neq h(\bar{e})\}\\
&= \min \{h(e)\colon e\in \Ed \textnormal{ and } h(e) < h(\bar{e})\}.
\end{split}
\end{equation}
When $\cfo$ is exact, both sets on the right are empty; when $\cfo$ is not exact, they are not.
In \S \ref{sec:th-2-implies-th-1} we show (Lem.~\ref{lem:h-star-defs-coincide}) that this definition of $h_*$ coincides with that given in \eqref{eq:h-star-def} under the assumptions of \S \ref{sec:summary-first-main-result}.
 
\subsection{Main result} 
Theorem~\ref{th:qualitative} below is our first main result; in \S \ref{sec:th-2-implies-th-1} we use it to prove Theorem~\ref{th:qualititative-special-case-intro} of \S \ref{sec:summary-first-main-result}. 
For the statement we introduce the notation $\comp^U = (\vtx^U, \Eu^U, \st)$ for the analogous undirected Morse graph of a Morse-Smale gradient $-\nabla U$, while all other notations remain as in \S\ref{sec:setup-morse}.
Given $e\in \Eu^U$, we denote by $U(e)\coloneqq \max_{x\in e}U(x)$ the value of $U$ at the unique index-$1$ zero of $-\nabla U$ in $e$.

\begin{restatable}[]{Th}{ThmQualitative}\label{th:qualitative}
Let $U\in \Cont^2(M)$ be a Morse-Smale function on a closed connected Riemannian manifold $M$, such that $U$ has a unique global minimizer and
\begin{equation}\label{eq:th-qualitative-minimizer}
\textnormal{the minimizer of}\quad \min_{T\in \ST(\comp^U)}\sum_{e\in T}U(e) \quad \textnormal{is unique.}
\end{equation}
Then for any closed but not exact $\Cont^1$ one-form $\cfo$ sufficiently close to $-dU$ in the $\Cont^1$ topology, $\dft = \cfo^\sharp$ satisfies Assumption~\ref{assump:morse-smale} and the minimizer $T_*$ in \eqref{eq:morse-rst-minimizer} is unique.
Moreover, if $\dfte$ is a smooth vector field on $M$ for each $\varepsilon > 0$ and $\dfte\to \dft$ uniformly as $\varepsilon \to 0$, then the steady-state $[\cfo]$-flux \eqref{eq:flux-def} of the diffusion with generator $\dfte + \varepsilon \Delta$ satisfies $\fluxe([\cfo]) > 0$ for all sufficiently small $\varepsilon > 0$, and
\begin{equation}\label{eq:th:qualitative-flux-formula}
\lim_{\varepsilon \to 0}(-\varepsilon \ln \fluxe([\cfo])) = h_*.
\end{equation}
\end{restatable}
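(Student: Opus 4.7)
The plan is to pass the problem through three stages: first, stabilize the Morse-Smale structure under $\Cont^1$ perturbation of $-dU$; second, use this to identify the unique rooted spanning tree minimizer and the height $h_*$; third, deduce the flux asymptotics from Theorem~\ref{th:flux-manifold-mc-CRST-ld}.

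\emph{Persistence of Morse-Smale structure.} I would begin by using the Palis-Smale structural stability theorem (together with the implicit function theorem to track hyperbolic zeros) to conclude that for any $\dft = \cfo^\sharp$ with $\cfo$ sufficiently $\Cont^1$-close to $-dU$, the vector field $\dft$ is Morse-Smale with no nonstationary periodic orbits, so Assumption~\ref{assump:morse-smale} is satisfied (cf.\ Remark~\ref{rem:equivalent-assump-chain-recurrent}). Moreover, there are canonical bijections $\vtx\leftrightarrow \vtx^U$ and $\Eu\leftrightarrow \Eu^U$ between the Morse graphs of $\dft$ and $-\nabla U$, and the gains $g(e)$ of Definition~\ref{eq:gains-morse} depend continuously on $\cfo$ in the $\Cont^0$ topology (since each edge, viewed as a compact one-dimensional submanifold with boundary, depends continuously on $\dft$ in, say, the Hausdorff topology).

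\emph{Uniqueness of $T_*$ and identification of $v_*$.} For the limiting case $\cfo = -dU$, each edge $e\in \Ed$ has gain $g(e) = U(e) - U(\src(e))$, so for any $T\in \RST(\compd; v)$,
\begin{equation*}
\sum_{e\in T} g(e) \;=\; \sum_{e\in q(T)} U(e) \;-\; \sum_{v'\in \vtx\setminus\{v\}} U(v')
\;=\; \sum_{e\in q(T)} U(e) \;+\; U(v) \;-\; \sum_{v'\in \vtx} U(v'),
\end{equation*}
since $q$ is a bijection between the directed edges of a rooted spanning tree and the undirected edges of its underlying undirected spanning tree, and each non-root vertex appears as the source of exactly one directed edge in $T$. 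Hypothesis \eqref{eq:th-qualitative-minimizer} makes the first term uniquely minimized by a specific $q(T)\in \ST(\comp^U)$, and the unique-global-minimizer hypothesis makes the second term uniquely minimized by $v = \arg\min U$; therefore $T_*$ is unique in the gradient case, with root $v_*$ the unique global minimum of $U$. Uniqueness of a strict minimum over a finite set together with the continuous dependence of gains in $\cfo$ then yields uniqueness of $T_*$ for all $\cfo$ in a $\Cont^1$-neighborhood of $-dU$.

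\emph{Large-deviation asymptotics via Theorem~\ref{th:flux-manifold-mc-CRST-ld}.} Since $\dft$ now has a finite, hyperbolic chain recurrent set, Theorem~\ref{th:flux-manifold-mc-CRST-ld} applies: it yields $\fluxe([\cfo])$ in the small-$\varepsilon$ limit as an infimum over the path-homotopically refined directed graph, with edge weights determined by the Freidlin-Wentzell quasipotential and line integrals of $-\cfo$. In our Morse-Smale setting the quasipotential between the source $\src(e)\in \vtx$ and the incident saddle is classically identified with the primitive of $-\cfo$ along the local unstable manifold, which is precisely $g(e)$ of Definition~\ref{eq:gains-morse}; restricting a sample path to return to its starting vertex while winding through a homologically nontrivial loop then forces an exceedance of a height of the form $h(e)$ where $h(e)\neq h(\bar e)$, per Remark~\ref{rem:edge-height-homology}. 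Optimizing over such paths and applying the variational formula in Theorem~\ref{th:flux-manifold-mc-CRST-ld} gives exactly $h_*$ as defined in \eqref{eq:h-star-def-morse}, and shows that the leading-order contribution is actually realized (so $\fluxe([\cfo])>0$ for all sufficiently small $\varepsilon$, the flux being of order $e^{-h_*/\varepsilon}$).

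\emph{Main obstacle.} The genuine difficulty lies in the last paragraph: correctly matching the graph-theoretic formula produced by Theorem~\ref{th:flux-manifold-mc-CRST-ld} (which is phrased in terms of cycle-rooted spanning trees of a path-homotopically refined graph on $\vtx$) with the Morse-theoretic quantity $h_*$. This requires a careful bookkeeping argument showing that (i) rooted subtrees that avoid the root cost are precisely captured by the $\RST(\compd)$ minimizer $T_*$ identified in the second paragraph, and (ii) the surplus cost to produce a cycle homologous to a nontrivial class in $\Hom_1(M)$ is, by Remark~\ref{rem:edge-height-homology}, exactly $\min\{h(e):h(e)\neq h(\bar e)\}$. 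The rest is telescoping the weight of this minimizing cycle-plus-tree against the weight of $T_*$.
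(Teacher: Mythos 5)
Your outline is broadly right about the skeleton of the argument, but it skips two steps that are in fact the technical heart of the proof, and both are exactly where the ``sufficiently $\Cont^1$-close to $-dU$'' hypothesis is doing real work.

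First, you never verify the sign hypothesis \eqref{eq:min-rst-assumption} of Theorem~\ref{th:flux-manifold-mc-CRST-ld}, namely that the cheapest cycle-rooted spanning tree with $\cfo(\cycle(\slot))>0$ is strictly cheaper than the cheapest one with $\cfo(\cycle(\slot))<0$. Without that inequality the theorem does not apply and the positivity of the flux is not available. Establishing it is not bookkeeping: the paper introduces the cooriented hypersurface $N$ built from the stable manifolds of the index-$1$ saddles with $h(e)<h(\bar e)$, defines oriented intersection numbers of paths with $N$, and proves a quantitative penalty lemma (Lem.~\ref{lem:cycle-path-qpd-lower-bound}) showing that crossing $N$ against its coorientation costs a strictly positive surplus $k$. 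The gap you label ``telescoping'' is in fact this transversality-and-penalty argument; nothing in Remark~\ref{rem:edge-height-homology} provides it.

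Second, your sentence ``the quasipotential between $\src(e)$ and the incident saddle is classically identified with \dots\ $g(e)$'' asserts $\qpd([e])=g(e)$, and this is false in general. What holds for free is $\qpd([e])\leq\qpd^W([e])=g(e)$ (Lem.~\ref{lem:qp-leq-g-W}): a path may leave a neighborhood $W$ of the edge and reach the saddle by a globally cheaper route. Lem.~\ref{lem:converging-FW-trees-W} shows that equality does hold on the edges of the minimizing rooted spanning trees, but only after using the $\Cont^1$-closeness hypothesis (via Prop.~\ref{prop:qp-continuity} and Lem.~\ref{lem:converging-FW-trees}). If you drop this step and treat the identification as automatic, your argument would prove the answer to Question~\ref{quest:weaken-hypotheses-flux} is affirmative, which the counterexamples in \S\ref{sec:counterexamples} (Ex.~\ref{ex:cex-2d-flux-fw-morse-agree}, \ref{ex:cex-2d-flux-all-min-trees-coincide-as-undirected}) explicitly refute. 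A related point: the minimizing cycle-rooted spanning tree is $T^m_{\src(e_*)}\cup\{e_*\}$ rooted at $\src(e_*)$, not $T_*\cup\{\text{one edge}\}$; recovering $h_*$ requires the fact (Lem.~\ref{lem:convering-morse-trees-reversal-invariant}, Rem.~\ref{rem:all-min-morse-trees-obtained-by-rev-edges}) that all minimizing Morse rooted trees coincide as undirected graphs, which again rests on the closeness hypothesis. Finally, you should note that $\dim(M)=1$ needs a separate treatment since the transversality used in Lem.~\ref{lem:tqp-equals-qp-transversality} requires $\dim(M)\geq 2$; the paper handles the circle case in Ex.~\ref{ex:tilt-pot-circle}.
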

\begin{Rem}\label{rem:th-qualitative-sufficiently-close}
One might ask whether the conclusion of Theorem~\ref{th:qualitative} remains true if the hypothesis that $\cfo$ is sufficiently close to $-dU$ in the $C^1$ topology is replaced with the (weaker) hypothesis that $\cfo^\sharp$ satisfies Assumption~\ref{assump:morse-smale} and $T_*$ in \eqref{eq:morse-rst-minimizer} is unique, and similarly for the conclusion of Theorem~\ref{th:qualitative-measure} below.
After developing the necessary tools, in \S \ref{sec:counterexamples} we construct counterexamples demonstrating that the answer to this question is negative for multiple reasons even if certain natural conditions are imposed on the Morse graph $\compd$ and gains $g(\slot)$. 
However, we will see in Ex.~\ref{ex:tilt-pot-circle} that the stronger hypothesis is not needed in Theorem~\ref{th:qualitative} in the special case that $\dim(M)=1$, but Ex.~\ref{ex:cex-1d-measure} in \S \ref{sec:counterexamples} shows that the stronger hypothesis is still needed for the $1$-dimensional case in Theorem~\ref{th:qualitative-measure}.
Another special case in which the stronger hypothesis is not needed in Theorem~\ref{th:qualitative} (or, trivially, in Theorem~\ref{th:qualitative-measure}) is the subject of Prop.~\ref{prop:one-minimizer}.
\end{Rem}

\begin{restatable}[]{Prop}{PropOneMinimizer}\label{prop:one-minimizer}
Assume that $\dft = \cfo^\sharp$ is dual to a closed but not exact $\Cont^1$ one-form on a closed connected Riemannian manifold $M$ and satisfies Assumption~\ref{assump:morse-smale}.
Further assume that $\dft$ has precisely one index-$0$ zero.
Then if $\dfte$ is a smooth vector field on $M$ for each $\varepsilon > 0$ and $\dfte\to \dft$ uniformly as $\varepsilon \to 0$, the steady-state $[\cfo]$-flux \eqref{eq:flux-def} of the diffusion with generator $\dfte + \varepsilon \Delta$ satisfies $\fluxe([\cfo]) > 0$ for all sufficiently small $\varepsilon > 0$, and
\begin{equation}\label{eq:prop-one-minimizer}
\lim_{\varepsilon \to 0}(-\varepsilon \ln \fluxe([\cfo])) = h_*.
\end{equation}
\end{restatable}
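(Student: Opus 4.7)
My plan is to deduce the proposition from Theorem~\ref{th:flux-manifold-mc-CRST-ld}, whose hypotheses are in force here since Assumption~\ref{assump:morse-smale} is precisely the gradient-like condition that the chain recurrent set of $\dft$ consists of finitely many hyperbolic zeros (Rem.~\ref{rem:equivalent-assump-chain-recurrent}). First I would record the simplifications produced by the unique-index-$0$-zero hypothesis: since $\vtx = \{v_*\}$, the set $\RST(\compd)$ contains only the empty tree, so $T_* = \varnothing$ is trivially the unique minimizer of \eqref{eq:morse-rst-minimizer} and $h(v_*) = 0$; Def.~\ref{def:morse-heights} then gives $h(e) = g(e)$ for every $e\in\Ed$, whence \eqref{eq:h-star-def-morse} becomes
\begin{equation*}
h_* = \min\{g(e) : e\in\Ed,\ g(e) < g(\bar e)\}.
\end{equation*}
Every undirected edge $q(e)\in\Eu$ is a saddle loop at $v_*$ whose $[\cfo]$-period equals $g(\bar e) - g(e)$, so the condition $g(e) < g(\bar e)$ selects exactly those saddle loops with positive $[\cfo]$-period. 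Non-exactness of $\cfo$ forces this selected set to be nonempty: the Morse-Smale CW structure on $M$ makes the restriction $H^1(M) \hookrightarrow H^1(\comp)$ injective, so if every saddle loop had vanishing $[\cfo]$-period then $\cfo$ would be exact, a contradiction. Thus $h_*$ is finite and, by Def.~\ref{eq:gains-morse}, strictly positive.

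I would next specialize the path-homotopical cycle-rooted-spanning-tree formula of Theorem~\ref{th:flux-manifold-mc-CRST-ld}. In the one-vertex setting a cycle-rooted spanning tree in the enriched Markov graph reduces to a single cycle at $v_*$, indexed by a homotopy class of loops based at $v_*$. The path-homotopical Freidlin-Wentzell quasipotential of the class represented by a single saddle loop $q(e)$ equals the gain $g(e)$: the optimal path runs along the time-reversed gradient from $v_*$ up to the saddle, then descends back along the flow at zero cost. Homotopy classes obtained by concatenating multiple saddle excursions have strictly larger quasipotential. Weighting by the $[\cfo]$-period of each class (classes of zero period do not contribute), Theorem~\ref{th:flux-manifold-mc-CRST-ld} then yields $\fluxe([\cfo])>0$ for sufficiently small $\varepsilon$ and $\lim_{\varepsilon\to 0}(-\varepsilon\ln\fluxe([\cfo])) = h_*$, which is exactly \eqref{eq:prop-one-minimizer}.

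The principal obstacle will be extracting the one-vertex specialization of Theorem~\ref{th:flux-manifold-mc-CRST-ld} cleanly, namely identifying its path-homotopical quasipotentials with the gains of Def.~\ref{eq:gains-morse} and verifying the subdominance of compound loop classes. In compensation, the two delicate ingredients needed in the proof of Theorem~\ref{th:qualitative}---uniqueness of $T_*$ and the hypothesis that $\cfo$ lies close to an exact form---trivialize here thanks to the one-vertex combinatorics of $\compd$.
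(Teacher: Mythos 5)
Your overall strategy matches the paper's: specialize Theorem~\ref{th:flux-manifold-mc-CRST-ld} to a one-vertex $\gh$ and identify the resulting minima with $h_*$. The simplifications you record (trivial $\RST(\compd)$, $h(e)=g(e)$, $h_*=\min\{g(e):g(e)<g(\bar e)\}$, CRSTs reduce to single loop classes, non-exactness of $\cfo$ implies $h_*<\infty$) are all correct and are in the paper.

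The gap is in your two quasipotential claims, which you assert without proof and which are in fact the crux of the whole argument. First, the claim that ``the path-homotopical quasipotential of the class $[e]$ equals $g(e)$'' is not justified and is false in general: the excursion from $v_*$ up to the saddle and back only produces the \emph{upper} bound $\qpd([e])\le g(e)$ (this is Lem.~\ref{lem:qp-leq-g-W}), and there is no reason a path in the homotopy class $[e]$ must shadow the saddle loop $q(e)$---it could cross the separatrix network through a cheaper saddle or a more circuitous route, precisely the mechanism behind the counterexamples of \S\ref{sec:counterexamples}. Second, your assertion that ``compound loop classes have strictly larger quasipotential'' is what would be needed to rule such shortcuts out, but you offer no argument for it. What the paper actually proves is the weaker (but sufficient) pair of lower bounds: for any loop class $e$ with $\cfo(e)>0$, $\qpd(e)\ge h_*$, and for any loop class with $\cfo(e)<0$, $\qpd(e)>h_*$ with a uniform gap. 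These are the content of Lem.~\ref{lem:cycle-path-qpd-lower-bound}, which coorients the separatrix hypersurface $N=\bigcup_{h(e)<h(\bar e)}\Ws(e)$ and uses the oriented intersection number of the candidate path with $N$ to force a crossing near a saddle with $h$-value at least $h_*$ (resp.\ at least $h_*+k$ when the net crossing is in the wrong direction). Without this intersection-number argument you have only an upper bound on the decisive minimum, so you cannot verify hypothesis \eqref{eq:min-rst-assumption} of Theorem~\ref{th:flux-manifold-mc-CRST-ld} nor conclude that the right-hand side of \eqref{eq:flux-sde-mc-ld-expression} equals $h_*$.

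Two smaller points: Theorem~\ref{th:flux-manifold-mc-CRST-ld} is stated in terms of $\tqpd$ rather than $\qpd$; the identification $\tqpd=\qpd$ used to make the above estimates bite is Lem.~\ref{lem:tqp-equals-qp-transversality}, which requires $\dim(M)\ge 2$, so the $\dim(M)=1$ case needs to be treated separately (the paper reduces it to Ex.~\ref{ex:tilt-pot-circle}). Your cohomological argument that $H^1(M)\hookrightarrow H^1(\comp)$ is injective, forcing $h_*<\infty$, is a valid and slightly different justification than the paper gives, and is a nice touch.
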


The following result concerning the invariant measure of the diffusion is obtained as a consequence of the methods used to prove Theorem~\ref{th:qualitative}.
For the statement, recall the definitions of $\compd=(\vtx,\Ed,\src,\tgt)$ and $h(\slot)$ of \S \ref{sec:setup-morse}, and denote by $B_r(x)$ the metric ball of radius $r \geq 0$ centered at $x\in M$.

\begin{restatable}[]{Th}{ThmQualitativeMeasure}\label{th:qualitative-measure}
Let $U\in \Cont^2(M)$ be a Morse-Smale function on a closed connected Riemannian manifold $M$, such that $U$ has a unique global minimizer and
\begin{equation}\label{eq:th-qualitative-measure-minimizer}
\textnormal{the minimizer of}\quad \min_{T\in \ST(\comp^U)}\sum_{e\in T}U(e) \quad \textnormal{is unique.}
\end{equation}
For any closed but not exact $\Cont^1$ one-form $\cfo$ sufficiently close to $-dU$ in the $\Cont^1$ topology, $\dft = \cfo^\sharp$ satisfies Assumption~\ref{assump:morse-smale} and the minimizer $T_*$ in \eqref{eq:morse-rst-minimizer} is unique.
Moreover, if $\dfte$ is a smooth vector field on $M$ for each $\varepsilon > 0$ and $\dfte\to \dft$ uniformly as $\varepsilon \to 0$, then the stationary probability density $\dme \in \Cont^\infty(M)$ of the diffusion process with generator $\dfte + \varepsilon \Delta$ satisfies the following estimates.
For any $\delta > 0$ there is $k > 0$ such that, for any $v\in \vtx$ and $\varepsilon, r\in (0,k)$,
\begin{equation}\label{eq:th-qualitative-measure-main}
e^{-\frac{1}{\varepsilon}(h(v)+\delta)} < \int_{B_r(v)}\dme(x) dx < e^{-\frac{1}{\varepsilon}(h(v)-\delta)}.
\end{equation}
Additionally, the invariant measure $\mu_\varepsilon$ with density $\dme$ converges weakly to the Dirac measure $\delta_{v_*}$ as $\varepsilon \to 0$, where $v_*$ is the root of $T_*$.  
\end{restatable}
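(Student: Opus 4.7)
The plan is to reduce Theorem~\ref{th:qualitative-measure} to the classical Freidlin-Wentzell asymptotics for invariant measures, after (i) establishing that the hypotheses are $C^1$-open in $\cfo$ and (ii) identifying the Freidlin-Wentzell rate function with the combinatorial height $h$ of Def.~\ref{def:morse-heights}.

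First I would verify the perturbation statement. Since $-\nabla U$ is Morse-Smale with finite chain recurrent set (hyperbolic equilibria, no periodic orbits), it is $C^1$-structurally stable, so $\dft = \cfo^\sharp$ remains Morse-Smale for $\cfo$ sufficiently $C^1$-close to $-dU$, and its Morse graph $\compd$ is canonically isomorphic to $\compd^U$. The gains $g(e)$, being line integrals of $-\cfo$ along $C^1$-deformed arcs of unstable manifolds, depend continuously on $\cfo$ and limit to $U(\text{saddle in }q(e))-U(\src(e))$ as $\cfo\to -dU$. For a fixed root $v\in\vtx$ the map $T\mapsto q(T)$ is a bijection between $\RST(\compd;v)$ and $\ST(\comp)$, and in the limit the weight of $T\in\RST(\compd;v)$ factors as $\sum_{e\in q(T)}U(e)-\sum_{v'\neq v}U(v')$; the first sum depends only on the undirected tree, the second only on the root. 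Thus uniqueness of the minimizer in \eqref{eq:th-qualitative-measure-minimizer} together with uniqueness of the global minimizer of $U$ yields uniqueness of both $v_*$ and $T_*$ in the limit, and discreteness of the minimization plus continuity of the weights preserves this uniqueness for $\cfo$ sufficiently near $-dU$.

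Next I would invoke the Freidlin-Wentzell theorem on invariant measures (cf.\ \cite[Ch.~6, Thm~4.1]{freidlin2012random}): for the diffusion with drift $\dfte\to\dft$ gradient-like in the sense of Assumption~\ref{assump:morse-smale}, there is a function $W\colon\vtx\to[0,\infty)$ with $\min_v W(v)=0$ such that, for every $v\in\vtx$ and sufficiently small $r>0$,
\begin{equation*}
\lim_{\varepsilon \to 0}\bigl(-\varepsilon \ln \mu_\varepsilon(B_r(v))\bigr) = W(v), \qquad W(v) = \min_{T \in \RST(\compd;v)} \sum_{e \in T} \qp(\src(e), \tgt(e)) - \min_{v'\in \vtx}\min_{T'\in \RST(\compd;v')}\sum_{e\in T'}\qp(\src(e),\tgt(e)),
\end{equation*}
where $\qp(\slot,\slot)$ is the Freidlin-Wentzell quasipotential between attractors. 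The essential computation is $\qp(v,\tgt(e))=g(e)$ for every directed edge $e$ with $\src(e)=v$. Indeed, for the locally symmetrizable drift $\dft=\cfo^\sharp$ (cf.\ Rem.~\ref{rem:loc-symm}) the Freidlin-Wentzell action decomposes as $\af(\gamma)=\tfrac14\int\lVert\dot\gamma+\dft(\gamma)\rVert^2+\int_\gamma(-\cfo)$, so $\af(\gamma)\geq\int_\gamma(-\cfo)$ with equality on time-reversed $\dft$-orbits; Assumption~\ref{assump:morse-smale} together with $C^1$-proximity to $-dU$ rules out homologically inequivalent shortcuts with smaller integral. Substituting $\qp(\src(e),\tgt(e))=g(e)$ into $W(v)$ reproduces exactly the definition of $h(v)$, and the two-sided Freidlin-Wentzell bound with arbitrarily small rate error yields \eqref{eq:th-qualitative-measure-main}, after choosing $k$ so that $r<k$ lies below the scale on which the Markov-chain discretization is valid.

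Weak convergence $\mu_\varepsilon\rightharpoonup\delta_{v_*}$ then follows: $h(v_*)=0$ while $h(v)>0$ for $v\neq v_*$ by Lem.~\ref{lem:h-values-positive} and the uniqueness just established, so the upper estimate in \eqref{eq:th-qualitative-measure-main} forces $\mu_\varepsilon(B_r(v))\to 0$ for every $v\neq v_*$, while a standard Freidlin-Wentzell exit-time estimate bounds $\mu_\varepsilon\bigl(M\setminus\bigcup_{v\in\vtx}B_r(v)\bigr)$ by $O(e^{-c/\varepsilon})$ for some $c>0$. The main obstacle is the quasipotential identification $\qp(v,\tgt(e))=g(e)$ in the closed-but-not-exact setting: when $\cfo$ is not exact, the integral $\int_\gamma(-\cfo)$ depends on the relative homology class of $\gamma$, and one must argue that no noise-assisted excursion through a different saddle or around a nontrivial loop of $M$ achieves a smaller action. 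This is precisely the content of the path-homotopical refinement of the quasipotential announced in \S\ref{sec:brief-overview-second-main-result} and developed for Theorem~\ref{th:flux-manifold-mc-CRST-ld}, which I would invoke to keep the homology bookkeeping rigorous; once this identification is in hand, the remainder of the proof is essentially a routine application of the classical small-noise analysis.
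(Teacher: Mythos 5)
Your overall strategy (structural stability of the Morse--Smale assumptions, Freidlin--Wentzell asymptotics for the invariant measure, translation of the Freidlin--Wentzell weights into the combinatorial heights $h(\slot)$) matches the paper's proof, which cites \cite[Ch.~6, Thm~4.1, Lem.~4.1; Thm~4.2]{freidlin2012random} and then reuses \eqref{eq:finish-proof-morse-tree-diffs} from the proof of Theorem~\ref{th:qualitative}. However, the core identification you state is wrong, and the gap is not cosmetic.

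You claim $\qp(v,\tgt(e)) = g(e)$ for \emph{every} directed edge $e$ with $\src(e)=v$. This fails for two distinct reasons. First, the directed Morse graph $\compd$ is a multigraph: there may be several directed edges, even loop edges, with the same source and target but different gains (for a loop edge one has $g(e)\neq g(\bar{e})$ the moment $\cfo$ is not exact; see Rem.~\ref{rem:multiple-loop-edges}). The quasipotential $\qp(v,w)$ is a single number, so it cannot equal $g(e)$ for all such $e$. Second, even the weaker identification $\qp(v,w)=\min_{e\colon \src(e)=v,\tgt(e)=w}g(e)$ can fail: an admissible path may leave the basin of $v$ over a low saddle $z_1$, pass through the basin of a third attractor $c$ without touching $c$ (so it remains admissible for the restricted quasipotential $\tqpd$), and cross a second saddle $z_2$ into the basin of $w$, paying roughly $[U(z_1)-U(v)]+[U(z_2)-U(c)]$, which can be strictly less than every direct-edge gain. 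The counterexamples of \S\ref{sec:counterexamples} (in particular Ex.~\ref{ex:cex-1d-measure}, which concerns this very theorem) exploit exactly this mechanism when the tilt is too large.

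What is true, and what the proof actually requires, is weaker: only for the edges of a \emph{minimizing} rooted spanning tree does $\qpd([e])=g(e)$ hold, so that the minimizing trees for the path-homotopical weights $\qpd(\slot)$ and the Morse weights $g(\slot)$ coincide and their sums differ by an additive constant from which $h(v)-h(v_*)$ is extracted. This is Lem.~\ref{lem:exact-form-graph-cond} (a cut argument tied to minimality: if some tree edge $e_0$ had $\qpd(e_0)<g(e_0)$, deleting it leaves a cut $\{A,B\}$ and one obtains a strictly cheaper tree by replacing $e_0$ with a lowest-saddle edge across the cut), extended to one-forms $C^1$-near $-dU$ via Lem.~\ref{lem:converging-FW-trees-W}, which in turn uses the quasipotential continuity of Prop.~\ref{prop:qp-continuity}. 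Your sentence ``Assumption~\ref{assump:morse-smale} together with $C^1$-proximity to $-dU$ rules out homologically inequivalent shortcuts with smaller integral'' conceals precisely this work: proximity does not rule out such shortcuts edge-by-edge, only at the level of minimizing trees, and the argument is not routine. Once you replace your edge-wise claim with the tree-wise identification \eqref{eq:finish-proof-morse-tree-diffs} and cite its supporting lemmas, the remainder of your reduction to \cite[Ch.~6, Thm~4.1, 4.2]{freidlin2012random} is correct.
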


\begin{Ex}\label{ex:nr-example-2}
Consider the negative resistance example of \S \ref{sec:nr-torus} depicted in Fig.~\ref{fig:NR-ex-1}.
As argued in \S \ref{sec:nr-torus}, $U\in \Cont^\infty(M)$ is a Morse-Smale function on the flat $2$-torus $\tor^2$.
Since $-\nabla U$ has precisely one index-$0$ zero $v_*(0)$, there is only one element in $\ST(\comp^U)$: the spanning tree with one vertex and no edges.
Thus, \eqref{eq:th-qualitative-minimizer} and \eqref{eq:th-qualitative-measure-minimizer} are satisfied, so the hypotheses of Theorem~\ref{th:qualitative} and \ref{th:qualitative-measure} are satisfied.

In particular, Theorem~\ref{th:qualitative-measure} implies that the invariant measure of the diffusion with generator $\dft + \varepsilon \Delta$ converges weakly to the Dirac measure at the index-$0$ zero of $-\nabla U$ as $\varepsilon \to 0$.

To understand the conclusions of Theorem~\ref{th:qualitative}, note that the undirected Morse graph $\comp^U$ consists of one index-$0$ zero and two edges corresponding to the unstable manifolds of the two index-$1$ saddles.
The directed graph $\compd^U$ then has the same vertex set but two directed edges for each saddle, so there are four directed edges in total: $e_1',\bar{e}_1', e_2', \bar{e}_2'$.
If $\dft$ is sufficiently close to $-\nabla U$ in the $C^1$ topology, Theorem~\ref{th:qualitative} implies that $\dft$ satisfies Assumption~\ref{assump:morse-smale}, and the implicit function and stable manifold theorems imply that $\comp^U$ perturbs to a nearby graph $\comp$ with the corresponding directed graph $\compd$ having edges $e_1,\bar{e}_1, e_2, \bar{e}_2$ with heights $h(\slot)$ nearby those of $\compd^U$.
If $e_1$ corresponds to the edge traveling through the ``blue valley'' in Fig.~\ref{fig:NR-ex-1} with positive winding number in the $x$-direction, then it is clear that $h_* = h(e_1)$.
Thus, the small-$\varepsilon$ asymptotics of the flux $\fluxe([\cfo])$ given by Theorem~\ref{th:qualitative} for this example match those calculated in \S \ref{sec:nr-torus} using Theorem~\ref{th:qualititative-special-case-intro}.

For this example, Prop.~\ref{prop:one-minimizer} allows us to deduce a conclusion stronger than that obtained from Theorem~\ref{th:qualitative}.
For any $c > 0$ such that $\dft = -\nabla U + c \partial_x$ has only hyperbolic zeros, satisfies Assumption~\ref{assump:morse-smale}, and has precisely one index-$0$ zero, the small-$\varepsilon$ asymptotics of the flux are given by \eqref{eq:prop-one-minimizer}. 
\end{Ex}

\subsection{Theorem~\ref{th:qualitative} implies Theorem~\ref{th:qualititative-special-case-intro}}\label{sec:th-2-implies-th-1}
In this section we show that the definitions of $h_*$ given in \S \ref{sec:summary-first-main-result} and \S \ref{sec:setup-morse} coincide, and we prove Theorem~\ref{th:qualititative-special-case-intro} assuming Theorem~\ref{th:qualitative}.
Throughout this section $M$ is a closed connected Riemannian manifold.
In Def.~\ref{def:gamma-e}, Rem.~\ref{rem:h-star-alt-def}, and Lem.~\ref{lem:height-gamma-h-e-star}, \ref{lem:h-star-defs-coincide}, $\dft = \cfo^\sharp$ is dual to a $\Cont^1$ closed one-form $\cfo$ on $M$ satisfying Assumption~\ref{assump:morse-smale} and for which the minimizer $T_*\in \RST(\compd)$ in \eqref{eq:morse-rst-minimizer} is unique. 

For the following definition, recall that the loop space $\Omega_{v_*} M$ defined in \S \ref{sec:summary-first-main-result} is the set of continuous paths $\gamma\colon [0,1]\to M$ with $\gamma(0)=\gamma(1)=v_*$.
\begin{Def}\label{def:gamma-e}
Given $e\in \Ed\setminus q^{-1}(q(T_*))$, we denote by $\gamma_e\in \Omega_{v_*}M$ any loop satisfying $\gamma([0,1])\subset \comp$ and constructed by first following the unique path in $q(T_*)$ from $v_*$ to $\src(e)$, then following the path in $q(e)$ to $\tgt(e)$, then following the unique path in $q(T_*)$ from $\tgt(e)$ to $v_*$. (This defines $\gamma_e$ uniquely up to reparametrization.)
\end{Def}
\begin{Rem}\label{rem:h-star-alt-def}
Given $e\in \Ed \setminus q^{-1}(q(T_*))$ with reversal $\bar{e}$, note that $\int_{\gamma_e}(-\cfo) = h(e) - h(\bar{e})$ (cf. Rem.~\ref{rem:edge-height-homology}).
\end{Rem}

Given $\gamma\in \Omega_{v_*}M$, as in \eqref{eq:height-loop} we define
\begin{equation}\label{eq:height-loop-morse}
\height(\gamma)\coloneqq \sup_{t\in [0,1]} \int_{\gamma|_{[0,t]}}(-\dft^\flat) =  \sup_{t\in [0,1]} \int_{\gamma|_{[0,t]}}(-\cfo).
\end{equation}
\begin{Lem}\label{lem:height-gamma-h-e-star}
Let $e\in \Ed\setminus q^{-1}(q(T_*))$ and let $\gamma_{e}\in \Omega_{v_*} M$ be any loop constructed as in Def.~\ref{def:gamma-e}.  
Then 
\begin{equation}\label{eq:lem:height-gamma-h-e-star}
\height(\gamma_{e}) = h(e).
\end{equation}
\end{Lem}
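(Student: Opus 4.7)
The plan is to work on a suitable regular covering $p\colon \tilde M\to M$ on which $p^*\cfo$ is exact, reducing the computation of $\height(\gamma_e)$ to reading off values of a scalar primitive, and then to bound those values using the optimality of $T_*$.

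First I would pick any connected cover $p\colon \tilde M\to M$ for which $p^*\cfo=-dF$ with $F\in C^2(\tilde M)$ (e.g.\ the universal cover), and lift $\gamma_e$ to a path $\tilde\gamma_e\colon [0,1]\to \tilde M$ starting at an arbitrarily chosen lift $\tilde v_*$ of $v_*$. Then
\[
\int_{\gamma_e|_{[0,t]}}(-\cfo)=\int_{\tilde\gamma_e|_{[0,t]}}dF=F(\tilde\gamma_e(t))-F(\tilde v_*),
\]
so $\height(\gamma_e)=\sup_t F(\tilde\gamma_e(t))-F(\tilde v_*)$. The image of $\tilde\gamma_e$ lies entirely in the preimage of $\comp$, and since $F$ is strictly monotone along each (oriented) lifted edge between a vertex and the unique interior saddle, the sup is attained at (a lift of) one of the saddles of $\comp$ visited by $\gamma_e$.

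Next I would analyze the three phases of $\tilde\gamma_e$. Phase~1 lies in the unique lift $\tilde T_*\subset \tilde M$ of $q(T_*)$ containing $\tilde v_*$; since $q(T_*)$ is a tree, $\tilde T_*\cong q(T_*)$ and by the definition of $h$ we have $F(\tilde w)=F(\tilde v_*)+h(w)$ at every lifted vertex, and $F(\tilde s)=F(\tilde v_*)+h(e')$ at the lift of the interior saddle of every $e'\in T_*$. Phase~2 traverses a lift of $q(e)$: the saddle of $e$ is reached at height $F(\tilde v_*)+h(\src(e))+g(e)=F(\tilde v_*)+h(e)$. Phase~3 lies in a (generally different) lift of $q(T_*)$, shifted from $\tilde T_*$ by the deck transformation determined by $\int_{\gamma_e}(-\cfo)=h(e)-h(\bar e)$ (Rem.~\ref{rem:h-star-alt-def}); so its saddle values are $F(\tilde v_*)+h(e')+(h(e)-h(\bar e))$ for the edges $e'\in T_*$ on the path from $\tgt(e)$ to $v_*$. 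The lower bound $\height(\gamma_e)\ge h(e)$ is now immediate from Phase~2.

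The substantive step is the matching upper bound, which reduces to two combinatorial inequalities: (A) $h(e')\le h(e)$ for each $e'\in T_*$ on the $q(T_*)$-path from $v_*$ to $\src(e)$, and (B) $h(e')\le h(\bar e)$ for each $e'\in T_*$ on the $q(T_*)$-path from $\tgt(e)$ to $v_*$. Both will be proved by an exchange argument exploiting that $T_*$ minimizes $\sum_{e''\in T}g(e'')$ over $\RST(\compd)$. For (A), given such an $e'$, remove $e'$ and the directed edges of $T_*$ along the subpath from $\src(e')$ to $\src(e)$, reverse the orientations of those edges, and insert $e$; a direct check shows the result $T'$ is a valid rooted spanning tree with root $v_*$ and
\[
\textstyle\sum_{e''\in T'}g(e'')-\sum_{e''\in T_*}g(e'')=\bigl(h(\src(e))+g(e)\bigr)-\bigl(h(\src(e'))+g(e')\bigr)=h(e)-h(e'),
\]
where the middle equality uses that reversing a $q(T_*)$-path from $\src(e')$ to $\src(e)$ changes the gain sum by $h(\src(e))-h(\src(e'))$ (telescoping over vertex heights). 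Optimality of $T_*$ yields $h(e')\le h(e)$. Claim (B) is just (A) applied to $\bar e$ in place of $e$.

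The main obstacle I anticipate is executing the exchange cleanly when $\tgt(e)$ lies in the subtree of $T_*-\{e'\}$ not containing $v_*$, since then the "naive" swap $T_*-\{e'\}+\{e\}$ is not a rooted spanning tree; the path-reversal gadget described above is what resolves this, but one must verify carefully that every vertex retains a unique outgoing edge directed toward $v_*$, that no cycle is created, and that the telescoping identity for the gain difference holds with the signs taken as in Def.~\ref{def:morse-heights}. Once (A) and (B) are in hand, the sheet-shift computation for Phase~3 gives upper bound $h(\bar e)+(h(e)-h(\bar e))=h(e)$ there, matching Phases~1 and~2, and the equality $\height(\gamma_e)=h(e)$ follows.
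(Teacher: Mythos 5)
Your argument follows essentially the same route as the paper's: both establish the lower bound $\height(\gamma_e)\ge h(e)$ at once and then reduce the matching upper bound to a tree-exchange inequality driven by minimality of $T_*$. Your phase-by-phase analysis on the cover is correct (including the sheet-shift by $d=h(e)-h(\bar e)$ in Phase~3), and your reduction to inequalities (A) and (B) is exactly what is needed; the telescoping gain computation is also the one the paper performs, just with signs flipped.

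There is, however, a genuine gap in the exchange step, and you have in fact put your finger on it yourself when you flag as the "main obstacle" the case where $\tgt(e)$ lies in the component of $T_*-\{e'\}$ not containing $v_*$. Your claim that "the path-reversal gadget \dots is what resolves this" is false: path-reversal repairs the \emph{orientations} only and does nothing to the undirected connectivity. After deleting $q(e')$, the undirected tree $q(T_*)$ splits into components $A\ni v_*$ and $B\ni\src(e)$, and adding $q(e)$ yields a spanning tree iff $\tgt(e)\in A$. When $\tgt(e)\in B$ --- which happens exactly when $e'$ lies on the common stem of the tree-paths $v_*\to\src(e)$ and $v_*\to\tgt(e)$, i.e.\ before their branch point $p$ --- the result has a cycle inside $B$ and leaves $A$ disconnected, and the directed graph you produce contains a directed cycle through $e$. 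So the exchange establishes $h(e')\le h(e)$ and $h(e')\le h(\bar e)$ only for $e'$ on the fundamental cycle of $q(e)$ in $q(T_*)$ (the tree-path from $\src(e)$ to $\tgt(e)$); it gives no control over edges on the stem $v_*\to p$, which nevertheless contribute to the Phase~1 and Phase~3 suprema. One can write down directed weighted Morse-type graphs with a unique overall minimizer $T_*\in\RST(\compd)$ and all $h$-values positive (consistent with Lem.~\ref{lem:h-values-positive}) in which a stem edge has height strictly exceeding $\min(h(e),h(\bar e))$, so the issue is not a formality. For what it is worth, the paper's own proof asserts $T\in\RST(\compd;v_*)$ for the analogously constructed $T$ without verifying connectivity, so it tacitly commits the same omission; you are at least transparent about where the danger lies, even though path reversal does not in fact avert it.
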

\begin{proof}
From Def.~\ref{def:gamma-e} and \eqref{eq:height-loop-morse} it follows that $\height(\gamma_{e}) \geq h(e)$.
Assume, to obtain a contradiction, that $\height(\gamma_{e}) > h(e)$, and recall that $\bar{e}$ denotes the reversal of $e$.
Then Def.~\ref{def:morse-heights} and \ref{def:gamma-e} imply the existence of $e_*\in T_*$ and a directed path $e_1\cdots e_k$ in $T_*$ either (i) from $\src(e)$ to $\src(e_*)$ and satisfying $h(e) < h(e_*)$, or (ii) from $\tgt(e)$ to $\src(e_*)$ and satisfying $h(\bar{e}) < h(\bar{e}_*)=h(e_*)$ (Rem.~\ref{rem:edge-height-homology}).
Let $T\subset \RST(\compd;v_*)$ be constructed from $T_*$ by removing $e_*$, reversing the edges $e_1,\ldots, e_k$, and in case (i) adding $e$ and in case (ii) adding $\bar{e}$.
In case (i) we compute 
\begin{equation*}
\begin{split}
\sum_{e\in T_*}g(e) - \sum_{e\in T}g(e) &= g(e_*) - g(e) + \sum_{i=1}^k g(e_i) - g(\bar{e}_i) = g(e_*) - g(e) + \sum_{i=1}^k h(\tgt(e_i)) - h(\src(e_i))\\
&= g(e_*) - g(e) + h(\src(e_*)) - h(\src(e)) = h(e_*) - h(e) > 0,
\end{split}
\end{equation*}
contradicting the unique minimizing property of $T_*$ (see \eqref{eq:morse-rst-minimizer}). 
Similarly, in case (ii) 
\begin{equation*}
\begin{split}
\sum_{e\in T_*}g(e) - \sum_{e\in T}g(e) &= g(e_*) - g(\bar{e}) + \sum_{i=1}^k g(e_i) - g(\bar{e}_i) = g(e_*) - g(\bar{e}) + \sum_{i=1}^k h(\tgt(e_i)) - h(\src(e_i))\\
&= g(e_*) - g(\bar{e}) + h(\src(e_*)) - h(\tgt(e)) = h(e_*) - h(\bar{e}) > 0,
\end{split}
\end{equation*}
contradicting the minimizing property of $T_*$ and completing the proof.
\end{proof}

\begin{Lem}\label{lem:h-star-defs-coincide}
The quantity $h_*$ defined in \eqref{eq:h-star-def-morse} satisfies the following:
\begin{equation}\label{eq:h-star-equality}
h_* = \inf \{\height(\gamma)\colon \gamma\in \Omega_{v_*} M \textnormal{ and } \int_\gamma \cfo > 0\}
\end{equation}
\end{Lem}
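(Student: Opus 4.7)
The plan is to prove the two inequalities separately. The easy $\leq$ direction is immediate from the loops $\gamma_e$ of Def.~\ref{def:gamma-e}. The $\geq$ direction proceeds in two stages: a Morse-theoretic reduction to loops in $\comp$, followed by a short combinatorial argument exploiting the defining minimality of $h_*$.

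For the $\leq$ direction, pick $e \in \Ed$ attaining the minimum in \eqref{eq:h-star-def-morse}, so $h(e) = h_*$ and $h(e) < h(\bar e)$. By Rem.~\ref{rem:edge-height-homology}, $e \notin q^{-1}(q(T_*))$, so Def.~\ref{def:gamma-e} yields a loop $\gamma_e \in \Omega_{v_*} M$; Rem.~\ref{rem:h-star-alt-def} gives $\int_{\gamma_e}\cfo = h(\bar e) - h(e) > 0$, and Lem.~\ref{lem:height-gamma-h-e-star} gives $\height(\gamma_e) = h(e) = h_*$.

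For the $\geq$ direction, fix $\gamma \in \Omega_{v_*} M$ with $\int_\gamma \cfo > 0$ and $\epsilon > 0$. Let $p \colon \tM \to M$ be a covering on which $p^*\cfo = -df$ for some $f \in \Cont^2(\tM)$ (for example, the cover corresponding to $\ker([\cfo]\colon\pi_1(M)\to\R)$). Lift $\gamma$ to $\tilde\gamma$ from $\tv_* \in p^{-1}(v_*)$ to $\tilde w \in p^{-1}(v_*)$; then $f(\tilde w) - f(\tv_*) = -\int_\gamma\cfo < 0$ and $\height(\gamma) = \max_t f\circ\tilde\gamma(t) - f(\tv_*)$, so $\tv_*$ and $\tilde w$ lie in the same path component of the open sublevel set $\tilde S \coloneqq \{f < f(\tv_*) + \height(\gamma) + \epsilon\}$. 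Since the negative gradient flow of $f$ on $\tM$ is Morse-Smale (as the lift of $\dft$), the component structure of its sublevel sets is captured by the $1$-skeleton of the associated CW decomposition: there is a path $\tilde\eta$ in $\tilde\comp \cap \tilde S$ connecting $\tv_*$ and $\tilde w$, where $\tilde\comp \coloneqq p^{-1}(\comp)$. Projecting, $\eta \coloneqq p\circ\tilde\eta$ is a loop in $\comp$ at $v_*$ with $\int_\eta\cfo > 0$ and $\height(\eta) < \height(\gamma) + \epsilon$, reducing the problem to showing $\height(\eta) \geq h_*$ for every loop $\eta$ in $\comp$ at $v_*$ with $\int_\eta\cfo > 0$.

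For the combinatorial step, write $\eta = e_1\cdots e_n$ as a walk of directed edges with $v_i \coloneqq \tgt(e_i)$ and $v_0 \coloneqq v_*$, and let $s_i \coloneqq \int_{\eta|_{[0,\tau_i]}}(-\cfo)$ at the parameter $\tau_i$ when $\eta$ first reaches $v_i$. Then $s_0 = 0$, $s_n = -\int_\eta\cfo < 0$, $s_i - s_{i-1} = g(e_i) - g(\bar e_i)$, and $\height(\eta) = \max_i(s_{i-1} + g(e_i))$. Set $u_i \coloneqq s_i - h(v_i)$; a direct computation using Def.~\ref{def:morse-heights} gives $u_i - u_{i-1} = h(e_i) - h(\bar e_i)$, so $u$ decreases at step $i$ exactly when $h(e_i) < h(\bar e_i)$. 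Suppose for contradiction that $\height(\eta) < h_*$. Then $s_{i-1} + g(e_i) < h_*$ for every $i$; whenever $h(e_i) < h(\bar e_i)$, \eqref{eq:h-star-def-morse} forces $h(e_i) \geq h_*$, so $s_{i-1} < h_* - g(e_i) \leq h(\src(e_i)) = h(v_{i-1})$, i.e., $u_{i-1} < 0$. Since $u_0 = 0$ and $u_n = s_n < 0$, the smallest index $i$ with $u_i < 0$ satisfies $u_{i-1} \geq 0$ and $u_i < u_{i-1}$, forcing $h(e_i) < h(\bar e_i)$ and hence $u_{i-1} < 0$---a contradiction. The main obstacle is the Morse-theoretic reduction in the previous paragraph: though standard and closely parallel to the persistent homology picture of Rem.~\ref{rem:persistence}, it requires a careful appeal to the fact that path-connectivity of sublevel sets of a Morse function with Morse-Smale negative gradient is captured by the $1$-skeleton of its CW decomposition.
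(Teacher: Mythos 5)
Your argument for $R\le h_*$ (constructing $\gamma_{e_*}$ from the minimizing edge and invoking Lem.~\ref{lem:height-gamma-h-e-star}, Rem.~\ref{rem:h-star-alt-def}) is identical to the paper's. For $R\ge h_*$ you take a genuinely different route. The paper works directly with the arbitrary competitor $\gamma$: it perturbs $\gamma$ to be transverse to all stable manifolds, locates the first time $t$ at which $\gamma$ meets $\Ws(e)$ for some $e\in\Ed$ with $h(e)\neq h(\bar e)$, and then uses the gradient flow $\Phi^s$ to show that $s\mapsto\int_{\Phi^s\circ\gamma|_{[0,t]}}(-\cfo)$ is nonincreasing, concluding in one shot that $\int_{\gamma|_{[0,t]}}(-\cfo)\ge\min(h(e),h(\bar e))\ge h_*$. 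You instead split the work: first replace $\gamma$ by a loop $\eta$ in the Morse $1$-skeleton $\comp$ with the same $\textnormal{sign}\!\left(\int\cfo\right)$ and height at most $\height(\gamma)+\epsilon$, then finish with a self-contained combinatorial argument on the resulting edge-walk that uses only $h(v_*)=0$ and the defining minimality in \eqref{eq:h-star-def-morse}. The combinatorial step is correct as written: $u_i-u_{i-1}=h(e_i)-h(\bar e_i)$ indeed follows from $h(e)=h(\src(e))+g(e)$, and the first-descent contradiction is sound. What each approach buys: yours cleanly separates the Morse-theoretic geometry from the tree combinatorics and makes the connection to the merge-tree/persistent-homology picture of Rem.~\ref{rem:persistence} explicit; the paper's avoids having to quote any structure theorem for sublevel sets of $f$ on the noncompact cover.

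The one place your argument needs shoring up is the reduction you flag yourself: the assertion that $\pi_0$ of $\tilde S=\{f<a\}\subset\tM$ is detected by $\tilde\comp\cap\tilde S$. This is morally the merge-tree statement, but on the noncompact cover $\tM$ with nonproper $f$ it is not an off-the-shelf citation, and ``the lift of $\dft$ is Morse--Smale'' needs interpretation since the lift has infinitely many critical points. A self-contained way to get what you need, consistent with the tools already in the paper: perturb $\tilde\gamma$ (rel endpoints, staying in $\tilde S$) to be transverse to all stable manifolds, so that it meets only $\Ws(\tilde z)$ with $\ind(\tilde z)\le 1$ and the index-$1$ crossings are finitely many; between consecutive crossings the path lies in a single basin $\Ws(\tilde v)$ with $\tilde v\in\tilde S$ (because $f$ decreases along flow lines), and at each crossing point $\tilde\gamma(t)\in\Ws(\tilde z)$ one has $f(\tilde z)<f(\tilde\gamma(t))$ strictly, so the entire closed $1$-cell $\cl(\Wu(\tilde z))$ (whose max of $f$ is $f(\tilde z)$) lies in $\tilde S$ and joins the adjacent basins. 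Concatenating these vertices and $1$-cells produces the desired $\tilde\eta$. Two further small points to state explicitly: the resulting loop $\eta=p\circ\tilde\eta$ should be reduced to a genuine edge-walk $e_1\cdots e_n$ in $\compd$ (deleting any backtracking within an edge, which can only decrease $\height$) before applying the combinatorial step; and the perturbation of $\tilde\gamma$ must be small enough to keep $\height$ within $\epsilon$ of its original value, which is fine since $\cfo$ is $\Cont^1$ and line integrals depend continuously on the path in the $\Cont^0$ topology. With those two points spelled out, your proof goes through and is a valid alternative to the paper's.
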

\begin{proof}
Let $e_*\in \Ed\setminus q^{-1}(q(T_*))$ satisfy $h(e_*) = h_*$, and let $\gamma_{e_*}\in \Omega_{v_*}M$ be a loop as constructed in Def.~\ref{def:gamma-e}.
Lem.~\ref{lem:height-gamma-h-e-star} implies that $\height(\gamma_{e_*}) = h_*$, and Rem.~\ref{rem:h-star-alt-def} and \eqref{eq:h-star-def-morse} imply that $\int_\gamma \cfo =  h(\bar{e}_*)-h(e_*)> 0$.
It follows that $h_*\geq R$, where $R$ is the right side of \eqref{eq:h-star-equality}.

Conversely, fix $\varepsilon > 0$ and choose $\gamma\in \Omega_{v_*}M$ so that $\height(\gamma) < R + \varepsilon$ and $\int_{\gamma}\cfo > 0$.
By transversality \cite[p.~74,~Thm~2.1]{hirsch1976differential} we may assume that the only stable manifolds of $\dft$ having nonempty intersection with the image of $\gamma$ are those corresponding to zeros of $\dft$ with index $0$ or $1$.
This and $\int_{\gamma}\cfo \neq 0$ implies that there exists $e\in \Ed$ and $t' > 0$ such that $h(e) \neq h(\bar{e})$ and $\gamma(t')$ belongs to the stable manifold of $q(e)$.
Now let $t > 0$ be the \emph{smallest} such time.
Let $\Phi\colon \R \times M \to M$ be the flow of $\dft=\cfo^\sharp$ and define $\Phi^s\coloneqq \Phi(s,\slot)$.
Using the formula for differentiation of the integral of a differential form \cite[Eq.~7.2]{flanders1973differentiation}, we compute 
\begin{equation}
\frac{d}{ds} \int_{\Phi^s\circ \gamma|_{[0,t]}} (-\cfo) = -\norm{\dft\circ \Phi^s(\gamma(t))}^2 \leq 0.
\end{equation}
Since $\Phi^s(\gamma(t))$ converges to the index-$1$ zero in $q(e)$ as $s\to \infty$, it follows that $\int_{\gamma|_{[0,t]}}(-\cfo) \geq \min(h(e),h(\bar{e}))$ (since $t > 0$ was the smallest such time).
Since $h_*\leq \min(h(e),h(\bar{e}))$ by \eqref{eq:h-star-def-morse}, it follows that
$$h_* \leq \min(h(e),h(\bar{e})) \leq \int_{\gamma|_{[0,t]}}(-\cfo) \leq \height(\gamma) < R+\varepsilon.$$
Since $\varepsilon > 0$ was arbitrary, this implies that $h_*\leq R$ and completes the proof.
\end{proof}

We now prove Theorem~\ref{th:qualititative-special-case-intro}.
For convenience we restate this theorem and Assumption~\ref{assump:morse-smale-intro}.

\AssumpMorseSmaleIntro*
\ThmQualitativeSpecial*
\begin{Rem}\label{rem:th1-proof-h-v-defs-coincide}
The proof shows that, for sufficiently small $c>0$,  $\dft$ satisfies Assumption~\ref{assump:morse-smale} and the minimizer $T_*\in \RST(\compd)$ is unique.
Moreover, when $c$ is sufficiently small, $v_*(c)$ defined in \S \ref{sec:summary-first-main-result} coincides with the root of $T_*$ and (as follows from Lem.~\ref{lem:h-star-defs-coincide}) $h_*(c)$ defined in \S \ref{sec:summary-first-main-result} coincides with $h_*$ defined in \eqref{eq:h-star-def-morse}.
\end{Rem}
\begin{proof}
We first observe that Assumption~\ref{assump:morse-smale-intro} directly implies that $U$ is Morse, $U$ has a unique global minimizer, and all pairwise intersections of (un)stable manifolds of $\nabla U$ are transverse.
Assumption~\ref{assump:morse-smale-intro} also implies \eqref{eq:th-qualitative-minimizer} since the minimal spanning tree of a weighted undirected graph with distinct edge weights is always unique \cite[Ex.~20.5]{sedgewick2002algorithms}.
Additionally, it is immediate that $\cfo=-dU + c\cft$ converges to $-dU$ in the $\Cont^1$ topology as $c\to 0$.

Hence Theorem~\ref{th:qualitative} implies that, for all sufficiently small $c > 0$, $\dft = -\nabla U + c\cft^\sharp$ satisfies Assumption~\ref{assump:morse-smale} and the minimizer $T_*\in \RST(\compd)$ of \eqref{eq:morse-rst-minimizer} is unique. 
We use the notation $T_*(c)$ to emphasize the dependence of $T_*$ on $c$.
We temporarily introduce the notations $v_*^1(c)$ for the quantity $v_*(c)$ introduced in \S \ref{sec:summary-first-main-result} and $v^2_*(c)$ for the root of $T_*(c)$.

We claim that $v^1_*(c)$ and $v^2_*(c)$ are both well-defined for $c = 0$ and $v^1_*(0) = v^2_*(0)$.
When $c = 0$, the vector field $\dft$ satisfies Assumption~\ref{assump:morse-smale} and, for any $v\in \vtx$ and $T\in \RST(\compd;v)$, 
\begin{equation}\label{eq:comparison-h-star-gain-sum}
\sum_{e\in T}g(e) = U(v) +   \sum_{e\in T}U(e) - \sum_{v'\in \vtx}U(v').
\end{equation}
Since the third term on the right of \eqref{eq:comparison-h-star-gain-sum} is independent of $T$ and the second term is unchanged if $T$ is replaced by any $T'\in q^{-1}(q(T))$, we see that $v^2_*(0)$ (the root of $T_*(0)$) must minimize $U$ when $c=0$.
Thus, $v^1_*(0) = v^2_*(0)$.
The implicit function and stable manifold theorems \cite[p.~75,~Thm~6.2]{palis1980geometric} imply that the $1$-dimensional unstable manifolds of $\dft$, gains $g(\slot)$, and heights $h(\slot)$ depend continuously on $c$; since the set of rooted spanning trees in $\compd$ is finite and the minimizer $v^1_*(0)$ of $U$ is unique (by Assumption~\ref{assump:morse-smale-intro}), the root $v^2_*(c)$ of $T_*(c)$ must also coincide with $v^1_*(c)$ for $c> 0$ sufficiently small.

Letting $\cfo = -dU + c\beta$, linearity of $\fluxec(\slot)$ and Prop.~\ref{prop:flux-positive-c1f-case} imply that $\fluxec([\cfo])=c \fluxec([\cft]).$
Thus, Theorem~\ref{th:qualitative} implies that, for all sufficiently small $c > 0$, $\fluxec([\cft]) > 0$ for all sufficiently small $\varepsilon > 0$ and
$$\lim_{\varepsilon\to 0}(-\varepsilon \ln \fluxec([\cft])) = \lim_{\varepsilon\to 0}(-\varepsilon \ln \fluxec([\cfo])) = h_*,$$
where $h_*$ is as defined in \eqref{eq:h-star-def-morse}.
Since $v^1_*(c) = v^2_*(c)$ when $c$ is sufficiently small, Lem.~\ref{lem:h-star-defs-coincide} implies that this $h_*$ coincides with $h_*(c)$ defined in \eqref{eq:h-star-def}.
This implies \eqref{eq:qualititative-special-case intro} and completes the proof.
\end{proof}

\section{Drifts whose chain recurrent sets consist of a finite number of hyperbolic zeros }\label{sec:drift-finite-hyperbolic-chain}
Theorem \ref{th:qualitative} is derived in part from a more general result (Theorem~\ref{th:flux-manifold-mc-CRST-ld}) described in this section.
\subsection{Setup}\label{sec:setup-sec:drift-finite-hyperbolic-chain}
We first discuss preliminaries: we review the definitions of chain recurrence \cite{conley1978isolated} and cycle-rooted spanning trees \cite{pitman2018tree}, and we introduce a path-homotopical refinement of the Freidlin-Wentzell quasipotential \cite{freidlin2012random}.
\subsubsection{Chain recurrence}
Let $\Phi\colon \R\times M\to M$ be a continuous flow.
Defining $\Phi^t\coloneqq \Phi(t,\slot)$, that $\Phi$ is a flow means that $\Phi^0 = \id_M$ and $\Phi^{t+s}=\Phi^t\circ \Phi^s$ for all $s,t\in \R$.
The following definition is standard  \cite{conley1978isolated, hurley1995chain, robinson1999dynamical,alongi2007recurrence}.
\begin{Def}\label{def:chain-recurrent}
Given $\varepsilon, T> 0$ and $x,y\in M$, an \concept{$(\varepsilon,T)$-chain} from $x$ to $y$ is a tuple $$(x=x_0, x_1,\ldots, x_N =y; t_1,\ldots, t_N)$$ such that $t_i\geq T$ and  $\dist{\Phi^{t_i}(x_{i-1})}{x_i}\leq \varepsilon$ for all $i\geq 1$.
A point $x\in M$ is \concept{chain recurrent} if for all $\varepsilon, T>0$ there is an $(\varepsilon, T)$-chain from $x$ to itself.
The \concept{chain recurrent set} $R(\Phi)\subset M$ is the set of all chain recurrent points. 
\end{Def} 
If $\Phi$ is generated by a $\Cont^1$ vector field $\dft$ on $M$, we adopt the abuse of notation $R(\dft)\coloneqq R(\Phi)$ and speak of the chain recurrent set of $\dft$.
\subsubsection{Action functional and path-homotopical quasipotential}
Let $\dft$ be a continuous vector field on a Riemannian manifold $M$ (not necessarily compact).
Following Freidlin and Wentzell \cite[Ch.~6.1]{freidlin2012random}, we define the associated \concept{action functional} on continuous paths $\vp\in C([T_1,T_2],M)$ via 
\begin{equation}\label{eq:action-functional}
\af_{T_1,T_2}(\vp) \coloneqq \frac{1}{4}\int_{T_1}^{T_2} \norm{\dot{\vp}-\dft(\vp)}^2\, dt
\end{equation}
if $\vp$ is absolutely continuous and set $\af_{T_1,T_2}(\vp)\coloneqq +\infty$ otherwise.\footnote{The definition of $\af_{T_1,T_2}(\slot)$ in \cite[Ch.~6.1]{freidlin2012random} has a factor of $1/2$ in front of the integral instead of the factor of $1/4$ appearing in \eqref{eq:action-functional}. 
The source of the difference is that we use the convention $L_\varepsilon = \sum_i b^{i}_{\varepsilon}(x)\frac{\partial}{\partial x^i} + \varepsilon \sum_{i,j}a^{ij}(x)\frac{\partial^2}{\partial x^i \partial x^j}$ for the infinitesimal generator of a diffusion (see \eqref{eq:generator}), while the convention $L_\varepsilon = \sum_i b^{i}_{\varepsilon}(x)\frac{\partial}{\partial x^i} + \frac{\varepsilon}{2} \sum_{i,j}a^{ij}(x)\frac{\partial^2}{\partial x^i \partial x^j}$ used in \cite{freidlin2012random} includes a factor of $1/2$ on the second term.
Thus, the Riemannian metric $(a^{-1})_{ij}$ defined according to the convention of \cite{freidlin2012random} and used to define $\af_{T_1,T_2}(\slot)$ corresponds to $(1/2)(a^{-1})_{ij}$ according to our convention, and this extra factor of $1/2$ leads to the factor of $1/4$ in \eqref{eq:action-functional}.}
We also use the notation $\af_{T}(\slot)\coloneqq \af_{0,T}(\slot)$.
We use the notation $\af(\slot)$ instead $\af_{T_1,T_2}(\slot)$ if the domain $[T_1,T_2]$ of $\vp$ is clear from context or if we do not wish to emphasize the domain.\footnote{There is a related functional which is invariant under reparametrization of the path $\vp$, making $T_1$ and $T_2$ immaterial \cite{vanden2008geometric,heymann2015minimum}. This functional has some properties which are better and some properties which are worse than the functionals $\af_{T_1,T_2}(\slot)$ we consider, and $\af_{T_1,T_2}(\slot)$ seems more convenient for our purposes.}

\begin{Rem}[{\cite[p.~12]{ventsel1970small}}]\label{rem:finite-action-iff-L2}
Given $T_1\leq T_2$, define the $L^2$ inner product $\ip{\gamma}{\psi}_2\coloneqq \int_{T_1}^{T_2} \ip{\gamma}{\psi}dt$ and norm $\norm{\gamma}_2\coloneqq \sqrt{\ip{\gamma}{\gamma}}_2$.
Since
\begin{equation}\label{eq:finite-af-iffs-l2}
\begin{split}
4\af_{T_1,T_2}(\vp) &= \norm{\dot \vp}_2^2 - 2\ip{\dot{\vp}}{\dft(\vp)}_2 +  \norm{\dft(\vp)}_2^2\\
&\geq \norm{\dot \vp}_2^2 - 2\norm{\dot \vp}_2\norm{\dft(\vp)}_2 + \norm{\dft(\vp)}_2^2 \\
&= (\norm{\dot \vp}_2-\norm{\dft(\vp)}_2)^2,
\end{split}
\end{equation}
given $\vp\in \Cont([T_1,T_2],M)$ it follows that $\af_{T_1,T_2}(\vp) < +\infty$ if and only if $\vp$ is absolutely continuous and $\norm{\dot{\vp}}_2< + \infty$.
\end{Rem}

Freidlin and Wentzell use the action functional to define an asymmetric \emph{quasipotential} on pairs of points $(x,y)$ to be the infimal action over all continuous paths joining $x$ to $y$.
However, this definition does not account for the way that a path starting from $x$ wraps around $M$ before reaching $y$, and we will require some amount of such information for studying flux.
Hence we will introduce a refinement of the quasipotential which takes into account homotopy classes of paths.
(Mere homological information would suffice, but accounting for homotopical information presents no additional difficulties and leads to stronger intermediate results.) 

Two continuous paths $\vp_1,\vp_2 \colon [0,1] \to M$ with common endpoints are path-homotopic (as opposed to freely homotopic) if $\vp_1$ is homotopic to $\vp_2$ via a homotopy that fixes the endpoints \cite[p.~187]{lee2011topological}.
A path homotopy class is a maximal collection of path-homotopic paths.
Let $\Pi(M)$ denote the set of path homotopy classes and $\src,\tgt\colon \Pi(M)\to M$ be the maps sending classes to their source and target points.
If $e_1,e_2\in \Pi(M)$ and $\tgt(e_1)=\src(e_2)$, there is a well-defined path homotopy class $e_1e_2\in \Pi(M)$ with $\src(e_1e_2)=\src(e_1)$ and $\tgt(e_1e_2)=\tgt(e_2)$ defined to be the class of the concatenation of any pair of paths representing $e_1$ and $e_2$.
This concatenation operation makes $\Pi(M)$ into a groupoid called the \concept{fundamental groupoid} \cite[p.~15]{may1999concise}.
Given $e\in \Pi(M)$,  we denote by $\Cont_e([T_1,T_2],M)\subset \Cont([T_1,T_2],M)$ the subset of paths $\vp$ with $(t\mapsto \vp(t(T_2-T_1)+T_1))\in e$.
If $\vp\in \Cont_e([T_1,T_2],M)$ for some $T_1 \leq T_2$, we also write $[\vp] = e$.
\begin{Def}\label{def:quasipotential}
Given $e\in \Pi(M)$, we define the \concept{(path-homotopical) quasipotential} $\qpd(e)\in [0,+\infty)$ via
\begin{equation}
\qpd(e)\coloneqq \inf \{\af(\vp)\colon [\vp]=e\}.
\end{equation}
\end{Def}

Assume now that $\dft$ is a $\Cont^1$ vector field on $M$ such that $\dft^{-1}(0)$ consists only of hyperbolic zeros.
Recall that the (Morse) index of $z\in \dft^{-1}(0)$ is the dimension of the unstable manifold $\Wu(z)$ for the flow of $\dft$.
Denote by $\vtx\subset \dft^{-1}(0)$ those zeros with index $0$.
\begin{Def}\label{def:quasipotential-restricted}
We define the \concept{restricted quasipotential} $\tqp_{\dft}(e)\in [0,+\infty]$ by considering only those paths meeting $\vtx$ only at the endpoints:
\begin{equation}\label{eq:tqpd-def}
\tqpd(e)\coloneqq \inf\{\af(\vp)\colon [\vp]=e \,\,\textnormal{ and }\,\, (\vp|_{\interior(\dom(\vp))})^{-1}(\vtx)=\varnothing\}.
\end{equation}
Here $\dom(\vp)$ is the domain of $\vp$ and $\interior(\slot)$ denotes the topological interior.
For example, if $\vp\in \Cont([T_1,T_2],M)$ then $\interior(\dom(\vp))=(T_1,T_2)$.
\end{Def}

It is immediate from the definitions that $\tqpd \geq \qpd$.
We now show that equality holds when $\dim(M)\geq 2$.
\begin{Lem}\label{lem:tqp-equals-qp-transversality}
Let $\dft$ be a $\Cont^1$ vector field on a Riemannian manifold $M$ (not necessarily compact) such that $\dft^{-1}(0)$ consists only of hyperbolic zeros.
Assume that $\dim(M) \geq 2$. 
Then $\tqpd = \qpd$, and for every $e\in \Pi(M)$ and $\varepsilon > 0$ there is a smooth path $\vp$ satisfying $\af(\vp) < \qpd(e) + \varepsilon$.
\end{Lem}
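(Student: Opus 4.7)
The plan is, given $e \in \Pi(M)$ and $\varepsilon > 0$, to construct a single smooth path $\tilde\vp \in e$ with $\tilde\vp^{-1}(\vtx) \subset \{T_1,T_2\}$ and $\af(\tilde\vp) < \qpd(e) + \varepsilon$. Such a $\tilde\vp$ supplies both the claimed smooth approximant and, together with the obvious inequality $\qpd \le \tqpd$, forces $\tqpd(e) = \qpd(e)$ on letting $\varepsilon \to 0$. The construction proceeds in two stages, starting from a nearly optimal representative $\vp_0 \in e$ with $\af(\vp_0) < \qpd(e) + \varepsilon/3$, which is absolutely continuous with $\dot{\vp}_0 \in L^2$ by Remark~\ref{rem:finite-action-iff-L2}.

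\emph{Smoothing.} I would mollify $\vp_0$ into a smooth path $\vp_1$ with the same endpoints, the same path-homotopy class, and $\af(\vp_1) < \af(\vp_0) + \varepsilon/3$. A standard way is to embed $M$ isometrically into some $\R^N$ via the Nash embedding theorem, convolve $\vp_0$ componentwise with a mollifier on a subinterval $[T_1+\eta,T_2-\eta]$, project back onto $M$ using the closest-point retraction from a tubular neighborhood, and interpolate linearly on the thin collars to restore the boundary values $\vp_1(T_i) = \vp_0(T_i)$. Expanding the square in \eqref{eq:action-functional} and using that $\dft$ is continuous shows $\af$ is continuous in the topology generated by $\|\dot{\vp}\|_{L^2} + \|\vp\|_{C^0}$, so sufficiently fine mollification yields the action bound; $C^0$-closeness automatically preserves the path-homotopy class by local contractibility of $M$.

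\emph{Transversality.} Since all zeros of $\dft$ are hyperbolic, $\vtx$ is a discrete, hence finite, subset of $M$. I would define a smooth variation $\Psi\colon B \times [T_1,T_2] \to M$, with $B \subset \R^{\dim M}$ a small open ball, by $\Psi(s,t) \coloneqq \exp_{\vp_1(t)}\!\bigl(\chi(t)\,E(t)\,s\bigr)$, where $E(t)\colon \R^{\dim M} \to T_{\vp_1(t)}M$ is a smooth orthonormal frame along $\vp_1$ and $\chi \in \Cont_c^\infty((T_1,T_2),[0,1])$ equals $1$ on a closed subinterval of $(T_1,T_2)$ containing $\vp_1^{-1}(\vtx) \cap (T_1,T_2)$. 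By construction $\Psi(0,\cdot) = \vp_1$, $\Psi(s,T_i) = \vp_1(T_i)$, and the restriction $\Psi|_{B \times (T_1,T_2)}$ is a submersion at every point of $\Psi^{-1}(\vtx)$; hence each $\Psi^{-1}(v) \cap (B \times (T_1,T_2))$, $v \in \vtx$, is a $1$-dimensional submanifold. Projecting the resulting finite union of $1$-manifolds to $B$ and applying Sard's theorem---crucially using $\dim B = \dim M \ge 2 > 1$---the set of $s \in B$ whose slice $\Psi(s,\cdot)$ meets $\vtx$ in the interior has measure zero. Choosing such an $s$ close enough to $0$, the smooth path $\tilde\vp \coloneqq \Psi(s,\cdot)$ is $H^1$- and $C^0$-close to $\vp_1$, lies in $e$, avoids $\vtx$ on $(T_1,T_2)$, and satisfies $\af(\tilde\vp) < \af(\vp_1) + \varepsilon/3 < \qpd(e) + \varepsilon$, as desired.

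The step most likely to require care is the transversality argument: one must verify that the finite-dimensional variation $\Psi$ is actually a submersion at every interior preimage of $\vtx$ (the cutoff $\chi$ is introduced precisely for this), and that the smoothing and transversality perturbations can be made simultaneously $H^1$-small so that the two action increments of $\varepsilon/3$ each are honest. The hypothesis $\dim M \ge 2$ is indispensable: when $\dim M = 1$, a point of $\vtx$ can topologically separate $M$, so within a fixed homotopy class no perturbation can avoid such a point in the interior, and the identity $\tqpd = \qpd$ in fact fails.
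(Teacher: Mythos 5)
Your argument is correct and follows the same two-step strategy as the paper's proof: first smooth a near-optimal path, then perturb it within its path-homotopy class to avoid the critical set, exploiting $\dim M\ge 2$. Where the paper cites a Sobolev density result and Hirsch's parametric transversality theorem, you re-derive both steps from scratch (Nash embedding with mollification and the closest-point retraction for the smoothing; an explicit finite-dimensional variation with a cutoff $\chi$ plus Sard's theorem for the avoidance), and the cutoff makes explicit the endpoint-fixing constraint that is only implicit in the paper's transversality citation. One small slip: you assert that $\vtx$ is ``discrete, hence finite,'' but the lemma expressly allows $M$ to be noncompact, so $\vtx$ is merely closed and discrete, hence at most countable. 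This does not damage the argument, since a countable union of measure-zero subsets of $B$ still has measure zero---or, equivalently, one can first restrict to a compact tubular neighborhood of $\vp_1([T_1,T_2])$, which meets only finitely many points of $\vtx$.
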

\begin{proof}
Fix any $e\in \Pi(M)$.
It suffices to show that, for any $\vp\in \Cont_{e}([T_1,T_2],M)$ satisfying $\af(\vp)<\infty$ and $\varepsilon > 0$, there is a path $\psi\in \Cont_{e}([T_1,T_2],M)$ with $\psi|_{(T_1,T_2)}$ disjoint from $\dft^{-1}(0)$ and satisfying $\af(\psi)< \af(\vp) + \varepsilon$.
First note Rem.~\ref{rem:finite-action-iff-L2} implies that $\vp$ is absolutely continuous with $\norm{\dot{\vp}}_2<+\infty$, and for such a $\vp$ and any $\delta > 0$ there is a smooth $\gamma\in \Cont_e([T_1,T_2],M)$ such that $\gamma$ is uniformly $\delta$-close to $\vp$ and  $\norm{\dot{\gamma}-\dot{\vp}}_{2}<\delta$ (cf. \cite[p.~52, Lem.~5.1]{showalter1994hilbert}). 
Thus, the first line of \eqref{eq:finite-af-iffs-l2} implies that $\af(\gamma)<\af(\vp) + \varepsilon/2$ if $\delta$ is small enough.
Second countability of $M$ and hyperbolicity of $\dft^{-1}(0)$ imply that the latter set is countable, so the transversality theorem \cite[p.~74,~Thm~2.1]{hirsch1976differential} implies that we may uniformly $\Cont^1$-approximate $\gamma$ by a smooth path $\psi\in \Cont_e([T_1,T_2],M)$ such that $\psi|_{(T_1,T_2)}$ is disjoint from $\dft^{-1}(0)$ and  $\af(\psi)<\af(\gamma)+\varepsilon/2 < \af(\vp)+\varepsilon$.
\end{proof}
Given $x,y\in M$, we also define 
\begin{equation}\label{eq:unrefined-quasipot}
\qpd(x,y)\coloneqq \inf_{e\in \src^{-1}(x)\cap \tgt^{-1}(y)} \qpd(e)\quad \textnormal{and} \quad \tqpd(x,y)\coloneqq \inf_{e\in \src^{-1}(x)\cap \tgt^{-1}(y)} \tqpd(e). 
\end{equation}
The quantity on the left coincides with the quasipotential considered by Freidlin and Wentzell \cite[Ch.~6.1,~6.2]{freidlin2012random}.
However, the quantity on the right is \emph{not} the same as the restricted quasipotential considered in \cite[Ch.~6]{freidlin2012random} since our definition of $\tqpd$ involves only the \emph{stable} zeros of $\dft$ (those with index $0$).
But if $\dim(M)\geq 2$ so that Lem.~\ref{lem:tqp-equals-qp-transversality} applies, these quantities are the same.

\subsubsection{Cycle-rooted spanning trees}
Let $\gph = (V,E,\src,\tgt)$ be a directed graph; as in \S \ref{sec:setup-morse} we allow loop edges at a single vertex and multiple edges between the same pair of vertices.
We denote by $\RST(\gph;v)$ the set of rooted spanning trees with edges directed toward a root $v\in \vtx$ and set $\RST(\gph)\coloneqq \bigcup_{v\in \vtx} \RST(\gph;v)$. 

We say that a subset $\edg'\subset \edg$ is a \concept{cycle-rooted spanning tree} if there exists $v\in \vtx$ and $\edg''\in \RST(\gph;v)$ such that $\edg'$ is the union of $\edg''$ with a single edge in $\src^{-1}(v)$ \cite{pitman2018tree}.
See Fig.~\ref{fig:crst}.
Such an $\edg'$ contains exactly one directed cycle $\cC\subset E$, and this cycle contains $v$.
We denote the set of all cycle-rooted spanning trees containing the directed cycle $\cC\subset \edg$ by $\CRST(\gph;\cC)$, and we define $\CRST(\gph)\coloneqq \bigcup_{\cC}\CRST(\gph;\cC)$ to be the set of all cycle-rooted spanning trees.
We define the map $\cycle\colon \CRST(\gph)\to 2^\edg$ be the map which sends each $\edg'\in \CRST(\gph)$ to its unique directed cycle, so that $\cycle(\edg') = C$ if $\edg' \in \CRST(\gph;\cC)$. 

\begin{figure}
	\centering
	\def\svgwidth{0.5\linewidth}
\begingroup%
  \makeatletter%
  \providecommand\color[2][]{%
    \errmessage{(Inkscape) Color is used for the text in Inkscape, but the package 'color.sty' is not loaded}%
    \renewcommand\color[2][]{}%
  }%
  \providecommand\transparent[1]{%
    \errmessage{(Inkscape) Transparency is used (non-zero) for the text in Inkscape, but the package 'transparent.sty' is not loaded}%
    \renewcommand\transparent[1]{}%
  }%
  \providecommand\rotatebox[2]{#2}%
  \newcommand*\fsize{\dimexpr\f@size pt\relax}%
  \newcommand*\lineheight[1]{\fontsize{\fsize}{#1\fsize}\selectfont}%
  \ifx\svgwidth\undefined%
    \setlength{\unitlength}{375.13700855bp}%
    \ifx\svgscale\undefined%
      \relax%
    \else%
      \setlength{\unitlength}{\unitlength * \real{\svgscale}}%
    \fi%
  \else%
    \setlength{\unitlength}{\svgwidth}%
  \fi%
  \global\let\svgwidth\undefined%
  \global\let\svgscale\undefined%
  \makeatother%
  \begin{picture}(1,0.64013988)%
    \lineheight{1}%
    \setlength\tabcolsep{0pt}%
    \put(0,0){\includegraphics[width=\unitlength,page=1]{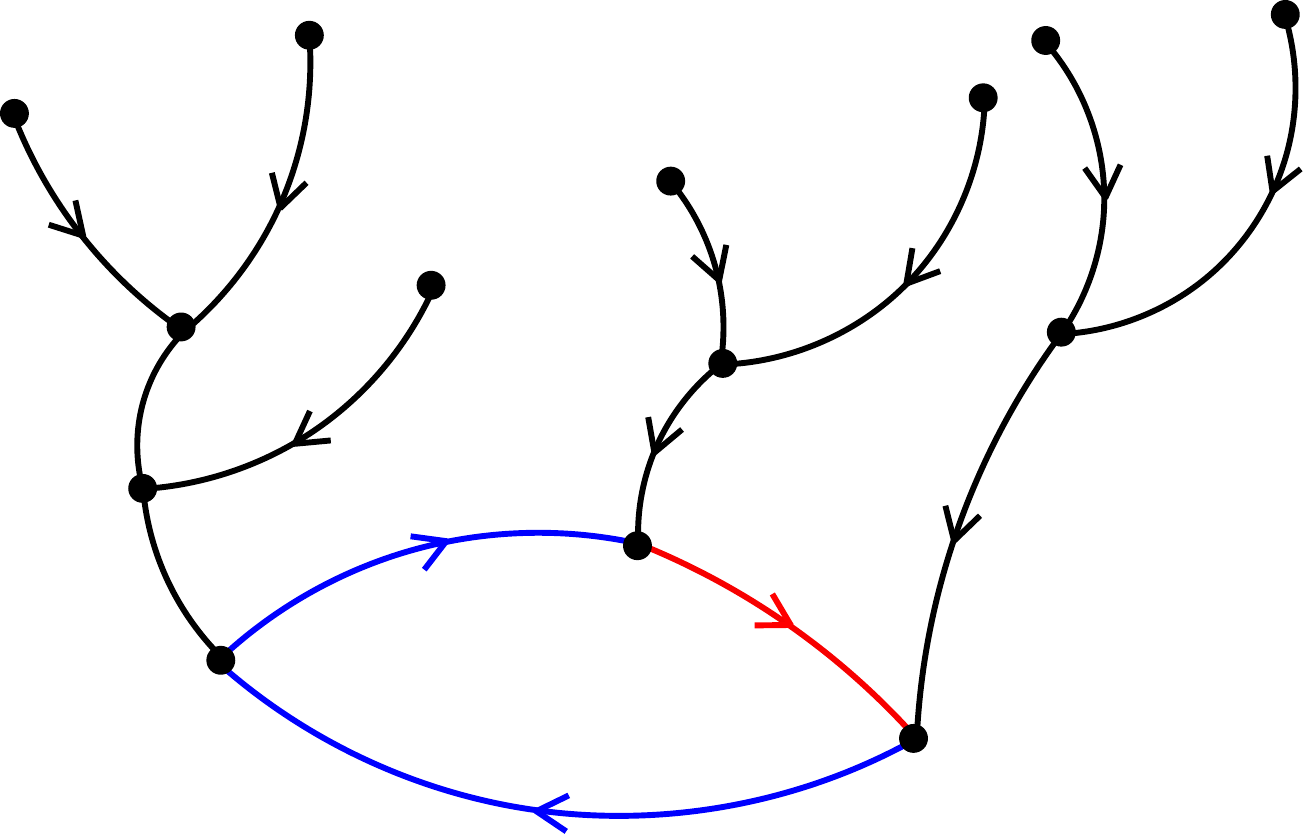}}%
    \put(0.46085646,0.18005489){\color[rgb]{0,0,0}\makebox(0,0)[lt]{\lineheight{1.25}\smash{\begin{tabular}[t]{l}$v$\end{tabular}}}}%
    \put(0,0){\includegraphics[width=\unitlength,page=2]{crst.pdf}}%
  \end{picture}%
\endgroup%

	\caption{The black dots are the vertex set of a directed graph $\gph = (\vtx,\edg,\src,\tgt)$.
	The union $\edg'\in \RST(\gph;v)$ of the black and blue edges is a rooted spanning tree with root $v\in \vtx$. 
    The union of the blue and red edges is a directed cycle $\cC  = \cycle(\edg') \subset \edg$, and the union of all edges shown (black, blue, and red) is a cycle-rooted spanning tree $\edg''\in \CRST(\gph;\cC)$.
    All vertices in $\vtx$ are shown, but edges not in $\edg''$ are not.}\label{fig:crst}
\end{figure}

\subsection{Main result}\label{sec:main-result-finite-hyp-chain-rec}
\begin{Def}\label{def:path-homotopical-graph}
Let $\dft$ be a $\Cont^1$ vector field on a closed manifold $M$ whose chain recurrent set $R(\dft)$ consists of a finite number of hyperbolic zeros.
(In particular, this implies that $R(\dft) = \dft^{-1}(0)$ so that all zeros of $\dft$ are hyperbolic, and that all trajectories of $\dft$ converge to $\dft^{-1}(0)$ in both forward and backward time.)
Define $\vtx \subset \dft^{-1}(0)$ to be those zeros with index $0$, and define $\Eh\subset \Pi(M)$ to be those path homotopy classes with source and target points in $\vtx$.
We define the \concept{path-homotopical graph} to be the directed graph $\gh\coloneqq (\vtx,\Eh,\src,\tgt)$.
\end{Def}

Given a closed one-form $\cfo$ on  $M$ and $e\in \Pi(M)$, note that the line integral $\int_e \cfo\coloneqq \int_{\vp}\cfo$ is well-defined independent of the choice of continuous path $\vp$ satisfying $[\vp]=e$.
We now state our main result for the present level of generality. 
\begin{restatable}[]{Th}{ThmFWGeneral}\label{th:flux-manifold-mc-CRST-ld}
	Let $\cfo$ be a closed one-form on a closed connected Riemannian manifold $M$, $\dft$ be a $\Cont^1$ vector field on $M$ whose chain recurrent set consists of a finite number of hyperbolic zeros, and $\dfte$ be a smooth vector field on $M$ for each $\varepsilon > 0$ with $\dfte\to \dft$ uniformly as $\varepsilon \to 0$.
	Let $\gh = (\vtx,\Eh,\src,\tgt)$ be as in Def.~\ref{def:path-homotopical-graph}.
	Given $e\in \Eh$ and $\edg\subset \Eh$ with $\#(\edg)<\infty$, define $$\cfo(e)\coloneqq \int_e \cfo \quad \textnormal{and} \quad \cfo(\edg)\coloneqq \sum_{e\in \edg}\cfo(e).$$ 
	Then the minima 
	\begin{equation*}
	\min_{\substack{\edg\in \CRST(\gh)\\\cfo(\cycle(\edg))> 0}}\,\,\,\sum_{e\in \edg} \tqpd(e)\quad \textnormal{ and } \quad \min_{\substack{\edg\in \CRST(\gh)\\\cfo(\cycle(\edg))<0}}\,\,\,\sum_{e\in \edg} \tqpd(e)\
	\end{equation*}
exist.
Assume they satisfy the inequality 
	\begin{equation}\label{eq:min-rst-assumption}
	 \min_{\substack{\edg\in \CRST(\gh)\\\cfo(\cycle(\edg))> 0}}\,\,\,\sum_{e\in \edg} \tqpd(e) < \min_{\substack{\edg\in \CRST(\gh)\\\cfo(\cycle(\edg))<0}}\,\,\,\sum_{e\in \edg} \tqpd(e).
	\end{equation}
	Then the steady-state $[\cfo]$-flux \eqref{eq:flux-def} of the diffusion with generator $\dfte + \varepsilon \Delta$ satisfies $\fluxe([\cfo]) > 0$ for all sufficiently small $\varepsilon > 0$, and
	\begin{equation}\label{eq:flux-sde-mc-ld-expression}
	\lim_{\varepsilon\to 0}(-\varepsilon \ln \fluxe([\cfo])) = \left(\min_{\substack{\edg\in \CRST(\gh)\\\cfo(\cycle(\edg))> 0}}\,\,\,\sum_{e\in \edg} \tqpd(e)\right) - \left(\min_{\substack{\edg \in \RST(\gh)}}\sum_{e\in \edg}\tqpd(e)\right).
	\end{equation}
\end{restatable}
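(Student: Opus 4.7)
The plan is to reduce the diffusion to a discrete-time Markov chain on the set $\vtx$ of stable zeros of $\dft$ whose transitions are labelled by path-homotopy classes in $\Eh$, and then express the steady-state flux through a Kirchhoff / cycle-rooted spanning tree identity. First, I would fix pairwise disjoint neighborhoods $B_v\ni v$ for each $v\in\vtx$ and track the excursion chain determined by successive re-entries of $X^\varepsilon$ into $\bigcup_v B_v$. The sequence $(Y_n,c_n)$ of visited stable zero and homotopy class of the preceding excursion---read off by lifting each excursion to a covering of $M$ on which $\cfo$ becomes exact---forms a Markov chain on $\vtx\times\Eh$. Standard Freidlin--Wentzell exit-distribution estimates adapted to the hyperbolic chain-recurrent setting \cite[Ch.~6]{freidlin2012random} then yield
\begin{equation*}
P_v(c_1=e) = \exp\!\bigl(-\tqpd(e)/\varepsilon + o(1/\varepsilon)\bigr), \qquad \src(e)=v,
\end{equation*}
with the \emph{restricted} quasipotential $\tqpd$ appearing because excursions by construction avoid the other stable zeros in their interiors.

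From \eqref{eq:flux-micro}, closedness of $\cfo$, and additivity of line integrals along concatenated excursions, an ergodic-theorem argument for the chain gives
\begin{equation*}
\fluxe([\cfo]) = \frac{\ex_\pi[\cfo(c_1)]}{\ex_\pi[\tau_1]},
\end{equation*}
where $\pi$ is the stationary distribution of $Y_n$ and $\tau_1$ the first excursion time. The Kirchhoff matrix-tree theorem gives $\pi(v)\propto\sum_{T\in\RST(\gh;v)}\prod_{e\in T}P(e)$, and its cycle-rooted spanning tree refinement---obtained by adjoining to each rooted spanning tree an additional outgoing edge at its root and rearranging the resulting sum---converts $\sum_{e\in\Eh}\pi(\src(e))P(e)\cfo(e)$ into
\begin{equation*}
\frac{1}{\sum_{T\in\RST(\gh)}\prod_{e\in T}P(e)}\sum_{\edg\in\CRST(\gh)}\cfo(\cycle(\edg))\prod_{e\in\edg}P(e).
\end{equation*}
Substituting the exponential asymptotics for $P(e)$, using that $\ex_\pi[\tau_1]$ is subexponential in $\varepsilon$, and applying Laplace asymptotics then yields \eqref{eq:flux-sde-mc-ld-expression}: hypothesis \eqref{eq:min-rst-assumption} ensures that the minimum-action positive-winding CRST strictly dominates all negative-winding ones, so the signed numerator is positive with magnitude $\exp(-(\min \text{ positive CRST})/\varepsilon)$ up to subexponential factors, forcing $\fluxe([\cfo])>0$ and producing the stated exponential rate.

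The principal difficulty is the countable infinitude of $\Eh$ whenever $\pi_1(M)$ is infinite, which makes the sums over $\RST(\gh)$ and $\CRST(\gh)$ into infinite series. I would handle this by truncation: for any $K>0$ the set $\{e\in\Eh:\tqpd(e)\leq K\}$ is finite, by compactness of $M$ together with a coercivity/lower-semicontinuity property of $\af$ (cf.\ Rem.~\ref{rem:finite-action-iff-L2}) which confines action-bounded paths into a $C^0$-compact family spanning only finitely many homotopy types. Excursions whose class has quasipotential above $K$ carry probability $o(e^{-K/\varepsilon})$, so choosing $K$ just above the right-hand side of \eqref{eq:min-rst-assumption} makes their contribution negligible in both numerator and denominator. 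Two residual technical points are (i) that $\tqpd$ rather than the coarser $\qpd$ governs the one-step transition probabilities, which uses Lem.~\ref{lem:tqp-equals-qp-transversality} when $\dim M\ge 2$ and is direct when $\dim M=1$, and (ii) subexponential upper and lower bounds on $\ex_\pi[\tau_1]$, both standard Freidlin--Wentzell hitting-time estimates in the finite-hyperbolic chain-recurrent setting.
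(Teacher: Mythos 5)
Your proposal follows essentially the same route as the paper: an excursion Markov chain between neighborhoods of the stable zeros whose transitions are labelled by path-homotopy classes, Freidlin--Wentzell estimates giving $\exp(-\tqpd(e)/\varepsilon+o(1/\varepsilon))$ for the transition probabilities, the Markov chain tree theorem and its cycle-rooted refinement to express the flux, subexponential bounds on the expected excursion time, and a truncation argument (backed by the linear growth of $\tqpd$ with the ``size'' of $e$) to tame the infinite sums over $\Eh$. The only place you are slightly thinner than the paper is in the uniformity of the tail estimate---one needs not just that each term with $\tqpd(e)>K$ is $o(e^{-K/\varepsilon})$ but that the \emph{sum} over such $e$ is, which the paper handles via a geometric-decay refinement of the exit estimate (Lem.~\ref{lem:FW-transition-refinement}, eq.~\eqref{eq:FW-transition-P-refinement-decay}) together with the linear growth bound of Lem.~\ref{lem:univ-cover-qp-growth}.
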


We conclude this section with an example (Ex.~\ref{ex:tilt-pot-circle}) which illustrates Theorem~\ref{th:flux-manifold-mc-CRST-ld} in the special case $\dim(M)=1$, proves (a stronger version of) Theorem~\ref{th:qualitative} in this special case, and compares with a known result from the literature.
The following two lemmas will be used in this example and in the sequel.
\begin{Lem}[cf. {\cite[p.~100]{freidlin2012random}}]\label{lem:S-alt-expression}
Let $\dft$ be a continuous vector field on a Riemannian manifold $M$. 
Let $\vp \in \Cont([T_1,T_2],M)$ satisfy $\af(\vp)<\infty$. 
Then
\begin{equation}
\af(\vp)=\frac{1}{4}\int_{T_1}^{T_2} \norm{\dot{\vp} + \dft(\vp)}^2dt -  \int_{T_1}^{T_2} \ip{\dot{\vp}}{\dft(\vp)} dt\label{eq:S-alt-expression}.
\end{equation}
\end{Lem}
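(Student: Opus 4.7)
The plan is to derive the identity by a direct pointwise algebraic manipulation (``polarization'') and then integrate, with a short argument that every term appearing is well-defined under the hypothesis $\af(\vp) < \infty$.

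First I would record the pointwise identity: for any $v,w$ in a (real) inner product space,
\begin{equation*}
\tfrac{1}{4}\norm{v-w}^2 = \tfrac{1}{4}\norm{v+w}^2 - \ip{v}{w},
\end{equation*}
which follows by expanding both squared norms. Applied pointwise with $v = \dot{\vp}(t)$ and $w = \dft(\vp(t))$ in the tangent space $T_{\vp(t)}M$ (equipped with the Riemannian inner product), this gives
\begin{equation*}
\tfrac{1}{4}\norm{\dot{\vp}(t)-\dft(\vp(t))}^2 = \tfrac{1}{4}\norm{\dot{\vp}(t)+\dft(\vp(t))}^2 - \ip{\dot{\vp}(t)}{\dft(\vp(t))}
\end{equation*}
for almost every $t\in [T_1,T_2]$ (where $\dot{\vp}$ exists, which is a.e.\ because $\vp$ is absolutely continuous by $\af(\vp)<\infty$ and Rem.~\ref{rem:finite-action-iff-L2}). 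Integrating over $[T_1,T_2]$ yields the claimed identity, \emph{provided each of the three integrals is finite}.

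Next I would verify integrability. By Rem.~\ref{rem:finite-action-iff-L2}, the hypothesis $\af(\vp)<\infty$ forces $\vp$ to be absolutely continuous and $\dot{\vp}\in L^2([T_1,T_2])$, so the left-hand side of \eqref{eq:S-alt-expression} is finite by definition. Since $\vp$ is continuous on the compact interval $[T_1,T_2]$, its image is compact in $M$, so $\dft\circ \vp$ is bounded on $[T_1,T_2]$ and hence lies in $L^2([T_1,T_2], TM)$ along $\vp$; consequently $\dot{\vp} + \dft(\vp)\in L^2$, and the first integral on the right of \eqref{eq:S-alt-expression} is finite. The cross term $\ip{\dot{\vp}}{\dft(\vp)}$ is in $L^1([T_1,T_2])$ by the Cauchy--Schwarz inequality applied to $\dot{\vp}\in L^2$ and $\dft(\vp)\in L^\infty\subset L^2$, so the second integral is likewise finite.

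There is no real obstacle here: the lemma is a bookkeeping identity, and the only care required is to check that every term in the right-hand side of \eqref{eq:S-alt-expression} makes sense once $\af(\vp)$ is finite. The argument above handles this uniformly, and the integrated identity then follows termwise from the pointwise polarization identity.
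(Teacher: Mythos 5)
Your proof is correct and uses essentially the same approach as the paper: a pointwise polarization identity followed by integration. The only cosmetic difference is that the paper first states a more general identity with an arbitrary comparison vector field $\bw$ and then specializes to $\bw = -\dft$, whereas you write the needed identity directly; your explicit verification of integrability of each term is a welcome detail the paper leaves implicit.
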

\begin{proof}
Let $\bw$ be an arbitrary continuous vector field on $M$.
By expanding both sides below, we see that  
$$\norm{\dot{\vp}-\dft(\vp)}^2  = \norm{\dot{\vp}-\bw(\vp)}^2 + \norm{\dft(\vp)}^2-\norm{\bw(\vp)}^2 + 2\ip{\dot{\vp}}{\bw(\vp)-\dft(\vp)}.$$
Taking $\bw = -\dft$, dividing by $4$, and integrating yields \eqref{eq:S-alt-expression}.
\end{proof}

\begin{Lem}\label{lem:S-lower-bound}
Let $\dft$ be a continuous vector field on a Riemannian manifold $M$ such that $\dft = \cfo^\sharp$ is the metric dual of a continuous closed one-form.
Let $\vp\colon [T_1,T_2]\to M$ be continuous.
Then
$$\af(\vp)\geq \int_\vp (-\cfo)$$
with equality if and only if $\vp$ is a segment of an integral curve of $-\dft$.
\end{Lem}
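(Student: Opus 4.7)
The plan is to reduce the statement directly to Lemma~\ref{lem:S-alt-expression}. First, I would dispose of the trivial case: if $\af(\vp)=+\infty$, then since $\cfo$ is continuous (hence bounded) and $\vp$ is continuous on the compact interval $[T_1,T_2]$, the line integral $\int_\vp(-\cfo)$ of the closed one-form along the continuous path is finite, so the inequality is automatic.

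Next I would handle the main case $\af(\vp)<\infty$. By Rem.~\ref{rem:finite-action-iff-L2}, $\vp$ is absolutely continuous with $\dot\vp\in L^2$, so $\int_\vp \cfo = \int_{T_1}^{T_2} \cfo(\dot\vp)\,dt$ agrees with the closed-form interpretation. The key observation is that because $\dft=\cfo^\sharp$ is the metric dual of $\cfo$, we have the pointwise identity $\ip{\dot\vp}{\dft(\vp)} = \cfo(\dot\vp)$, so
\begin{equation*}
\int_{T_1}^{T_2}\ip{\dot\vp}{\dft(\vp)}\,dt = \int_\vp \cfo.
\end{equation*}
Plugging this into the identity \eqref{eq:S-alt-expression} supplied by Lemma~\ref{lem:S-alt-expression} yields
\begin{equation*}
\af(\vp) = \frac{1}{4}\int_{T_1}^{T_2}\norm{\dot\vp + \dft(\vp)}^2\,dt + \int_\vp(-\cfo).
\end{equation*}
Since the first term on the right is nonnegative, the desired inequality $\af(\vp)\geq \int_\vp(-\cfo)$ follows immediately.

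For the equality case, note that equality holds if and only if the nonnegative term vanishes, i.e.\ $\dot\vp(t) = -\dft(\vp(t))$ for almost every $t\in[T_1,T_2]$. Because $\vp$ is absolutely continuous and $\dft$ is continuous, the map $t\mapsto \dft(\vp(t))$ is continuous; combining this with the fundamental theorem of calculus (in the Lebesgue sense for absolutely continuous functions) promotes the a.e.\ identity to $\vp(t)-\vp(s) = -\int_s^t \dft(\vp(r))\,dr$ for all $s\leq t$, so $\vp$ is actually $\Cont^1$ and satisfies $\dot\vp = -\dft\circ\vp$ identically, i.e.\ $\vp$ is a segment of an integral curve of $-\dft$. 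Conversely, any such segment clearly makes the nonnegative term vanish and thus achieves equality. No real obstacle is anticipated: the whole argument is a one-line manipulation once Lemma~\ref{lem:S-alt-expression} and the dual identification $\ip{\,\cdot\,}{\dft}=\cfo(\,\cdot\,)$ are in hand; the only minor subtlety is justifying that the a.e.\ ODE upgrades to an everywhere-ODE, which is standard for absolutely continuous curves against a continuous vector field.
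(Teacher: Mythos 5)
Your proof is correct and follows essentially the same route as the paper: apply Lemma~\ref{lem:S-alt-expression}, identify $\int_{T_1}^{T_2}\ip{\dot\vp}{\dft(\vp)}\,dt$ with $\int_\vp\cfo$ via the duality $\dft=\cfo^\sharp$, and read off the inequality from nonnegativity of the remaining term. The extra care you take (the $\af(\vp)=+\infty$ case, invoking Rem.~\ref{rem:finite-action-iff-L2}, and upgrading the a.e.\ ODE to an everywhere $\Cont^1$ integral curve) fills in details the paper's terse proof leaves implicit, and all of it is sound.
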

\begin{proof}
This is immediate from Lem.~\ref{lem:S-alt-expression} and the fact that, since $\bv =\cfo^\sharp$, if $\af(\vp)<\infty$ then $$-\int_{T_1}^{T_2}\ip{\dot{\vp}}{\bv(\vp)}dt=\int_{\vp}(-\cfo).$$
\end{proof}

\begin{Ex}\label{ex:tilt-pot-circle}
\begin{figure}
	\centering
	\def\svgwidth{0.8\columnwidth}
	\import{figs/}{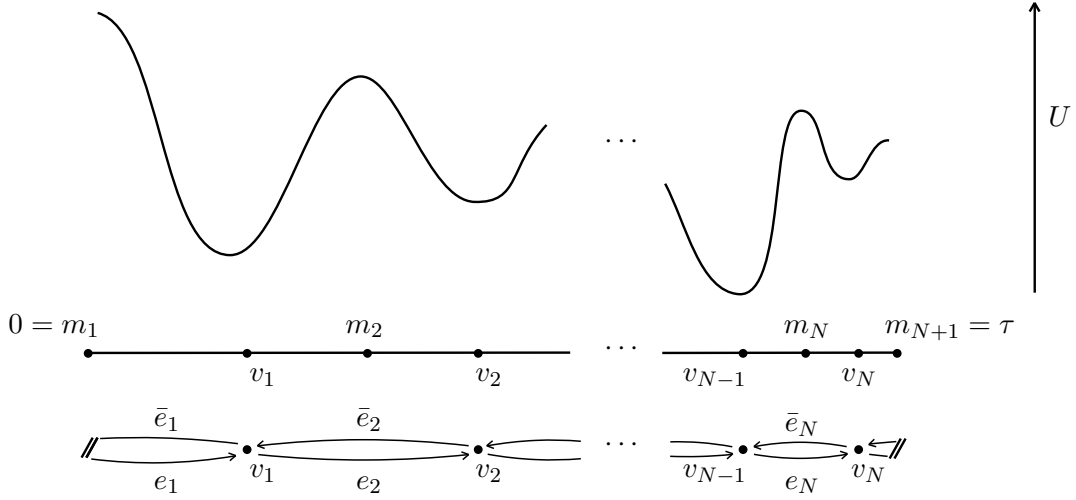}  
	\caption{An illustration of Ex.~\ref{ex:tilt-pot-circle}.}\label{fig:tilt-pot}
\end{figure}
Let $\tau>0$, let $M$ be the circle $\sph^1$ viewed as $[0,\tau]$ with $0$ and $\tau$ identified, let $G$ be any Riemannian metric on $M = \sph^1$, and let $\dft$ be a $\Cont^1$ vector field on $M$ whose chain recurrent set consists of a finite number of hyperbolic zeros.
We may and do conflate $G$ with a smooth $\tau$-periodic function $\R\to (0,\infty)$ and $\dft$ with a  $\Cont^1$ $\tau$-periodic function $\dft\colon \R\to \R$.
Let $U\colon \R\to \R$ be any antiderivative of $-g\dft = -\dft^\flat$ (so that $U' = -g\dft$ and $\dft = -(1/g)U' = -(dU)^\sharp$), and note that $U$ is a $\Cont^2$ Morse function on $\R$.
After a translation of coordinates if necessary, we may assume that $U'(0) = U'(\tau) = 0$.

The Morse function $U|_{[0,\tau]}$ has some finite number $N \geq 1$ of local minimizers and $N+1$ local maximizers.
We order the local minimizers $v_1, v_2,\ldots, v_N$ and maximizers $0 = m_1, m_2,\ldots, m_{N+1} = \tau$ so that
$$m_1 < v_1 < m_2 < v_2 < \cdots < m_N < v_N < m_{N+1}.$$
See Fig.~\ref{fig:tilt-pot}.
The directed graph $\gh = (\vtx,\Eh,\src,\tgt)$ is defined by 
\begin{alignat*}{1}
\vtx&\coloneqq \{v_1,v_2,\ldots, v_N\} \qquad  \Eh\coloneqq  \{e_1,e_2,\ldots, e_N\} \cup \{\bar{e}_1,\bar{e}_2,\ldots, \bar{e}_N\} \cup \edg',
\end{alignat*} 
where the edges $e_i$ go to the right, the reversals $\bar{e}_i$ go to the left, and $\tgt$ and $\src$ respectively send the arrows $e_i$ and $\bar{e}_i$ to their tips and tails; $\edg'\subset \Eh$ is defined to be the infinite set of path homotopy classes $e$ that do not contain a path $\vp$ satisfying  $(\vp|_{\interior(\dom(\vp))})^{-1}(\vtx)=\varnothing$, and thus satisfy $\tqpd(e) = +\infty$ (cf. \eqref{eq:tqpd-def}).

For any $i$ we may choose a path $\vp\in \Cont_{e_i}([0,T_1],M)$ to first go from $\src(e_i)$ to $m_i$ while approximating an integral curve of $-\dft$, then go from $m_i$ to $\tgt(e_i)$ while approximating an integral curve of $\dft$.
The approximations can be made arbitrarily good by taking $T_1$ sufficiently large (cf. \cite[p.~143, Lem.~1.1]{freidlin2012random}), so \eqref{eq:action-functional} and Lem.~\ref{lem:S-alt-expression}, \ref{lem:S-lower-bound} imply that
\begin{equation}\label{eq:ex-1d-case-tqp}
\begin{split}
\forall i\in \{2,\ldots, N\}\colon \tqpd(e_i) &= U(m_i) - U(\src(e_i)) \quad \textnormal{and} \quad \tqpd(\bar{e}_i) = U(m_i) - U(\tgt(e_i))
\end{split}
\end{equation}
and 
\begin{equation}\label{eq:ex-1d-case-tqp-2}
\tqpd(e_1)  = U(\tau)-U(v_N), \qquad \quad\,\, \tqpd(\bar{e}_1) = U(0)-U(v_1),
\end{equation}
where the expressions with the reversals $\bar{e}_i$ follow from the same argument described above for the $e_i$.
(For more details, this also follows from Lem.~\ref{lem:qp-leq-g-W}.)

Let $\cfo = a(x)dx$ be any $\Cont^1$ closed one-form on $M = \sph^1$ which is closed but not exact.
Then there are only two cycle-rooted spanning trees $\edg,\bar{\edg}\in \CRST(\gh)$ for which $\cfo(\cycle(\slot))\neq 0$ and $\tqpd(\slot)$ has finite values on all edges: $\edg = \{e_1,\ldots, e_N\}$ and $\bar{\edg} = \{\bar{e}_1,\ldots, \bar{e}_N\}$.
(Note that $\cycle(\edg) = \edg$ and $\cycle(\bar{\edg}) = \bar{\edg}$.) 
From \eqref{eq:ex-1d-case-tqp} and \eqref{eq:ex-1d-case-tqp-2} we see that
\begin{equation}\label{eq:ex-1d-case-tqpd-difference}
\sum_{e\in \bar{\edg}} \tqpd(e) - \sum_{e\in \edg}\tqpd(e) = U(0) - U(\tau),
\end{equation}
so the assumption \eqref{eq:min-rst-assumption} of Theorem~\ref{th:flux-manifold-mc-CRST-ld} is satisfied if and only if
$U(\tau)\neq U(0)$ and $\textnormal{sign}(\int_0^\tau a(x)dx) = \textnormal{sign}(U(0)-U(\tau))$.
We henceforth assume this is the case.
By a reflection of $\R$ if necessary, we may and do henceforth assume that
\begin{equation}\label{eq:tilt-pot-condition}
\int_0^\tau a(x)dx > 0 \qquad \textnormal{and} \qquad U(0) > U(\tau).
\end{equation}
Theorem~\ref{th:flux-manifold-mc-CRST-ld} then implies that, if $\dfte$ is a smooth vector field on $M$ for each $\varepsilon > 0$ and $\dfte \to \dft$ uniformly as $\varepsilon \to 0$, the diffusion on $M$ with generator $\dfte + \varepsilon \Delta$ satisfies $\fluxe([\cfo]) > 0$ for all sufficiently small $\varepsilon > 0$ and
\begin{align}
\lim_{\varepsilon\to 0}(-\varepsilon \ln \fluxe([\cfo])) &= \sum_{e\in \edg} \tqpd(e) - \left(\min_{\substack{T \in \RST(\gh)}}\sum_{e\in T}\tqpd(e)\right) \nonumber\\
&= \max_{T\in \RST(\gh)} \left(\sum_{e\in \edg} \tqpd(e) - \sum_{e\in T}\tqpd(e)\right). \nonumber
\end{align}
If $T_*\in \RST(\gh)$ is any minimizer, $v_*$ is the root of $T_*$, and $m_* \in [v_*, v_*+\tau]$ corresponds to the local maximizer of $U$ in the unique edge in $\edg\setminus T_*$, then all of the following expressions follow from the above and \eqref{eq:ex-1d-case-tqp}, \eqref{eq:ex-1d-case-tqp-2}:
\begin{align}
\lim_{\varepsilon\to 0}(-\varepsilon \ln \fluxe([\cfo])) &= U(m_*)-U(v_*) \label{eq:ex-1d-case-flux-ld-0}\\
&= \max_{y\in [v_*,v_*+\tau]}U(y)-U(v_*) \label{eq:ex-1d-case-flux-ld-1}\\
&= \max_{\substack{x\in [0,\tau]\\y\in [x,x+\tau]}} \label{eq:ex-1d-case-flux-ld-2} U(y)-U(x).
\end{align}
We see that the result does not depend on the particular choice of closed one-form $\cfo = a(x)dx$, or even on the specific cohomology class of $\cfo$, as long as \eqref{eq:tilt-pot-condition} is satisfied.

In particular, the one-form $-U'(x)dx$ is closed but not exact since $U(0)>U(\tau)$, so we may take $\cfo = -U'(x)dx$.
In this case, $h_*$ as defined in \eqref{eq:h-star-def-morse} coincides with the right side of \eqref{eq:ex-1d-case-flux-ld-0}.
This proves Theorem~\ref{th:qualitative} in the case $\dim(M) = 1$.
In fact, it proves something stronger: the ``sufficiently close'' hypothesis of Theorem~\ref{th:qualitative} is not needed in this $1$-dimensional case (cf. Rem.~\ref{rem:th-qualitative-sufficiently-close}) as long as $\cfo^\sharp$ satisfies Assumption~\ref{assump:morse-smale} (which, in the case $\dim(M)=1$, is equivalent to our assumption that the chain recurrent set of $\dft$ consists of a finite number of hyperbolic zeros), and the hypothesis that the minimizer $T_*$ in \eqref{eq:morse-rst-minimizer} is unique is also not needed. 

Finally, we show that the above expressions for the large deviations of $\fluxe([\cfo])$ are consistent with a known result.
Consider the special case that $\dfte \equiv \dft = -U'$, the Riemannian metric $G$ is the standard Euclidean one, and $\int_{\sph^1}\cfo = \int_0^\tau a(x)dx = \tau$ (e.g., $\cfo = dx$).
The following result has appeared in the literature \cite[Eq.~18,~20]{reimann2002diffusion}:
\begin{equation}\label{eq:ex-1d-case-flux-analytical}
\fluxe([\cfo]) = \frac{\tau \varepsilon  (1-e^{-\frac{c}{\varepsilon}})}{\int_0^{\tau} \int_{x}^{\tau+x} e^{\frac{1}{\varepsilon}(U(y)-U(x))}\, dy\, dx }, 
\end{equation}
where $c\coloneqq U(0) - U(\tau) > 0$.
Eq.~\eqref{eq:ex-1d-case-flux-analytical} can be obtained by solving the stationary Fokker-Planck equation \eqref{eq:fokker-planck} by quadrature since \eqref{eq:fokker-planck} amounts to an ordinary differential equation with periodic boundary conditions in the case $M = \sph^1$.
Computing $\lim_{\varepsilon \to 0}(-\varepsilon\ln(\slot))$ of the right side of \eqref{eq:ex-1d-case-flux-analytical} using Laplace's method \cite[pp.~55--56]{freidlin2012random} yields \eqref{eq:ex-1d-case-flux-ld-2}, as claimed.
\end{Ex}

\section{Proof of Theorem~\ref{th:flux-manifold-mc-CRST-ld}}\label{sec:proof-th-flux-manifold-mc-CRST-ld}
In this section we prove Theorem~\ref{th:flux-manifold-mc-CRST-ld} in several steps.
We first establish some results concerning the quasipotential in \S \ref{sec:quasipotential}.
In \S \ref{sec:markov-chain-flux-expression} we define a discrete-time Markov chain intended to approximate the diffusion process, and we derive a Markov chain expression for the steady-state flux.
We then derive estimates on transition times and probabilities related to this approximating Markov chain in \S \ref{sec:transition-times-probabilities-estimates}.
Using these results, we complete the proof of Theorem~\ref{th:flux-manifold-mc-CRST-ld} in \S \ref{sec:finishing-the-proof-crst}.
\subsection{Quasipotential results}\label{sec:quasipotential}
Throughout this section $M$ is a closed Riemannian manifold.
The following definitions are used to formulate Prop.~\ref{prop:qp-int-conditions-iff-connecting-piecewise-orbit}.
\begin{Def}[omega-limit and alpha-limit sets {\cite[p.~29]{conley1978isolated}}]\label{def:omega-limit}
Let $\Phi\colon \R\times M\to M$ be the flow of a $\Cont^1$ vector field $\dft$.
Given $x\in M$, we define the \concept{$\omega$-limit set} and \concept{$\omega^*$-limit set} via  $$\omega(x)\coloneqq \bigcap_{T\geq 0}\cl(\Phi^{[T,\infty)}(x)) \qquad \omega^*(x)\coloneqq \bigcap_{T\geq 0}\cl(\Phi^{(-\infty, -T]}(x)).$$
Given any $\dft$-integral curve segment $\gamma$ of $\dft$ with connected domain, we define $\omega(\gamma)\coloneqq \omega(\gamma(t))$  and $\omega^*(\gamma)\coloneqq \omega^*(\gamma(t))$ for any $t\in \dom(\gamma)$; this does not depend on the choice of $t$ \cite[p.~12]{palis1980geometric}.
\end{Def}
\begin{Rem}
If $\dft$ is as in Def.~\ref{def:omega-limit} and the chain recurrent set $R(\dft)$ is finite, then $R(\dft) = \dft^{-1}(0)$ and every integral curve $\gamma\colon \R\to M$ of $\dft$ satisfies $$\omega(\gamma) = \lim_{t\to+\infty}\gamma(t)\in \dft^{-1}(0) \quad \textnormal{ and } \quad \omega^*(\gamma) = \lim_{t\to -\infty}\gamma(t)\in \dft^{-1}(0).$$ 
\end{Rem}

Up to reparametrization, the following definition is a minor extension of what is known in the literature by names such as ``piecewise flow-lines''  \cite[p.~3]{cohen1995morse}, ``broken orbits''  \cite[p.~138]{weber2006morse}, and ``broken tunnelings'' \cite[p.~231]{nicolaescu2011invitation}.
It is a minor extension only because we do not require the initial and final integral curves $\gamma_1$ and $\gamma_N$ in Def.~\ref{def:piecewise-integral-curves} to be maximal (we do not require that they are defined on all of $\R$).
\begin{Def}[piecewise $\dft$-integral curves]\label{def:piecewise-integral-curves}
Let $\dft$ be a $\Cont^1$ vector field such that $R(\dft)$ is finite.
Let $(\gamma_1,\gamma_2,\ldots, \gamma_N)$ be a finite sequence of $\dft$-integral curve segments.
Denote by $\dom(\gamma_j)$ the domain of $\gamma_j$.
We say that this sequence is a \concept{piecewise $\dft$-integral curve} if 
\begin{itemize}
	\item $\dom(\gamma_j)=\R$ for $j\not \in \{1,N\}$,
	\item $\dom(\gamma_1)= [t_1,\infty)\cap \R$ and $\dom(\gamma_N)=(-\infty,t_N]\cap \R$  for some $t_1, t_N\in \R\cup \{-\infty, +\infty\}$ if $N\geq 2$, and 
	\item $\omega(\gamma_j) = \omega^*(\gamma_{j+1})$ for all $j\in \{1,\ldots, N-1\}$. 
\end{itemize}
Given $x,y\in M$, we say that the sequence is a piecewise $\dft$-integral curve \concept{from $x$ to $y$} if either $\gamma_1(t_1) = x$ or $\omega^*(\gamma_1) = x$ and either $\gamma_N(t_N)=y$ or $\omega(\gamma_N)=y$.
\end{Def}

\begin{Def}\label{def:int-curve-homotopy-class}
Given any $a,b\in [-\infty,+\infty]$ with $a<b$ and any curve $\gamma\colon [a,b]\cap \R\to M$ such that $\lim_{t\to a}\gamma(t)$ and $\lim_{t\to b}\gamma(t)$ exist, $[\gamma]\in \Pi(M)$ is defined to be the path homotopy class of the unique continuous extension to $[0,1]$ of any reparametrization of $\gamma|_{(a,b)}$ with domain $(0,1)$.
In particular, if $\dft$ is a $\Cont^1$ vector field on $M$ such that $R(\dft)$ is finite and if $c = (\gamma_1,\gamma_2,\ldots,\gamma_N)$ is a piecewise $\dft$-integral curve, then each $\gamma_j$ satisfies these assumptions.
In this case, we define $[c]\coloneqq [\gamma_1][\gamma_2]\cdots[\gamma_N]$. 
If $[c]=e\in \Pi(M)$, we say that \concept{$e$ contains the piecewise integral curve} $c$. 
\end{Def}
The definition \eqref{eq:action-functional} of the action $\af(\slot)$ and Lem.~\ref{lem:S-alt-expression} make the following result intuitively plausible. 
We defer the proof to App.~\ref{app:proofs}.
\begin{restatable}[]{Prop}{PropQpIntIffPiecewise}\label{prop:qp-int-conditions-iff-connecting-piecewise-orbit}Let $\dft$ be a $\Cont^1$ vector field on a closed Riemannian manifold $M$. 
Assume that the chain recurrent set $R(\dft)$ is finite.
Then for any $e\in \Pi(M)$, $$\qpd(e) = 0 \quad \iff \quad \textnormal{e contains a piecewise $\dft$-integral curve}$$
and
$$\qpd(e) = \inf_{[\vp]=e}\left(-\int_{T_1}^{T_2}\ip{\dot{\vp}}{\dft(\vp)}dt\right) \quad \iff \quad \textnormal{e contains a piecewise $(-\dft)$-integral curve},$$
where the infimum is over absolutely continuous paths $\vp$ of the form $\vp\colon [T_1,T_2]\to M$ with square integrable derivative and satisfying $[\vp]=e$. 
In particular, if $\dft = \cfo^\sharp$ is the metric dual of a closed one-form $\cfo$, then
$$\qp_{\cfo^\sharp}(e) = \int_{e}(-\cfo) \quad \iff \quad \textnormal{e contains a piecewise $(-\cfo^\sharp)$-integral curve}.$$
\end{restatable}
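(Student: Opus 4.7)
The plan is to prove the three equivalences in sequence. The first is the core technical statement; the second reduces to the first applied to $-\dft$ via Lem.~\ref{lem:S-alt-expression}; the third specializes the second to closed $\cfo$.

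For the first equivalence, the $(\Leftarrow)$ direction is constructive: given a piecewise $\dft$-integral curve $c=(\gamma_1,\ldots,\gamma_N)$ in $e$, I would build a sequence of absolutely continuous paths $\vp_n\colon [T_1^n,T_2^n]\to M$ with $[\vp_n]=e$ and $\af(\vp_n)\to 0$. Each $\vp_n$ follows $\gamma_j$ on long time intervals truncated inside shrinking neighborhoods of the intermediate zeros where $\omega(\gamma_j)=\omega^*(\gamma_{j+1})$, and connects consecutive pieces by short detours in local charts. Because the zeros are hyperbolic, local linearization and the exponential rates along the stable/unstable directions give precise control of the lengths and speeds of these detours, driving their action contributions to zero. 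For the $(\Rightarrow)$ direction, $\af(\vp_n)\to 0$ yields $\dot\vp_n-\dft(\vp_n)\to 0$ in $L^2$; compactness of $M$ and finiteness of the chain recurrent set let me extract, via Arzel\`a-Ascoli and a diagonal argument across a shrinking family of neighborhoods of $\dft^{-1}(0)$, a subsequence converging to a piecewise $\dft$-integral curve in $e$.

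For the second equivalence, Lem.~\ref{lem:S-alt-expression} gives the identity $\af(\vp)=\af_{-\dft}(\vp)+I(\vp)$, where $I(\vp)\coloneqq -\int\ip{\dot\vp}{\dft(\vp)}\,dt$ and $\af_{-\dft}$ denotes the action associated to the vector field $-\dft$. Since $\af_{-\dft}\geq 0$ pointwise, $\qpd(e)\geq \inf_{[\vp]=e}I(\vp)$. Applying the first equivalence to $-\dft$ gives $\inf_{[\vp]=e}\af_{-\dft}(\vp)=0$ iff $e$ contains a piecewise $(-\dft)$-integral curve. When such a curve exists, an approximating sequence $\vp_n$ with $\af_{-\dft}(\vp_n)\to 0$ satisfies $\af(\vp_n)-I(\vp_n)\to 0$, which forces $\af(\vp_n)$ and $I(\vp_n)$ to share a common limit realizing $\qpd(e)=\inf_{[\vp]=e}I(\vp)$. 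Conversely, if $\qpd(e)=\inf_{[\vp]=e}I(\vp)$ then any $\af$-minimizing sequence $\vp_n$ satisfies $\af_{-\dft}(\vp_n)=\af(\vp_n)-I(\vp_n)\to 0$, and the first equivalence applied to $-\dft$ produces the required piecewise $(-\dft)$-integral curve. The third equivalence then follows by noting that when $\cfo$ is closed, $I(\vp)=-\int_\vp\cfo$ depends only on $[\vp]=e$, so $\inf_{[\vp]=e}I(\vp)=\int_e(-\cfo)$ and the second equivalence specializes to $\qp_{\cfo^\sharp}(e)=\int_e(-\cfo)$.

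The main technical obstacle is the $(\Rightarrow)$ direction of the first equivalence: producing a piecewise integral curve as a limit of paths with vanishing action, where the subtlety is that integral curves approach hyperbolic zeros only asymptotically, so the time intervals $[T_1^n,T_2^n]$ of $\vp_n$ may grow without bound and must be carefully managed. This is a Freidlin-Wentzell-style argument adapting techniques from \cite[Ch.~6]{freidlin2012random} to the path-homotopical setting, inducting over the partial order on $\dft^{-1}(0)$ induced by the heteroclinic connections to track both the excursion structure of $\vp_n$ and its homotopy class simultaneously.
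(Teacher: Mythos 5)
Your overall plan mirrors the paper's: isolate the first equivalence as the core lemma, deduce the second via Lem.~\ref{lem:S-alt-expression} by decomposing $\af(\vp)=\af^{-\dft}(\vp)+I(\vp)$ with $I(\vp)\coloneqq-\int\ip{\dot{\vp}}{\dft(\vp)}\,dt$ and applying the first equivalence to $-\dft$, then specialize to closed $\cfo$. The paper does exactly this, proving the first equivalence separately as Lem.~\ref{lem:qp-zero-iff-connecting-piecewise-orbit}.

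Two issues with your treatment of the first equivalence. In the $(\Leftarrow)$ direction you invoke hyperbolicity, local linearization, and exponential rates at the zeros. But the proposition only assumes $R(\dft)$ finite, so no hyperbolic normal form is available---and none is needed: the paper truncates the integral-curve segments inside small balls around the zeros and bridges with short connectors whose action is small by \cite[p.~143,~Lem.~1.1]{freidlin2012random}, a fact using only continuity of $\dft$. The genuine gap is in the $(\Rightarrow)$ direction. You say Arzel\`a--Ascoli and a diagonal argument ``extract a subsequence converging to a piecewise $\dft$-integral curve in $e$,'' but this is precisely the step that needs an idea beyond compactness: after reparametrization a subsequence converges to a \emph{single} integral-curve segment, and nothing in the sketch shows that the stitching process terminates or that the accumulated path-homotopy class exhausts $e$ rather than falling short. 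The paper's Lem.~\ref{lem:qp-zero-iff-connecting-piecewise-orbit} resolves this by contradiction via a maximality device: assume $e$ contains no piecewise $\dft$-integral curve, take a piecewise $\dft$-integral curve $c$ with $\qpd([c]^{-1}e)=0$ and $[c]\neq e$ admitting no forward extension preserving these properties, and then show---splitting on whether the local exit times are bounded or not---that $c$ can in fact be extended. Your closing suggestion to ``induct over the partial order on $\dft^{-1}(0)$ induced by heteroclinic connections'' targets the same obstruction but is a different and unexecuted idea; as stated it does not close the gap.

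A smaller remark on the second equivalence: your $(\Leftarrow)$ argument asserts that a sequence $\vp_n$ with $\af^{-\dft}(\vp_n)\to 0$ forces $\af(\vp_n)$ and $I(\vp_n)$ to share a common limit ``realizing $\qpd(e)=\inf I$.'' Such a sequence need not make $I(\vp_n)$ tend to $\inf_{[\vp]=e}I(\vp)$, so the common limit you extract is not a priori the infimum; only the forward implication follows cleanly from $\qpd(e)\geq\qp_{(-\dft)}(e)+\inf_{[\vp]=e}I(\vp)$. The paper's own derivation of this step is comparably terse; what it actually uses downstream is the closed-$\cfo$ specialization, where $I(\vp)=\int_e(-\cfo)$ is constant on $[\vp]=e$, so $\qpd(e)=\qp_{(-\dft)}(e)+\int_e(-\cfo)$ holds exactly and the biconditional is immediate.
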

We expect the following result is known.
However, since we were unable to find a proof of this exact statement in the literature, we provide a proof in App.~\ref{app:proofs}.
\begin{restatable}[]{Lem}{OrbitsBoundedLength}\label{lem:orbits-bounded-length}	Let $\dft$ be a $\Cont^1$ vector field on a closed Riemannian manifold $M$.
Assume that $R(\dft)$ consists of a finite number of hyperbolic zeros.
Then there exists $C>0$ such that, if $\gamma$ is any integral curve of $\dft$,
\begin{equation}
\textnormal{length}(\gamma)<C.
\end{equation}
\end{restatable}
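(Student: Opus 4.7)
The plan is to decompose $\textnormal{length}(\gamma)$ into contributions from disjoint isolating neighborhoods $U_z$ of each hyperbolic zero $z\in\dft^{-1}(0)$ and from their compact complement $K=M\setminus\bigcup_z U_z$, and to bound both uniformly. The three ingredients are a local length estimate near each hyperbolic zero, a smooth Lyapunov function produced by Conley's fundamental theorem of dynamical systems, and the combinatorial structure of Conley isolating blocks.

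First I would establish a uniform local length bound: for each hyperbolic zero $z$ there exist a neighborhood $U_z$ and a constant $K_z$ such that any integral curve segment contained in $U_z$ has length at most $K_z$. Working in local coordinates in which $\dft(x)=Ax+O(\|x\|^2)$ with $A$ hyperbolic, I would use an adapted inner product on $T_zM=E^s\oplus E^u$ giving $\langle Ax_s,x_s\rangle\leq -\lambda\|x_s\|^2$ and $\langle Ax_u,x_u\rangle\geq\lambda\|x_u\|^2$. For any segment of an integral curve staying in a $\delta$-ball, Gronwall-type estimates show that $\|x_s(t)\|$ decays exponentially going forward and $\|x_u(t)\|$ decays exponentially going backward, from values bounded by $\delta$ at the boundary of the segment. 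Combined with $\|\dft(x)\|\leq C(\|x_s\|+\|x_u\|)$, integrating yields a length bound of order $\delta/\lambda$ uniform in the segment. Crucially this handles orbits spending infinite time in $U_z$ (those lying in the stable manifold of $z$) because the estimate comes from exponential decay of speed rather than any time cutoff.

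Next I would invoke the smooth version of Conley's fundamental theorem to obtain a $C^\infty$ function $L\colon M\to\R$ with $\dot L(x)=\langle\nabla L(x),\dft(x)\rangle<0$ on $M\setminus\dft^{-1}(0)$ and $L$ taking distinct values on the zeros $z_1,\ldots,z_k$; set $\eta=\min_{i\neq j}|L(z_i)-L(z_j)|$. I would further shrink the $U_z$'s so that they are disjoint Conley isolating blocks whose boundaries decompose $\partial U_z=\partial^-U_z\sqcup\partial^+U_z$ with $\dft$ transverse to each piece, satisfy $L(\overline{U_z})\subset[L(z)-\eta/3,L(z)+\eta/3]$, and satisfy $\textnormal{dist}(\partial^-U_z,\partial^+U_z)\geq d_z>0$; set $\epsilon=\inf_K(-\dot L)>0$ and $C'=\max_M\|\dft\|$, both finite and positive by compactness.

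The combinatorial key is a uniform bound on the number of times a single orbit visits each $U_z$. Along $\gamma$, any re-entry into $U_z$ after an exit must occur entirely through $K$: an intermediate excursion into $U_{z'}$ with $z'\neq z$ would force the $\eta/3$-windows around $L(z)$ and $L(z')$ to overlap along the monotone $L$-decrease, contradicting the $\eta$-separation. Between any two consecutive visits the orbit must travel distance at least $d_z$ through $K$, taking time $\geq d_z/C'$ and inducing an $L$-drop of at least $\epsilon d_z/C'$. Since the $L$-values of all entries and exits of $U_z$ lie in a window of width $\leq 2\eta/3$, the number of visits to $U_z$ along a single orbit is at most $1+2\eta C'/(3\epsilon d_z)$. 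Hence the length inside $\bigcup_z U_z$ is bounded by $\sum_z K_z(1+2\eta C'/(3\epsilon d_z))$, and the length in $K$ is bounded by $C'(\max L-\min L)/\epsilon$ because the total time in $K$ is at most $(\max L-\min L)/\epsilon$. Summing yields the required uniform constant $C$. The main technical obstacle is the local hyperbolic length estimate—subtle because the time spent in $U_z$ can be infinite along orbits in the stable manifold, so the bound has to come from the exponential decay of $\|\dft\|$—together with constructing isolating blocks whose entry and exit boundaries are separated.
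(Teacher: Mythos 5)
Your local length estimate near each hyperbolic zero and the decomposition of $M$ into the $U_z$'s and the compact complement $K$ match the paper's proof; your bound on the length through $K$ via $-\dot L \geq \epsilon$ (so the time spent in $K$ is at most $(\max L - \min L)/\epsilon$) is a clean alternative to the paper's argument, which instead shows that each excursion through $M\setminus \bigcup_z U_z$ has a uniformly bounded duration and counts at most $\#\dft^{-1}(0)$ of them.

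The gap is in the visit-counting step. You posit isolating blocks with $\partial U_z = \partial^- U_z \sqcup \partial^+ U_z$, $\dft$ transverse to each piece, and $\textnormal{dist}(\partial^- U_z,\partial^+ U_z)\geq d_z>0$. This cannot be arranged when $z$ is a saddle-type zero (Morse index strictly between $0$ and $n$): the entry set and exit set of an isolating block around a saddle necessarily meet along a nonempty tangent set where $\dft$ grazes $\partial U_z$ (for a product block, a ball in the stable direction times a ball in the unstable direction, the tangent set is the product of the two boundary spheres; smoothing the corners moves but does not remove the tangencies). Hence $d_z = 0$ and your lower bound $\epsilon d_z/C'$ on the $L$-drop between consecutive visits to $U_z$ is vacuous, so the count $1+2\eta C'/(3\epsilon d_z)$ gives nothing. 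Your ``no intermediate excursion'' observation is correct, but it only tells you that the orbit stays in $K$ between consecutive visits; you still need a uniformly positive lower bound on how long it stays there, which the distance was supposed to supply. The paper sidesteps the count entirely by arranging that each orbit visits each $U_z$ at most once: it takes $U_z$ to be the connected component containing $z$ of $\tilde{U}_z\cap\Phi^{\R}(N)$ for a small neighborhood $N$ of $z$ --- a flow-saturated tube following $\Ws(z)\cup\Wu(z)$ that misses the corners of the rectangular block --- so that the entry part of $\partial U_z$ has Lyapunov values $>h(z)+\delta$ and the exit part has Lyapunov values $<h(z)-\delta$. Since $h$ is nonincreasing along orbits, an orbit that has exited can never re-enter. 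If you want to retain a count-based argument you would need precisely this separation of entry and exit Lyapunov values, after which the count collapses to one.
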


\begin{Def}\label{def:length}
	Given $e\in \Pi(M)$, we define $$\length(e)\coloneqq \inf\{\length(\vp)\colon \vp\in \Cont_e([0,1],M) \textnormal{ and }\vp \textnormal{ is absolutely continuous}\}.$$
\end{Def}

\begin{Lem}\label{lem:univ-cover-qp-growth}
	Let $\dft$ be a $C^1$ vector field on a closed Riemannian manifold $M$.
	Assume that $R(\dft)$ consists of a finite number of hyperbolic zeros.
	Then there exists $m,c>0$ such that, for all $e\in \Pi(M)$,
	\begin{equation}\label{eq:univ-cover-qp-growth-1}
	\qpd(e) \geq m \cdot \length(e)-c.
	\end{equation}
	Equivalently, if $\pi\colon \tM \to M$ is the smooth universal cover equipped with the pullback metric and $\qp_{\tdft}$ is the quasipotential defined with respect to the lift $\tdft$ of $\dft$ to $\tM$, then for all $x,y\in \tM$,
	\begin{equation}\label{eq:univ-cover-qp-growth-2}
	\qptd(x,y) \geq m \cdot \dist{x}{y}-c.
	\end{equation}
	Furthermore, if $\cfo$ is any closed 1-form on $M$, then for all $e\in \Pi(M)$,
	\begin{equation}\label{eq:univ-cover-qp-growth-3}
	\qpd(e) \geq m \cdot \left|\int_{e}\cfo\right| - c
	\end{equation}
	where $\int_e\cfo \coloneqq \int_{\vp}\cfo$ for any continuous path $\vp$ with $[\vp] = e$.
\end{Lem}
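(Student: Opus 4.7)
The plan is to derive (2) and (3) from (1), then prove (1) by a sub-additivity argument combined with a compactness claim in the universal cover.

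For (1)$\iff$(2): The universal covering $\pi\colon \tM \to M$ is a local isometry with $\tM$ simply connected. Fixing a lift of $\src(e)$, every $e \in \Pi(M)$ determines, via the homotopy lifting property, a unique pair $(\tilde x, \tilde y) \in \tM\times\tM$ together with the unique path class in $\tM$ connecting them, and one has $\length(e) = \distT{\tilde x}{\tilde y}{\tM}$ and $\qpd(e) = \qptd(\tilde x,\tilde y)$ since action and length are preserved under lifting by local isometry. For (1)$\implies$(3): the pullback $\pi^*\cfo$ is exact on the simply-connected $\tM$, say $\pi^*\cfo = df$ for some $C^1$ function $f$, so $\left|\int_e\cfo\right| = |f(\tilde y)-f(\tilde x)| \leq \sup_{M}\norm{\cfo}\cdot\distT{\tilde x}{\tilde y}{\tM} = \sup_{M}\norm{\cfo}\cdot \length(e)$, with $\sup_{M}\norm{\cfo}$ finite by compactness of $M$; (3) follows from (1) by adjusting the constants.

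It thus suffices to prove the following pointwise strengthening of (2): there exist $m, c > 0$ such that $\af(\tilde\vp) \geq m\,\distT{\tilde\vp(T_1)}{\tilde\vp(T_2)}{\tM} - c$ for every absolutely continuous $\tilde\vp\colon [T_1,T_2]\to \tM$. The key reduction is to the following \emph{compactness claim}: there exist $\delta, D > 0$ such that every AC path $\tilde\vp$ with $\af(\tilde\vp) \leq \delta$ satisfies $\distT{\tilde\vp(T_1)}{\tilde\vp(T_2)}{\tM} \leq D$. Granted this claim, since $t\mapsto \af(\tilde\vp|_{[T_1,t]})$ is continuous and non-decreasing, any AC path $\tilde\vp$ splits at the times when the accumulated action first reaches $\delta, 2\delta,\ldots$ into at most $N \leq \lceil \af(\tilde\vp)/\delta\rceil + 1$ consecutive subpaths of action at most $\delta$ each; applying the compactness claim to each and summing via the triangle inequality yields $\distT{\tilde\vp(T_1)}{\tilde\vp(T_2)}{\tM} \leq ND \leq (\af(\tilde\vp)/\delta + 2)D$, which rearranges to the desired bound with $m = \delta/D$ and $c = 2\delta$.

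The compactness claim is the main obstacle, and I plan to prove it by contradiction. Suppose AC paths $\tilde\vp_n$ in $\tM$ satisfy $\af(\tilde\vp_n) \to 0$ while $\distT{\tilde\vp_n(T_{1,n})}{\tilde\vp_n(T_{2,n})}{\tM}\to \infty$; by invariance of $\af$ and distance under deck transformations, one may place every starting point in a fixed fundamental domain. Projecting to $M$ and exploiting compactness of the action's sub-level sets (applied to arc-length reparametrizations restricted to bounded time intervals, together with a diagonal argument), sub-sequential limits of the projected paths have zero action and, by Proposition~\ref{prop:qp-int-conditions-iff-connecting-piecewise-orbit}, contain piecewise $\dft$-integral curves on $M$. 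The structural input is that finiteness of $R(\dft)$ precludes nontrivial heteroclinic cycles, so any piecewise $\dft$-integral curve on $M$ visits each index-$0$ zero at most once and thus has at most $|\dft^{-1}(0)|$ pieces; together with the uniform orbit length bound from Lemma~\ref{lem:orbits-bounded-length}, this gives a universal bound on the total length of any piecewise $\dft$-integral curve on $M$, hence on $\tM$ by lifting. A Gronwall-type quantitative closeness estimate comparing $\tilde\vp_n$ to an approximating lifted piecewise integral curve then forces the endpoint displacement of $\tilde\vp_n$ to stay uniformly bounded, contradicting the assumption. I expect the hardest part to be making this closeness estimate precise under the lifting, since Gronwall comparisons between $\tilde\vp_n$ and an integral curve of $\tdft$ are only valid on bounded time intervals with exponentially degrading constants, so the number of ``transitions'' the lifted path makes near lifted zeros will need to be carefully controlled by the action budget $\delta$.
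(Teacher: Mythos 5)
Your reductions are all sound: the identifications $\length(e)=\dist{\tilde x}{\tilde y}$ and $\qpd(e)=\qptd(\tilde x,\tilde y)$ on the universal cover are correct (since $\pi$ is a local isometry and $\tM$ is simply connected), and the derivation of \eqref{eq:univ-cover-qp-growth-3} from \eqref{eq:univ-cover-qp-growth-1} via $|\int_e\cfo|\leq(\sup_M\norm{\cfo})\length(e)$ matches the paper's. Your chaining argument from the ``compactness claim'' to the linear lower bound is also correct and is essentially the same concatenation-of-annular-crossings step the paper performs at the end of its proof. And your compactness claim itself is exactly equivalent to the paper's intermediate assertion that $\qptd(x,y)>\varepsilon$ whenever $\dist{x}{y}\geq K_3$, for some uniform $\varepsilon,K_3>0$.

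The genuine gap is where you predict it: the proof of the compactness claim. You propose to argue by contradiction, extract sub-sequential limits of projected paths $\vp_n$ with vanishing action, identify the limits as piecewise $\dft$-integral curves of bounded length, and then ``transfer'' a displacement bound back to $\tilde\vp_n$ via a Gronwall estimate. The problem is that compactness of $\af$-sublevel sets only gives convergence of $\vp_n|_{[T_1,T]}$ for each \emph{fixed} $T$, while the displacement $\dist{\tilde\vp_n(T_{1,n})}{\tilde\vp_n(T_{2,n})}\to\infty$ may accrue entirely at times $T_{2,n}\to\infty$ that escape every compact window. A diagonal subsequence only ever controls an initial segment of the lifted path; the rest of $\tilde\vp_n$ is invisible to the limit. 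Your proposed fix---partitioning into bounded-time pieces and Gronwall-comparing each piece to a lifted integral curve---is precisely what needs a uniform bound on the \emph{number} of such pieces, and obtaining that bound without circularity is the whole substance of the claim (you cannot appeal to a ``minimum transition cost,'' because near a hyperbolic saddle two integral curves can be joined at essentially zero action cost, so there is no per-transition cost floor; the global bound comes from the \emph{absence of heteroclinic cycles}, not from any local estimate).

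The paper avoids this circularity entirely by a different route. After showing $\qptd(x,y)>0$ for $\dist{x}{y}\geq K_2$ (an immediate consequence of Prop.~\ref{prop:qp-int-conditions-iff-connecting-piecewise-orbit} plus the uniform bound $K_1$ on the length of a piecewise integral curve), the paper upgrades this pointwise positivity to a uniform lower bound $\varepsilon>0$ by invoking two facts you have not used: (i) continuity of $(x,y)\mapsto\qptd(x,y)$ (cited from \cite[p.~143,~Lem.~1.1]{freidlin2012random}), and (ii) compactness of the closed balls $B_{2K_2}(x)\subset\tM$, which holds because the pullback metric is complete when $M$ is compact. Together with deck-transformation invariance of $\qptd$ and a finite cover of $M$ by balls, these give a uniform $\varepsilon_x$ on each $B_{K_2}(x)\times\partial B_{2K_2}(x)$ with a global minimum $\varepsilon>0$---no Gronwall estimate needed. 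If you wish to salvage your route, the cleanest fix is to replace the Gronwall step by simply quoting continuity of the quasipotential and then running the paper's compactness/covering argument; otherwise you would essentially be re-proving that continuity statement and a good part of Lem.~\ref{lem:orbits-bounded-length} from scratch.
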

\begin{proof}
	By Prop.~\ref{prop:qp-int-conditions-iff-connecting-piecewise-orbit}, $\qpd(e) = 0$ implies that $e$ contains a piecewise $\dft$-integral curve.
	By Lem.~\ref{lem:orbits-bounded-length}, there exists $K_0>0$ such that every integral curve of $\dft$ has length less than $K_0$.
	There are no heteroclinic cycles for $\dft$ since $R(\dft) = \dft^{-1}(0)$ is finite, so the length of every piecewise $\dft$-integral curve is less than $K_0 \cdot \#(\dft^{-1}(0)) \eqqcolon K_1$.
	Thus, for $e\in \Pi(M)$ with $\length(e) \geq 2 K_1 \eqqcolon K_2$, $\qpd(e) > 0$.
	
	On $\tM$, the latter condition is equivalent to the condition that $\qptd(x,y) > 0$ for all $x,y$ with $\dist{x}{y}\geq K_2$. 
	The pullback metric on $\tM$ is complete since $M$ is compact \cite[p.~146, Thm~2.8]{docarmo1992riemannian}, so for any $x\in \tM$ the closed metric ball $B_{K_2}(x)$ of radius $K_2$ centered at $x$ is compact.
	Continuity of $\qptd$ \cite[p.~143, Lem.~1.1]{freidlin2012random} then implies that, for each $x\in \tM$, there is $\varepsilon_x > 0$ such that $\qptd(y,z) > \varepsilon_x$ for all $y\in B_{K_2}(x)$ and $z\in \partial B_{2K_2}(x)$.
    It follows that $\qptd(y,z)> \varepsilon_x$ for all $y\in B_{K_2}(x)$ and all $z\not \in \interior(B_{2K_2}(x))$.
	
	By invariance of $\tdft$ and the pullback metric on $\tM$ with respect to deck transformations, for any $x\in M$ it follows that  $\qptd(y,z) > \varepsilon_x$ for all $\tx\in \pi^{-1}(\pi(x))$, $y\in B_{K_2}(\tx)$, and $z\not \in \interior(B_{2K_2}(\tx))$. 
	By compactness of $M$, there exist finitely many $x_1,\ldots, x_n\in \tM$ such that every fiber $\pi^{-1}(x)$ has nonempty intersection with $\bigcup_{i=1}^n B_{K_2}(x_i)$.
	Defining $\varepsilon\coloneqq \min\{\varepsilon_{x_1},\ldots, \varepsilon_{x_n}\}$ and $K_3\coloneqq 2 K_2$, it follows that 
	$$\forall x,y\in \tM\colon \dist{x}{y}\geq K_3 \implies \qptd(x,y)> \varepsilon,$$
	and hence
	$$\forall x,y\in \tM\colon \forall n\in \N\colon  \dist{x}{y}\geq n K_3 \implies \qptd(x,y)> n \varepsilon.$$
	The latter condition follows from the first since any path from $x$ to $y$ with $\dist{x}{y}\geq nK_3$ must pass through $\partial B_{K_3}(x), \partial B_{2K_3}(x),\ldots, \partial B_{nK_3}(x)$ and therefore must pass through a sequence of at least $n$ points $y_1,\ldots, y_n$  satisfying $\dist{y_i} {y_{i+1}}\geq K_3$.
	Denoting by $\llfloor r\rrfloor\leq r$ the integer part of $r\in \R$, it follows that
	$$\forall x,y\in \tM\colon \qptd(x,y)> \llfloor \frac{\dist{x}{y}}{K_3} \rrfloor \varepsilon \geq \left(\frac{\dist{x}{y}}{K_3} -1 \right)\varepsilon = \frac{\varepsilon}{K_3}\dist{x}{y}-\varepsilon.$$
	Defining $m_0\coloneqq \frac{\varepsilon}{K_3}$ and $c\coloneqq \varepsilon$ establishes \eqref{eq:univ-cover-qp-growth-1} and \eqref{eq:univ-cover-qp-growth-2} for any $m\in (0,m_0]$.
	Since $$\left|\int_\vp \cfo \right|\leq \left(\sup_{x\in M}\norm{\cfo_x}\right)\length(\vp)$$ for any absolutely continuous $\vp$ with $[\vp]=e$, taking the infimum over all such $\vp$ and using \eqref{eq:univ-cover-qp-growth-1} yields \eqref{eq:univ-cover-qp-growth-3} for any $m\in (0,m_1]$, where $m_1\coloneqq \left(\sup_{x\in M}\norm{\cfo_x}\right)^{-1}m_0$.
	Taking $m\coloneqq \min(m_0,m_1)$ completes the proof.
\end{proof}

\subsection{Markov chain expressions for the steady-state flux}\label{sec:markov-chain-flux-expression}
Throughout the remainder of \S \ref{sec:proof-th-flux-manifold-mc-CRST-ld},  $\dft$ is a $\Cont^1$ vector field on a closed connected Riemannian manifold $M$ whose chain recurrent set consists of a finite number of hyperbolic zeros, and $(X^\varepsilon_t, \Prob_x^\varepsilon)$ is the diffusion process with generator \eqref{eq:diff-gen}, where $\dfte$ is smooth for each $\varepsilon > 0$ and $\dfte\to \dft$ uniformly as $\varepsilon \to 0$.

Recall that  $\Ws(z)$ and $\Wu(z)$ respectively denote the stable and unstable manifolds of $z\in \dft^{-1}(0)$, and the (Morse) index of $z\in \dft^{-1}(0)$ is  $\ind(z)\coloneqq \dim(\Wu(z))$.
Recall also that $\vtx \subset \dft^{-1}(0)$ are those zeros with index $0$,  $\Eh\subset \Pi(M)$ are those path homotopy classes with source $\src(\slot)$ and target $\tgt(\slot)$ points in $\vtx$, and $\gh$ is the  directed graph $\gh\coloneqq (\vtx,\Eh,\src,\tgt)$.
Note that the vertex set $\vtx$ is finite but the edge set $\Eh$ is infinite.
 
Let $\kappa_0 > 0$ be sufficiently small that the closed metric balls $B_{\kappa_0}(v)$ of radius $\kappa_0$ centered at each $v\in \vtx$ are pairwise disjoint and geodesically convex \cite[Thm~6.17]{lee2018riemannian}, and define $C\coloneqq M\setminus \bigcup_{v\in \vtx}\interior(B_{\kappa_0}(v))$.
Following \cite[Sec.~6.2]{freidlin2012random}, we fix $\kappa_1\in(0,\kappa_0)$ and define $g_v\coloneqq B_{\kappa_1}(v)$ for $v\in \vtx$ and $g\coloneqq \bigcup_{v\in \vtx}g_v$.
We introduce the stopping times $\tau_0 \coloneqq 0$, $\sigma_n \coloneqq \inf \{t \geq \tau_n\colon X^\varepsilon_t\in C\}$, and $\tau_n\coloneqq \inf\{t \geq \sigma_{n-1}\colon X^\varepsilon_t \in \partial g\}$ and consider the Markov chains $Z_n^\varepsilon = X^\varepsilon_{\tau_n}$ with unique invariant measure \cite[p.~107, Lem.~4.6, p.~120]{khasminskii2012stochastic} $\nu^\varepsilon$ on $\partial g$.
Given $v\in \vtx$ and $x\in g$, let $P(x,v) = \Prob_x^\varepsilon(X^\varepsilon_{\tau_1}\in \partial g_v)$.

Given $e\in \Eh$ and $x\in g_{\src(e)}$, we let $P(x,e)$ denote the probability conditioned on $X^\varepsilon_0 = x$ that, when concatenated with short paths in $g$ from $\vtx$ to $x$ and from $X^\varepsilon_{\tau_1}$ to $\vtx$, the resulting path $\vp = X^\varepsilon_{[0,\tau_1]}$ satisfies $[\vp]=e$.
The property $[\vp]=e$ does not depend on the choice of short paths since each of the $g_v$ are simply connected.

Given a closed one-form $\cfo$ on $M$ and $e\in \Pi(M)$, recall the definition $\int_e \cfo\coloneqq \int_{\vp}\cfo$, where $\vp$ is any continuous path $\vp$ satisfying $[\vp]=e$.
The following two lemmas express the flux $\fluxe([\cfo])$ defined in \eqref{eq:flux-def} in terms of the data defined above.
\begin{Lem}\label{lem:flux-manifold-mc}
	Let $\cfo$ be a closed one-form on $M$.
	Given $e\in \Eh$, define $$\bar{P}_{\kappa_1}(e)\coloneqq \frac{1}{\nu^\varepsilon(\partial g_{\src(e)})}\int_{\partial g_{\src(e)}}\nu^\varepsilon(dy)P(y,e)\quad \textnormal{and} \quad  
	\cfo(e)\coloneqq \int_e \cfo.$$
	Then the following integral and sum are absolutely convergent and:
	\begin{equation}\label{eq:flux-manifold-mc}
	\begin{split}
	\fluxe([\cfo]) = \left(\int_{\partial g}\nu^\varepsilon(dy)\E_y^\varepsilon[\tau_1]\right)^{-1} \sum_{e\in \Eh}\nu^\varepsilon(\partial g_{\src(e)}) \bar{P}_{\kappa_1}(e) \cfo(e).
	\end{split}
	\end{equation}  
\end{Lem}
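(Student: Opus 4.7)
The plan is to derive \eqref{eq:flux-manifold-mc} by combining Manabe's ergodic identity \eqref{eq:flux-micro} with the strong law of large numbers for ergodic additive functionals of the embedded discrete-time Markov chain $Z^\varepsilon_n = X^\varepsilon_{\tau_n}$, whose unique invariant probability measure is $\nu^\varepsilon$. Since $\tau_N\to\infty$ almost surely, \eqref{eq:flux-micro} gives
\begin{equation*}
\fluxe([\cfo])\overset{\textnormal{a.s.}}{=}\lim_{N\to\infty}\frac{N}{\tau_N}\cdot\frac{1}{N}\int_{X^\varepsilon_{[0,\tau_N]}}\cfo,
\end{equation*}
so it is enough to identify both Ces\`aro limits as ergodic excursion averages.

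The key bookkeeping step is an excursion decomposition with a telescoping boundary correction. I would write $\int_{X^\varepsilon_{[0,\tau_N]}}\cfo = \sum_{n=1}^N\int_{X^\varepsilon_{[\tau_{n-1},\tau_n]}}\cfo$ and, for each $n$, let $v_{n-1},v_n\in\vtx$ be the unique vertices with $X^\varepsilon_{\tau_{n-1}}\in\partial g_{v_{n-1}}$ and $X^\varepsilon_{\tau_n}\in\partial g_{v_n}$. Close the excursion by prepending a short arc $\alpha_n\subset g_{v_{n-1}}$ from $v_{n-1}$ to $X^\varepsilon_{\tau_{n-1}}$ and appending a short arc $\beta_n\subset g_{v_n}$ from $X^\varepsilon_{\tau_n}$ to $v_n$; the resulting loop determines a homotopy class $e_n\in\Eh$, and $\cfo(e_n) = \int_{\alpha_n}\cfo+\int_{X^\varepsilon_{[\tau_{n-1},\tau_n]}}\cfo+\int_{\beta_n}\cfo$. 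Because each $g_v$ is simply connected and $\cfo$ is closed, the $\cfo$-integral between two fixed points of $g_v$ is path-independent, so $\int_{\beta_n}\cfo+\int_{\alpha_{n+1}}\cfo=0$; telescoping yields
\begin{equation*}
\sum_{n=1}^N\cfo(e_n) \;=\; \int_{X^\varepsilon_{[0,\tau_N]}}\cfo + R_N,\qquad |R_N|\le 2\kappa_1\sup_M\norm{\cfo},
\end{equation*}
so the two sides share the same $N\to\infty$ Ces\`aro limit.

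Finally, applying the strong law for the uniquely ergodic chain $(Z^\varepsilon_n,\nu^\varepsilon)$---ergodicity is standard for the nondegenerate elliptic kernel on compact $M$ via Doeblin's condition---yields, almost surely, $\tau_N/N\to\int_{\partial g}\nu^\varepsilon(dy)\E^\varepsilon_y[\tau_1]$ and $N^{-1}\sum_{n=1}^N\cfo(e_n)\to\int_{\partial g}\nu^\varepsilon(dy)\E^\varepsilon_y[\cfo(e_1)]$, provided both integrals are finite. Finiteness of the first integral follows from standard elliptic exit-time estimates. For the second I would combine the telescoping bound $|\cfo(e_1)|\le\bigl|\int_{X^\varepsilon_{[0,\tau_1]}}\cfo\bigr|+2\kappa_1\sup_M\norm{\cfo}$ with a universal-cover reduction: lifting via the covering $\pi\colon\tM\to M$ on which $\pi^*\cfo=df$ is exact gives $\bigl|\int_{X^\varepsilon_{[0,\tau_1]}}\cfo\bigr|=|f(\tilde X^\varepsilon_{\tau_1})-f(\tilde X^\varepsilon_0)|\le(\sup_M\norm{\cfo})\dist{\tilde X^\varepsilon_0}{\tilde X^\varepsilon_{\tau_1}}$, and a Lyapunov moment bound applied to the lifted SDE (whose coefficients are bounded since they descend from $M$) controls $\E^\varepsilon_y[\dist{\tilde X^\varepsilon_0}{\tilde X^\varepsilon_{\tau_1}}]$ by $C(1+\E^\varepsilon_y[\tau_1])$, which is $\nu^\varepsilon$-integrable in $y$. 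This simultaneously delivers absolute convergence, since unfolding the excursion expectation gives $\int_{\partial g}\nu^\varepsilon(dy)\E^\varepsilon_y[\cfo(e_1)]=\sum_{e\in\Eh}\int_{\partial g_{\src(e)}}\nu^\varepsilon(dy)P(y,e)\cfo(e) = \sum_{e\in\Eh}\nu^\varepsilon(\partial g_{\src(e)})\bar P_{\kappa_1}(e)\cfo(e)$. Dividing the two limits then yields \eqref{eq:flux-manifold-mc}. The main obstacle is the $\nu^\varepsilon$-integrability of $\cfo(e_1)$---an a priori unbounded functional of the excursion---and the universal-cover displacement moment estimate is the only nonroutine ingredient needed for it.
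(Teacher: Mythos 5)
Your proposal follows essentially the same strategy as the paper's proof: excursion decomposition of $\int_{X^\varepsilon_{[0,\tau_N]}}\cfo$ using short closing arcs in the simply connected balls $g_v$ (the paper uses minimizing geodesics), then the strong law of large numbers for the stationary sequence induced by $\nu^\varepsilon$ to identify the Ces\`aro limits of $\tau_N/N$ and $N^{-1}\sum\cfo(e_n)$, then the law of total expectation to unfold $\E^\varepsilon_y[\cfo(e_1)]$ into the sum over $\Eh$. Two points on the piece you flag as nonroutine: you do not actually need Doeblin or a separate Markov-chain ergodic theorem, since the paper appeals only to Birkhoff/Doob for the strictly stationary process induced by $\nu^\varepsilon$, and $\nu^\varepsilon$ is already given as the unique invariant measure of the chain; and the "Lyapunov moment bound" on the universal cover, when made precise, is nothing but the decomposition of $\tilde X^\varepsilon_{[0,\tau_1]}$ into a bounded-drift part (contributing $O(\E^\varepsilon_y[\tau_1])$) and a martingale part (contributing $O(\sqrt{\E^\varepsilon_y[\tau_1]})$ by Cauchy--Schwarz and the It\^o isometry), which is exactly the estimate the paper performs directly on $M$ to get $\E_x^\varepsilon\bigl|\int_{X^\varepsilon_{[0,\tau]}}\cfo\bigr|\leq C_0\bigl(\E_x^\varepsilon[\tau]+\sqrt{\E_x^\varepsilon[\tau]}\bigr)$. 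So the universal-cover detour gives the same bound by the same mechanism, and in that sense is a cosmetic rather than substantive deviation.
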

\begin{proof}
	Given any path $\vp\in \Cont([T_1,T_2],M)$ with initial and terminal points in $g$, we denote by $e(\vp)\in \Eh$ the class of the path defined by following the unique minimizing geodesic in $g$ from $\vtx$ to $\vp(T_1)$, then following $\vp$, then following the unique minimizing geodesic in $g$ from $\vp(T_2)$ to $\vtx$.
	Since for any $n$ the path homotopy class $$[X^\varepsilon_{[0,\tau_n]}]=[\gamma_1]e(X^\varepsilon_{[0,\tau_{1}]})\cdots e(X^\varepsilon_{[\tau_{n-1},\tau_{n}]})[\gamma_2],$$
	where $\gamma_1,\gamma_2$ are the unique minimizing geodesics in $g$ joining $X^\varepsilon_0$ to $\vtx$ and $\vtx$ to $X^\varepsilon_{\tau_n}$, and since $|\int_{\gamma}\cfo|$ is uniformly bounded for any minimizing geodesic $\gamma$ in $g$, it follows that $$\lim_{n\to\infty} \frac{1}{\tau_n}\int_{X^\varepsilon_{[0,\tau_n]}}\cfo = \lim_{n\to\infty}\frac{1}{\tau_n}\sum_{i=0}^{n-1}\int_{e(X^\varepsilon_{[\tau_i,\tau_{i+1}]})}\cfo.$$
	Using this observation and the fact that the $\tau_1$ (hence also $\tau_n$ with $n\geq 1$) are positive with probability $1$ since $X^\varepsilon$ has continuous sample paths with probability $1$, we compute (with additional justification after):
	\begin{equation*}
	\begin{split}
	 \lim_{n\to\infty} \left(\frac{1}{\tau_{n}}  \int_{X^\varepsilon_{[0,\tau_{n}]}}\cfo\right)
	&= \lim_{n\to\infty} \left(\frac{n}{(\tau_{1}-\tau_0)+\cdots + (\tau_{n}-\tau_{n-1})} \cdot \frac{1}{n}\sum_{i=0}^{n-1}  \int_{e(X^\varepsilon_{[\tau_{i},\tau_{i+1}]})}\cfo\right)\\
	&\overset{\textnormal{a.s.}}{=} \left(\int_{\partial g}\nu^\varepsilon(dy)\E_y^\varepsilon[\tau_1]\right)^{-1} \int_{\partial g} \nu^\varepsilon(dy) \E_y^\varepsilon\left[\int_{e(X^\varepsilon_{[0,\tau_{1}]})} \cfo\right]\\
	&=  \left(\int_{\partial g}\nu^\varepsilon(dy)\E_y^\varepsilon[\tau_1]\right)^{-1} \sum_{e\in \Eh} \cfo(e) \nu^\varepsilon(\partial g_{\src(e)})  \frac{1}{\nu^\varepsilon(\partial g_{\src(e)})}\int_{\partial g_{\src(e)}}\nu^\varepsilon(dy)P(y,e),
	\end{split}
	\end{equation*}
    the right side of which is the right side of \eqref{eq:flux-manifold-mc}.
	The third line follows from the second and the law of total expectation.
	If both $\tau_1$ and $\int_{e(X^\varepsilon_{[0,\tau_1]})}\cfo$ are absolutely integrable, then the second line follows from the first and the strong law of large numbers for strictly stationary stochastic processes (or the Birkhoff ergodic theorem) \cite[p.~465,~Thm~2.1]{doob1953stochastic} since, by the strong Markov property of $X^\varepsilon$, the sequences $\left(\tau_1-\tau_0, \tau_2-\tau_1, \ldots \right)$ and $\left(\int_{e(X^\varepsilon_{[0,\tau_1]})}\cfo, \int_{e(X^\varepsilon_{[\tau_1, \tau_2]})}\cfo,\ldots \right)$ are strictly stationary processes \cite[p.~94]{doob1953stochastic} when given the joint distributions induced by the invariant measure $\nu^\varepsilon$ on $\partial g$ and the transition probabilities $\Prob_x^\varepsilon(X^\varepsilon_{\tau_1}\in \slot)$.
	
	We now argue that both $\tau_1$ and $\int_{e(X^\varepsilon_{[0,\tau_1]})}\cfo$ are absolutely integrable.
    The absolute integrability claim for $\tau_1 = (\tau_1-\sigma_0)+\sigma_0$ follows since
    \begin{equation*}
    \E_y^\varepsilon[\tau_1-\sigma_0] = \begin{cases}
    f_1(y), & y \in C\\
    0, &y\not \in C
    \end{cases}
    \qquad \text{and} \qquad \E_y^\varepsilon[\sigma_0] = \begin{cases}
    f_2(y), & y\not \in \interior(C)\\
    0, & y\in \interior(C)
    \end{cases},
    \end{equation*}
    where $f_1$ and $f_2$ are respectively the unique smooth (hence bounded uniformly in $y$) solutions to certain elliptic PDE boundary value problems on the compact domains $C$ and $M\setminus \interior(C)$ \cite[p.~90, Cor.~3.2,~p.~120]{khasminskii2012stochastic}.
    Since $M$ is compact it follows from \cite[pp.~382--384]{ikeda1981stochastic}, the Cauchy-Schwarz inequality, and the It\^{o} isometry that there is $C_0 > 0$ such that $\E_x^\varepsilon\left|\int_{X^\varepsilon_{[0,\tau]}}\cfo\right|\leq (\E_x^\varepsilon[\tau] + \sqrt{\E_x^\varepsilon[\tau]})C_0$ for any $x\in M$ and stopping time $\tau$.
    Since there is $C_1 > 0$ such that $\left|\int_{e(X^\varepsilon_{[0,\tau_1]})}\cfo \right| \leq \left|\int_{X^\varepsilon_{[0,\tau_1]}}\cfo\right| + C_1$, we see that the uniform boundedness of $\E_y^\varepsilon[\tau_1]$ implies the uniform boundedness of $\E_y^\varepsilon \left|\int_{e(X^\varepsilon_{[0,\tau_1]})}\cfo \right|$.
    In particular, $\int_{e(X^\varepsilon_{[0,\tau_1]})}\cfo $ is absolutely integrable.

	To complete the proof, we note that
    \begin{equation*}
    \begin{split}
    \sum_{e\in \Eh}\left|\nu^\varepsilon(\partial g_{\src(e)}) \bar{P}_{\kappa_1}(e) \cfo(e)\right| &= \sum_{e\in \Eh}\nu^\varepsilon(\partial g_{\src(e)}) \bar{P}_{\kappa_1}(e) |\cfo(e)| =  \sum_{e\in \Eh}|\cfo(e)| \int_{\partial g_{\src(e)}}\nu^\varepsilon(dy)P(y,e)\\
    &= \int_{\partial g}\nu^\varepsilon(dy)\E_y^\varepsilon \left|\int_{e(X^\varepsilon_{[0,\tau_{1}]})} \cfo \right|
    \end{split}
    \end{equation*}		
    by the law of total expectation, so the uniform boundedness of $\E_y^\varepsilon \left|\int_{e(X^\varepsilon_{[0,\tau_{1}]})} \cfo \right|$ established above and compactness of $\partial g$ imply that the sum in \eqref{eq:flux-manifold-mc} is absolutely convergent.
\end{proof}

\begin{Lem}\label{lem:flux-manifold-mc-CRST}
	Let $\cfo$ be a closed one-form on $M$. For $e\in \Eh$ let $\cfo(e)$, $\bar{P}_{\kappa_1}(e)$ be defined as in Lem.~\ref{lem:flux-manifold-mc}.
	Given $\edg\subset \Eh$ with $\#(\edg)<\infty$, define $\cfo(\edg)\coloneqq \sum_{e\in \edg}\cfo(e)$ and $\bar{\pi}(\edg)\coloneqq \prod_{e\in \edg}\bar{P}_{\kappa_1}(e)$. 
	Then the following integral and both sums are absolutely convergent and: 
	\begin{equation}\label{eq:flux-manifold-mc-CRST}
	\fluxe([\cfo]) = \left(\int_{\partial g}\nu^\varepsilon(dy)\E_y^\varepsilon[\tau_1]\right)^{-1} \left(\sum_{\edg\in \RST(\gh)}\bar{\pi}(\edg)\right)^{-1}\sum_{\edg \in \CRST(\gh)} \bar{\pi}(\edg)\cfo(\cycle(\edg)).
	\end{equation}
\end{Lem}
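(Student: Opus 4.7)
The plan is to deduce \eqref{eq:flux-manifold-mc-CRST} from \eqref{eq:flux-manifold-mc} in three steps: (i) apply the classical matrix-tree theorem to the \emph{finite} stochastic matrix on $\vtx$ induced by $\bar{P}_{\kappa_1}$ in order to express $\nu^\varepsilon(\partial g_v)$ as a weighted sum over rooted spanning trees of $\gh$; (ii) expand each matrix entry using distributivity across path-homotopy classes; and (iii) invoke the standard bijection between pairs (edge, rooted spanning tree at its source) and pairs (cycle-rooted spanning tree, marked cycle-edge).

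First I would set $\mu(v)\coloneqq\nu^\varepsilon(\partial g_v)$ and $K_{vw}\coloneqq\sum_{e\in\src^{-1}(v)\cap\tgt^{-1}(w)}\bar{P}_{\kappa_1}(e)$. A short computation using Fubini and total probability shows that $\bar{P}_{\kappa_1}$ is a stochastic kernel on $\vtx$, hence $K$ is a stochastic matrix; invariance of $\nu^\varepsilon$ for the chain $Z_n^\varepsilon$ implies $\sum_v\mu(v)K_{vw}=\mu(w)$. The classical Kirchhoff/Markov-chain-tree theorem applied to $K$ on the finite set $\vtx$ yields
\begin{equation*}
\mu(v) \;=\; Z_\vtx^{-1}\sum_{T\in\RST(\gph_\vtx;v)}\prod_{(v',w')\in T}K_{v'w'},
\qquad
Z_\vtx \coloneqq \sum_{T\in\RST(\gph_\vtx)}\prod_{(v',w')\in T}K_{v'w'},
\end{equation*}
where $\gph_\vtx$ is the complete directed graph on $\vtx$. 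Expanding each $K_{v'w'}$ as a sum over $e\in\src^{-1}(v')\cap\tgt^{-1}(w')$ and rearranging yields
\begin{equation*}
\mu(v) \;=\; Z^{-1}\sum_{\edg\in\RST(\gh;v)}\bar\pi(\edg), \qquad Z\coloneqq\sum_{\edg\in\RST(\gh)}\bar\pi(\edg),
\end{equation*}
since each $\edg\in\RST(\gh;v)$ projects to a unique $T\in\RST(\gph_\vtx;v)$ and, conversely, each such $T$ together with a choice of $e\in\Eh$ above each edge of $T$ reconstructs a unique $\edg\in\RST(\gh;v)$.

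With this identity, substituting $\nu^\varepsilon(\partial g_{\src(e)})=Z^{-1}\sum_{\edg\in\RST(\gh;\src(e))}\bar\pi(\edg)$ into \eqref{eq:flux-manifold-mc} and invoking the bijection
\begin{equation*}
\{(e,\edg)\colon e\in\Eh,\ \edg\in\RST(\gh;\src(e))\} \;\longleftrightarrow\; \{(\edg',e)\colon \edg'\in\CRST(\gh),\ e\in\cycle(\edg')\},\quad (e,\edg)\mapsto(\edg\cup\{e\},\,e),
\end{equation*}
which is well-defined because $e\notin\edg$ (no edge of $\edg\in\RST(\gh;\src(e))$ leaves the root), so $\edg\cup\{e\}$ is a functional graph whose unique directed cycle necessarily contains $e$, reindexes the double sum as
\begin{equation*}
\sum_{e\in\Eh}\sum_{\edg\in\RST(\gh;\src(e))}\bar\pi(\edg)\bar{P}_{\kappa_1}(e)\cfo(e) \;=\; \sum_{\edg'\in\CRST(\gh)}\bar\pi(\edg')\sum_{e\in\cycle(\edg')}\cfo(e) \;=\; \sum_{\edg'\in\CRST(\gh)}\bar\pi(\edg')\,\cfo(\cycle(\edg')),
\end{equation*}
which is \eqref{eq:flux-manifold-mc-CRST}.

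The one nontrivial technical point---and the main obstacle---is absolute convergence of the two infinite sums in the statement, needed to justify the rearrangements above. For $\sum_{\edg\in\RST(\gh)}\bar\pi(\edg)$ the bound is automatic from the matrix-tree expansion, since $K$ is stochastic, $\vtx$ is finite, and $|\RST(\gph_\vtx)|<\infty$. For the cycle-rooted sum I would combine the Freidlin--Wentzell upper bound $\bar{P}_{\kappa_1}(e)\leq C_\varepsilon\exp\!\big(-(\tqpd(e)-\delta)/\varepsilon\big)$, to be established in \S\ref{sec:transition-times-probabilities-estimates}, with the linear bound $|\cfo(e)|\leq m^{-1}(\tqpd(e)+c)$ obtained from Lem.~\ref{lem:univ-cover-qp-growth} via $\tqpd\geq\qpd$; the product then decays exponentially in the total quasipotential weight of the cycle, controlling $\sum_{\edg\in\CRST(\gh)}|\bar\pi(\edg)\cfo(\cycle(\edg))|$. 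Once these tail bounds are in hand, the combinatorial substitution above is routine.
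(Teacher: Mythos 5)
Your combinatorial scheme is exactly the paper's: collapse to the finite stochastic matrix $K$ on $\vtx$, apply the Markov chain tree formula there, expand $K_{v'w'}$ by distributivity to lift back to $\gh$, and reindex via the bijection between pairs $(e,\edg)$ with $\edg\in\RST(\gh;\src(e))$ and cycle-rooted spanning trees with a marked cycle edge. That bijection is stated and justified correctly, and the algebra goes through.

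The one place you diverge from the paper is the absolute convergence argument for the cycle-rooted sum, and that is where there is a genuine problem. You propose to invoke the Freidlin--Wentzell bound $\bar{P}_{\kappa_1}(e)\lesssim\exp\!\big(-(\tqpd(e)-\delta)/\varepsilon\big)$ from \S\ref{sec:transition-times-probabilities-estimates}. Aside from creating a forward dependency (those estimates are established \emph{after} this lemma in the paper's logical order), that bound is a small-noise estimate: Lemma~\ref{lem:FW-transition-refinement} only guarantees it for all $\varepsilon$ below some threshold $\varepsilon_0>0$ depending on $\delta,N,\kappa_0,\kappa_1$. Lemma~\ref{lem:flux-manifold-mc-CRST}, however, is asserted for \emph{every} $\varepsilon>0$, so your argument would not prove the statement at all values of $\varepsilon$. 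The cleaner and correct route is the one the paper takes: observe that $\sum_{\edg\in\RST(\gh;\src(e))}\bar\pi(\edg)=Z\,\nu^\varepsilon(\partial g_{\src(e)})$ with $Z=\sum_{\edg\in\RST(\gh)}\bar\pi(\edg)<\infty$, so the rearranged double sum is a constant multiple of $\sum_{e\in\Eh}\nu^\varepsilon(\partial g_{\src(e)})\bar{P}_{\kappa_1}(e)\cfo(e)$, whose absolute convergence was \emph{already} proved in Lemma~\ref{lem:flux-manifold-mc} via moment bounds (Itô isometry and elliptic PDE estimates) that are valid for every fixed $\varepsilon>0$. Once you have that, all rearrangements of nonnegative terms are justified by Tonelli and no further estimates are needed. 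I would replace your FW-based convergence argument with this bootstrap and leave the rest of the proof as is.
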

\begin{proof}
	Let 
	$\gph_c = (\vtx,\edg_c)$ denote the finite, complete directed graph on $\vtx$ (exactly one edge for each ordered pair of vertices) with transition probabilities given by $\bar{P}_{\kappa_1}(v,w)\coloneqq \sum_{e\in \src^{-1}(v)\cap \tgt^{-1}(w)}\bar{P}_{\kappa_1}(e)$ for $v,w\in \vtx$.
	Given $\edg\subset \edg_c$, we similarly define $\bar{\pi}(\edg)\coloneqq \prod_{e\in \edg}\bar{P}_{\kappa_1}(\src(e),\tgt(e))$.
	Using the definitions we find for all $v\in \vtx$ that
	\begin{equation*}
	\begin{split}
	\sum_{w\in \vtx }\nu^\varepsilon(\partial g_w)\bar{P}_{\kappa_1}(w,v)&= \sum_{w\in \vtx }\int_{\partial g_w}\nu^\varepsilon(dy)P(y,v) = \sum_{w\in \vtx }\int_{\partial g_w}\nu^\varepsilon(dy)\Prob_{y}(X^\varepsilon_{\tau_1}\in \partial g_v) \\
	&= \int_{\partial g}\nu^\varepsilon(dy)\Prob_{y}(X^\varepsilon_{\tau_1}\in \partial g_v) = \nu^\varepsilon(\partial g_v),
	\end{split}
	\end{equation*}
	since $\nu^\varepsilon$ is the invariant measure for the Markov chain $Z_n^\varepsilon = X^\varepsilon_{\tau_n}$.
	Hence $\mu^\varepsilon(v)\coloneqq \nu^\varepsilon(\partial g_v)$ coincides with the unique invariant measure of the irreducible and aperiodic Markov chain on $\gph_c$ with transition probabilities $\bar{P}_{\kappa_1}(\slot,\slot)$.
	Since $\gph_c$ is finite, we may thus apply the Markov chain tree formula \cite{pitman2018tree} to deduce that
	$$\nu^\varepsilon(\partial g_v) = \left(\sum_{\edg\in \RST(\gph_c)}\bar{\pi}(\edg)\right)^{-1} \sum_{\edg\in \RST(\gph_c;v)}\bar{\pi}(\edg)$$
	for each $v\in \vtx$.
	Substituting this expression into \eqref{eq:flux-manifold-mc} of Lem.~\ref{lem:flux-manifold-mc} yields
	\begin{equation*}
	\begin{split}
	\fluxe([\cfo]) &= \left(\int_{\partial g}\nu^\varepsilon(dy)\E_y^\varepsilon[\tau_1]\right)^{-1} \left(\sum_{\edg\in \RST(\gph_c)}\bar{\pi}(\edg)\right)^{-1}\sum_{e\in \Eh}\bar{P}_{\kappa_1}(e)\cfo(e)\sum_{\edg\in \RST(\gph_c;\src(e))}\bar{\pi}(\edg),
	\end{split}
	\end{equation*}
	and Lem.~\ref{lem:flux-manifold-mc} implies that the integral and second sum are absolutely convergent.
	In terms of the path-homotopical directed graph $\gh=(\vtx,\Eh,\src,\tgt)$, it follows from the definitions that $\sum_{\edg\in \RST(\gph_c)}\bar{\pi}(\edg) = \sum_{\edg\in \RST(\gh)}\bar{\pi}(\edg)$ and  $\sum_{\edg\in \RST(\gph_c;\src(e))}\bar{\pi}(\edg) = \sum_{\edg\in \RST(\gh;\src(e))}\bar{\pi}(\edg)$, so
	\begin{equation*}
	\begin{split}
	\fluxe([\cfo]) &=  \left(\int_{\partial g}\nu^\varepsilon(dy)\E_y^\varepsilon[\tau_1]\right)^{-1} \left(\sum_{\edg\in \RST(\gh)}\bar{\pi}(\edg)\right)^{-1}\sum_{e\in \Eh}\bar{P}_{\kappa_1}(e)\cfo(e)\sum_{\edg\in \RST(\gh;\src(e))}\bar{\pi}(\edg)\\
	&=  \left(\int_{\partial g}\nu^\varepsilon(dy)\E_y^\varepsilon[\tau_1]\right)^{-1}\left(\sum_{\edg\in \RST(\gh)}\bar{\pi}(\edg)\right)^{-1}\sum_{\edg\in \CRST(\gh)}\bar{\pi}(\edg)\cfo(\cycle(\edg))
	\end{split}
	\end{equation*}
	as desired.
\end{proof}

\subsection{Estimates on transition times and probabilities}\label{sec:transition-times-probabilities-estimates}
We aim to use Lem.~\ref{lem:flux-manifold-mc-CRST} to estimate $\fluxe([\cfo])$.
To do this, we will first estimate the terms $\E_y^\varepsilon[\tau_1]$ and $\bar{P}_{\kappa_1}(e)$ (hence also $\bar{\pi}(\edg)$) appearing in \eqref{eq:flux-manifold-mc-CRST}.
We begin with $\E_y^\varepsilon[\tau_1]$ in the following result.
In the proofs in this section we use the notation $\Cont_{x,y}([0,T],M)\subset \Cont([0,T],M)$ for those continuous paths going from $x\in M$ to $y\in M$.
\begin{Lem}\label{lem:tau-1-estimates}
	For every $\delta > 0$ there is $\bar{\kappa}>0$ such that, for all $0 < \kappa_1 < \kappa_0 < \bar{\kappa}$,  there is $\varepsilon_0 > 0$ such that, for all $0 < \varepsilon <\varepsilon_0$ and all $x\in g$,
	\begin{equation}\label{eq:tau-1-bound}
	e^{-\frac{1}{\varepsilon}\delta} \leq \E_x^\varepsilon[\tau_1] \leq e^{\frac{1}{\varepsilon}\delta}.
	\end{equation}	
\end{Lem}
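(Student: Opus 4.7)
The plan is to handle the two sides of the inequality separately. The lower bound $\E_x^\varepsilon[\tau_1]\geq e^{-\delta/\varepsilon}$ is nearly trivial because $e^{-\delta/\varepsilon}\to 0$, so it suffices to bound $\E_x^\varepsilon[\tau_1]$ below by a positive constant, uniformly for $x\in g$ and small $\varepsilon$. For this I would note $\tau_1\geq\sigma_0$ and that $x\in g_v = B_{\kappa_1}(v)$ for some stable hyperbolic zero $v\in\vtx$. After shrinking $\bar\kappa$ each $g_v$ becomes forward invariant under the flow of $\dft$ (standard Lyapunov argument using hyperbolicity of $v$). For any fixed $T>0$, the classical sample-path upper bound of \cite[Ch.~3]{freidlin2012random} (transferred to our $\varepsilon$-dependent drift using $\dfte\to\dft$ uniformly) gives $\Prob_x^\varepsilon(\sup_{t\leq T}\dist{X_t^\varepsilon}{X_t^0}>(\kappa_0-\kappa_1)/2)\leq e^{-c/\varepsilon}$ for some $c>0$ and all small $\varepsilon$, uniformly in $x\in g$. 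Since $X_t^0$ stays in $g_v\subset\interior(B_{\kappa_0}(v))$ for all $t$, on the complementary event $\sigma_0>T$, and hence $\E_x^\varepsilon[\tau_1]\geq T/2>e^{-\delta/\varepsilon}$ for $\varepsilon$ small.

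The upper bound is the substantive part. I would decompose $\tau_1=\sigma_0+(\tau_1-\sigma_0)$ and estimate each summand by Freidlin--Wentzell theory. Applying the exit-time asymptotics \cite[Ch.~4]{freidlin2012random} to the ball $B_{\kappa_0}(v)$ containing $x\in g_v$ yields
\[
\limsup_{\varepsilon\to 0}\varepsilon\ln\E_x^\varepsilon[\sigma_0]\;\leq\; V_v(\kappa_0)\;\coloneqq\;\inf_{y\in\partial B_{\kappa_0}(v)}\qpd(v,y),
\]
uniformly over $x\in g_v$. Continuity of the quasipotential and $\qpd(v,v)=0$ force $V_v(\kappa_0)\to 0$ as $\kappa_0\to 0$; since $\vtx$ is finite, shrinking $\bar\kappa$ further makes $V_v(\kappa_0)<\delta/2$ uniformly in $v$, giving $\E_x^\varepsilon[\sigma_0]\leq e^{\delta/(2\varepsilon)}$ for small $\varepsilon$. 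For the remaining piece, the strong Markov property writes $\E_x^\varepsilon[\tau_1-\sigma_0]=\E_x^\varepsilon\!\bigl[\E_{X^\varepsilon_{\sigma_0}}^\varepsilon[\hat\tau]\bigr]$, where $\hat\tau$ is the first hitting time of $\partial g$ starting from $C$; the uniform bound $\sup_{y\in C}\E_y^\varepsilon[\hat\tau]\leq T_1<\infty$ (independent of all small $\varepsilon$) coming from \cite[Ch.~6]{freidlin2012random} then yields $\E_x^\varepsilon[\tau_1]\leq e^{\delta/(2\varepsilon)}+T_1\leq e^{\delta/\varepsilon}$ for $\varepsilon$ small.

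The main obstacle is justifying the uniform-in-$\varepsilon$ bound $\sup_{y\in C}\E_y^\varepsilon[\hat\tau]\leq T_1$. This is precisely where the hypothesis that the chain recurrent set of $\dft$ consists of finitely many hyperbolic zeros is essential: the only deterministic obstruction to trajectories in $C$ reaching $g$ in uniformly bounded time is that some initial conditions lie on stable manifolds of index-$\geq 1$ zeros, but those stable manifolds have positive codimension and the noise pushes the process transversally off them in expected time polynomial (in particular subexponential) in $1/\varepsilon$. The standard Freidlin--Wentzell statements are formulated for an $\varepsilon$-independent drift, so the remaining routine step is to check that uniform convergence $\dfte\to\dft$ (together with the uniform estimates on the associated action functionals coming from the continuity of $\dft\mapsto\af(\vp)$) lets one transfer those conclusions to our $\varepsilon$-dependent family; this verification is essentially the content of \cite[pp.~135--136]{freidlin2012random}.
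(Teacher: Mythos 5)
Your decomposition $\tau_1 = \sigma_0 + (\tau_1 - \sigma_0)$ and the overall strategy of bounding each piece via Freidlin--Wentzell theory are the same as the paper's, but two of the steps as you've written them contain genuine errors that need repair.

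\textbf{Lower bound.} The assertion that, after shrinking $\bar\kappa$, ``each $g_v = B_{\kappa_1}(v)$ becomes forward invariant under the flow of $\dft$'' is not justified. Forward invariance of a small metric ball about a hyperbolic sink $v$ with linearization $A$ amounts to $G(v)A + A^T G(v) < 0$, where $G$ is the given Riemannian metric; this holds for an adapted (Lyapunov) metric but not for an arbitrary one, and the metric here is the one induced by the diffusion coefficients, which we have no freedom to choose. More seriously, even the weaker statement you actually need --- that the deterministic trajectory from $x\in B_{\kappa_1}(v)$ stays at distance $\geq(\kappa_0-\kappa_1)/2$ from $C$ on $[0,T]$ --- is not uniform over the range $0<\kappa_1<\kappa_0<\bar\kappa$ when $\kappa_1$ is close to $\kappa_0$, since the deterministic orbit may overshoot $B_{\kappa_0}(v)$ before converging. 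The paper avoids all of this by taking the reference path to be the \emph{constant} path at $x$ rather than the deterministic orbit; the action of a constant path at $x\in B_{\kappa_0}(v)$ is $\tfrac{T_0}{4}\norm{\dft(x)}^2 = O(T_0\kappa_0^2)$ (since $\dft(v)=0$), which can be made $<\delta/4$ by shrinking $\bar\kappa$, and then \cite[p.~74; p.~135, Thm~3.2]{freidlin2012random} gives $\Prob_x^\varepsilon(\sigma_0\geq T_0)\geq e^{-\delta/(2\varepsilon)}$ without any invariance argument. The resulting lower bound $T_0 e^{-\delta/(2\varepsilon)}\geq e^{-\delta/\varepsilon}$ is weaker than your constant lower bound but just as good for the lemma and far easier to justify.

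\textbf{Upper bound for $\E_x^\varepsilon[\tau_1-\sigma_0]$.} Your argument is internally inconsistent: you state in the same sentence that the expected time to leave a neighborhood of an index-$\geq1$ zero is ``polynomial in $1/\varepsilon$'' and then conclude a bound $T_1<\infty$ \emph{independent of} $\varepsilon$. The first statement is essentially right (the escape time from a hyperbolic saddle grows like $\ln(1/\varepsilon)$), but precisely because of this the uniform bound $\sup_{y\in C}\E_y^\varepsilon[\hat\tau]\leq T_1<\infty$ is false: $\E_y^\varepsilon[\hat\tau]\to\infty$ as $\varepsilon\to 0$ for $y$ on the stable manifold of a saddle. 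What is actually true, and what the paper uses via \cite[p.~173, Lem.~5.1; p.~176, Lem.~5.3, Thm~5.3]{freidlin2012random}, is the subexponential estimate $\lim_{\varepsilon\to 0}\varepsilon\ln\E_y^\varepsilon[\tau_1-\sigma_0]=0$ uniformly in $y\in C$, which relies on $C$ containing no stable invariant sets of $\dft$. This is still enough for the conclusion: $e^{\delta/(2\varepsilon)}+\E_x^\varepsilon[\tau_1-\sigma_0]\leq e^{\delta/\varepsilon}$ for small $\varepsilon$ once you replace your constant $T_1$ with something of the form $e^{\delta/(2\varepsilon)}$.

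A minor difference worth noting: for $\E_x^\varepsilon[\sigma_0]$ you invoke the FW exit-time asymptotics $\varepsilon\ln\E\sigma_0\to\inf_{y\in\partial B_{\kappa_0}(v)}\qpd(v,y)$, which carries its own regularity hypotheses on $\partial B_{\kappa_0}(v)$. The paper sidesteps this by proving $\Prob_x^\varepsilon(\sigma_0<T_1)\geq e^{-\delta/(2\varepsilon)}$ directly from a short-path action estimate and then summing a geometric series via the strong Markov property, which is more elementary and avoids having to verify those hypotheses.
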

\begin{proof}
	Fix $\delta > 0$.
	From \cite[p.~143, Lem.~1.1]{freidlin2012random} it follows that, if $\bar{\kappa} > \kappa_0 > \kappa_1$ is sufficiently small, then for any $v\in \vtx$, $x\in  B_{\kappa_0}(v)$, and $y\in \partial B_{\kappa_0 + (\kappa_0-\kappa_1)}(v)$ there exists a constant path $c_{x,T_0}\in \Cont_{x,x}([0,T_0],M)$ and a short path $\vp\in \Cont_{x,y}([0,T_1],M)$ such that $\af_{T_0}(c_{x,T_0}),\af_{T_1}(\vp)< \delta/4$.

	Defining $\distT{\gamma}{\psi}{T}\coloneqq \max_{t\in [0,T]}\dist{\gamma(t)}{\psi(t)}$, it follows
	from \cite[p.~74; p.~135, Thm~3.2]{freidlin2012random} that there exists $\varepsilon_0$ independent of $v,x,y$ such that $\Prob_x^\varepsilon(\distT{X^\varepsilon}{c_{x_0,T_0}}{T_0}< (\kappa_0-\kappa_1)/2)\geq e^{-\frac{1}{\varepsilon}\delta/2}$ and $\Prob_x^\varepsilon(\distT{X^\varepsilon}{\vp}{T_1}< (\kappa_0-\kappa_1)/2)\geq e^{-\frac{1}{\varepsilon}\delta/2}$ for all $0 < \varepsilon < \varepsilon_0$.
	Hence
	\begin{equation}\label{eq:sigma-geq-prob-lower-bound}
    \forall x\in g_v\colon  \Prob_x^\varepsilon(\sigma_0\geq T_0) \geq \Prob_x^\varepsilon(\distT{X^\varepsilon}{c_{x,T_0}}{T_0}< (\kappa_0-\kappa_1)/2) \geq e^{-\frac{1}{\varepsilon}\delta/2}
	\end{equation}
	and 
	\begin{equation}\label{eq:sigma-leq-prob-lower-bound}
	\forall x\in B_{\kappa_0}(v)\colon \Prob_x^\varepsilon(\sigma_0< T_1)\geq \Prob_x^\varepsilon(\distT{X^\varepsilon}{\vp}{T_1}< (\kappa_0-\kappa_1)/2) \geq e^{-\frac{1}{\varepsilon}\delta/2}.
	\end{equation}
	From \eqref{eq:sigma-geq-prob-lower-bound} and $\tau_1 \geq \sigma_0$ we obtain (if $e^{-\frac{1}{\varepsilon_0}\delta/2}<T_0$) the desired lower bound 
	\begin{equation}\label{eq:sigma-mean-lower-bound}
	\E_x^\varepsilon[\tau_1]\geq \E_x^\varepsilon[\sigma_0]\geq T_0\cdot \Prob_x^\varepsilon(\sigma_0\geq T_0) > e^{-\frac{1}{\varepsilon}\delta}.
	\end{equation}
	From \eqref{eq:sigma-leq-prob-lower-bound} and the strong Markov property, for $n\in \N$ we obtain
	$$\Prob_x^\varepsilon(\sigma_0 \geq nT_1)\leq (1-e^{-\frac{1}{\varepsilon}\delta/2})^n,$$
	hence (cf. \cite[p.~148, Lem.~1.7]{freidlin2012random})
	\begin{equation}\label{eq:sigma-mean-upper-bound}\begin{split}
	\E_x^\varepsilon[\sigma_0] &\leq  T_1\sum_{n=0}^\infty (n+1)\Prob_x^\varepsilon((n+1)T_1\geq \sigma_0\geq nT_1) =  T_1\sum_{n=0}^\infty \Prob_x^\varepsilon(\sigma_0\geq nT_1) \\
	&\leq T_1\sum_{n=0}^\infty (1-e^{-\frac{1}{\varepsilon}\delta/2})^n = T_1 e^{\frac{1}{\varepsilon}\delta/2}.
	\end{split}
	\end{equation}
	By \cite[p.~173, Lem.~5.1]{freidlin2012random}, \cite[p.~176, Lem.~5.3, Thm~5.3]{freidlin2012random}, $$\lim_{\varepsilon\to 0}\varepsilon \ln \E_x^\varepsilon[\tau_1 - \sigma_0] = 0$$
	uniformly in $x\in C$ since $C$ contains no stable invariant sets for $\dft$.
	Hence after a further $(v,x,y)$-independent shrinking of $\varepsilon_0$ if necessary,  $e^{-\frac{1}{\varepsilon}\delta/2} < \E_x^\varepsilon[\tau_1-\sigma_0] < e^{\frac{1}{\varepsilon}\delta/2}$ for all $0 < \varepsilon < \varepsilon_0$.
	From this and \eqref{eq:sigma-mean-upper-bound} we obtain $\E_x^\varepsilon[\tau_1] = \E_x^\varepsilon[\sigma_0] + \E_x^\varepsilon[\tau_1-\sigma_0]< (1+T_1)e^{\frac{1}{\varepsilon}\delta/2} \leq e^{\frac{1}{\varepsilon}\delta}$, where the latter inequality holds if $1+T_1 \leq e^{\frac{1}{\varepsilon_0}\delta}$.
	This establishes \eqref{eq:tau-1-bound}.
\end{proof}

The following result provides estimates on the transition probabilities $P(x,e)$ defined in \S \ref{sec:markov-chain-flux-expression}, where $e\in \Eh$ and $x\in g_{\src(e)}$.
\begin{Lem}\label{lem:FW-transition-refinement}
	For any $\delta, N > 0$ there is $\bar{\kappa}>0$ such that, for all $0 < \kappa_1 < \kappa_0 < \bar{\kappa}$,  there is $\varepsilon_0 > 0$ such that, for all $0 < \varepsilon <\varepsilon_0$, all $v\in \vtx$, and all $x\in g_v$:
	\begin{equation}\label{eq:FW-transition-P-refinement-decay}
	\forall m\in \N_{\geq 1}\colon \sum_{\{e\in \src^{-1}(v)\colon \tqpd(e) \geq mN\}} P(x,e)\leq e^{-\frac{1}{\varepsilon}m(N-\delta)}
	\end{equation}
	and, for all $e\in \src^{-1}(v)$ satisfying $\tqpd(e)\leq N$:
	\begin{equation}\label{eq:FW-transition-refinement-P}
	 e^{-\frac{1}{\varepsilon}(\tqpd(e)+\delta)} \leq P(x,e)\leq e^{-\frac{1}{\varepsilon}(\tqpd(e)-\delta)}.
	\end{equation}
\end{Lem}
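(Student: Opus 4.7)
The plan is to establish the pointwise bound \eqref{eq:FW-transition-refinement-P} by standard Freidlin--Wentzell tube estimates refined to track path-homotopy, and then to deduce the tail bound \eqref{eq:FW-transition-P-refinement-decay} by combining this bound with the exponential length growth furnished by Lem.~\ref{lem:univ-cover-qp-growth} together with the exponential growth of the number of homotopy classes in balls of the universal cover.

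For the lower bound in \eqref{eq:FW-transition-refinement-P}, I would fix $e \in \src^{-1}(v)$ with $\tqpd(e) \leq N$ and, by Def.~\ref{def:quasipotential-restricted}, choose an absolutely continuous $\vp \in \Cont_e([0,T],M)$ whose restriction to $(0,T)$ avoids $\vtx$ and satisfies $\af(\vp) < \tqpd(e) + \delta/8$. Using \cite[p.~143, Lem.~1.1]{freidlin2012random}, I would prepend and append short paths of small action inside $g_{\src(e)}$ and $g_{\tgt(e)}$ joining $x$ to $\vp(0)$ and $\vp(T)$ to a point on $\partial g_{\tgt(e)}$, producing $\tilde\vp$ with $\af(\tilde\vp) < \tqpd(e) + \delta/4$. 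Choosing $r > 0$ smaller than the distance from $\tilde\vp|_{(0,T)}$ to $\vtx \setminus \{\src(e), \tgt(e)\}$, and small enough that every continuous $r$-perturbation of $\tilde\vp$ again represents the class $e$, the Freidlin--Wentzell tube lower bound \cite[p.~135, Thm~3.2]{freidlin2012random} yields $P(x,e) \geq \Prob_x^\varepsilon(\distT{X^\varepsilon}{\tilde\vp}{T} < r) \geq \exp\bigl(-\tfrac{1}{\varepsilon}(\tqpd(e) + \delta)\bigr)$ for $\varepsilon$ small.

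For the upper bound in \eqref{eq:FW-transition-refinement-P}, I would fix a large $T_* > 0$ and use Lem.~\ref{lem:tau-1-estimates} together with a Markov iteration to show that the contribution to $P(x,e)$ from $\{\tau_1 > T_*\}$ is exponentially negligible. On $\{\tau_1 \leq T_*\}$, the class $e$ constraint forces any realizing $X^\varepsilon_{[0,\tau_1]}$, after concatenation with short paths in $g_{\src(e)}$ and $g_{\tgt(e)}$, to be a candidate in the infimum defining $\tqpd(e)$ up to an $O(\kappa_0)$ correction (the middle segment lies in $C$ and hence automatically avoids $\vtx$). The Freidlin--Wentzell upper bound on closed sublevel sets of $\af$ \cite[p.~137, Thm~3.3]{freidlin2012random}, applied to the closed set of paths in class $e$ with $\kappa_0$ chosen small enough to absorb the correction, then gives $P(x,e) \leq \exp\bigl(-\tfrac{1}{\varepsilon}(\tqpd(e) - \delta)\bigr)$, proving \eqref{eq:FW-transition-refinement-P}.

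To obtain \eqref{eq:FW-transition-P-refinement-decay}, I would decompose $\{e \in \src^{-1}(v) : \tqpd(e) \geq mN\} = \bigsqcup_{k \geq m} A_k$ with $A_k \coloneqq \{e : \tqpd(e) \in [kN, (k+1)N)\}$; the just-established upper bound gives $P(x,e) \leq \exp\bigl(-\tfrac{1}{\varepsilon}(kN - \delta/2)\bigr)$ for $e \in A_k$. By Lem.~\ref{lem:univ-cover-qp-growth}, $e \in A_k$ implies $\length(e) \leq C_3(k+1)$; lifting to the universal cover $\tM$ and using that the quotient $\tM / \pi_1(M) = M$ is compact, the number of lifts of the target of $e$ within distance $C_3(k+1)$ of a fixed lift of $v$ grows at most exponentially, so $\#A_k \leq C_1 \exp(C_2 k)$. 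Hence $\sum_{e \in A_k} P(x,e) \leq C_1 \exp\bigl(C_2 k - \tfrac{1}{\varepsilon}(kN - \delta/2)\bigr)$, which for $\varepsilon$ small is at most $\exp\bigl(-\tfrac{1}{\varepsilon} k(N-\delta)\bigr)$, and summing a geometric series over $k \geq m$ (after a mild relabeling of $\delta$) yields \eqref{eq:FW-transition-P-refinement-decay}. The main obstacle is making the Freidlin--Wentzell upper bound uniform in the homotopy class $e$, since the classical statements concern a single closed event; I expect to handle this by lifting the diffusion to a precompact fundamental domain of $\tM$ and transporting a single uniform estimate there to all classes on $M$ via deck-transformation invariance. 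A secondary bookkeeping task is coordinating the nested parameters $(\delta, \kappa_1 < \kappa_0 < \bar\kappa, T_*, \varepsilon_0)$ so that the small losses incurred in the tube, endpoint, and truncation steps compose to the single $\delta$ in the statement.
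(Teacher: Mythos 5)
Your lower bound argument for \eqref{eq:FW-transition-refinement-P} is sound and similar in spirit to what the paper does, but there are two genuine gaps in the rest.

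First, for the upper bound in \eqref{eq:FW-transition-refinement-P}, truncating to $\{\tau_1 \leq T_*\}$ and applying a fixed-horizon estimate such as \cite[p.~137, Thm~3.3]{freidlin2012random} does not work cleanly. By Lem.~\ref{lem:tau-1-estimates} one only has $\E_x^\varepsilon[\tau_1] \leq e^{\delta/\varepsilon}$; the Markov iteration in its proof gives $\Prob_x^\varepsilon(\sigma_0 > n T_1) \leq (1-e^{-\delta/(2\varepsilon)})^n$. To make $\Prob_x^\varepsilon(\tau_1 > T_*)$ smaller than the target $e^{-(\tqpd(e)-\delta)/\varepsilon}$ you would therefore need $T_*$ to grow like $e^{c/\varepsilon}$, and the fixed-horizon Freidlin--Wentzell upper bound is not uniform over $\varepsilon$-dependent, exponentially growing horizons (its $\varepsilon_0$ depends on $T_*$). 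The paper sidesteps this entirely by applying the \emph{exit-problem} estimate \cite[Ch.~6, Thm~5.1]{freidlin2012random}, which is formulated directly in terms of the random exit time from a smooth domain and gives the two-sided bound without any time truncation; the substantial work in the paper's proof (Sard's theorem, the smooth approximate sublevel sets $D_x$, the $\Aut(\pi)$-equivariant covering by $r_0$-balls) is devoted precisely to setting up domains on which that theorem applies uniformly.

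Second, and more seriously, your derivation of the tail bound \eqref{eq:FW-transition-P-refinement-decay} applies the pointwise bound \eqref{eq:FW-transition-refinement-P} outside its stated domain. That bound is only claimed for $e$ with $\tqpd(e)\leq N$, whereas the classes summed over in \eqref{eq:FW-transition-P-refinement-decay} satisfy $\tqpd(e)\geq mN$; the assertion ``the just-established upper bound gives $P(x,e) \leq \exp(-\tfrac{1}{\varepsilon}(kN-\delta/2))$ for $e\in A_k$'' is therefore unjustified. A uniform-in-$e$ pointwise estimate of this kind is exactly the hard part, and lifting to a precompact fundamental domain does not by itself resolve it, since the difficulty is not compactness of the state space but the unbounded cost (and hence unbounded traversal) associated with distant classes. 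The paper avoids needing any such uniform pointwise estimate: it iterates the single-step exit bound \eqref{eq:prob-exit-ub} via the strong Markov property and the stopping times $(s_n)$, obtaining $\Prob_x^\varepsilon(\tqptd(x,X^\varepsilon_{\tau_1})\geq mN) \leq e^{-\tfrac{1}{\varepsilon} m(N-3\delta/N_0)}$ in one stroke, which \emph{is} \eqref{eq:FW-transition-P-refinement-decay}. Your counting argument (exponential growth of $\#A_k$ via Lem.~\ref{lem:univ-cover-qp-growth} and the at-most-exponential growth of $\pi_1(M)$) is correct as far as it goes, and could be used to pass from the iterated per-level bound $P(x,e)\leq e^{-k(N-\delta)/\varepsilon}$ for $e\in A_k$ to the stated tail bound after relabeling $\delta$; but that per-level bound is itself the iterated estimate, and you would still need to establish it. As written, the proposal assumes what needs to be proved at the crucial step.
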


\begin{proof}
    In the case that $\dim(M) = 1$ (so that $M$ is diffeomorphic to the circle), the statement of the lemma follows straightforwardly from \cite[Ch.~6, Thm~5.1]{freidlin2012random}, so we may and do henceforth assume that $\dim(M) \geq 2$.
    Recall that in this case $\qpd = \tqpd$ (Lem.~\ref{lem:tqp-equals-qp-transversality}), and this allows us to take advantage of continuity of $\qpd$ \cite[p.~143, Lem.~1.1]{freidlin2012random}.

	We work on the universal cover $\pi\colon \tM\to M$ equipped with the pullback metric, which is complete since $M$ is compact \cite[p.~146, Thm~2.8]{docarmo1992riemannian}, and we denote by $\tdft$ the unique lift of $\dft$ to $\tM$ ($\pi_*\tdft=\dft$).
	We work with the quasipotential $\qptd(\slot,\slot) = \tqptd(\slot,\slot)$ as in the proof of Lem.~\ref{lem:univ-cover-qp-growth}.
	Given $A,B\subset \tM$ and $x\in \tM$, we define the sets
	\begin{equation}\label{eq:qptd-images}
	\begin{split}
	\qptd(A,B)\coloneqq \{\qptd(a,b)\colon a\in A, b\in B\}, \quad  \qptd(x,A)\coloneqq \qptd(\{x\},A), \quad  \qptd(A,x)\coloneqq \qptd(A,\{x\}).
	\end{split}
	\end{equation}
	We also consider the lift $(\tilde{X}^\varepsilon_t, \tProbe_x)$ of the diffusion process $(X^\varepsilon_t,\Prob^\varepsilon_x)$ to $\tM$.
	We define $\tvtx\coloneqq \pi^{-1}(\vtx)$, $\tilde{g}\coloneqq \pi^{-1}(g)$, and $\tilde{g}_{v}$ to be the connected component of $\pi^{-1}(g_{\pi(v)})$ containing $v\in \tvtx$.
	Note that $\tilde{g}_v = B_{\kappa_1}(v)$, where $\kappa_1$ is yet to be specified.
	We consider the Markov chain $\tilde{Z}_n = \tilde{X}^\varepsilon_{\tau_n}$ on $\partial \tilde{g}$, where the stopping times $\sigma_n$, $\tau_n$ are defined as before.
	Given $v,w\in \tvtx$ and $x\in \tilde{g}_v$, we let $P(x,w) = \tProbe_x(\tilde{X}^\varepsilon_{\tau_1}\in \partial \tilde{g}_w)$.
	We denote by $H = \Aut(\pi)$ the group of deck transformations which acts by isometries on $\tM$.

    Fix $N,\delta > 0$ and  $N_0 > \max(3,3\delta/N)$ (to aid intuition, $N_0$ may be arbitrarily large).
    Roughly speaking, we would like to establish \eqref{eq:FW-transition-P-refinement-decay} and \eqref{eq:FW-transition-refinement-P} by employing \cite[Ch.~6, Thm~5.1]{freidlin2012random} to estimate transition probabilities within the compact $\qptd(x,\slot)$-sublevel sets, but these sublevel sets are not smooth (as the cited theorem requires).
    Thus, we will consider ``smooth approximate $\qptd(x,\slot)$-sublevel sets''.
    More precisely, Sard's theorem and Lem.~\ref{lem:univ-cover-qp-growth} imply that for each $x\in \tM$ there exists a compact, connected, smooth, codimension-$0$ submanifold $D_x$ with boundary and $N_x >N_0$ such that (cf. \eqref{eq:qptd-images}):\footnote{Proof: for each $x\in \tM$ and $h > 0$, define the open sublevel set $U_{x,h}\coloneqq \{y\in \tM\colon \qptd(x,y)< h\}$.
    It follows from Lem.~\ref{lem:univ-cover-qp-growth} and completeness of the pullback metric on $\tM$ that each $U_{x,N}$ is precompact, so the set $U_{x,N} \cap \tdft^{-1}(0)$ is finite since $\tdft^{-1}(0)=\pi^{-1}(\dft^{-1}(0))$ is  discrete. 
    Thus, there is $N_x > N_0$ such that $D_x\cap \tdft^{-1}(0)\subset [0,N-3\delta/N_x]$.
    For each $x\in \tM$, let $f_x\colon \tM\to [0,1]$ be a $\Cont^\infty$ function satisfying $f_x^{-1}(1) = \cl(U_{x,N-2\delta/N_x})$ and $\textnormal{supp}(f_x)\subset U_{x,N-\delta/N_x}$.
    By Sard's theorem, for each $x\in \tM$ there exists a regular value $c_x\in (0,1)$ of $f$. 
    Defining $D_x$ to be the connected component of $f_x^{-1}([c_x,1])$ containing $x$, since $\partial D_x = f^{-1}(c_x)$ the collection $(D_x)_{x\in \tM}$ satisfies the required conditions.}
    \begin{align}
    \qptd(x,D_x) &\subset [0,N-\delta/N_x] \nonumber\\
    \qptd(x,\partial D_x)&\subset [N-2\delta/N_x,N-\delta/N_x] \label{eq:qp-z-pDz-cont}\\
    \qptd(x,D_x \cap \tdft^{-1}(0))&\subset [0,N-3\delta/N_x] \label{eq:qp-z-zeros-leq}.
    \end{align}
    In particular, it follows from \eqref{eq:qp-z-pDz-cont} and \eqref{eq:qp-z-zeros-leq} that $\partial D_x \cap \tdft^{-1}(0) = \varnothing$.

    Since $M$ is compact and $\qptd(\slot,\slot)$ is continuous \cite[p.~143, Lem.~1.1]{freidlin2012random} and $H$-invariant, it follows that $\qptd(\slot,\slot)$ is uniformly continuous.
    Since also the pullback metric is $H$-invariant, there is $r_0 > 0$ such that $B_{r_0}(v) \cap B_{r_0}(w)=\varnothing$ for all $v,w\in \tvtx$ and
    \begin{equation}\label{eq:FW-trans-r0-choice}
    \forall x,y\in \tM\colon \dist{x}{y}\leq r_0\implies \qptd(x,y)< \min(\delta/(2N_0), N-3\delta/N_0).
    \end{equation}
    Using \eqref{eq:qp-z-pDz-cont}, it follows in particular that $B_{r_0}(x)\subset \interior(D_x)$ for all $x\in \tM$.
    For each $v\in \vtx$, fix a single representative $\tv\in \pi^{-1}(v)$ and let $S\subset \tvtx$ be the finite set of representatives.
	By compactness of $M$, there exists a finite subset $I_0\subset \tM$ with $S\subset I_0$ such that 
	$\bigcup_{z\in I_0}B_{r_0}(\pi(z)) = M$ and hence also $H \cdot \bigcup_{z\in I_0}B_{r_0}(z) = \tM$.
    Defining $I\coloneqq H\cdot I_0\supset \tvtx$, it follows that 
	\begin{equation}\label{eq:union-r0-balls-tm}
	\bigcup_{z\in I}B_{r_0}(z)=\tM.
	\end{equation}    
    
    Since the set $I_0$ is finite, so is $N_1\coloneqq \max_{z\in I_0} N_z > N_0$.
    We choose $\kappa_0\in (0,r_0)$ small enough that (using uniform continuity of $\qptd(\slot,\slot)$)
    \begin{equation}\label{eq:b-kappa-V-small}
    \forall x,y\in \tM\colon \dist{x}{y}\leq \kappa_0 \implies \qptd(x,y) < \delta/(2N_1).
    \end{equation}
    Since \eqref{eq:qp-z-pDz-cont} and \eqref{eq:qp-z-zeros-leq} imply that (cf. \eqref{eq:qptd-images})$$\qptd(D_z\cap \tdft^{-1}(0),\partial D_z) \geq (N-2\delta/N_z)-(N-3\delta/N_z) = \delta/N_z \geq \delta/N_1$$ for all $z\in I_0$, it follows that $\partial D_z \cap \left(\bigcup_{y\in D_z \cap \tdft^{-1}(0)}B_{\kappa_0}(y)\right)=\varnothing$ for all $z\in I_0$.
    Since $H$ acts on $\tM$ by isometries and since $\qptd$ is $H$-invariant, it follows that
\begin{equation}\label{eq:b-kappa-disjoint-Dz}
\forall z\in I\colon \partial D_z \cap \left(\bigcup_{y\in D_z \cap \tdft^{-1}(0)}B_{\kappa_0}(y)\right)=\varnothing. 
\end{equation}    
    
For each $z\in I_0$ and $w\in  \tvtx \cap D_{z}$, fix $\vp_{z,w}\in \Cont_{z,w}([0,T_{z,w}],D_{z})$ satisfying $\vp_{z,w}((0,T_{z,w})) \cap \tdft^{-1}(0) = \varnothing$ and
\begin{equation}\label{eq:FW-trans-af-vp-bound}
\af_{T_{z,w}}(\vp_{z,w}) < \qptd(z,w) + \delta/N_z.
\end{equation}
This is possible since a path $\vp_{z,w}$ from $z$ to $w$ such that $\vp_{z,w}((0,T_{z,w}))$ is disjoint from $\tdft^{-1}(0)$ and $\af_{T_{z,w}}(\vp_{z,w})< \qptd(z,w) + \delta/N_z$ exists by Lem.~\ref{lem:tqp-equals-qp-transversality} since $\dim(M)\geq 2$, and such a path must necessarily be contained in $D_{z}$ by \eqref{eq:qp-z-pDz-cont} and \eqref{eq:qp-z-zeros-leq}.
Since there are only finitely many such paths (since $\#(I_0)<\infty$), if necessary we may further shrink $\kappa_0 > 0$  to ensure that the image of $\vp_{z,w}$ is disjoint from $\bigcup_{y\in D_z \cap \tdft^{-1}(0)}B_{\kappa_0}(y)$ for all $z\in I_0$.
We define $\vp_{hz,hw}\coloneqq h\vp_{z,w}$ for each $z\in I_0$, $w\in  \tvtx \cap D_{z}$, and $h\in H$.
Since everything in sight is $H$-invariant, this yields a collection of paths $\vp_{z,w}$ satisfying the same properties for all $z \in I$ and $w\in  \tvtx \cap D_{z}$.

Now fix any $\kappa_1\in (0,\kappa_0)$ and recall that $\tilde{g}_{v} = B_{\kappa_1}(v)$ for $v\in \tvtx$ and $\tilde{g} = \bigcup_{v\in \tvtx}\tilde{g}_v$.
For each $z\in I$ we define $\hat{D}_{z}\coloneqq D_{z}\setminus \interior(\tilde{g})$ and
\begin{align}
\zeta_{z}&\coloneqq \inf\{t\geq \sigma_0\colon \tilde{X}^\varepsilon_{t}\in \partial \hat{D}_{z}\}\\
\forall x,y\in \hat{D}_{z}\colon \qptdz(x,y)&\coloneqq \inf \{\af_T(\vp)\colon T > 0, \vp \in \Cont_{x,y}([0,T],\hat{D}_{z})\}.
\end{align}
Eq.~ \eqref{eq:b-kappa-disjoint-Dz} implies that each $\hat{D}_z$ is a compact, smooth, codimension-$0$ submanifold with boundary since each $D_z$ is, and this will shortly enable us to apply \cite[Ch.~6,~Thm~5.1]{freidlin2012random}. 

Note that $\tilde{X}^\varepsilon_{\sigma_0}\in \pi^{-1}(C)$, each $\hat{D}_z$ contains no stable invariant sets for $\tdft$ by construction, and $\qptdz(\slot,\slot) \geq \qptd(\slot,\slot)$.
It thus follows from \eqref{eq:FW-trans-r0-choice} and \eqref{eq:b-kappa-V-small}, the strong Markov property, \cite[Ch.~6, Thm~5.1]{freidlin2012random}, and $N_0 < N_z$ (for all $z$) that there is $\varepsilon_1 > 0$ such that, for all $\varepsilon \in (0,\varepsilon_1)$, the following inequalities hold for all $z\in I_0$.
By invariance of the transition probabilities with respect to the action of $H = \Aut(\pi)$ by isometries, they also hold for all $z\in I$:
\begin{align}
\forall x\in  B_{r_0}(z)\colon \tProbe_x(\tilde{X}^\varepsilon_{\zeta_{z}}\in \partial D_{z})&\leq e^{-\frac{1}{\varepsilon}(N-3\delta/N_0)}\label{eq:prob-exit-ub}\\
\forall x\in  B_{r_0}(z)\colon \forall w\in \tvtx \cap D_{z}\colon e^{-\frac{1}{\varepsilon}(\qptdz(x,w)+2\delta/N_0)} \leq \tProbe_x(\tilde{X}^\varepsilon_{\zeta_{z}}\in \partial \tilde{g}_{w}) &\leq  e^{-\frac{1}{\varepsilon}(\qptd(x,w)-2\delta/N_0)} \label{eq:prob-transit-in-D-ub}.
\end{align}
Since $I\supset \tvtx$, we may substitute any $v\in \tvtx$ for $z$ in \eqref{eq:prob-exit-ub} and \eqref{eq:prob-transit-in-D-ub}.
Hence the lower bound in \eqref{eq:FW-transition-refinement-P} follows from the lower bound in  \eqref{eq:prob-transit-in-D-ub}, $\tProbe_x(\tilde{X}^\varepsilon_{\zeta_{z}}\in \partial \tilde{g}_{w})\leq P(x,w)$, $r_0>\kappa_0> \kappa_1$, $N_0 > 3$, and the fact that $\qptdz(x,w)\leq \af_{T_{\tv,w}}(\vp_{\tv,w})< \qptd(x,w) + \delta/N_0$ by \eqref{eq:FW-trans-af-vp-bound} and $N_z > N_0$.
Next, we estimate
\begin{equation}\label{eq:FW-pxe-final-short-estimate}
\begin{split}
P(x,w) &\leq  \tProbe_x(\tilde{X}^\varepsilon_{\zeta_{v}}\in \partial \tilde{g}_{w}) + \tProbe_x(\tilde{X}^\varepsilon_{\zeta_{v}}\in \partial D_v) \leq e^{-\frac{1}{\varepsilon}(\qptd(x,w)-2\delta/N_0)} + e^{-\frac{1}{\varepsilon}(N-3\delta/N_0)}\\
&\leq e^{-\frac{1}{\varepsilon}(\qptd(x,w)-3\delta/N_0)} 
\end{split}
\end{equation}	
for all $v\in \tvtx\subset I$ and $x\in \tilde{g}_{v}\subset B_{r_0}(v)$ by \eqref{eq:prob-exit-ub} and \eqref{eq:prob-transit-in-D-ub}, with the last inequality holding for all $\varepsilon \in (0,\varepsilon_0)$ if $\varepsilon_0\in (0, \varepsilon_1)$ is sufficiently small.
Since $N_0 > 3$ and $r_0>\kappa_0 >  \kappa_1$, \eqref{eq:FW-pxe-final-short-estimate} implies the upper bound in \eqref{eq:FW-transition-refinement-P}.
		
Finally, let $f\colon \tM\to I$ be any Borel measurable ``selection function'' satisfying $f(z)=z$ for all $z\in I$ and, for all $y\in \tM$, $f(y)=z$ for some $z\in I$ satisfying $y\in B_{r_0}(z)$.
Such a map exists since $\tM = \bigcup_{z\in I}B_{r_0}(z)$ by \eqref{eq:union-r0-balls-tm}.
Define the sequence $(s_n)_{n\in \N}$ of stopping times by setting $s_0 \coloneqq 0$ and inductively defining $s_{n+1}\coloneqq s_n + \zeta_{f(\tilde{X}^\varepsilon_{s_n})}$. 
Then for any $v\in \tvtx$ and $x\in \tilde{g}_{v}$, \eqref{eq:qp-z-pDz-cont} and \eqref{eq:FW-trans-r0-choice} imply that
\begin{equation}
\Prob_x^\varepsilon(\tqptd(x,X^\varepsilon_{\tau_1})\geq mN)\leq \prod_{i=0}^{m-1} \Prob_{\tilde{X}^\varepsilon_{s_i}}(\tilde{X}^\varepsilon_{s_{i+1}}\in \partial D_{f(\tilde{X}^\varepsilon_{s_i})})\leq e^{-\frac{1}{\varepsilon}m(N-3\delta/N_0)},
\end{equation}
where the last equality follows from \eqref{eq:prob-exit-ub} (substituting $v\in \tvtx$ for $z\in I\supset \tvtx$).
Since $N_0>3$, this implies \eqref{eq:FW-transition-P-refinement-decay} and completes the proof.
\end{proof}

\subsection{Finishing the proof}\label{sec:finishing-the-proof-crst}
We now prove Theorem~\ref{th:flux-manifold-mc-CRST-ld}.
For convenience we restate the theorem.
\ThmFWGeneral*
\begin{proof}
We first note that the minimizers in \eqref{eq:min-rst-assumption} (and maximizers below) exist by Lem.~\ref{lem:univ-cover-qp-growth} and discreteness of $\dft^{-1}(0)$.

We will use the following additional notation: for $n\geq 0$ we define $\Eh^n\coloneqq \{e\in \Eh\colon \tqpd(e)\geq n\}$, $ R_n \coloneqq \{\edg\in \RST(\gh) \colon \edg\cap \Eh^n \neq \varnothing\}$,  $C_n\coloneqq \{\edg\in \CRST(\gh)\colon \edg\cap \Eh^n \neq \varnothing\}$, and $2^{\Eh}_n \coloneqq \{\edg\in 2^{\Eh}\colon \#(\edg)\leq \#(\vtx) \textnormal{ and }\edg\cap \Eh^n \neq \varnothing\}$.
Note that $R_n = 2^{\Eh}_n\cap \RST(\gh)$ and $C_n = 2^{\Eh}_n\cap \CRST(\gh)$  since $\#(\edg)=\#(\vtx)-1$ for every $\edg\in \RST(\gh)$ and $\#(\edg)=\#(\vtx)$ for every $\edg\in \CRST(\gh)$.

Fix any $\delta > 0$.
Let $m, c> 0$ be as in \eqref{eq:univ-cover-qp-growth-3} of Lem.~\ref{lem:univ-cover-qp-growth} and choose $N > 0$ such that $N - \delta$ is strictly larger than both of the minimums in \eqref{eq:flux-sde-mc-ld-expression}.
By Lem.~\ref{lem:tau-1-estimates} and Lem.~\ref{lem:FW-transition-refinement} there is $\bar{\kappa}>0$ such that, for all $0 < \kappa_1 < \kappa_0 < \bar{\kappa}$, there is $\varepsilon_6 > 0$ such that, for all $0<\varepsilon<\varepsilon_6$, \eqref{eq:tau-1-bound}, \eqref{eq:FW-transition-P-refinement-decay} and \eqref{eq:FW-transition-refinement-P} hold with $\delta/6$ replacing $\delta$.
Moreover, using \eqref{eq:FW-transition-refinement-P}, we may assume that $\varepsilon_6$ is small enough that, for all $0<\varepsilon < \varepsilon_6$, 
\begin{equation}\label{eq:bar-pi-bounds}
e^{-\frac{1}{\varepsilon}(\delta/6+ \sum_{e\in \edg}\tqpd(e))}  \leq \bar{\pi}(\edg)\leq  e^{-\frac{1}{\varepsilon}(-\delta/6+ \sum_{e\in \edg}\tqpd(e))} 
\end{equation}
for any finite subset $\edg\subset \Eh$ satisfying $\#(\edg)\leq \#(\vtx)$ and $\tqpd(e)\leq N$ for all $e\in \edg$.

We first use  \eqref{eq:tau-1-bound} to obtain from \eqref{eq:flux-manifold-mc-CRST} of Lem.~\ref{lem:flux-manifold-mc-CRST} that
\begin{equation}\label{eq:flux-manifold-mc-CRST-ld-1}
\begin{split}
e^{-\frac{1}{\varepsilon}\delta/6}\left(\sum_{\edg\in \RST(\gh)}\bar{\pi}(\edg)\right)^{-1}&\sum_{\edg \in \CRST(\gh)} \bar{\pi}(\edg)\cfo(\cycle(\edg)) \leq \fluxe([\cfo])\\
&\leq e^{\frac{1}{\varepsilon}\delta/6}\left(\sum_{\edg\in \RST(\gh)}\bar{\pi}(\edg)\right)^{-1}\sum_{\edg \in \CRST(\gh)} \bar{\pi}(\edg)\cfo(\cycle(\edg)),
\end{split}
\end{equation}
where $\bar{\pi}(\slot)$ is as defined in Lem.~\ref{lem:flux-manifold-mc-CRST}.

Next, by the definition $\bar{P}_{\kappa_1}(e)\coloneqq \frac{1}{\nu^\varepsilon(\partial g_{\src(e)})}\int_{\partial g_{\src(e)}}\nu^\varepsilon(dy)P(y,e)$  and the Fubini-Tonelli theorem, for any subset $\edg\subset \Eh$ it follows that 
\begin{equation}\label{eq:sum-p-bar-expression}
\sum_{e\in \edg}\bar{P}_{\kappa_1}(e) = \sum_{v\in \vtx}\frac{1}{\nu^\varepsilon(\partial g_v)}\int_{\partial g_v}\nu^\varepsilon(dy)\sum_{e\in \src^{-1}(v)\cap \edg}P(y,e).
\end{equation}
If we set $\edg = \Eh$ in \eqref{eq:sum-p-bar-expression} the sum inside the integral becomes equal to $1$, so we obtain
\begin{equation}\label{eq:sum-p-bar-all}
\sum_{e\in \Eh}\bar{P}_{\kappa_1}(e) = \#(\vtx).
\end{equation}
If we set $\edg = \Eh^{kN}$ in \eqref{eq:sum-p-bar-expression} for $k\in \N$, then by \eqref{eq:FW-transition-P-refinement-decay} of Lem.~\ref{lem:FW-transition-refinement} (with $\delta/6$ replacing $\delta$) the sum inside the integral in \eqref{eq:sum-p-bar-expression} is bounded above by $e^{-\frac{1}{\varepsilon}k(N-\delta/6)}$: 
\begin{equation}\label{eq:sum-p-bar-em}
\sum_{e\in \Eh^{kN}}\bar{P}_{\kappa_1}(e)\leq \#(\vtx) e^{-\frac{1}{\varepsilon}k(N-\delta/6)}.
\end{equation}
Let $\varepsilon_5\in (0,\varepsilon_6)$ be small enough that $\sum_{\ell=1}^{\#(\vtx)}\#(\vtx)^{\ell}\leq e^{\frac{1}{\varepsilon_5}(\delta/5-\delta/6)}$.
For any $k\in \N$, we compute
\begin{equation*}
\sum_{\edg\in 2^{\Eh}_{kN}}\bar{\pi}(\edg) = \sum_{\ell=1}^{\#(\vtx)}\sum_{\substack{e_1\in \Eh^{kN}\\e_2,\ldots,e_\ell\in\Eh}} \bar{P}_{\kappa_1}(e_1)\cdots \bar{P}_{\kappa_1}(e_\ell) = \left(\sum_{e\in \Eh^{kN}}\bar{P}_{\kappa_1}(e)\right)\sum_{\ell=1}^{\#(\vtx)} \left(\sum_{e\in \Eh}\bar{P}_{\kappa_1}(e)\right)^{\ell-1}
\end{equation*}
using the Fubini-Tonelli theorem.
Substituting \eqref{eq:sum-p-bar-all} and \eqref{eq:sum-p-bar-em} into the above yields, for all $0<\varepsilon<\varepsilon_5$ and $k\in \N$,
\begin{equation}\label{eq:sum-rst-n-bound}
\sum_{\edg\in 2^{\Eh}_{kN}}\bar{\pi}(\edg)\leq e^{-\frac{1}{\varepsilon}k(N-\delta/6)}\sum_{\ell=1}^{\#(\vtx)}\#(\vtx)^{\ell}\leq e^{-\frac{1}{\varepsilon}k(N-\delta/5)},
\end{equation}
where the second inequality follows from our choice of $\varepsilon_5$.

We now bound the sum over $\edg\in \RST(\gh) = R_0$ in \eqref{eq:flux-manifold-mc-CRST-ld-1}.
By the bounds in \eqref{eq:bar-pi-bounds} and since $N-\delta> \min_{\edg\in \RST(\gh)}\sum_{e\in \edg}\tqpd(e)$, there exists $\varepsilon_4\in (0,\varepsilon_5)$ such that, for all $0<\varepsilon < \varepsilon_4$,
\begin{equation}\label{eq:sum-bar-pi-R0-RN-bounds}
\max_{\edg\in \RST(\gh)} e^{-\frac{1}{\varepsilon}(\delta/5+ \sum_{e\in \edg}\tqpd(e))} \leq \sum_{\edg\in R_0\setminus R_N}\bar{\pi}(\edg)\leq \max_{\edg\in \RST(\gh)} e^{-\frac{1}{\varepsilon}(-\delta/5+ \sum_{e\in \edg}\tqpd(e))}.
\end{equation}
Since $\RST(\gh) = (R_0 \setminus R_N) \cup R_N$ and $R_N = 2^{\Eh}_N\cap \RST(\gh)$, we have that $(R_0\setminus R_N) \subset \RST(\gh)\subset (R_0\setminus R_N) \cup 2^{\Eh}_N$. 
Therefore, combining \eqref{eq:sum-bar-pi-R0-RN-bounds} with \eqref{eq:sum-rst-n-bound} (with $k = 1$ in the latter) yields the existence of $\varepsilon_3\in (0,\varepsilon_4)$ such that, for all $\varepsilon \in (0,\varepsilon_3)$,
\begin{equation}\label{eq:sum-bar-pi-RST-bounds}
\max_{\edg\in \RST(\gh)}e^{-\frac{1}{\varepsilon}(+\delta/4+\sum_{e\in \edg}\tqpd(e))} \leq \sum_{\edg\in \RST(\gh)}\bar{\pi}(\edg)\leq \max_{\edg\in \RST(\gh)}e^{-\frac{1}{\varepsilon}(-\delta/4+\sum_{e\in \edg}\tqpd(e))}.
\end{equation}
These are the desired bounds on $\sum_{\edg\in \RST(\gh)}\bar{\pi}(\edg)$.

We now seek to establish bounds on $\sum_{\edg\in \CRST(\gh)}\bar{\pi}(\edg)\cfo(\cycle(\edg))$. 
Since $C_0\setminus C_N$ is a finite set, the assumption \eqref{eq:min-rst-assumption} and our choice of $N$ imply the existence of $\varepsilon_2\in (0,\varepsilon_3)$ such that, for all $\varepsilon \in (0,\varepsilon_2)$,
\begin{equation}\label{eq:C0-CN-bounds}
\begin{split}
\max_{\substack{\edg\in C_0\setminus C_N\\\cfo(\cycle(\edg))> 0}}\,\,\,e^{-\frac{1}{\varepsilon}(\delta/5+\sum_{e\in \edg}\tqpd(e))} &\leq \sum_{\edg\in C_0\setminus C_N}\bar{\pi}(\edg)\cfo(\cycle(\edg)) \\
&\leq \max_{\substack{\edg\in C_0\setminus C_N\\\cfo(\cycle(\edg))> 0}}\,\,\,e^{-\frac{1}{\varepsilon}(-\delta/5+\sum_{e\in \edg}\tqpd(e))}.
\end{split}
\end{equation}
Now suppose we can find $\varepsilon_1\in (0,\varepsilon_2)$ such that, for all $\varepsilon \in (0,\varepsilon_1)$,
\begin{equation}\label{eq:CN-bound-desired}
\left|\sum_{\edg\in C_N}\bar{\pi}(\edg)\cfo(\cycle(\edg))\right|\leq e^{-\frac{1}{\varepsilon}(N-\delta/4)}.
\end{equation}
Then since $\CRST(\gh) = (C_0\setminus C_N) \cup C_N$, \eqref{eq:C0-CN-bounds}, \eqref{eq:CN-bound-desired}, and our choice of $N$ would imply the existence of $\varepsilon_0\in (0,\varepsilon_1)$ such that, for all $\varepsilon \in (0,\varepsilon_0)$, 
\begin{equation}\label{eq:CRST-bounds-desired}
\begin{split}
\max_{\substack{\edg\in \CRST(\gh)\\\cfo(\cycle(\edg))> 0}}\,\,\,e^{-\frac{1}{\varepsilon}(\delta/3+\sum_{e\in \edg}\tqpd(e))} &\leq \sum_{\edg\in \CRST(\gh)}\bar{\pi}(\edg)\cfo(\cycle(\edg)) \\
&\leq \max_{\substack{\edg\in \CRST(\gh)\\\cfo(\cycle(\edg))> 0}}\,\,\,e^{-\frac{1}{\varepsilon}(-\delta/3+\sum_{e\in \edg}\tqpd(e))}.
\end{split}
\end{equation}
From \eqref{eq:flux-manifold-mc-CRST-ld-1} we see this would imply $\fluxe([\cfo])> 0$ for all $\varepsilon\in (0,\varepsilon_0)$, and substituting the bounds \eqref{eq:sum-bar-pi-RST-bounds} and \eqref{eq:CRST-bounds-desired} into \eqref{eq:flux-manifold-mc-CRST-ld-1} would yield that, for all $\varepsilon \in (0,\varepsilon_0)$, 
\begin{equation}\label{eq:flux-sde-mc-ld-delta-bound}
\begin{split}
\left|-\varepsilon\ln\fluxe([\cfo]) - \left(\min_{\substack{\edg\in \CRST(\gh)\\\cfo(\cycle(\edg))> 0}}\,\,\,\sum_{e\in \edg} \tqpd(e)\right) + \left(\min_{\substack{\edg \in \RST(\gh)}}\sum_{e\in \edg}\tqpd(e)\right)\right|\leq \delta.
\end{split}
\end{equation}
Since $\delta > 0$ was arbitrary, \eqref{eq:flux-sde-mc-ld-delta-bound} would imply the desired Eq.~\eqref{eq:flux-sde-mc-ld-expression}.

To complete the proof it therefore remains only to show that there exists $\varepsilon_1\in(0,\varepsilon_2)$ such that \eqref{eq:CN-bound-desired} holds for all $\varepsilon\in (0,\varepsilon_1)$.
For any $k\in \N$ and $\edg \in C_{kN}\setminus C_{(k+1)N}$, Eq.~\eqref{eq:univ-cover-qp-growth-3} of Lem.~\ref{lem:univ-cover-qp-growth} implies that $$|\cfo(\cycle(\edg))|\leq \#(\vtx)\max_{e\in \edg}|\cfo(e)|\leq \frac{(k+1)N+c}{m}\#(\vtx),$$
where $m,c>0$ are the constants defined in Lem.~\ref{lem:univ-cover-qp-growth}.
Therefore,
\begin{align*}
\left|\sum_{\edg\in C_N}\bar{\pi}(\edg)\cfo(\cycle(\edg))\right|&= \left|\sum_{k=1}^\infty\,\,\sum_{\edg\in C_{kN}\setminus C_{(k+1)N}}\bar{\pi}(\edg)\cfo(\cycle(\edg))\right|\\
&\leq \frac{\#(\vtx)}{m}\left(c\sum_{k=1}^\infty\sum_{\edg\in 2^{\Eh}_{kN}} \bar{\pi}(\edg) + N\sum_{k=1}^\infty (k+1)\sum_{\edg\in 2^{\Eh}_{kN}}\bar{\pi}(\edg)\right)\\
&\leq \frac{\#(\vtx)}{m}\left(c\sum_{k=1}^\infty  e^{-\frac{1}{\varepsilon}k(N-\delta/5)}  + N\sum_{k=1}^\infty ke^{-\frac{1}{\varepsilon}k(N-\delta/5)}\right),
\end{align*}
where the final line follows from \eqref{eq:sum-rst-n-bound}.
The first sum in the last line is equal to $\frac{e^{-\frac{1}{\varepsilon}(N-\delta/5)}}{1-e^{-\frac{1}{\varepsilon}(N-\delta/5)}}$, and the second sum is equal to the derivative $\frac{e^{-\frac{1}{\varepsilon}(N-\delta/5)}}{(1-e^{-\frac{1}{\varepsilon}(N-\delta/5)})^2}$ of the first with respect to $-\frac{1}{\varepsilon}(N-\delta/5)$, so both sums are logarithmically equivalent to $e^{-\frac{1}{\varepsilon}(N-\delta/5)}$ as $\varepsilon \to 0$.
Thus, there indeed exists $\varepsilon_1\in (0,\varepsilon_2)$ such that \eqref{eq:CN-bound-desired} holds for all $\varepsilon\in (0,\varepsilon_1)$.
This completes the proof.
\end{proof}

\section{Proofs of Theorems~\ref{th:qualitative}, \ref{th:qualitative-measure} and Prop.~\ref{prop:one-minimizer}}\label{sec:proofs-morse}
In this section we prove Theorems~\ref{th:qualitative} and \ref{th:qualitative-measure} and Prop.~\ref{prop:one-minimizer}.
Properties of the quasipotential specific to the case that $\dft = \cfo^\sharp$ is dual to a closed one-form are established in \S \ref{sec:quasipot-closed-form}.
Results concerning continuous dependence with respect to $\dft$ of the quasipotential and of minimizing rooted spanning trees are established in \S \ref{sec:quasipotential-continuous-dependence} and \S\ref{sec:spanning-tree-continuous-dependence}, respectively. 
Using these tools and also Theorem~\ref{th:flux-manifold-mc-CRST-ld}, we complete the proof of Theorem~\ref{th:qualitative} in \S \ref{sec:finishing-morse-flux-proof}, and we also prove Theorem~\ref{th:qualitative-measure} and Prop.~\ref{prop:one-minimizer} in the same section.
\subsection{Properties of the quasipotential for tilted Morse-Smale potentials}\label{sec:quasipot-closed-form}
In the statements below, given $\dft = \cfo^\sharp$ satisfying Assumption~\ref{assump:morse-smale} and $e\in \Ed$, let $[e]\in \Pi(M)$ be the path homotopy class of any orientation-preserving parametrization of $e$.
Define $\Ws(e)$ and $\Wu(e)$ to be the stable and unstable manifolds of the unique index-$1$ zero of $\dft$ in $q(e)$, and note that $\Wu(e)=q(e)\in \Eu$.
If $\cfo = -dU$ is exact, then we additionally define $U(e)$ and $U(q(e))$ to be the value of $U$ at the unique index-$1$ critical point of $U$ in $q(e)$.

\begin{Lem}\label{lem:saddle-lowest-wrt-beta}
Let $M$ be a closed connected Riemannian manifold.
Assume that $\dft = \cfo^\sharp$ is dual to a $\Cont^1$ closed one-form and satisfies Assumption~\ref{assump:morse-smale}.
Fix $e\in \Ed$, define $W$ to be any neighborhood of $q(e)$ contained in $\Ws(\src(e))\cup \Ws(\tgt(e))\cup \Ws(e)$, and let $\vp\colon[T_1,T_2]\to W$ be any continuous path satisfying $[\vp]=[e]$.
Define $t_*\coloneqq \inf \{t\in [T_1,T_2]\colon \vp(t)\in \Ws(e)\}$ and let $z\in q(e)\in \Eu$ be the unique index-$1$ zero of $\dft$ in $q(e)\subset M$.
Then $$\int_{\vp|_{[T_1,t_*]}}(-\cfo) \geq g(e)$$
with equality if and only if $\vp(t_*)=z$.
\end{Lem}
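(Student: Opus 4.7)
The plan is to construct a local potential $u_1$ for $\cfo$ on the simply connected basin $\Ws(\src(e))$, extend it continuously along the forward $\dft$-flow from $\vp(t_*)$ to $z$, and exploit monotonicity of this potential under the flow to bound the integral below by $g(e)$.

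First I would observe that $\vp([T_1,t_*)) \subset \Ws(\src(e))$: this image is connected, lies in $W\setminus \Ws(e)\subset \Ws(\src(e))\cup \Ws(\tgt(e))$ by definition of $t_*$, and contains $\vp(T_1)=\src(e)\in \Ws(\src(e))$. Consequently $\vp(t_*)\in \cl(\Ws(\src(e)))\cap \cl(\Ws(e))$. I would handle the degenerate case $\vp(t_*)=z$ separately (it directly gives the equality clause) and otherwise take $\vp(t_*)\in \Ws(e)\setminus\{z\}$. Because $\Ws(\src(e))$ is the basin of a hyperbolic asymptotically stable equilibrium, it is diffeomorphic to $\R^n$ and hence simply connected, so there exists a smooth $u_1\colon \Ws(\src(e))\to\R$ with $\cfo|_{\Ws(\src(e))}=-du_1$ and $u_1(\src(e))=0$; for every $s<t_*$, $\int_{\vp|_{[T_1,s]}}(-\cfo)=u_1(\vp(s))$.

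Next, I would let $\gamma\colon[0,\infty)\to M$ be the forward $\dft$-integral curve with $\gamma(0)=\vp(t_*)$. Since $\vp(t_*)\in \Ws(e)$, $\gamma(s)\to z$ as $s\to\infty$, and forward-invariance of $\cl(\Ws(\src(e)))$ under the $\dft$-flow forces $\gamma([0,\infty))\subset \Ws(e)\cap\cl(\Ws(\src(e)))$. By the hyperbolic normal form at $z$ together with continuity of the stable/unstable splittings along $\Ws(e)$, near any $y\in \{\vp(t_*)\}\cup\gamma([0,\infty))\cup\{z\}$ the submanifold $\Ws(e)$ locally separates $\Ws(\src(e))$ from $\Ws(\tgt(e))$: a small simply connected ball $B\ni y$ admits a local primitive $u_B$ with $\cfo|_B=-du_B$, and $B\cap \Ws(\src(e))$ is a connected open half-ball on which $u_1-u_B$ is constant. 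This lets me extend $u_1$ continuously to $\bar u_1$ on $\{\vp(t_*)\}\cup\gamma([0,\infty))\cup\{z\}$, smoothly on the interior with $d\bar u_1=-\cfo$. Continuity of the line integral then gives $\int_{\vp|_{[T_1,t_*]}}(-\cfo)=\bar u_1(\vp(t_*))$, and the analogous computation along the canonical path in $q(e)$ from $\src(e)$ to $z$ gives $\bar u_1(z)=g(e)$.

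Finally, along $\gamma$, since $\dft=\cfo^\sharp$,
\begin{equation*}
\frac{d}{ds}\bar u_1(\gamma(s))=-\cfo(\dft(\gamma(s)))=-\norm{\dft(\gamma(s))}^2\leq 0,
\end{equation*}
so $\bar u_1$ is nonincreasing along $\gamma$, giving $\bar u_1(\vp(t_*))\geq \bar u_1(z)=g(e)$ with equality iff $\dft(\gamma)\equiv 0$, i.e.\ $\vp(t_*)=z$. The hard part will be making rigorous the consistent identification of the ``$\src(e)$-side'' of $\Ws(e)$ along the entire orbit $\gamma$ (matching the side picked out by the canonical path in $q(e)$ near $z$), which is what guarantees $\bar u_1$ is single-valued and $\bar u_1(z)=g(e)$; in the generic case $\src(e)\neq\tgt(e)$ this follows from the local hyperbolic normal form together with contractibility of the embedded submanifold $\Ws(e)$, while the loop-edge case $\src(e)=\tgt(e)$ can be handled by passing to a two-fold cover that separates the two branches of $\Wu(z)$.
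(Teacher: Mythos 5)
Your plan is correct in spirit, and the monotonicity computation at the end ($\frac{d}{ds}\bar u_1(\gamma(s)) = -\norm{\dft(\gamma(s))}^2 \leq 0$) is exactly the mechanism, but you take a noticeably different route in implementing the potential. The paper lifts everything once and for all to the universal cover $\pi\colon\tM\to M$ with the pullback metric: there the closed form $\pi^*\cfo$ is globally exact, $\pi^*\cfo=-df$ for a single $f\in C^2(\tM)$, the lift $\tilde\vp$ of $\vp$ with $\tilde\vp(T_1)=v\in\pi^{-1}(\src(e))$ is unique, and setting $\tilde x:=\tilde\vp(t_*)$ the whole lemma collapses to the observation that $f(\tilde x)\geq f(\tilde z)$ (with equality iff $\tilde x=\tilde z$), where $\tilde z$ is the index-$1$ critical point of $f$ with $\tilde x\in \Ws(\tilde z)$ for $-\nabla f$, because the minimum of $f$ on a stable manifold of $-\nabla f$ is attained exactly at the equilibrium. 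The single global primitive $f$ and the uniqueness of path lifts make the "single-valuedness of $\bar u_1$" and the "which side of $\Ws(e)$" questions vanish by construction.

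By contrast, your construction stays downstairs: you build a primitive $u_1$ on the simply connected basin $\Ws(\src(e))$ and then try to extend it across $\Ws(e)$ along the orbit $\gamma$ of $\vp(t_*)$ by gluing local primitives $u_B$. As you note yourself, the consistency of the gluing (that you always pick the $\src(e)$-side, compatibly with the normalization $\bar u_1(z)=g(e)$ at the far end, and also compatibly when $\src(e)=\tgt(e)$) is precisely the delicate step, and your proposed fix for the loop-edge case (a two-fold cover) is not fleshed out and looks fragile — a priori you would want the cover on which the periods of $\cfo$ over loops that $\vp$ and the canonical edge could differ by all vanish, which is the universal cover (or at least a cover killing $[\cfo]$), not merely a two-fold cover. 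So while your approach can presumably be completed, the universal-cover lift is the cleaner and shorter route: it converts the hard gluing/monodromy problem into a trivial uniqueness-of-lifts statement, which is what the paper does.

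One small additional remark: you don't actually need to run the flow $\gamma$ explicitly; once you have a well-defined $f$ (or $\bar u_1$) and know $\tilde x\in \Ws(\tilde z)$, it is a standard Morse-theoretic fact that the minimum of the potential on a stable manifold of a gradient-like equilibrium equals the potential value at the equilibrium, with equality only at the equilibrium — this is the one-line ending the paper uses in place of your explicit derivative computation along $\gamma$.
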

\begin{proof}
Let $\pi\colon \tM\to M$ be the universal cover of $M$ equipped with the pullback metric, and let $f\in \Cont^2(\tM)$ be a primitive of $\pi^*(-\cfo)$ so that $\pi^*\cfo = -df$.
Fix $v\in \pi^{-1}(\src(e))$ and let $\tilde{\vp}\colon [T_1,T_2]\to \tM$ be the unique lift of $\vp$ to $\tM$ satisfying $\tilde{\vp}(T_1)=v$. 
Define $\tx\coloneqq \tilde{\vp}(t_*)$ and $\tilde{w}\coloneqq \tilde{\vp}(T_2)$.
Let $\tilde{z}$ be the unique index-$1$ critical point of $f$ such that $\tilde{x}\in \Ws(\tilde{z})$ belongs to the stable manifold of $\tilde{z}$ with respect to the flow of $-\nabla f$.

Then the claim of the lemma is equivalent to the statement that $f(\tilde{x})\geq f(\tilde{z})$ with equality if and only if $\tilde{x} = \tilde{z}$.
But this follows since $\tilde{x}\in \Ws(\tilde{z})$, a  stable manifold for an equilibrium of the flow of $-\nabla f$, and the unique minimum value of $f$ on such a stable manifold is always the value of $f$ at the equilibrium.
\end{proof}

\begin{Lem}\label{lem:qp-leq-g-W}
Let $M$ be a closed connected Riemannian manifold.
Assume that $\dft = \cfo^\sharp$ is dual to a $\Cont^1$ closed one-form and satisfies Assumption~\ref{assump:morse-smale}.
Fix $e\in \Ed$, define $W$ to be any neighborhood of $q(e)$ contained in $\Ws(\src(e))\cup \Ws(\tgt(e))\cup \Ws(e)$, and define
\begin{equation}\label{eq:qp-W}
\qpd^W([e])\coloneqq \inf \{\af(\vp)\colon [\vp]=[e], \image(\vp)\subset W\},
\end{equation}
where $\image(\vp)$ is the image of $\vp$ and the infimum is over all continuous paths of the form $\vp\colon [T_1,T_2]\to W$ with $[\vp]=[e]$.
Then
$$g(e) = \qpd^W([e])\geq \tqpd([e])\geq \qpd([e]).$$
\end{Lem}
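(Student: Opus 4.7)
The plan is to prove the central equality $g(e) = \qpd^W([e])$ and then deduce the remaining two inequalities from it; the final inequality $\tqpd([e]) \geq \qpd([e])$ is in any case immediate from the definitions.

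First I would prove the lower bound $\qpd^W([e]) \geq g(e)$. Take any continuous $\vp\colon [T_1,T_2]\to W$ with $[\vp]=[e]$ and finite action. Because $\vp(T_1)=\src(e)\in \Ws(\src(e))$ and $\vp(T_2)=\tgt(e)\in \Ws(\tgt(e))$ while the three stable manifolds composing $W$ are pairwise disjoint, the path must cross $\Ws(e)$; and since $\src(e)\notin \Ws(e)$, the first-hitting time $t_*\coloneqq \inf\{t\in [T_1,T_2]\colon \vp(t)\in \Ws(e)\}$ satisfies $t_* > T_1$. Lem.~\ref{lem:saddle-lowest-wrt-beta} then gives $\int_{\vp|_{[T_1,t_*]}}(-\cfo)\geq g(e)$, Lem.~\ref{lem:S-lower-bound} applied to $\vp|_{[T_1,t_*]}$ gives $\af_{T_1,t_*}(\vp)\geq \int_{\vp|_{[T_1,t_*]}}(-\cfo)$, and $\af(\vp) \geq \af_{T_1,t_*}(\vp)$ holds trivially; combining these and infimizing over $\vp$ yields the claim.

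For the matching upper bound $\qpd^W([e])\leq g(e)$ I would construct, for each $\delta>0$, a path $\vp_\delta\colon [0,T_\delta]\to W$ with $[\vp_\delta]=[e]$ and $\af(\vp_\delta)\leq g(e)+\delta$. Let $\gamma_{\src}$ and $\gamma_{\tgt}$ denote the two arcs of $q(e)\setminus\{z\}$ joining $\src(e)$ and $\tgt(e)$ respectively to the saddle $z$, and pick $x_1\in \gamma_{\src}$ close to $\src(e)$, $y_1\in \gamma_{\src}$ close to $z$, and $y_2\in \gamma_{\tgt}$ close to $z$. The path $\vp_\delta$ is the concatenation of four pieces: a short connector from $\src(e)$ to $x_1$; the time-reversal of the subarc of $\gamma_{\src}$ from $x_1$ to $y_1$ (a $(-\dft)$-integral curve); a short bridge past $z$ from $y_1$ to $y_2$; and the subarc of $\gamma_{\tgt}$ from $y_2$ to $\tgt(e)$ (a $\dft$-integral curve). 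By Lem.~\ref{lem:S-lower-bound} the second piece has action exactly $\int_{x_1}^{y_1}(-\cfo)$, which tends to $g(e)=\int_{\src(e)}^z(-\cfo)$ as $x_1\to \src(e)$ and $y_1\to z$; the fourth piece has zero action; and the first and third pieces can be given action less than $\delta/2$ by a short-time, bounded-velocity estimate using Lem.~\ref{lem:S-alt-expression} and the fact that $\dft$ is small near the zeros $\src(e)$ and $z$. All pieces can be arranged to lie in $W$, and the concatenation manifestly has path-homotopy class $[e]$.

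The remaining inequalities then follow quickly. The only vertices of $\vtx$ contained in $W$ are $\src(e)$ and $\tgt(e)$ (stable manifolds of distinct index-$0$ zeros being disjoint), and by construction $\vp_\delta|_{(0,T_\delta)}$ avoids both of them, so each $\vp_\delta$ belongs to the admissible class for $\tqpd([e])$; hence $\tqpd([e])\leq g(e)=\qpd^W([e])$. The inequality $\tqpd([e])\geq \qpd([e])$ is immediate from Definitions~\ref{def:quasipotential} and \ref{def:quasipotential-restricted}, since the infimum defining $\tqpd$ is taken over a subclass of the paths defining $\qpd$. The main obstacle is the upper-bound construction, and within it the initial connector: because $\src(e)$ is a zero of $\dft$, producing a path that \emph{departs} $\src(e)$ with arbitrarily small action requires a short-time Freidlin--Wentzell-style estimate rather than a direct appeal to Lem.~\ref{lem:S-lower-bound}.
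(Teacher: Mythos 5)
Your proof is structurally the same as the paper's, and the lower bound $\qpd^W([e])\geq g(e)$ and the final inequalities are handled correctly and identically. The gap is in the upper-bound construction, at the target endpoint.

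You correctly flag that the \emph{first} piece (departing $\src(e)$) cannot be a flow segment and requires a short-connector estimate \`a la Freidlin--Wentzell. But you then describe the \emph{fourth} piece as ``the subarc of $\gamma_{\tgt}$ from $y_2$ to $\tgt(e)$ (a $\dft$-integral curve).'' Since $\tgt(e)$ is a zero of $\dft$, a $\dft$-integral curve starting at $y_2$ converges to $\tgt(e)$ only as $t\to\infty$; it never reaches $\tgt(e)$ in finite time, so this ``piece'' is not a continuous path on a compact interval and does not belong to the admissible class in \eqref{eq:qp-W}. (You cannot fix this by reparametrizing to finite time, because the action $\af$ is not reparametrization-invariant and a reparametrized integral curve generically has positive action.) The issue is exactly symmetric to the one you noticed at $\src(e)$, but you addressed it only at the source. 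The paper avoids this with a \emph{five}-piece concatenation: its fourth piece is a $\dft$-integral curve from a point near $z$ to a point $w'$ near $\tgt(e)$ (finite time, zero action), followed by a fifth short connector $\vp_w$ from $w'$ to $\tgt(e)$ with action $<\varepsilon/3$. Inserting the analogous fifth piece repairs your construction; with that fix the argument goes through.

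A minor stylistic note: your small-action bound for the connectors does not actually need ``$\dft$ is small near the zeros.'' The standard estimate \cite[p.~143, Lem.~1.1]{freidlin2012random} (a short geodesic traversed in time $\tau\sim\sqrt{\dist{x}{y}}$ has action $O(\dist{x}{y})$) works between \emph{any} nearby points, which is what the paper invokes for all three of its connector pieces.
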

\begin{proof}
Set $v=\src(e), w=\tgt(e)$, and let $z\in q(e)$ be the unique index-$1$ critical point of $\dft$ contained in $q(e)$.
Fix $\varepsilon > 0$.
By \cite[p.~143, Lem.~1.1]{freidlin2012random} there are arbitrarily small neighborhoods $B_v, B_z, B_w\subset W$ respectively of $v,z,w$ such that, for any $x,y$ contained in the same neighborhood, there is a smooth path $\vp$ from $x$ to $y$ in $W$ satisfying $\af(\vp)< \varepsilon/3$.
Let $\vp_v$ be such a path from $v$ to a nearby point $v'$ in the interior of $q(e)$, $\vp_w$ be such a path to $w$ from a nearby point $w'$ in the interior of $q(e)$, and $\vp_z$ be such a path from points $z', z''$ in the interior of $q(e)$ near $z$ such that $z'$ is in the component of $q(e)\setminus \{z\}$ containing $v'$ and $z''$ is in the component containing $w'$. 
Let $\vp_{vz}$ be the segment of the integral curve of $-\dft$ going from $v'$ to $z'$, and let $\vp_{zw}$ be the segment of the integral curve of $\dft$ going from $z''$ to $w'$.
Let $\vp$ be the concatenation $$\vp = \vp_{v}\cdot \vp_{vz}\cdot \vp_{z}\cdot \vp_{zw}\cdot \vp_w$$
of the paths. 
Then
\begin{align*}
\af(\vp)&=\af(\vp_v)+\af(\vp_{vz})+\af(\vp_z)+\af(\vp_{zw})+\af(\vp_w)\\
&\leq \af(\vp_{vz})+\af(\vp_{zw})+\varepsilon\\
&= \af(\vp_{vz})+\varepsilon,
\end{align*}
where $\af(\vp_{zw})=0$ by the definition of $\af(\slot)$ in \eqref{eq:action-functional} and the fact that $\dot{\vp}_{zw}\equiv \bv(\vp_{zw})$.
Since $\vp_{vz}$ is a segment of an integral curve of $\dft$, Lem.~\ref{lem:S-lower-bound} implies that $\af(\vp_{vz}) = \int_{\vp_{vz}}(-\cfo)$, and this integral converges to $g(e)$  as the diameters of the neighborhoods $B_v, B_z$ tend to zero.
Hence by choosing these neighborhoods small enough we may ensure that $\af(\vp_{vz})< g(e)+ \varepsilon$ and $\af(\vp)< g(e)+2\varepsilon$.
Since $\varepsilon$ was arbitrary and $\qpd^W([e])\leq \af(\vp)$, this implies that $$\qpd^W([e])\leq g(e).$$

To prove the reverse inequality, let $\vp\colon [T_1,T_2]\to W$ be any continuous path in $W$ with $[\vp]=[e]$ and define $t_*\coloneqq \inf\{t\in [T_1,T_2]\colon \vp(t)\in \Ws(z)\}$. 
Lem.~\ref{lem:S-lower-bound} and \ref{lem:saddle-lowest-wrt-beta} imply that 
\begin{equation}\label{eq:qp-leq-g-W-S-lb}
\af(\vp)\geq \int_{\vp|_{[T_1,t_*]}}(-\cfo) \geq g(e),
\end{equation}
and taking the infimum over all such paths $\vp$ yields the desired inequality
$$\qpd^W([e])\geq g(e).$$
The inequality $\qpd^W([e])\geq \tqpd([e])$ follows since $W$ contains no index-$0$ zeros except $\src(e)$ and $\tgt(e)$, and the inequality $\tqpd([e])\geq \qpd([e])$ always holds.
\end{proof}

Let $N\subset M$ be a cooriented, codimension-$1$, $\Cont^1$ embedded submanifold (not necessarily compact). 
Given a $\Cont^1$ path $\vp \colon [T_1,T_2]\to M$ which is transverse to $N$, recall (for Lem.~\ref{lem:cycle-path-qpd-lower-bound}) that the \concept{oriented intersection number} $I(N,\vp)$ of $N$ with $\vp$ is defined by
\begin{equation}\label{eq:intersection-number}
I(N,\vp)\coloneqq \sum_{t\in \vp^{-1}(N)} \varepsilon_t,
\end{equation}
where $\varepsilon_t = \pm1$ if the coorientation of $N$ induced by $\dot{\vp}(t)$ is $\pm$ the given coorientation of $N$.
(Note that $\vp(T_1)$ and/or $\vp(T_2)$ may belong to $N$, and such boundary points do contribute to the sum in \eqref{eq:intersection-number}.)

\begin{Lem}\label{lem:cycle-path-qpd-lower-bound}
Let $M$ be a closed connected Riemannian manifold.
Assume that $\dft = \cfo^\sharp$ is dual to a closed but not exact $\Cont^1$ one-form, satisfies Assumption~\ref{assump:morse-smale}, and is such that the minimizer $T_*$ in \eqref{eq:morse-rst-minimizer} is unique.
Equip
\begin{align}
N\coloneqq \bigcup_{\substack{e\in \Ed\\ h(e)<h(\bar{e})}}\Ws(e)
\end{align}
with the coorientation induced by the directed edges $e\in \Ed$ satisfying $h(e) < h(\bar{e})$.
Then there exists $k>0$ such that the following holds.
For any $\Cont^1$ path $\vp\colon [T_1,T_2] \to M$ satisfying (i) $[\vp]\in \Eh$, (ii) $\vp$ is transverse to $\bigcup_{z\in \dft^{-1}(0)}\Ws(z)$, (iii) $I(N,\vp) < 0$, and (iv)
\begin{align}\label{eq:t1-t2-def-eq}
t_1&\coloneqq \inf\{t\geq T_1\colon \vp(t) \in N\}
< t_2\coloneqq \inf\{t\geq t_1\colon I(N,\vp|_{[T_1, t]}) < 0\}
\end{align}
it follows that, if $e_1\in \Ed$ is such that $\vp(t_1)\in \Ws(e_1)$ and $h(e_1)< h(\bar{e}_1)$, then
\begin{equation}\label{eq:lem-cycle-path-qpd-ineq-0}
h(e_1) + k < h(\vp(0)) + \af(\vp|_{[T_1, t_2]}).
\end{equation}
\end{Lem}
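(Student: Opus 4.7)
The plan is to lift $\vp$ to a suitable regular smooth cover of $M$ on which $\cfo$ becomes exact, convert the action bound into an $f$-difference bound via Lem.~\ref{lem:S-lower-bound}, and then use the signed-crossing hypotheses to locate the relevant saddle-lift.

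First, I would fix a regular smooth cover $\pi\colon\tM\to M$ on which $\pi^*\cfo=-df$ for some $f\in\Cont^2(\tM)$ (for instance the universal cover, or the minimal cover killing $\cfo$-periods), and a lift $\widetilde{v_*}$ of $v_*$ normalized so that $f(\widetilde{v_*})=0$. For each $v\in\vtx$ (resp.\ $e\in\Ed$), lifting the unique path in $q(T_*)$ from $v_*$ to $v$ (and, for $e$, continuing along $e$ to the saddle of $q(e)$) produces a canonical lift $\widetilde v$ (resp.\ $\widetilde{z_e}$) with $f(\widetilde v)=h(v)$ (resp.\ $f(\widetilde{z_e})=h(e)$). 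Applied to $f$ in $\tM$, Lem.~\ref{lem:saddle-lowest-wrt-beta} says that $f$ attains its unique minimum on each connected component of $\pi^{-1}(\Ws(z_e))$ at the saddle-lift on that component. Lifting $\vp$ to $\tilde\vp$ with $\tilde\vp(T_1)=\widetilde{\vp(T_1)}$ and applying Lem.~\ref{lem:S-lower-bound} then yields
\[
\af(\vp|_{[T_1,t_2]}) \;\geq\; \int_{\vp|_{[T_1,t_2]}}(-\cfo) \;=\; f(\tilde\vp(t_2))-h(\vp(T_1)),
\]
so the lemma reduces to producing a uniform $k>0$ with $f(\tilde\vp(t_2))\geq h(e_1)+k$. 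Writing $\tilde\vp(t_2)\in\Ws(\tilde z)$ for the saddle-lift $\tilde z$ on the component of $\pi^{-1}(\Ws(z_{e_2}))$ containing $\tilde\vp(t_2)$, the saddle-lowest property reduces this further to $f(\tilde z)\geq h(e_1)+k$.

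The technical heart of the argument is a deck-translate bookkeeping that identifies the admissible saddle-lifts $\tilde z$. The lifted path $\tilde\vp$ starts in the basin of $\widetilde{\vp(T_1)}$; before $t_1$ it may cross only lifts of stable manifolds of edges $w$ with $h(w)=h(\bar w)$ (those outside $N$); at $t_1$ it crosses a specific lift of $\Ws(z_{e_1})$ positively, which moves $\tilde\vp$ into the basin of a non-canonical lift of $\tgt(e_1)$ (note $e_1\notin q(T_*)$ by Rem.~\ref{rem:edge-height-homology}); between $t_1$ and $t_2$ the cumulative signed count must return from $+1$ to $0$; and at $t_2$ it drops from $0$ to $-1$ through a final negative crossing. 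A deck-translate computation on $f$-values encountered during these crossings---generalizing the explicit circle-valued calculation in Ex.~\ref{ex:tilt-pot-circle}, where the only admissible $\tilde z$ turns out to be the canonical saddle-lift through $\bar e_1$ at $f$-value $h(\bar e_1)=h(e_1)+(h(\bar e_1)-h(e_1))$---shows that $\tilde z$ cannot be the canonical saddle-lift of $f$-value exactly $h(e_1)$, so $f(\tilde z)$ exceeds $h(e_1)$ by a nonzero $\cfo$-period.

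Finally, uniformity of $k>0$ will follow from: finiteness of $\Ed$; uniqueness of the minimizer $T_*$ in~\eqref{eq:morse-rst-minimizer}, which forces every ``non-canonical'' alternative to strictly exceed the canonical weight sum; and the finite combinatorial structure of the admissible saddle-lifts compatible with the signed-crossing constraints, so that a minimum over finitely many strictly positive excesses gives $k>0$. The main obstacle I anticipate is the identification of admissible saddle-lifts in the previous paragraph: carefully combining the transversality hypothesis (ii), the sign constraint on the first crossing ($+$ at $t_1$) and the last ($-$ at $t_2$), and the minimality in the definition of $t_2$ to rule out the canonical saddle-lift of $f$-value exactly $h(e_1)$ and pin down the strictly larger value realized.
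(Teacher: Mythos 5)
Your preparatory steps (lift to the universal cover, normalize $f$, invoke Lem.~\ref{lem:S-lower-bound} and the lifted Lem.~\ref{lem:saddle-lowest-wrt-beta}) match the paper's, but then you diverge in a way that opens a genuine gap. You apply Lem.~\ref{lem:S-lower-bound} over the \emph{whole} segment $[T_1,t_2]$ to reduce the claim to the pure $f$-level comparison $f(\tilde z)\geq h(e_1)+k$, where $\tilde z$ is the saddle-lift crossed at $t_2$. This discards exactly the information the paper exploits. The paper instead splits the action at $t_1$: it bounds $\af(\vp|_{[T_1,t_1]})$ by the $f$-increment (giving $f(\tilde\vp(t_1))\geq h(e_1)$), and separately bounds $\af(\vp|_{[t_1,t_2]})$ below by $\qptd(\tilde\vp(t_1),\tilde\vp(t_2))$. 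The strict positivity of that quasipotential is what creates the margin $k$: by Prop.~\ref{prop:qp-int-conditions-iff-connecting-piecewise-orbit}, since conditions (ii)--(iv) force the saddle-lifts $z_1\neq z_2$ (the crossing at $t_1$ is $+1$ and the one at $t_2$ is $-1$ after the cumulative count has returned to $0$), the homotopy class $[\vp|_{[t_1,t_2]}]$ contains no piecewise $\dft$-integral curve, so $\qpd([\vp|_{[t_1,t_2]}])>0$. Your reformulation as $f(\tilde z)\geq h(e_1)+k$ has no analogue of this mechanism: the quantity $f(\tilde z)-h(e_1)$ can equal zero (when $\tilde z$ is the canonical saddle-lift of $q(e_1)$), and even where it is positive it lies in a coset of the period group of $\cfo$, which can be dense in $\R$; so the ``minimum over finitely many strictly positive excesses'' you invoke does not exist, and no uniform $k$ can be extracted that way.

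The related claim of ``finiteness of admissible saddle-lifts'' is also unsupported: the lemma imposes no a priori bound on the action of $\vp|_{[T_1,t_2]}$, so $\tilde\vp(t_1)$ and $\tilde\vp(t_2)$ can lie on lifts arbitrarily far in the cover (the inequality is easy there precisely because the action is large, but your argument never uses that). The paper handles this non-compactness not by counting lifts, but by showing that the map $(x,y)\mapsto f(x)+\qptd(x,y)$, restricted to $\bigcup_{z\in\pi^{-1}(\pi(z_1))}\Ws(z)\times\bigl(\pi^{-1}(N)\setminus\Ws(z)\bigr)$, attains a minimum strictly above $h(e_1)$; this uses continuity of $\qptd$, its invariance under deck transformations, and---crucially---the linear growth (coercivity) of $\qptd$ established in Lem.~\ref{lem:univ-cover-qp-growth}, which compactifies the minimization. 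Taking the minimum over the finitely many index-$1$ zeros of $\dft$ then yields $k>0$. To repair your argument, you would need to retain the quasipotential term $\qptd(\tilde\vp(t_1),\tilde\vp(t_2))$ rather than collapsing everything to an $f$-difference, and replace the combinatorial enumeration of lifts by the coercivity-plus-continuity minimization.
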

\begin{proof}
Let $\vp$, $t_1$, and $t_2$ be as in the statement, $\pi\colon \tM\to M$ be the universal cover equipped with the pullback metric, $\tdft$ be the lift of $\dft$ to $\tM$, and $f\in \Cont^2(\tM)$ satisfy $df = \pi^*(-\cfo)$ and $f(\vp(0)) = h(\vp(0))$.
Define $v_1\coloneqq \vp(T_1)\in \vtx$, fix $w_1\in \pi^{-1}(v_1)$, and let $\tilde{\vp}$ be the unique lift of $\vp$ satisfying $\tilde{\vp}(T_1) = w_1$.
Let $z_1, z_2\in \tdft^{-1}(0)$ be such that $\tilde{\vp}(t_i)$ belongs to the stable manifold $\Ws(z_i)$ of $z_i$ with respect to $\tdft$. 
Conditions (ii--iv) from the statement imply that $z_1 \neq z_2$.

Lem.~\ref{lem:S-lower-bound} implies the first inequality below, where $\af^{\tdft}$ and $\qptd(\slot,\slot)$ are respectively the action and quasipotential associated to $\tdft$:
\begin{equation}\label{eq:lem-cycle-path-qpd-ineq-1}
\begin{split}
h(\vp(0)) + \af(\vp|_{[T_1,t_2]}) & = f(\tilde{\vp}(0)) + \af^{\tdft}(\tilde{\vp}|_{[T_1,t_1]}) + \af^{\tdft}(\tilde{\vp}|_{[t_1,t_2]})\geq f(\tilde{\vp}(t_1)) + \af^{\tdft}(\tilde{\vp}|_{[t_1,t_2]})\\
&\geq f(\tilde{\vp}(t_1)) + \qptd(\tilde{\vp}(t_1),\tilde{\vp}(t_2))= f(\tilde{\vp}(t_1)) + \qpd([\vp|_{[t_1,t_2]}])\\  
&> f(\tilde{\vp}(t_1)) \geq h(e_1).
\end{split}
\end{equation} 
The second equality follows since $\qptd(x,y) = \qpd([\pi \circ \psi])$ for any continuous path $\psi$ in $\tM$ from $x$ to $y$, and the strict inequality follows from Prop.~\ref{prop:qp-int-conditions-iff-connecting-piecewise-orbit} since $z_1\neq z_2$ implies that there is no piecewise $\dft$-integral curve in $[\vp|_{[t_1,t_2]}]$.
From the continuity of $\qptd$ \cite[p.~143,~Lem.~1.1]{freidlin2012random}, invariance of $\qptd$ under deck transformations, Lem.~\ref{lem:univ-cover-qp-growth}, and \eqref{eq:lem-cycle-path-qpd-ineq-1} it follows that the restriction of the map $(x,y)\mapsto f(x) + \qptd(x,y)$ to $$\bigcup_{z\in \pi^{-1}(\pi(z_1))} \Ws(z)\times (\pi^{-1}(N)\setminus \Ws(z))$$
attains a minimum $h(e_1) + k_{\pi(z_1)} > h(e_1)$. 
Defining $k\coloneqq \min\{k_{z}\colon z\in \dft^{-1}(0) \textnormal{ and } \ind(z) = 1\}$ and using the fact that $\dft^{-1}(0)$ is a finite set (hence $k>0$), this and \eqref{eq:lem-cycle-path-qpd-ineq-1} imply \eqref{eq:lem-cycle-path-qpd-ineq-0}.
\end{proof}

\begin{Rem}
The equality in \eqref{eq:C-AB-def} below holds from taking $X=M$, $Y = \bigcup_{a\in A}\Ws(a)$, and $Z = \bigcup_{b\in B}\Ws(b)$ in the following general fact about topological spaces.
If (i) $X = \cl(Y) \cup \cl(Z)$ where (ii) $Y\cap \cl(Z) = \varnothing$ and (iii) $\cl(Y) \cap Z = \varnothing$, then $\partial (\cl(Y)) = \partial (\cl(Z))$.
This in turn follows since (i) implies  $\partial(\cl(Y))\subset \cl(Z)$ while (ii) implies $\partial(\cl(Y))\subset \cl(Y) \subset X\setminus \interior(\cl(Z))$, so $\partial(\cl(Y))\subset \cl(Z)\setminus \interior(\cl(Z))= \partial(\cl(Z))$, and the reverse argument using (iii) instead of (ii) yields the reverse inclusion $\partial(\cl(Y))\supset \partial(\cl(Z))$.
\end{Rem}
\begin{Lem}\label{lem:exact-form-graph-cond}
Let $M$ be a closed connected Riemannian manifold.
Assume that $\dft = -\nabla U$ is dual to a $\Cont^1$ exact one-form and is Morse-Smale.
Fix $v\in \vtx$ and let $T_v\in \RST(\gh;v)$ be any (possibly nonunique) minimizer
\begin{equation}\label{eq:lem-exact-form-graph-cond-FW-min}
T_v \in \arg\min_{T\in \RST(\gh;v)}\sum_{e\in T}\qpd(e).
\end{equation}
Then for every $e\in T_v$ the following hold, where $A,B\subset V$ are the (undirected) vertex components of $T_v\setminus \{e\}$,
\begin{equation}\label{eq:C-AB-def}
C_{\{A,B\}}\coloneqq \partial \left[\cl \bigcup_{a\in A}\Ws(a)\right] =  \partial \left[\cl \bigcup_{b\in B}\Ws(b)\right],
\end{equation}
and $H_{\{A,B\}} \coloneqq \min_{y\in C_{\{A,B\}}}U(y)$:
\begin{enumerate}
\item\label{item:e-eq-e'} there exists $e'\in \Ed$ such that $e = [e']$,
\item\label{item:qpd-eq-g} $\qpd(e) = g(e')=H_{\{A,B\}}-U(\src(e'))=H_{\{A,B\}}-U(\src(e))$, and
\item\label{item:HAB-strict-lower-bound} if $x\in \Ws(\src(e'))\setminus q(e')$, then $$\qpd(x,\tgt(e))> H_{\{A,B\}}-U(x) = g(e') + U(\src(e'))-U(x).$$
\end{enumerate}
\end{Lem}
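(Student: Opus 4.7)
The plan is to sandwich $\qpd(e)$ between a mountain-pass lower bound and a tree-rearrangement upper bound, proceeding by induction on $\#(A)$. Set $u=\src(e)$ and $w=\tgt(e)$, so $u\in A$ and $w\in B$. For the lower bound, any continuous $\vp$ representing $e$ joins $u\in\bigcup_{a\in A}\Ws(a)$ to $w\in\bigcup_{b\in B}\Ws(b)$ and hence hits $C_{\{A,B\}}$ at some time $t_*$, giving $U(\vp(t_*))\geq H_{\{A,B\}}$. Applying Lem.~\ref{lem:S-lower-bound} to $\vp|_{[T_1,t_*]}$ (with $\af\geq 0$ on the tail) will yield $\af(\vp)\geq H_{\{A,B\}}-U(u)$, so $\qpd(e)\geq H_{\{A,B\}}-U(\src(e))$. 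Since $C_{\{A,B\}}$ is invariant under the flow of $\dft=-\nabla U$ and $U$ strictly decreases along nonstationary orbits, $H_{\{A,B\}}$ must be attained at a zero of $\dft$; Morse-Smale and codimension considerations will force the minimizer to be an index-$1$ saddle $s^\ast$, whose two unstable-manifold branches flow to index-$0$ zeros $a^\ast\in A$ and $b^\ast\in B$. Let $e^\ast\in\Ed$ be the directed Morse graph edge from $a^\ast$ to $b^\ast$, so Lem.~\ref{lem:qp-leq-g-W} applied with $\cfo=-dU$ will give $\qpd([e^\ast])\leq g(e^\ast)=H_{\{A,B\}}-U(a^\ast)$.

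Next I will induct on $\#(A)$, with base case a leaf edge ($A=\{u\}$) for which the reversed path below is trivial. Form $T_v^{new}\in\RST(\gh;v)$ by removing $e$, reversing the unique directed path $a^\ast=p_0\to p_1\to\cdots\to p_m=u$ inside $T_A$, and inserting $[e^\ast]$. By the inductive hypothesis each $f_i=(p_{i-1}\to p_i)\in T_A$ satisfies $f_i=[f_i']$ with $\qpd(f_i)=g(f_i')=U(s_{f_i'})-U(p_{i-1})$; coupling with $\qpd(\bar f_i)\leq g(\bar f_i')=U(s_{f_i'})-U(p_i)$ from Lem.~\ref{lem:qp-leq-g-W} yields the telescope
\begin{equation*}
\sum_i[\qpd(\bar f_i)-\qpd(f_i)]\leq U(a^\ast)-U(u).
\end{equation*}
Optimality $\sum_{T_v}\qpd\leq\sum_{T_v^{new}}\qpd$ then rearranges to $\qpd(e)\leq\qpd([e^\ast])+U(a^\ast)-U(u)\leq H_{\{A,B\}}-U(u)$, matching the lower bound and giving (2). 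To deduce (1), equality in the lower bound will force $\qpd$-minimizing sequences for $e$ to have initial segments converging (via the equality clause of Lem.~\ref{lem:S-lower-bound}) to an integral curve of $+\nabla U$ from $u$ to a saddle of $C_{\{A,B\}}$ at height $H_{\{A,B\}}$, followed by a $\dft$-integral curve down to $w$; this pins $e$ as the class of some $e'\in\Ed$ with $\src(e')=u$ and $\tgt(e')=w$, closing the induction.

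Claim (3) is a refinement of the same equality analysis. For $x\in\Ws(\src(e'))\setminus q(e')$, any sequence of paths from $x$ to $\tgt(e)$ with $\af\to H_{\{A,B\}}-U(x)$ would have to approximate an integral curve of $+\nabla U$ from $x$ to the saddle $s^\ast$, which by the equality clause of Lem.~\ref{lem:S-lower-bound} together with the Morse-Smale structure of the gradient flow exists only when $x\in q(e')$; since this fails by hypothesis, the lower bound must be strict. The hard part will be making rigorous the "approximation-to-integral-curve" compactness argument underlying both (1) and (3), which must control possibly degenerate reparametrizations as the action tends to its lower bound and invoke the finite combinatorial structure of flow lines between critical points; this is also where the full Morse-Smale condition, rather than merely Morse, plays its essential role.
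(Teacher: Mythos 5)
Your approach is sound but genuinely different from the paper's. You establish item~\ref{item:qpd-eq-g} by a sandwich: the mountain-pass lower bound $\qpd(e)\geq H_{\{A,B\}}-U(\src(e))$ (same ingredient as the paper's derivation of \eqref{eq:U-qpd-e-lower-bound}) plus an upper bound obtained by a tree rearrangement $T_v \leadsto T_v^{\textnormal{new}}$, controlled by an induction on $\#(A)$. The paper instead runs a single global contradiction: assuming some $e_0\in T_v$ fails item~\ref{item:e-eq-e'} or~\ref{item:qpd-eq-g}, they replace $e_0$ by a Morse edge $[e_1]$ so as to strictly decrease the ``adjusted cost'' $\sum_e [U(\src(e))+\qpd(e)]$, which differs from $\sum_e\qpd(e)$ by the orientation-independent constant $C_v$ and is reversal-invariant on Morse edges (since $U(\src(e_1))+g(e_1)$ is just the saddle value). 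That one observation eliminates the need for your induction; your telescoping sum over the reversed path, together with $\qpd(\bar f_i)\leq g(\bar f_i')$ from Lem.~\ref{lem:qp-leq-g-W}, plays exactly the role the adjusted-cost trick plays there. Both routes then hit the wall you honestly flag: passing from $\qpd(e)=H_{\{A,B\}}-U(\src(e))$ to the existence of $e'\in\Ed$ with $e=[e']$ (and from there to item~\ref{item:HAB-strict-lower-bound}). The paper closes this via Prop.~\ref{prop:qp-int-conditions-iff-connecting-piecewise-orbit}, which characterizes when $\qpd$ attains the $\int(-\cfo)$ lower bound as exactly when the class contains a piecewise $(\nabla U)$-integral curve, combined with the Morse--Smale fact that any such curve terminating at a point of $C_{\{A,B\}}$ at the minimal $U$-level must lie in $q(e')$; invoking that proposition directly would discharge the ``approximation-to-integral-curve'' compactness you anticipate needing, since the delicate limiting arguments (unbounded time domains, degenerate reparametrizations near zeros) are already packaged in its appendix proof. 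Net comparison: your route is more constructive about where the witnessing Morse edge comes from, at the price of an extra induction; the paper's route is shorter and treats all $e\in T_v$ at once via the reversal-symmetric bookkeeping.
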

\begin{proof}
First consider \emph{any} partition $V = A\cup B$, define $C_{\{A,B\}}$ according to \eqref{eq:C-AB-def}, and define the ``barrier height'' $H_{\{A,B\}}\coloneqq \min_{y\in C_{\{A,B\}}}U(y)$.
Note that, by the Morse-Smale assumption, $H_{\{A,B\}}$ is the smallest value of $U(e)$ where $e\in \Eu$ ranges over those undirected Morse edges having one end in $A$ and one end in $B$; such edges exist by the Morse-Smale assumption and connectedness of $M$.
Lem.~\ref{lem:S-lower-bound} implies that, for  any continuous path $\vp\colon [T_1,T_2]\to M$ from $\cl (\bigcup_{a\in A}\Ws(a))$ to $\cl (\bigcup_{b\in B}\Ws(b))$,  
$$U(\vp(T_1))+\af(\vp)\geq \max_{t\in [T_1,T_2]}U(\vp(t))\geq H_{\{A,B\}}.$$
Taking the infimum over all such paths $\vp$ additionally satisfying $\vp(T_2)\in C_{A,B}$, followed by the same reasoning with the roles of $A$ and $B$ reversed yields, for all $x\in M$ and $y\in C_{\{A,B\}}$, 
\begin{equation}\label{eq:U-qpd-x-y-lower-bound}
U(x)+ \qpd(x,y)\geq H_{\{A,B\}}.
\end{equation}
Since $H_{\{A,B\}}$ is the lowest value of $U$ on $C_{\{A,B\}}$, the Morse-Smale assumption implies that any $y\in C_{\{A,B\}}$ satisfying $U(y) = H_{\{A,B\}}$ must satisfy $y\in q(e')$ for some $e'\in \Ed$.
The Morse-Smale assumption further implies that the only piecewise $(-\dft)$-integral curves terminating at such a $y\in q(e')$ must be contained in $q(e')$. 
Thus, Prop.~\ref{prop:qp-int-conditions-iff-connecting-piecewise-orbit} implies that equality holds in \eqref{eq:U-qpd-x-y-lower-bound} if and only if there is $e'\in \Ed$ with $x,y\in q(e')$ and $U(\src(e')) + g(e') = H_{\{A,B\}}$.

The same reasoning used to derive the equality characterization of \eqref{eq:U-qpd-x-y-lower-bound} implies that, for $e\in \Eh$ with ends in distinct elements of the partition $\{A,B\}$,
\begin{equation}\label{eq:U-qpd-e-lower-bound}
U(\src(e))+ \qpd(e)\geq H_{\{A,B\}}
\end{equation}
with equality if and only if there exists $e'\in \Ed$ satisfying $e =[e']$ and $U(\src(e))+g(e) = H_{\{A,B\}}$.
It follows that equality holds for $e\in \Eh$ if and only if equality holds for its reversal $\bar{e}\in \Eh$.

In the rest of the proof we use the following notation.
Let $\ST(\gh)\supset \RST(\gh)$ denote those subgraphs in $2^{\Eh}$ which are spanning trees in the undirected sense, i.e., those subgraphs which can be turned into rooted spanning trees by reversing the orientations of some edges.

We now prove the lemma using the notation introduced in its statement.
Assume that at least one of \ref{item:e-eq-e'} or  \ref{item:qpd-eq-g} fails for some $e_0\in T_v$ and let $A,B\subset V$ be the vertex components of $T_v\setminus \{e_0\}$.
In particular, failure of either \ref{item:e-eq-e'} or \ref{item:qpd-eq-g}, together with \eqref{eq:U-qpd-e-lower-bound} and its equality characterization, imply the \emph{strict} inequality 
\begin{equation}\label{eq:U-strict-lower-bound}
U(\src(e_0))+\qpd(e_0) > H_{\{A,B\}}.
\end{equation}
Fix an undirected edge $e_{\{A,B\}}\in \Eu$ with one end in $A$ and one in $B$ satisfying $U(e_{\{A,B\}})=H_{\{A,B\}}$.
Let $e_1\in q^{-1}(e_{\{A,B\}})\subset \Ed$ be either of the two directed copies of $e_{\{A,B\}}$.  
Since 
\begin{equation}\label{eq:lem-exact-form-double-eq}
\begin{split}
U(\src(e_1))+g(e_1) = H_{\{A,B\}} = U(\src(\bar{e}_1)) + g(\bar{e}_1),
\end{split}
\end{equation}  
\eqref{eq:U-strict-lower-bound} and the equality characterization of \eqref{eq:U-qpd-e-lower-bound} imply that
\begin{equation}\label{eq:lem-exact-form-double-ineq}
\begin{split}
U(\src(e_0))+\qpd(e_0)&> U(\src(e_1))+g(e_1) = U(\src(e_1))+\qpd([e_1])\\
U(\src(\bar{e}_0))+\qpd(\bar{e}_0)&> U(\src(\bar{e}_1))+g(\bar{e}_1) = U(\src(\bar{e}_1))+\qpd([\bar{e}_1]).
\end{split}
\end{equation}
The first line of \eqref{eq:lem-exact-form-double-ineq} implies that
\begin{align*}
\sum_{e\in T_v}U(\src(e))+\qpd(e) &>\sum_{e\in (T_v\setminus \{e_0\})\cup \{[e_1]\}}U(\src(e))+\qpd(e).
\end{align*}
Thus, $\edg' \coloneqq (T_v\setminus \{e_0\})\cup \{[e_1]\}\in \ST(\gh)$ satisfies $\sum_{e\in \edg'}U(\src(e)) + \qpd(e)< \sum_{e\in T_v}U(\src(e))+\qpd(e)$.

Repeating the above procedure for any edge $e_0\in \edg'$ for which at least one of \ref{item:e-eq-e'} or  \ref{item:qpd-eq-g} fails, we obtain by finite induction some $\edg\in \ST(\gh)$ for which 
\begin{equation}\label{eq:lem-exact-adjusted-cost-ineq}
\sum_{e\in T_v}U(\src(e))+\qpd(e) > \sum_{e\in \edg} U(\src(e))+ \qpd(e),
\end{equation}
and such that every edge in $\edg$ is of the form $[e_1]$ for some $e_1\in \Ed$ satisfying \eqref{eq:lem-exact-form-double-eq} and \eqref{eq:lem-exact-form-double-ineq}.
Since $\edg\in \ST(\gh)$, there is a unique $T\in \RST(\gh;v)$ which is obtained from reversing orientations of some edges in $\edg$.
Since satisfaction of \eqref{eq:lem-exact-form-double-eq} and \eqref{eq:lem-exact-form-double-ineq} is invariant under edge reversal, every edge in $T$ is of the form $[e_1]$ where $e_1\in \Ed$ satisfies \eqref{eq:lem-exact-form-double-eq} and \eqref{eq:lem-exact-form-double-ineq}.
Defining $C_v\coloneqq U(v)-\sum_{v'\in \vtx}U(v')$, we obtain (with justification after):
\begin{equation}\label{eq:lem-exact-punchline}
\begin{split}
\sum_{e\in T_v}\qpd(e) &= C_v + \sum_{e\in T_v}U(\src(e)) + \qpd(e)> C_v + \sum_{e\in \edg}U(\src(e))+\qpd(e)\\
&= C_v + \sum_{e\in T}U(\src(e)) + \qpd(e) = \sum_{e\in T}\qpd(e).
\end{split}
\end{equation}
The first equality is immediate from the definition of $C_v$ and the fact that $T_v\in \RST(\gh;v)$, the inequality follows from \eqref{eq:lem-exact-adjusted-cost-ineq}, the second equality follows from the fact that each $[e_1]\in T$ satisfies \eqref{eq:lem-exact-form-double-eq} and \eqref{eq:lem-exact-form-double-ineq}, and the final equality is immediate from the definition of $C_v$ and the fact that $T\in \RST(\gh;v)$.
The strict inequality in \eqref{eq:lem-exact-punchline} contradicts the assumption that $T_v\in \RST(\gh;v)$ minimizes the sum in \eqref{eq:lem-exact-form-graph-cond-FW-min}, so every edge $e\in T_v$ must satisfy both conditions \ref{item:e-eq-e'} and \ref{item:qpd-eq-g}.
Together with the equality characterization of \eqref{eq:U-qpd-x-y-lower-bound}, this establishes \ref{item:HAB-strict-lower-bound} and completes the proof.
\end{proof}

\subsection{A continuity property of the quasipotential}\label{sec:quasipotential-continuous-dependence}
For the proof of Theorem~\ref{th:qualitative} we need to consider the dependence of the quasipotential $\qpd(\slot)$ on the vector field $\dft$.
We are not aware of any existing results concerning this question of parameter dependence, so we state the result we need in this section and defer the technical proof to App.~\ref{app:proofs}.

\begin{Rem}[Smooth manifold structure on $\Pi(M)$]\label{rem:manifold-structure-fundamental-groupoid}
In formulating Prop.~\ref{prop:qp-continuity} and the results of \S \ref{sec:spanning-tree-continuous-dependence}, we use the topology on $\Pi(M)$ described as follows.
When given the discrete topology, the deck transformation group $\Aut(\pi)$ of the universal cover $\pi\colon \tM\to M$ of a smooth manifold $M$ acts smoothly, freely, and properly on $\tM$, so the diagonal action of $\Aut(\pi)$ on $\tM\times \tM$ (defined by $h\cdot (x,y) = (h\cdot x, h\cdot y)$ for $h\in \Aut(\pi)$ and $x,y\in \tM$) is also smooth, free, and proper.
Thus, the quotient manifold theorem implies that $(\tM\times \tM)/\Aut(\pi)$ has a unique smooth manifold structure of dimension $2\cdot\dim(M)$ such that the quotient map $\tM\times \tM \to (\tM\times \tM)/\Aut(\pi)$ is a smooth covering map \cite[Thm~21.13]{lee2013smooth}.
Next, the lifting properties of covering maps \cite[Prop.~A.77]{lee2013smooth} imply that the map $F\colon \tM\times \tM\to \Pi(M)$ sending $(x,y)$ to $[\pi\circ \vp]$ is surjective, where $\vp$ is any continuous path from $x$ to $y$, and that $F(x_1,y_1)=F(x_2,y_2)$ if and only if $ (x_1,y_1)= h\cdot (x_2,y_2)$ for some $h\in \Aut(\pi)$.
Thus, $F$ descends to a bijection $\bar{F}\colon (\tM\times \tM)/\Aut(\pi)\to \Pi(M)$, so we may define a topology and smooth manifold structure on $\Pi(M)$ by declaring that $\bar{F}$ is a diffeomorphism.
\end{Rem}

\newcommand{\cN}{\mathcal{N}}

\begin{restatable}[]{Prop}{PropQpContMS}\label{prop:qp-continuity}
Denote by $\vf^1(M)$ the space of $\Cont^1$ vector fields on the closed Riemannian manifold $M$ equipped with the $\Cont^1$ topology.
Let $\Pi(M)$ have the topology induced by its bijection with the smooth manifold $(\tM \times \tM)/\Aut(\pi)$, where $\pi\colon \tM\to M$ is the universal cover and the deck transformation group $\Aut(\pi)$ acts diagonally on $\tM\times \tM$. 
Let $\dft_0\in \vf^1(M)$ be a Morse-Smale vector field without nonstationary periodic orbits.
Then for any $e_0\in \Pi(M)$, the map
$$(\dft,e)\in \vf^1(M)\times \Pi(M) \mapsto \qpd(e)\in [0,+\infty) \quad \textnormal{is continuous at $(\dft_0,e_0)$.}$$
\end{restatable}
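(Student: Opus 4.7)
The strategy is to establish upper and lower semicontinuity of $(\dft, e) \mapsto \qpd(e)$ at $(\dft_0, e_0)$ separately. Throughout, I use the elementary estimate
\[
|\af_\dft(\psi) - \af_{\dft_0}(\psi)| \leq \tfrac{1}{4}\int \bigl(\|\dft(\psi)\| + \|\dft_0(\psi)\|+ 2\|\dot\psi\|\bigr)\|\dft(\psi) - \dft_0(\psi)\|\,dt,
\]
which follows from $a^2 - b^2 = (a-b)(a+b)$ applied pointwise. This shows $\af_\dft(\psi)$ depends continuously on $\dft$ in $C^0$ for a fixed absolutely continuous $\psi$.

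For upper semicontinuity, given $\delta > 0$, I would pick a representative $\vp_0 \colon [0,T]\to M$ of $e_0$ with $\af_{\dft_0}(\vp_0) < \qp_{\dft_0}(e_0) + \delta/3$ (smooth, if $\dim M\geq 2$, by Lem.~\ref{lem:tqp-equals-qp-transversality}; the case $\dim M = 1$ is handled by a direct explicit construction as in Ex.~\ref{ex:tilt-pot-circle}). For $(\dft, e)$ close to $(\dft_0, e_0)$, I would construct a representative $\vp$ of $e$ by prepending and appending short geodesic connectors to the endpoints of $\vp_0$; the smooth structure on $\Pi(M)$ described in Rem.~\ref{rem:manifold-structure-fundamental-groupoid} guarantees that these connectors can be chosen of arbitrarily small length (and hence action $<\delta/3$) while realizing the correct path-homotopy class. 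Combining with the $C^0$-continuity above yields $\af_\dft(\vp) < \qp_{\dft_0}(e_0) + \delta$.

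For lower semicontinuity, suppose by contradiction that $(\dft_n, e_n) \to (\dft_0, e_0)$ with $\qp_{\dft_n}(e_n) \to L < \qp_{\dft_0}(e_0)$, and select $\vp_n \colon [0,T_n]\to M$ with $[\vp_n] = e_n$ and $\af_{\dft_n}(\vp_n)\to L$. Persistence of Morse-Smale structure under small $C^1$ perturbations, combined with uniform hyperbolicity of the zeros near $\dft_0^{-1}(0)$ via the implicit function theorem, makes the constants $m, c$ in Lem.~\ref{lem:univ-cover-qp-growth} uniform in a $C^1$ neighborhood of $\dft_0$; thus the length of $\vp_n$ may be taken uniformly bounded. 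I would then attempt to extract a subsequence converging to a limit $\vp_* \colon [0,S]\to M$ with $[\vp_*] = e_0$ and $\af_{\dft_0}(\vp_*) \leq L$, contradicting $L < \qp_{\dft_0}(e_0)$. By lifting to the universal cover $\tM$ (so that homotopy classes correspond to endpoints) and applying weak lower semicontinuity of the convex functional $\af_{\dft_0}$ under weak convergence in $H^1$, the conclusion would follow.

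The main obstacle, and the bulk of the technical work, is controlling the time parametrization of the $\vp_n$. Minimizing sequences typically spend asymptotically long periods near hyperbolic equilibria of $\dft_n$ (since integral curves of $\dft_n$ and of $-\dft_n$ approach zeros only in infinite time while contributing zero action), so $T_n$ need not be uniformly bounded and the $\vp_n$ do not directly admit an Arzelà-Ascoli subsequence. The remedy, which I view as the crux of the proof, is to excise from each $\vp_n$ the sojourns within small fixed neighborhoods of the equilibria of $\dft_n$ and patch the remaining segments together by standardized transit paths of uniformly bounded time parameter and action tending to zero (in the spirit of the constructions \cite[p.~143,~Lem.~1.1]{freidlin2012random} used in Lem.~\ref{lem:tau-1-estimates}). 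The resulting ``regularized'' paths $\tilde\vp_n$ live on a fixed time interval and have uniformly bounded $L^2$-derivatives via \eqref{eq:finite-af-iffs-l2}, hence subconverge in $H^1$ after lifting. The Morse-Smale assumption (in particular, the absence of nonstationary periodic orbits, hence of heteroclinic cycles) is essential here: it ensures that the excision/replacement procedure cannot introduce extra homotopical ``wrapping'' in the limit, so that $[\vp_*] = e_0$ is preserved.
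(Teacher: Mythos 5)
Your upper semicontinuity argument and your overall plan for lower semicontinuity---argue by contradiction, excise the near-equilibrium sojourns to control the time parametrization, and pass to a limit via weak lower semicontinuity of the action on absolutely continuous paths (the paper's Lem.~\ref{lem:lower-semi-abs-cont})---match the paper's proof, and you correctly identify the central obstruction: near-minimizing paths spend arbitrarily long times near hyperbolic equilibria, so the $T_n$ need not be bounded.

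However, the claim that ``the resulting regularized paths $\tilde\vp_n$ live on a fixed time interval'' is asserted without justification, and this is exactly where the nontrivial work lies. Excising the sojourns inside small balls $B_\kappa$ around $\dft_0^{-1}(0)$ does \emph{not} by itself bound the remaining time: the number of homotopically nontrivial excursions outside $B_\kappa$ could a priori grow without bound along the sequence $(\vp_n)$, and each excursion contributes a strictly positive time. Nor does your uniform-in-$\dft$ length bound (via Lem.~\ref{lem:univ-cover-qp-growth}) close the gap, since bounded length controls neither the number of excursions nor the excursion durations---the velocity of a near-minimizer need not be bounded away from zero outside $B_\kappa$. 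What your sketch omits is a uniform bound $N_n\le N$ on the number of nontrivial excursions. The paper obtains this from a quantitative form of the ``no heteroclinic cycles'' consequence of the Morse--Smale hypothesis: by Prop.~\ref{prop:qp-int-conditions-iff-connecting-piecewise-orbit} and Lem.~\ref{lem:univ-cover-qp-growth}, every nonconstant loop class based at the zeros of $\dft_0$ has quasipotential bounded below by some $C_0>0$ (Eq.~\eqref{eq:prop-qp-continuity-loop-lb}), hence so do their $\varepsilon$-approximations (Eq.~\eqref{eq:prop-qp-continuity-approx-loop-lb}); if $N_n\to\infty$, a diagonal extraction plus Fatou's lemma (Eq.~\eqref{eq:af-le-qp-2k}) produces infinitely many limit excursion pieces, and the pigeonhole principle over the finite vertex set forces infinitely many of them to assemble into loops, each contributing $\ge C_1>0$, contradicting the finiteness of the total action. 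Only \emph{after} this bound $N_n\le N$ is established (Eq.~\eqref{eq:at-most-N-intervals}) does the excision-and-patching reduce to bounded $\tau_n$, using that each individual excursion has bounded duration by Eq.~\eqref{eq:af-out-of-b-lb}. Your remark that Morse--Smale ``ensures the excision cannot introduce extra homotopical wrapping in the limit'' gestures at the right phenomenon, but the operative mechanism is this uniformly positive lower bound on loop actions used to cap $N_n$; without it, your regularized paths are not on a bounded time interval and the weak-$H^1$ compactness step cannot be invoked.
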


\begin{Rem}
We do not know if lower semicontinuity at $(\dft_0,e_0)$ holds if $\dft_0\in \vf^1(M)$ is arbitrary.
\end{Rem}

\subsection{Continuity properties of the minimal spanning trees}\label{sec:spanning-tree-continuous-dependence}
In this section we study the minimizing path-homotopical and Morse rooted spanning trees of vector fields dual to closed one-forms close to a generic exact one-form. 
These results will be used in the proof of Theorem~\ref{th:qualitative}.

\begin{Lem}\label{lem:near-assump-2-implies-assump-2}
Assume that $\dft$ is a $\Cont^1$ vector field on a closed connected Riemannian manifold $M$ and satisfies Assumption~\ref{assump:morse-smale}.
Then $\dft$ has a neighborhood $\cN_{\dft}\subset \vf^1(M)$ in the space of $\Cont^1$ vector fields $\vf^1(M)$ equipped with the $\Cont^1$ topology such that, for every $\bu\in \cN_{\dft}$, $\bu$ satisfies Assumption~\ref{assump:morse-smale}.
Moreover, for each index-$0$ zero $v$ of $\dft$ and any sequence $(\bu_n)\subset \cN_{\dft}$ converging to $\dft$ in $\vf^1(M)$, there is a unique sequence $(v_n)$ of index-$0$ zeros of $\bu_n$ close to $v$ with $v_n\to v$.     
\end{Lem}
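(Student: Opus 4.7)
The plan is to combine the implicit function theorem at hyperbolic zeros with the classical $C^1$-openness of Morse-Smale vector fields. By Rem.~\ref{rem:equivalent-assump-chain-recurrent}, Assumption~\ref{assump:morse-smale} for $\dft$ is equivalent to $\dft$ being Morse-Smale with finite chain recurrent set $R(\dft) = \dft^{-1}(0)$ consisting of hyperbolic zeros. Two things are to be shown: (a) a $C^1$-neighborhood $\cN_\dft$ of $\dft$ on which Assumption~\ref{assump:morse-smale} persists, and (b) the existence and uniqueness of a continuous branch $v_n \to v$ of index-$0$ zeros.

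For (b), fix any zero $z$ of $\dft$. Hyperbolicity implies that $D\dft(z)\colon T_zM \to T_zM$ is invertible. In local coordinates the implicit function theorem applied to $(\bu,x)\mapsto \bu(x)$ furnishes open neighborhoods $\cN_z \subset \vf^1(M)$ of $\dft$ and $U_z \subset M$ of $z$, together with a continuous map $\bu \mapsto z(\bu) \in U_z$ for which $z(\bu)$ is the unique zero of $\bu$ in $U_z$. Continuity of the eigenvalues of $D\bu(z(\bu))$ in $\bu$ permits shrinking $\cN_z$ so that $z(\bu)$ is hyperbolic of the same Morse index as $z$. Applied at each of the finitely many zeros of $\dft$ and intersected, this yields a neighborhood on which every zero of $\dft$ deforms to a unique nearby zero of $\bu$ of the same index; in particular each $v \in \vtx$ perturbs to a unique nearby index-$0$ zero $v(\bu)$ of $\bu$, which is assertion (b).

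It remains to prove (a). Let $K := M \setminus \bigcup_{z\in \dft^{-1}(0)} U_z$. Since $\dft|_K$ is nonvanishing on the compact set $K$, a uniform lower bound $\|\dft\|_K \geq \delta > 0$ holds, so for $\bu$ sufficiently $C^0$-close to $\dft$ we have $\bu^{-1}(0) = \{z(\bu) : z \in \dft^{-1}(0)\}$, a finite set of hyperbolic zeros. What remains is the global statement that no nonstationary chain-recurrent behavior appears under perturbation and that transversality of stable/unstable manifold intersections persists. This is the main obstacle of the proof and is precisely the content of the Palis-Smale theorem on the $C^1$-openness of Morse-Smale vector fields (see \cite[Ch.~4]{palis1980geometric}): a $C^1$-neighborhood of $\dft$ consists of Morse-Smale vector fields topologically equivalent to $\dft$, in particular without nonstationary recurrence and with the transversality condition intact. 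Intersecting this neighborhood with the one from (b) yields $\cN_\dft$, completing the proof.
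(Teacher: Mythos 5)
Your proposal is correct and matches the paper's own proof, which likewise cites the implicit function theorem at hyperbolic zeros together with the $C^1$-openness and structural stability of Morse-Smale vector fields without nonstationary periodic orbits (the paper cites \cite{palis1968ms} and \cite{palis1968structural}; your reference to the same material in \cite{palis1980geometric} is equally valid). Your write-up fills in the details the paper leaves implicit — in particular the compactness argument ensuring there are no extra zeros away from $\dft^{-1}(0)$, and the observation that topological equivalence from structural stability is what rules out the birth of nonstationary periodic orbits under perturbation.
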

\begin{proof}
This follows from (i) the $C^1$-openness \cite[Thm~3.5]{palis1968ms} and structural stability \cite[Thm~5.2]{palis1968structural} of Morse-Smale vector fields without nonstationary periodic orbits and (ii) the implicit function theorem (the zeros of $\dft$ are hyperbolic).
\end{proof}

\begin{Lem}\label{lem:converging-FW-trees}
Assume that $\dft$ is a $\Cont^1$ vector field on a closed connected Riemannian manifold $M$ and satisfies Assumption~\ref{assump:morse-smale}.
Let $\cN_{\dft}$ be as in Lem.~\ref{lem:near-assump-2-implies-assump-2} and $\dft_n\in \cN_{\dft}$ be a sequence of $\Cont^1$ vector fields converging to $\dft$ in the $\Cont^1$ topology, with associated path-homotopical graphs $\gh^{n} = (\Eh^{n},\vtx^{n},\src,\tgt)$.
Fix $v\in \vtx$ and let $v_{n}\in \vtx^{n}$ be the unique sequence (Lem.~\ref{lem:near-assump-2-implies-assump-2}) close to $v$ with $v_n\to v$.
Fix any (possibly nonunique)
\begin{equation}\label{eq:lem-converging-FW-trees-FW-min}
T_{v_n}^{n} \in \arg\min_{T\in \RST(\gh^{n};v_n)}\sum_{e\in T}\qp_{\dft_n}(e).
\end{equation}
Then 
\begin{equation}\label{eq:lem-converging-FW-trees}
T_{v_n}^{n} \to \left( \arg\min_{T\in \RST(\gh;v)} \sum_{e\in T}\qpd(e)\right)\subset [\Pi(M)]^{\#(\vtx)-1}
\end{equation}
in the topology on the Cartesian product $[\Pi(M)]^{\#(\vtx)-1}$ induced by the bijection of $\Pi(M)$ with the smooth manifold $(\tM \times \tM)/\Aut(\pi)$, where $\Aut(\pi)$ is the deck transformation group of the universal cover $\pi\colon \tM\to M$ acting diagonally on $\tM\times \tM$ (Rem.~\ref{rem:manifold-structure-fundamental-groupoid}).
\end{Lem}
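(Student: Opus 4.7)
The plan is to prove the convergence by (i) constructing near-optimal ``lifts'' of limit-system minimizers to each $\gh^n$, (ii) extracting convergent subsequences of $(T_{v_n}^n)$ using quasipotential-controlled compactness, and (iii) passing to the limit via Prop.~\ref{prop:qp-continuity} to show every subsequential limit is a minimizer for $\dft$.

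\textbf{Step 1 (Lifting a limit minimizer).} Fix any $T_*\in \arg\min_{T\in \RST(\gh;v)}\sum_{e\in T}\qpd(e)$. For each $v^i\in \vtx$, let $v^i_n\in \vtx^n$ denote the corresponding sequence furnished by Lem.~\ref{lem:near-assump-2-implies-assump-2}. Given $e_*\in T_*$ with source $v^i$ and target $v^j$, pick any simply connected neighborhoods $U_i,U_j$ containing $v^i_n$ and $v^j_n$ for large $n$, and let $e_n\in \Eh^n$ be the path-homotopy class obtained by concatenating a short path in $U_i$ from $v^i_n$ to $v^i$, a representative of $e_*$, and a short path in $U_j$ from $v^j$ to $v^j_n$; this is well-defined since each $U_i,U_j$ is simply connected. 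Lifting to $\tM$ we see $e_n\to e_*$ in $\Pi(M)$. Collecting these edges yields $T_n^*\in \RST(\gh^n;v_n)$ with the same combinatorial structure as $T_*$. Prop.~\ref{prop:qp-continuity} applied edgewise gives
\[
\limsup_n \sum_{e\in T_{v_n}^n}\qp_{\dft_n}(e)\;\leq\;\lim_n \sum_{e\in T_n^*}\qp_{\dft_n}(e)\;=\;\sum_{e_*\in T_*}\qpd(e_*)\;=\; \mu_\dft,
\]
where $\mu_\dft$ denotes the minimum value in \eqref{eq:lem-converging-FW-trees}.

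\textbf{Step 2 (Compactness of $(T_{v_n}^n)$).} Each edge $e\in T_{v_n}^n$ satisfies $\qp_{\dft_n}(e)\leq \mu_\dft+1$ for large $n$. Lem.~\ref{lem:univ-cover-qp-growth}, applied to each $\dft_n$ with constants chosen uniformly over a $\Cont^1$-neighborhood of $\dft$ (the construction of $m,c$ in its proof rests on quantities depending $\Cont^0$-continuously on the drift, so a uniform choice is possible after shrinking the neighborhood), yields a uniform length bound $\length(e)\leq L$ for every edge of every $T_{v_n}^n$. Now fix once and for all lifts $\tilde{v}^i\in \tM$ of each $v^i\in \vtx$, and choose the nearby lifts $\tilde{v}^i_n$ of $v^i_n$. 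For $e\in T_{v_n}^n$ from $v^i_n$ to $v^j_n$, some lift runs from $\tilde v^i_n$ to $h_e\cdot \tilde v^j_n$ for some deck transformation $h_e$, and the length bound forces $h_e\cdot \tilde v^j_n\in B_{L+1}(\tilde v^i)$. Because $\pi^{-1}(v^j)\cap B_{L+1}(\tilde v^i)$ is finite (discrete fiber in a complete Riemannian manifold), only finitely many $h_e$ occur. Passing to a subsequence along which the assignment $e\mapsto h_e$ stabilizes for every edge slot of $T_{v_n}^n$, and invoking $\tilde v^i_n\to \tilde v^i$, we obtain $T_{v_n}^n\to T_*\in [\Pi(M)]^{\#(\vtx)-1}$ with $T_*\in \RST(\gh;v)$ (the combinatorial structure is preserved under the bijection $v^i\leftrightarrow v^i_n$).

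\textbf{Step 3 (The limit is a minimizer).} Along the convergent subsequence, Prop.~\ref{prop:qp-continuity} gives $\qp_{\dft_n}(e)\to \qpd(e_*)$ for each matched pair, so $\sum_{e_*\in T_*}\qpd(e_*) = \lim_n \sum_{e\in T_{v_n}^n}\qp_{\dft_n}(e)\leq \mu_\dft$ by Step 1. Since $T_*\in \RST(\gh;v)$, equality holds and $T_*$ attains the minimum. As every subsequence admits a further subsequence converging to a minimizer, the full sequence $(T_{v_n}^n)$ converges to the set of minimizers in $[\Pi(M)]^{\#(\vtx)-1}$, as claimed.

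\textbf{Main obstacle.} The delicate point is Step 2: extracting uniform control of $\length(e)$ for edges of $T_{v_n}^n$ despite the drift varying with $n$. This is where we must either (a) verify that the proof of Lem.~\ref{lem:univ-cover-qp-growth} produces constants stable under $\Cont^1$-perturbation of $\dft$, or (b) compare actions $\af^{\dft_n}$ and $\af^{\dft}$ using the uniform convergence $\dft_n\to \dft$ to transfer the length bound from $\dft$ to $\dft_n$ on paths of bounded derivative norm; either route suffices, but each needs a brief verification rather than an off-the-shelf citation.
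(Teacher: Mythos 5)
Your three-step skeleton matches the paper's (construct near-optimal lifts, extract a convergent subsequence, pass to the limit via Prop.~\ref{prop:qp-continuity}), and Steps 1 and 3 are sound.  But the ``main obstacle'' you flag in Step 2 is a genuine gap, not a detail: as written, your argument invokes Lem.~\ref{lem:univ-cover-qp-growth} for each $\dft_n$ with constants uniform over a $\Cont^1$-neighborhood, and neither of your two suggested remedies is straightforward.  Route (a) would require establishing that Lem.~\ref{lem:orbits-bounded-length} (the integral-curve length bound) and the covering/compactness construction inside the proof of Lem.~\ref{lem:univ-cover-qp-growth} produce constants that are uniform in a $\Cont^1$-neighborhood of $\dft$; this is plausible but is itself a proof of comparable difficulty.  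Route (b), directly comparing $\af^{\dft_n}(\vp)$ and $\af^{\dft}(\vp)$, fails to give a uniform bound without further work because near-optimal representatives $\vp$ of an edge may have arbitrarily large time domain $T$, and the error term in comparing the two actions scales like $\norm{\dft_n-\dft}_\infty\cdot \sqrt{T}\,\norm{\dot\vp}_2$, which is not controlled by the action alone as $T\to\infty$.

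The paper sidesteps this entirely.  It applies Lem.~\ref{lem:univ-cover-qp-growth} \emph{only to the fixed limiting field $\dft$} to choose a compact set $W_\kappa = \{e\in\Pi(M)\colon \length(e)\leq \kappa\}$ (a neighborhood of the diagonal / constant-path classes) on whose boundary $\qpd > C+1$, where $C$ is the limit minimum.  Then, using compactness of $\partial W_\kappa$ together with the joint continuity of $(\dft,e)\mapsto\qpd(e)$ from Prop.~\ref{prop:qp-continuity} (a finite-subcover argument, not uniform continuity), it transfers this to $\qp_{\dft_n}>C+1$ on $\partial W_\kappa$ for all large $n$, \emph{without} any uniformity in the constants of Lem.~\ref{lem:univ-cover-qp-growth}.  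Finally, it observes that for any $e\notin W_\kappa$ and any representative $\vp$, the continuous path $t\mapsto[\vp|_{[0,t]}]$ from the diagonal to $e$ must cross $\partial W_\kappa$, so $\af(\vp)$ exceeds $C+1$; hence $\qpd$ and $\qp_{\dft_n}$ both exceed $C+1$ outside $W_\kappa$.  Since the minimizing sum is $< C+\varepsilon < C+1$ for large $n$ (your Step 1), every edge of every $T^n_{v_n}$ lies in the compact $W_\kappa$, so $T^n_{v_n}\in W_\kappa^{\#(\vtx)-1}$, and subsequential compactness is automatic.  This is cleaner than either of your routes because it needs continuity of $\qp$ at the fixed base field $\dft$ (which Prop.~\ref{prop:qp-continuity} already provides) rather than any quantitative stability of the constants.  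I recommend replacing your Step 2 with this argument.
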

\begin{proof}
Define
\begin{equation}\label{eq:lem-FW-min}
C\coloneqq \min_{T\in \RST(\gh;v)}\sum_{e\in T}\qpd(e),
\end{equation}
and fix $\varepsilon \in (0,1)$.
It follows from Prop.~\ref{prop:qp-continuity} that, for all sufficiently large $n$,
\begin{equation}\label{eq:lem-converging-FW-trees-u-ub}
\min_{T\in \RST(\gh^{n};v_n)}\sum_{e\in T}\qp_{\dft_n}(e) < C + \varepsilon.
\end{equation}

There is an embedding $M\hookrightarrow \Pi(M)$ which sends each $x\in M$ to the path homotopy class of the constant path at $x$; let $M_0\subset \Pi(M)$ denote the image of $M$ via this embedding.  
Given $\kappa > 0$, define the neighborhood
\begin{equation*}
\begin{split}
W_\kappa\coloneqq \{e\in \Pi(M)\colon \length(e)\leq \kappa\}
&= \{(x,y)\in K\times \tM \colon \dist{x}{y}\leq \kappa\}/\Aut(\pi)
\end{split}
\end{equation*}
of $M_0$, where $K\subset \tM$ is any compact set satisfying $\pi(K)=M$.
The pullback metric on $\tM$ is complete since $M$ is compact \cite[p.~146, Thm~2.8]{docarmo1992riemannian}, so the second expression shows that $W_{\kappa}$ is compact.

Lem.~\ref{lem:univ-cover-qp-growth} implies that there is $\kappa > 0$ such that
\begin{equation}\label{eq:lem-converging-FW-trees-lb-bndry}
\qpd|_{\partial W_{\kappa}} > C+1,
\end{equation}
so compactness of $\partial W_{\kappa}$ and Prop.~\ref{prop:qp-continuity} imply that, for all sufficiently large $n$,
\begin{equation}\label{eq:qpvn-lower-bound}
\qp_{\dft_n}|_{\partial W_{\kappa}} > C+1.
\end{equation}
Since $W_\kappa$ is a neighborhood of $M_0$ it follows that, for any continuous path $\vp\in \Cont_e([0,T],M)$ representing any $e\in \Pi(M)\setminus W_{\kappa}$, the continuous path $t\in [0,T]\mapsto [\vp|_{[0,t]}]\in \Pi(M)$ from $M_0$ to $e$ must pass through $\partial W_\kappa$.
Hence \eqref{eq:lem-converging-FW-trees-lb-bndry} and \eqref{eq:qpvn-lower-bound} imply that, for all sufficiently large $n$,
\begin{equation}\label{eq:lem-converging-FW-trees-lb}
\qpd|_{\Pi(M)\setminus W_\kappa} > C+1 \qquad \textnormal{and} \qquad \qp_{\dft_n}|_{\Pi(M)\setminus W_\kappa} > C+1.
\end{equation}
From this, \eqref{eq:lem-FW-min}, and \eqref{eq:lem-converging-FW-trees-u-ub} it follows that every edge of every minimizer in $\RST(\gh^{n};v_n)$ and in $\RST(\gh;v)$ of the sums in \eqref{eq:lem-converging-FW-trees-FW-min} and \eqref{eq:lem-FW-min}, respectively, must belong to $W\coloneqq W_\kappa$, so these minimizing trees must belong to the compact Cartesian product $W^{N}\subset [\Pi(M)]^{N}$ of $W$ with itself $N\coloneqq \#(\vtx)-1$ times.

It follows that any convergent subsequence $(T_{v_{n_k}}^{n_k})\subset \RST(\gh^n;v_n)$ converges to some $\bar{T}\in W^N$, so
\begin{equation}\label{eq:lem-converging-FW-final}
\begin{split} 
\sum_{e\in \bar{T}}\qpd(e) &= \lim_{n\to \infty} \sum_{e\in T_{v_{n_k}}^{n_k}}\qp_{\dft_{n_k}}(e) = \lim_{n\to \infty} \min_{T\in \RST(\gh^{n_k};v_{n_k})}\sum_{e\in T}\qp_{\dft_{n_k}}(e) \leq C\\
& = \min_{T\in \RST(\gh;v)}\sum_{e\in T}\qpd(e).
\end{split}
\end{equation}
The first equality follows from Prop.~\ref{prop:qp-continuity}, the second equality is immediate from the definition of $T_{v_{n_k}}^{n_k}$, the inequality follows from the arbitrariness of $\varepsilon$ in \eqref{eq:lem-converging-FW-trees-u-ub}, and the final equality follows from \eqref{eq:lem-FW-min}.
Since $\vtx^n\to \vtx$ by the implicit function theorem and the assumption that the finitely many zeros of $\dft$ are hyperbolic, it follows that $\bar{T}\in \RST(\gh;v)$, so \eqref{eq:lem-converging-FW-final} implies that $\bar{T} \in A\coloneqq \arg\min_{T\in \RST(\gh;v)} \sum_{e\in T}\qpd(e)\subset W^N$.
Since $W^N$ is compact and since \eqref{eq:lem-converging-FW-final} shows that every convergent subsequence of $(T^n_{v_n})\subset W^N$ converges to $A$, it follows that $T^n_{v_n}\to A$.
This establishes \eqref{eq:lem-converging-FW-trees} and completes the proof.
\end{proof}

\begin{Lem}\label{lem:converging-FW-trees-W}
Assume that $\dft = \cfo^\sharp = -\nabla U$ is dual to a $\Cont^1$ exact one-form on a closed connected Riemannian manifold $M$ and satisfies Assumption~\ref{assump:morse-smale}.
Let $\cN_{\dft}$ be as in Lem.~\ref{lem:near-assump-2-implies-assump-2} and $\dft_n = \cfo_n^\sharp\in \cN_{\dft}$ be a sequence of vector fields dual to $\Cont^1$ closed one-forms converging to $\dft$ in the $\Cont^1$ topology.
Let $\gh^{n} = (\Eh^{n},\vtx^{n},\src,\tgt)$ and $\compd^n = (\Ed^n,\vtx^n, \src, \tgt)$ denote the associated path-homotopical and directed Morse graphs, respectively.
Fix $v\in \vtx$ and let $v_{n}\in \vtx^{n}$ be the unique sequence of Lem.~\ref{lem:near-assump-2-implies-assump-2} with $v_n\to v$.
For each $n$ fix
\begin{equation}\label{eq:lem-converging-FW-trees-FW-min-W}
T_{v_n}^{n} \in \arg \min_{\edg\in \RST(\gh^{n};v_n)}\sum_{e\in \edg}\qp_{\dft_n}(e).
\end{equation}
Then for all sufficiently large $n$ and $e_n\in T^n_{v_n}$, $e_n = [e_n']$ for some $e_n'\in \Ed^n$ and
$$\qp_{\dft_n}(e_n) = g(e_n').$$
\end{Lem}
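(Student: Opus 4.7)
First, by Lemma~\ref{lem:converging-FW-trees}, every subsequence of $(T^n_{v_n})$ has a sub-subsequence converging in $[\Pi(M)]^{N}$, where $N\coloneqq \#(\vtx)-1$, to some element of $A\coloneqq \arg\min_{T\in \RST(\gh;v)}\sum_{e\in T}\qpd(e)$. Because $\dft=-\nabla U$ is dual to an exact one-form and Morse--Smale, Lemma~\ref{lem:exact-form-graph-cond} applies: every edge of every $T_v\in A$ has the form $[e']$ for some Morse edge $e'\in \Ed$, with $\qpd([e'])=g(e')$. So it suffices to prove the conclusion along each convergent subsequence, whose limit $T_v\in A$ I now fix.

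Next, I would show that each edge $e_n$ of $T^n_{v_n}$ satisfies $e_n=[e_n']$ for some Morse edge $e_n'\in \Ed^n$. By Lemma~\ref{lem:near-assump-2-implies-assump-2} and the stable manifold theorem \cite[p.~75,~Thm~6.2]{palis1980geometric}, each index-$1$ saddle of $\dft$ and its one-dimensional unstable manifold perturb uniquely and continuously, so each Morse edge $e'\in \Ed$ appearing in $T_v$ is the $C^1$-limit of a unique $e_n'\in \Ed^n$, with $[e_n']\to[e']$ in $\Pi(M)$ and $g(e_n')\to g(e')$. Both $e_n$ and $[e_n']$ are path-homotopy classes with the same endpoints $v_n^1\coloneqq \src(e_n)$ and $v_n^2\coloneqq \tgt(e_n)$, and the fiber of the endpoint map $(\src,\tgt)\colon \Pi(M)\to M\times M$ above $(v_n^1,v_n^2)$ is discrete in the manifold topology $\Pi(M)\cong (\tM\times\tM)/\Aut(\pi)$ (Remark~\ref{rem:manifold-structure-fundamental-groupoid}), since it injects into a fiber of a covering map. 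Because both $e_n$ and $[e_n']$ converge to $[e']$, they must coincide for $n$ sufficiently large.

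Finally, I would establish the equality $\qp_{\dft_n}(e_n)=g(e_n')$. The upper bound $\qp_{\dft_n}([e_n'])\leq g(e_n')$ is immediate from Lemma~\ref{lem:qp-leq-g-W}. For the matching lower bound, I fix a small open tubular neighborhood $W$ of $q(e')$ contained in $\Ws(\src(e'))\cup\Ws(\tgt(e'))\cup\Ws(e')$, with $\partial W$ compact, chosen so that the analogous containment holds for $\dft_n$ when $n$ is large. Combining Lemma~\ref{lem:exact-form-graph-cond}~\ref{item:HAB-strict-lower-bound} (applied to $e'$ and to its reversal, via suitable rooted minimizers) with Lemma~\ref{lem:S-lower-bound} yields $\qpd(\src(e'),y)+\qpd(y,\tgt(e'))>g(e')$ at every $y\in \partial W$. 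Compactness of $\partial W$ and continuity of the unrefined quasipotential \cite[p.~143,~Lem.~1.1]{freidlin2012random} promote this to a uniform margin $g(e')+2\eta$, $\eta>0$, and a compact-uniform upgrade of Proposition~\ref{prop:qp-continuity} transfers the bound to $\qp_{\dft_n}(\src(e_n'),y)+\qp_{\dft_n}(y,\tgt(e_n'))\geq g(e_n')+\eta$ uniformly in $y\in\partial W$ for $n$ large. Decomposing the action of any path representing $[e_n']$ at its first exit from $W$ then shows such a path has action at least $g(e_n')+\eta$, so the infimum defining $\qp_{\dft_n}([e_n'])$ is restricted to paths remaining inside $W$; hence $\qp_{\dft_n}([e_n'])=\qp^W_{\dft_n}([e_n'])=g(e_n')$ by Lemma~\ref{lem:qp-leq-g-W}.

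The crux of the argument is this final step: the strict inequality of Lemma~\ref{lem:exact-form-graph-cond}~\ref{item:HAB-strict-lower-bound} degenerates to equality as $y$ approaches the saddle $z\in q(e')$ and is stated only for one ``side'' of $q(e')$, so one must choose $W$ to avoid a neighborhood of $\Ws(e')$ near $z$, treat the two basin pieces of $\partial W$ symmetrically, and establish a compact-uniform version of Proposition~\ref{prop:qp-continuity} in order to preserve the margin under the $C^1$-perturbation $\dft_n\to \dft$.
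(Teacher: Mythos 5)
Your overall architecture mirrors the paper's: reduce to a convergent subsequence $T^n_{v_n}\to T_v$ using Lemma~\ref{lem:converging-FW-trees} and the finiteness of the minimizing set (from Lemma~\ref{lem:univ-cover-qp-growth}); identify each limit edge $e=[e']$ with $e'\in\Ed$ via Lemma~\ref{lem:exact-form-graph-cond}; match $e_n=[e_n']$ via the stable manifold theorem and discreteness of the endpoint fiber in $\Pi(M)$; cap $\qp_{\dft_n}(e_n)\leq g(e_n')$ via Lemma~\ref{lem:qp-leq-g-W}. The genuine divergence is in the final lower bound. The paper argues by contradiction: it assumes $\qp_{\dft_n}(e_n)<\qp^W_{\dft_n}(e_n)$, extracts a single bounded-length sequence $f_n$ recording the class up to the first exit from $W$, passes to a subsequential limit $f$, and derives $g(e')>g(e')$ from \emph{one} application of Lemma~\ref{lem:exact-form-graph-cond}~\ref{item:HAB-strict-lower-bound} at the limit. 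You instead try for a direct lower bound by establishing a uniform margin $\qp_{\dft_n}(\src(e_n'),y)+\qp_{\dft_n}(y,\tgt(e_n'))\geq g(e_n')+\eta$ on all of $\partial W$.

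Your route as written has gaps that the paper's sequential argument is precisely built to dodge. First, the claimed pointwise inequality $\qpd(\src(e'),y)+\qpd(y,\tgt(e'))>g(e')$ on $\partial W$ cannot be obtained from Lemma~\ref{lem:exact-form-graph-cond}~\ref{item:HAB-strict-lower-bound} ``applied to $e'$ and to its reversal'': item~\ref{item:HAB-strict-lower-bound} lower-bounds the asymmetric quantity $\qpd(x,\tgt(e))$, while on the $\Ws(\tgt(e'))$ side of $\partial W$ you need a lower bound on $\qpd(\src(e'),y)$; applying~\ref{item:HAB-strict-lower-bound} to $\bar{e}'$ would give information about $\qpd(y,\src(e'))$, a differently-directed quantity. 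The correct argument on that side is instead that any path from $\src(e_n')$ to $y\in\partial W\cap\Ws(\tgt(e_n'))$ inside $W$ must first meet $\Ws(e_n')$, so by Lemmas~\ref{lem:saddle-lowest-wrt-beta} and~\ref{lem:S-lower-bound} (applied directly to $\dft_n$, no continuity needed) the first segment already costs $\geq g(e_n')$; and similarly for $y\in\partial W\cap\Ws(e')$ the margin $U(y)-U(z)$ is uniformly positive because $z\in\interior(W)$. Your phrasing ``choose $W$ to avoid a neighborhood of $\Ws(e')$ near $z$'' is not feasible (nor needed): $\Ws(e')$ has codimension one and $z\in q(e')\subset\interior(W)$, so $\partial W$ unavoidably meets $\Ws(e')$ but automatically avoids $z$. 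Second, the ``compact-uniform upgrade of Proposition~\ref{prop:qp-continuity}'' is asserted without proof; it is plausible (by continuity on compacts and Lemma~\ref{lem:univ-cover-qp-growth} reducing to finitely many homotopy classes), but it is an extra ingredient the paper does not need, because the contradiction argument only invokes Proposition~\ref{prop:qp-continuity} at a single limit class $f$. So you should either switch to the sequential contradiction argument, or explicitly (i) split $\partial W$ into its three stable-manifold pieces and give the Lemma~\ref{lem:saddle-lowest-wrt-beta}-based bound on the $\Ws(e')$ and $\Ws(\tgt(e'))$ pieces, and (ii) prove the uniform-on-compacts version of Proposition~\ref{prop:qp-continuity}.
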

\begin{proof}
Lem.~\ref{lem:converging-FW-trees} implies that $T^n_{v_n}$ converges to the set
\begin{equation}\label{eq:lem-converging-FW-trees-W-1}
\arg\min_{T\in \RST(\gh;v)} \sum_{e\in T}\qpd(e)
\end{equation}
in the topology described in that lemma, and Lem.~\ref{lem:exact-form-graph-cond} (or Lem.~\ref{lem:univ-cover-qp-growth}) implies that the set of minimizers in \eqref{eq:lem-converging-FW-trees-W-1} is finite.
Thus, we may partition the sequence $(T^n_{v_n})$ into subsequences converging to individual minimizers in \eqref{eq:lem-converging-FW-trees-W-1}, so we may and do henceforth assume that $T^n_{v_n}$ converges to some $T_v$ in \eqref{eq:lem-converging-FW-trees-W-1}.
Every edge in $T^n_{v_n}$ is then arbitrarily close (in the topology described in Rem.~\ref{rem:manifold-structure-fundamental-groupoid}) to a unique edge in $T_v$ for sufficiently large $n$, so we may and do henceforth assume that $e_n\to e\in T_v$.
Note that Lem.~\ref{lem:exact-form-graph-cond} implies the existence of $e'\in \Ed$ with $e=[e']$.
As in \S \ref{sec:quasipot-closed-form}, we abuse notation and denote by $\Ws(e')$ the stable manifold of the unique index-$1$ zero of $\dft$ in $q(e')$.

Let $W\subset M$ be a neighborhood of $q(e')$ with $\cl(W) \subset \Ws(\src(e'))\cup \Ws(\tgt(e'))\cup \Ws(e')$ and define $\qpd^W$, $\qp_{\dft_n}^W$ as in Lem.~\ref{lem:qp-leq-g-W}.
Since $\cl(q(e'))$ is an asymptotically stable invariant set for the flow of $\dft$, by taking $\cl(W)$ to be a sublevel set of a $\Cont^\infty$ Lyapunov function \cite{wilson1969smoothing,fathi2019smoothing} for $\cl(q(e'))$ we may assume that $\cl(W)$ is a smooth codimension-$0$ submanifold with boundary and that $\dft$ is strictly inward pointing at $\partial W$; since $\dft_n\to \dft$ it follows that
\begin{equation}\label{eq:inward-pointing}
\textnormal{$\dft_n$ is strictly inward pointing at $\partial W$ for all sufficiently large $n$.}
\end{equation} 
Since $\vtx^n\to \vtx$ by Lem.~\ref{lem:near-assump-2-implies-assump-2}, the properties of the topology on $\Pi(M)$ (Rem.~\ref{rem:manifold-structure-fundamental-groupoid}) imply that, if $e^1_n,e^2_n\in \Eh^n$ are two sequences of edges converging to the same edge in $\Eh$, then $e^1_n = e^2_n$ for all sufficiently large $n$. 
Since the $1$-dimensional unstable manifolds of $\dft_n$ converge uniformly to those of $\dft$ by the implicit function and stable manifold theorems \cite[p.~75,~Thm~6.2]{palis1980geometric}, there is a sequence $e_n'\in \Ed^n$ with $e_n'\to e'$ and hence $[e_n']\to [e']=e$, so the preceding sentence implies that $e_n = [e_n']$ for all sufficiently large $n$.
Let $z_n\in q(e_n')$ denote the unique index-$1$ zero of $\dft_n$ in $q(e_n')$.
For all sufficiently large $n$ we have $\{\src(e_n'), \tgt(e_n'), z_n\} \subset \interior(W)$ while the other zeros of $\dft_n$ are disjoint from $\cl(W)$, so \eqref{eq:inward-pointing} implies that, for all sufficiently large $n$, $$\cl(W)\subset \Ws(\src(e_n))\cup \Ws(\tgt(e_n))\cup \Ws(e_n).$$
Thus, Lem.~\ref{lem:qp-leq-g-W} implies that $\qp_{\dft_n}^W(e_n)=g(e_n')$ for large $n$, so to complete the proof it suffices to show that $\qp_{\dft_n}(e_n)=\qp_{\dft_n}^W(e_n)$ for all sufficiently large $n$.

Suppose not and define $A_n\coloneqq (\partial W)\cap \Ws(\src(e_n))$.
Then there exists a sequence $f_n\in \Eh^n$ such that $\src(f_n) = \src(e_n)$, $\tgt(f_n)\in A$, $\length(f_n)\leq \textnormal{diam}(W)$ (Def.~\ref{def:length}), and
\begin{equation}\label{eq:qp-fn-ineq}
\qp_{\dft_n}(e_n) = \qp_{\dft_n}(f_n) + \qp_{\dft_n}(\bar{f}_ne_n),
\end{equation}
where $\bar{f}_n$ denotes the reversal of $f_n$.
Since $\length(f_n)\leq \textnormal{diam}(W)$ for all $n$, after passing to a subsequence we may assume there is $f\in \src^{-1}(\src(e))$ satisfying 
\begin{equation}\label{eq:tgt-f-location}
\tgt(f)\in (\partial W) \cap \cl(\Ws(\src(e))
\end{equation}
such that $f_n\to f$ in the topology on $\Pi(M)$ described in Rem.~\ref{rem:manifold-structure-fundamental-groupoid}.
Prop.~\ref{prop:qp-continuity} and \eqref{eq:qp-fn-ineq} imply that 
\begin{equation}\label{eq:qpd-converged-equality}
\qpd(e) = \qpd(f) + \qpd(\bar{f}e).
\end{equation}
Set $x\coloneqq \src(e)$, $y\coloneqq \tgt(f)$, and define $z$ to be the unique index-$1$ zero of $\dft$ in $q(e')$.
Lem.~\ref{lem:S-lower-bound} implies  that $\qpd(f)\geq U(y)-U(x)$, and Lem.~\ref{lem:exact-form-graph-cond} implies that $\qpd(e)=g(e')$ and that the strict inequality $\qpd(\bar{f}e)> U(z)-U(y)$ holds.
Substituting these relations in \eqref{eq:qpd-converged-equality} yields
\begin{equation*}
g(e') = \qpd(f) + \qpd(\bar{f}e) > U(z)-U(x) = g(e'),
\end{equation*}
where we have used $g(e') = U(z)-U(x)$.
Since the strict inequality $g(e')>g(e')$ is a contradiction, this completes the proof.
\end{proof}

 For the following, recall that $U(e)$ for $e\in \Eu$ is defined to be the value of $U$ at the unique index-$1$ zero of $-\nabla U$ in $e\in \Eu$, and if $e\in \Ed$ then $U(e)\coloneqq U(q(e))$.
\begin{Lem}\label{lem:convering-morse-trees-reversal-invariant}
Assume that $\dft = \cfo^\sharp = -\nabla U$ is dual to a $\Cont^1$ exact one-form on a closed connected Riemannian manifold $M$ and satisfies Assumption~\ref{assump:morse-smale}.
Let $\cN_{\dft}$ be as in Lem.~\ref{lem:near-assump-2-implies-assump-2} and $\dft_n = \cfo_n^\sharp\in \cN_{\dft}$ be a sequence of $\Cont^1$ vector fields dual to closed one-forms converging to $\dft$ in the $\Cont^1$ topology, with associated directed Morse graphs $\compd^n = (\Ed^n,\vtx^n, \src, \tgt)$.
Assume that $U$ has a unique global minimizer and that there is a unique minimizer
\begin{equation}\label{eq:converging-morse-trees-reversal-invariant-1}
T\in \arg \min_{\edg\in \ST(\comp)}\sum_{e\in \edg}U(e).
\end{equation}
Then for every $v\in \vtx$ there is a unique minimizer
\begin{equation}\label{eq:converging-morse-trees-reversal-invariant-1pt5}
T_v\in \arg\min_{\edg\in \RST(\compd;v)}\sum_{e\in \edg}g(e),
\end{equation}
the equality $q(T_v) = T$ holds, and there is a unique minimizer $T_*\in \arg \min_{\edg\in \RST(\compd)}\sum_{e\in \edg}g(e)$.
Furthermore, for all sufficiently large $n$ there exists $T^n\in \ST(\comp^n)$ such that, for all $v_n\in \vtx^n$, there is a unique minimizer
\begin{equation}\label{eq:convering-morse-trees-reversal-invariant-2}
T^n_{v_n} \in \arg \min_{\min \edg\in \RST(\compd^n;v_n)}\sum_{e\in \edg}g(e),
\end{equation}
the equality $q(T^n_{v_n})=T^n$ holds, and there is a unique minimizer $T_*^n\in \arg \min_{\edg\in \RST(\compd^n)}\sum_{e\in \edg}g(e)$.
\end{Lem}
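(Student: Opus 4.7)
The plan is to first dispatch the unperturbed case by a direct algebraic computation and then to handle the perturbation by a uniform-convergence argument on a finite combinatorial set.

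In the unperturbed case, since $\cfo = -dU$ is exact, each gain reduces to $g(e) = U(e) - U(\src(e))$, where $U(e)$ is the value of $U$ at the index-$1$ critical point in $q(e)$. Because in any $T \in \RST(\compd;v)$ each non-root vertex is the source of exactly one edge, this identity yields
\begin{equation*}
\sum_{e \in T} g(e) = \sum_{e \in q(T)} U(e) - \sum_{v' \in \vtx \setminus \{v\}} U(v').
\end{equation*}
The map $T \mapsto q(T)$ is a bijection $\RST(\compd;v) \to \ST(\comp)$, since once the root is fixed the orientation of every edge of the undirected tree is forced. Hence minimizing over $\RST(\compd;v)$ is equivalent to minimizing $\sum_{e \in q(T)} U(e)$ over $\ST(\comp)$, and the uniqueness hypothesis \eqref{eq:converging-morse-trees-reversal-invariant-1} produces the unique $T_v$ with $q(T_v) = T$. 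The resulting cost as a function of the root equals $\sum_{e \in T} U(e) - \sum_{v' \in \vtx} U(v') + U(v)$, which is uniquely minimized at the unique global minimizer $v^*$ of $U$; thus $T_* := T_{v^*}$ is the unique overall minimizer.

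For the perturbed case, I would invoke Lem.~\ref{lem:near-assump-2-implies-assump-2}, the implicit function theorem, and the stable manifold theorem to produce, for all sufficiently large $n$, natural bijections $\iota_n \colon \vtx \to \vtx^n$ and $\iota_n \colon \Ed \to \Ed^n$ (hence also $\iota_n \colon \ST(\comp) \to \ST(\comp^n)$) such that vertices, edges, and interior index-$1$ saddle points converge $C^1$ as $n \to \infty$. Combined with $\cfo_n \to -dU$ in $C^1$, this gives $g_n(\iota_n e) \to g(e)$ for every $e \in \Ed$, hence uniformly since $\Ed$ is finite. I would then define the finite-valued cost $F_n \colon \ST(\comp) \times \vtx \to \R$ by
\begin{equation*}
F_n(\edg, v) = \sum_{e \in T^n_{\iota_n v}(\iota_n \edg)} g_n(e),
\end{equation*}
where $T^n_{v_n}(\edg^n) \in \RST(\compd^n; v_n)$ denotes the unique directed version of $\edg^n$ rooted at $v_n$, and define $F$ analogously using $\dft$ and $g$. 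Then $F_n \to F$ uniformly on the finite set $\ST(\comp) \times \vtx$.

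By the unperturbed analysis, $F$ has a unique minimizer $(T, v^*)$ on $\ST(\comp) \times \vtx$ and a unique partial minimizer $T$ on each slice $\ST(\comp) \times \{v\}$. Uniform convergence on a finite set transfers both uniqueness statements to $F_n$ for all $n$ large enough, giving $T^n := \iota_n(T) \in \ST(\comp^n)$ with $q(T^n_{v_n}) = T^n$ for every $v_n \in \vtx^n$, and with unique overall minimizer $T_*^n$ equal to the directed version of $T^n$ rooted at $\iota_n(v^*)$. The main obstacle, while more technical than conceptual, is to make precise the $C^1$ convergence of the perturbed unstable manifolds together with their interior saddle points, on which the crucial convergence $g_n(\iota_n e) \to g(e)$ ultimately rests; this is a standard consequence of hyperbolicity and the stable/unstable manifold theorem but must be combined with the $C^1$ convergence $\cfo_n \to -dU$ to control the line integrals defining the gains.
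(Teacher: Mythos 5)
Your proposal is correct and follows essentially the same route as the paper's proof: the unperturbed case is dispatched by exactly the identity $\sum_{e\in T}g(e) = U(v) + \sum_{e\in q(T)}U(e) - \sum_{v'\in \vtx}U(v')$ together with the observation that $T\mapsto q(T)$ is a bijection $\RST(\compd;v)\to\ST(\comp)$, and the perturbed case by combining the implicit function and stable manifold theorems with the finiteness of the tree set and the continuity of the gains. The paper's treatment of the perturbed case is a bit more terse (it does not explicitly introduce the bijections $\iota_n$ or the cost functional $F_n$, simply appealing to convergence of the Morse graphs and joint continuity of $g$), but the underlying argument is the same.
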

\begin{Rem}\label{rem:all-min-morse-trees-obtained-by-rev-edges}
In other words, for all sufficiently large $n$ and any $v_n, w_n\in \vtx^n$, the Morse minimizing rooted spanning trees in both $\RST(\compd^n;v_n)$ and $\RST(\compd^n;w_n)$ are unique and are obtained from one another by simply reversing some edges.
Moreover, for all sufficiently large $n$ there is a unique overall minimizer $T_*^n\in \RST(\compd^n)$ from among the $T_{v_n}^n$.
\end{Rem}

\begin{proof}
Fix any $v\in \vtx$, define $C\coloneqq -\sum_{v'\in \vtx}U(v')$, and note that any $\edg\in \RST(\compd;v)$ satisfies
\begin{align}\label{eq:convering-morse-trees-reversal-invariant-3}
\sum_{e\in \edg}g(e) =  C + U(v) +\sum_{e\in \edg}U(e) = C + U(v) +  \sum_{e\in q(\edg)}U(e)
\end{align}
since $g(e)=U(e)-U(\src(e))$ as $\cfo=-dU$, $C+U(v) = -\sum_{e\in \edg}U(\src(e))$, and $U(q(e))\coloneqq U(e)$.
Since the edges of the minimizer $T$ in \eqref{eq:converging-morse-trees-reversal-invariant-1} admit a unique choice of orientations producing a tree in $\RST(\compd;v)$, it follows from \eqref{eq:convering-morse-trees-reversal-invariant-3} and the uniqueness of the minimizer in \eqref{eq:converging-morse-trees-reversal-invariant-1} that the minimizer $T_v$ in \eqref{eq:converging-morse-trees-reversal-invariant-1pt5} is unique and satisfies $q(T_v) = T$, and since $U$ has a unique global minimizer it also follows that there is a unique minimizer $T_*\in \arg \min_{\edg\in \RST(\compd)}\sum_{e\in \edg}g(e)$.

Fix $v,w\in \vtx$ and any $v_n, w_n\in \vtx^n$ with $v_n\to v$ and $w_n\to w$.
The Morse graphs for $\dft_n$ converge to that of $\dft$ as $n\to \infty$ by the implicit function and stable manifold theorems \cite[p.~75,~Thm~6.2]{palis1980geometric}, so each $\dft_n$ has the same finite number of Morse rooted spanning trees for all sufficiently large $n$.
Since also the gain $g(\slot)$ of an edge depends jointly continuously on the edge and the vector field, it follows that the minimizers $T^n_{v_n}$ and $T^n_{w_n}$ in \eqref{eq:convering-morse-trees-reversal-invariant-2} are unique and $q(T^n_{v_n})=q(T^n_{w_n})$ for all large $n$ (since $q(T_w) = T = q(T_v)$).
By similar reasoning, there is a unique minimizer $T_*^n\in \arg \min_{\edg\in \RST(\compd^n)}\sum_{e\in \edg}g(e)$ for all sufficiently large $n$. 
Thus, defining $T^n\coloneqq q(T^n_{v_n})$ completes the proof.
\end{proof}

\subsection{Finishing the proofs}\label{sec:finishing-morse-flux-proof}
We now prove Theorem~\ref{th:qualitative}.
For convenience we restate the theorem.
\ThmQualitative*
\begin{proof}
This theorem was proved for the case $\dim(M) = 1$ in Ex.~\ref{ex:tilt-pot-circle}, so we may and do henceforth assume that $\dim(M) \geq 2$.
Recall that in this case $\qpd = \tqpd$ (Lem.~\ref{lem:tqp-equals-qp-transversality}), and this allows us to take advantage of the preliminary results concerning $\qpd$ proved in \S \ref{sec:spanning-tree-continuous-dependence} without additional fuss. 

Note that $\qpd([e])\leq g(e)$ for any $e\in \Ed$ by Lem.~\ref{lem:qp-leq-g-W}.
From this and Lem.~\ref{lem:converging-FW-trees-W} and \ref{lem:convering-morse-trees-reversal-invariant} (and the fact that the $\Cont^1$ topology is metrizable since $M$ is compact \cite[p.~62]{hirsch1976differential}) it follows that, if $\cfo$ is sufficiently close to $-dU$ in the $\Cont^1$ topology,
\begin{equation}\label{eq:m-th-pf-1}
\min_{T'\in \RST(\gh;v)}\sum_{e\in T'}\qpd(e) = \min_{T'\in \RST(\compd;v)}\sum_{e\in T'}g(e)
\end{equation}
for each $v\in \vtx$.
The same lemmas additionally imply that the minimizers $T^{\Pi}_{v}\in \RST(\gh;v)$ and $T^{m}_{v}\in \RST(\compd;v)$ are unique, the minimizer $T_* \in \arg\min_{\edg \in \RST(\compd)}\sum_{e\in \edg}g(e)$ is unique, and that 
\begin{equation*}
\textnormal{each $e\in T^{m}_v$ satisfies $[e]\in T^{\Pi}_v$ and $\qpd([e])=g(e)$.}
\end{equation*}
Defining $K_v\coloneqq \sum_{v'\in \vtx\setminus \{v\}}h(v')$, for later use we note that the latter fact implies that 
\begin{equation}\label{lem:finishing-morse-crst-bridge}
\begin{split}
\min_{\edg\in \RST(\gh)}\sum_{e\in \edg}h(\src(e))+\qpd(e) &= \min_{v\in \vtx} K_v + \sum_{e\in T^{\Pi}_v}\qpd(e) = \min_{v\in V}K_v + \sum_{e\in T^{m}_v}g(e)\\
&= \min_{\edg\in \RST(\compd)}\sum_{e\in \edg} h(e).
\end{split}
\end{equation}
Lem.~\ref{lem:convering-morse-trees-reversal-invariant} further implies the existence of $T\in \ST(\comp)$ such that
\begin{equation}\label{eq:morse-trees-reverse-each-other}
\textnormal{
 $q(T_v^m) = T$ for all $v\in \vtx$,}
\end{equation}
so that all minimizing Morse rooted spanning trees are obtained from one another by reversing some edges (Rem.~\ref{rem:all-min-morse-trees-obtained-by-rev-edges}).
It follows that, for any $v,w\in \vtx$, 
\begin{equation}\label{eq:finish-proof-morse-tree-diffs}
\sum_{e\in T^{\Pi}_v}\qpd(e) - \sum_{e\in T^{\Pi}_w}\qpd(e) = \sum_{e\in T^{m}_v}g(e) - \sum_{e\in T^{m}_w}g(e) = h(v)-h(w).
\end{equation}

Next, we equip $$N\coloneqq \bigcup_{\substack{e\in \Ed\\ h(e)< h(\bar{e})}}\Ws(e)$$
with the coorientation induced by the directed edges $e$ satisfying $h(e)< h(\bar{e})$, so the oriented intersection numbers $I(N, \vp)$ of $N$ with smooth paths $\vp$ transverse to $N$ are well-defined according to \eqref{eq:intersection-number}.
Let $k > 0$ be as in Lem.~\ref{lem:cycle-path-qpd-lower-bound} and fix $\varepsilon > 0$.
Fix $\sigma \in \{-1,+1\}$ and $\edg_0\in \CRST(\gh)$ satisfying $\textnormal{sign}(\cfo(\cycle(\edg_0))) = \sigma \neq 0$.
For each $e\in \cycle(\edg_0)$, let $\vp_e$ be a smooth path such that $[\vp_e]=e$, $\af(\vp_e)< \qpd(e) + \varepsilon$, and $\vp_e$ is transverse to $\bigcup_{z\in \dft^{-1}(0)}\Ws(z)$  (Lem.~\ref{lem:tqp-equals-qp-transversality}).
From the transversality condition it follows that the image of each $\vp_e$ does not intersect the stable manifold of any $z\in \dft^{-1}(0)$ with $\ind(z)\geq 2$ (since then $\textnormal{codim}(\Ws(z))\geq 2$).
Since $\textnormal{sign}(\cfo(\cycle(\edg_0))) = \sigma$, there exists $e_0\in \cycle(\edg_0)$ such that $\textnormal{sign}(I(N,\vp_{e_0}))  = \sigma$.
Let $[T_1,T_2]=\dom(\vp_{e_0})$ and define $t_*\coloneqq \inf\{t\geq T_1\colon \vp_{e_0}(t_*)\in N\}$, $\bar{h}_*\coloneqq \min\{h(\bar{e})\colon e\in \Ed \textnormal{ and } h(e) < h(\bar{e})\}$ (hence $0 < h_* < \bar{h}_*$), and $k_\sigma \geq 0$ by \begin{equation}\label{eq:ksig-def}
k_{\sigma}\coloneqq \begin{cases}
\min(k,\bar{h}_*-h_*), & \sigma < 0\\
0, & \sigma > 0
\end{cases}.
\end{equation}

Lem.~\ref{lem:cycle-path-qpd-lower-bound} implies that
\begin{equation}\label{eq:h-e-star-ineq}
\begin{split}
k_\sigma + h_* \leq  h(\src(e_0)) + \af(\vp_{e_0}) < h(\src(e_0)) + \qpd(e_0) + \varepsilon.
\end{split}
\end{equation}
Using \eqref{eq:h-e-star-ineq} and defining $C\coloneqq -\sum_{v\in \vtx}h(v)$, we compute
\begin{equation*}
\begin{split} 
\varepsilon + \sum_{e\in \edg_0}\qpd(e) &= \varepsilon +  C + \sum_{e\in \edg_0}h(\src(e)) + \qpd(e) \geq C+ k_\sigma + h_* + \sum_{e\in \edg_0\setminus \{e_0\}}h(\src(e)) + \qpd(e)\\
&\geq C + k_\sigma + h_* + \min_{\edg\in \RST(\gh)}\sum_{e\in \edg} h(\src(e))+\qpd(e)\\
&= C + k_\sigma + h_* + \min_{\edg\in \RST(\compd)}\sum_{e\in \edg}h(e),
\end{split} 
\end{equation*}
where the final equality follows from \eqref{lem:finishing-morse-crst-bridge}.
Let $e_*\in \Ed$ satisfy $h_* = h(e_*) < h(\bar{e}_*)$.
It follows from \eqref{eq:morse-trees-reverse-each-other} that $\min_{\edg\in \RST(\compd)}\sum_{e\in \edg}h(e) = \sum_{e\in T^{m}_v}h(e)$ for any $v\in \vtx$, so
\begin{equation}\label{eq:epsilon-sandwich}
\begin{split} 
\varepsilon + \sum_{e\in \edg_0}\qpd(e) &\geq C + k_\sigma + h_* + \min_{\edg\in \RST(\compd)}\sum_{e\in \edg}h(e)\\
&= C+k_\sigma + h_* + \sum_{e\in T^{m}_{\src(e_*)}}h(e)\\
&= k_\sigma + \sum_{e\in T^{m}_{\src(e_*)}\cup\{e_*\}}g(e)\\
&\geq k_\sigma + \sum_{e\in T^{m}_{\src(e_*)}\cup\{e_*\}}\qpd([e])\\
&\geq \sum_{e\in T^{m}_{\src(e_*)}\cup\{e_*\}}\qpd([e]),
\end{split}
\end{equation}
where the penultimate inequality follows from Lem.~\ref{lem:qp-leq-g-W}, and the final inequality follows from $k_{\sigma}\geq 0$.
Note that $\edg_1\coloneqq \{[e]\colon e\in T^{m}_{\src(e_*)}\cup\{e_*\}\}\in \CRST(\gh)$ and $\cfo(\cycle(\edg_1)) >0$ by Rem.~\ref{rem:h-star-alt-def} since $h(e_*)< h(\bar{e}_*)$. 
Since $\varepsilon > 0$ was arbitrary and $\edg_0\in \CRST(\gh)$ was an arbitrary cycle-rooted spanning tree satisfying $\sigma = \textnormal{sign}(\cfo(\cycle(\edg_0)))$, it follows from this and \eqref{eq:epsilon-sandwich} that
\begin{equation}\label{eq:final-morse-proof-almost-done}
\begin{split}
\min_{\substack{\edg\in \CRST(\gh)\\ \cfo(\cycle(\edg))<0}}\sum_{e\in \edg}\qpd(e) &\geq  k_{-1} + \min_{\substack{\edg\in \CRST(\gh)\\ \cfo(\cycle(\edg))>0}}\sum_{e\in \edg}\qpd(e)\\
& > \min_{\substack{\edg\in \CRST(\gh)\\ \cfo(\cycle(\edg))>0}}\sum_{e\in \edg}\qpd(e)
= g(e_*) +  \sum_{e\in T^m_{\src(e_*)}}g(e),
\end{split}
\end{equation}
where the strict inequality follows from  $k_{-1} > 0$.

It now follows from Theorem \ref{th:flux-manifold-mc-CRST-ld} and \eqref{eq:final-morse-proof-almost-done}, \eqref{eq:m-th-pf-1}, and \eqref{eq:finish-proof-morse-tree-diffs} (in that order) that 
\begin{equation*}
\begin{split}
\lim_{\varepsilon\to 0}(-\varepsilon \ln \fluxe([\cfo])) &= \left(\min_{\substack{\edg\in \CRST(\gh)\\\cfo(\cycle(\edg))> 0}}\sum_{e\in \edg}\qpd(e)\right) - \left(\min_{\substack{\edg \in \RST(\gh)}}\sum_{e\in \edg}\qpd(e)\right)\\
&= g(e_*) +  \sum_{e\in T^m_{\src(e_*)}}g(e) - \left(\min_{\substack{\edg \in \RST(\compd)}}\sum_{e\in \edg}g(e)\right)\\
&= g(e_*) + h(\src(e_*)) - h(v_*) = h_*,
\end{split}
\end{equation*}
where the final equality follows since $h_* = h(e_*) = g(e_*) + h(\src(e_*))$ by the definitions of $h(\slot)$ and $e_*$, and since the root $v_*$ of $T_*$ satisfies $h(v_*)=0$.
This completes the proof.
\end{proof}

We now prove Theorem~\ref{th:qualitative-measure}.
For convenience we restate the theorem.
For the statement, recall that $B_r(x)$ denotes the metric ball of radius $r \geq 0$ centered at $x\in M$.
\ThmQualitativeMeasure*
\begin{proof}
Since the hypotheses of Theorem~\ref{th:qualitative-measure} are identical to those of Theorem~\ref{th:qualitative}, for brevity we will assume all notation and statements established in the proof of Theorem~\ref{th:qualitative} up to and including \eqref{eq:finish-proof-morse-tree-diffs}.
The uniqueness of the minimizer $T_*\in \arg\min_{\edg\in \RST(\compd)}\sum_{e\in \edg}g(e)$ together with \cite[p.~167,~Thm~4.2]{freidlin2012random} immediately imply the weak convergence of $\mu_\varepsilon$ to $\delta_{v_*}$  as $\varepsilon \to 0$.

It follows from \cite[Ch.~6, Thm~4.1, Lem.~4.1]{freidlin2012random} that for any $\delta > 0$ there is $k > 0$ such that, for any $v\in \vtx$ and $\varepsilon, r\in (0,k)$,
\begin{equation*}
\begin{split}
-\delta + \left( \min_{\edg\in \RST(\gh;v)}\sum_{e\in \edg}\qpd(e)\right) &- \left( \min_{\edg\in \RST(\gh)}\sum_{e\in \edg}\qpd(e)\right) < -\varepsilon \ln \left(\int_{B_r(v)}\dme(x) dx\right)\\
  &< \delta + \left( \min_{\edg\in \RST(\gh;v)}\sum_{e\in \edg}\qpd(e)\right) - \left( \min_{\edg\in \RST(\gh)}\sum_{e\in \edg}\qpd(e)\right) .
 \end{split}
\end{equation*}
This and \eqref{eq:finish-proof-morse-tree-diffs} imply that 
\begin{equation}\label{eq:th-qualitative-measure-main-fw}
\begin{split}
-\delta + h(v) - h(v_*) &< -\varepsilon \ln \left(\int_{B_r(v)}\dme(x) dx\right) < \delta + h(v) - h(v_*) .
 \end{split}
\end{equation}
Since $h(v_*)=0$ by Def.~\ref{def:morse-heights} of $h(\slot)$, taking $\exp(-\frac{1}{\varepsilon}\slot)$ of \eqref{eq:th-qualitative-measure-main-fw} establishes \eqref{eq:th-qualitative-measure-main} for all $\varepsilon, r\in (0,k)$ and completes the proof. 
\end{proof}

We now prove Prop.~\ref{prop:one-minimizer}.
For convenience we restate the proposition.
\PropOneMinimizer*
\begin{proof}
This proposition was proved for the case $\dim(M) = 1$ in Ex.~\ref{ex:tilt-pot-circle}, so we may and do henceforth assume that $\dim(M) \geq 2$.
Recall that in this case $\qpd = \tqpd$ (Lem.~\ref{lem:tqp-equals-qp-transversality}). 
Since $\dft$ has precisely one index-$0$ zero $v_*\in \vtx$, $\RST(\gh)$ contains only the trivial rooted spanning tree with no edges, and every $\edg\in \CRST(\gh)$ consists of a single edge $\edg\in \Ed$. 
Note that every $e\in \Ed$ satisfies $\src(e)=\tgt(e)=v_*$ since there is only one index-$0$ zero.
In the following, given $e\in \Ed$ we use the notation $\cfo(e)\coloneqq \int_e \cfo$ of Theorem~\ref{th:flux-manifold-mc-CRST-ld}.

As in the proof of Theorem~\ref{th:qualitative}, we equip $$N\coloneqq \bigcup_{\substack{e\in \Ed\\ h(e)< h(\bar{e})}}\Ws(e)$$
with the coorientation induced by the directed edges $e$ satisfying $h(e)< h(\bar{e})$, so the oriented intersection numbers $I(N, \vp)$ of $N$ with smooth paths $\vp$ transverse to $N$ are well-defined according to \eqref{eq:intersection-number}.
Let $k > 0$ be as in Lem.~\ref{lem:cycle-path-qpd-lower-bound} and fix $\varepsilon > 0$.
Fix $\sigma \in \{-1,+1\}$ and $\{e_0\}\in \CRST(\gh)$ satisfying $\textnormal{sign}(\cfo(e_0)) = \sigma \neq 0$.
Let $\vp_{e_0}$ be a smooth path with $[\vp_{e_0}]=e_0$, $\af(\vp_{e_0})< \qpd(e_0) + \varepsilon$, and such that $\vp_{e_0}$ is transverse to $\bigcup_{z\in \dft^{-1}(0)}\Ws(z)$ (Lem.~\ref{lem:tqp-equals-qp-transversality}).

Since $\textnormal{sign}(\cfo(e_0))) = \sigma$ it follows that $\textnormal{sign}(I(N,\vp_{e_0}))  = \sigma$.
Let $[T_1,T_2]=\dom(\vp_{e_0})$ and define $t_*\coloneqq \inf\{t\geq T_1\colon \vp_{e_0}(t_*)\in N\}$, $\bar{h}_*\coloneqq \min\{h(\bar{e})\colon e\in \Ed \textnormal{ and } h(e) < h(\bar{e})\}$ (hence $0 < h_* < \bar{h}_*$), and $k_\sigma \geq 0$ by
\begin{equation}\label{eq:ksig-def-2}
k_{\sigma}\coloneqq \begin{cases}
\min(k,\bar{h}_*-h_*), & \sigma < 0\\
0, & \sigma > 0
\end{cases}.
\end{equation} 
Since $h(\src(e_0)) = h(v_*)=0$, Lem.~\ref{lem:cycle-path-qpd-lower-bound} implies that
\begin{equation}\label{eq:h-e-star-ineq-2}
\begin{split}
k_\sigma + h_* \leq \af(\vp_{e_0}) < \qpd(e_0) + \varepsilon.
\end{split}
\end{equation}
Since $\varepsilon>0$ was arbitrary and $k_{-1}>k_{+1}=0$, this and Lem.~\ref{lem:qp-leq-g-W} imply that
\begin{equation}\label{eq:one-min-morse-final}
\min_{\substack{\edg\in \CRST(\gh)\\ \cfo(\cycle(\edg)) < 0}}\sum_{e\in \edg} \qpd(e) \geq k_{-1} + \min_{\substack{\edg\in \CRST(\gh)\\ \cfo(\cycle(\edg)) > 0}}\sum_{e\in \edg} \qpd(e) >  \min_{\substack{\edg\in \CRST(\gh)\\ \cfo(\cycle(\edg)) > 0}}\sum_{e\in \edg} \qpd(e) =  h_*.
\end{equation}
Thus, \eqref{eq:one-min-morse-final} and Theorem~\ref{th:flux-manifold-mc-CRST-ld} imply the desired equality
\begin{equation*}
\begin{split}
\lim_{\varepsilon\to 0}(-\varepsilon \ln \fluxe([\cfo])) &= \left(\min_{\substack{\edg\in \CRST(\gh)\\\cfo(\cycle(\edg))> 0}}\sum_{e\in \edg}\qpd(e)\right) -\underbrace{\left(\min_{\substack{\edg \in \RST(\gh)}}\sum_{e\in \edg}\qpd(e)\right)}_{0} =  h_*.
\end{split}
\end{equation*}
\end{proof}

\section{Counterexamples}\label{sec:counterexamples}
In Rem.~\ref{rem:th-qualitative-sufficiently-close} we pointed out the following two natural questions:
\begin{Quest}\label{quest:weaken-hypotheses-flux}
Does the conclusion of Theorem~\ref{th:qualitative} remain true if the hypothesis that $\cfo$ is sufficiently close to $-dU$ in the $C^1$ topology is replaced with the weaker hypothesis that $\cfo^\sharp$ satisfies Assumption~\ref{assump:morse-smale} and the minimizer $T_*$ in \eqref{eq:morse-rst-minimizer} is unique?
\end{Quest}
\begin{Quest}\label{quest:weaken-hypotheses-measure}
Does the conclusion of Theorem~\ref{th:qualitative-measure} remain true if the same hypothesis replacement in Question~\ref{quest:weaken-hypotheses-flux} is made?
\end{Quest}
In the present section we demonstrate that the answers to Questions~\ref{quest:weaken-hypotheses-flux} and \ref{quest:weaken-hypotheses-measure} are both negative for multiple reasons, even if certain natural conditions are imposed on the Morse graph $\compd$ and gains $g(\slot)$, although we saw in Ex.~\ref{ex:tilt-pot-circle} that Question~\ref{quest:weaken-hypotheses-flux} has a positive answer in the special case that $\dim(M)=1$.
(However, we show in Ex.~\ref{ex:cex-1d-measure} that the answer to Question~\ref{quest:weaken-hypotheses-measure} is negative even when $\dim(M)=1$.)
The various claims concerning $\qpd(\slot)$ made in these examples can be justified using Lem.~\ref{lem:S-lower-bound}, Lem.~\ref{lem:qp-leq-g-W}, and/or  Prop.~\ref{prop:qp-int-conditions-iff-connecting-piecewise-orbit}, but we omit some details for the sake of brevity.

The following setup is shared by both Ex.~\ref{ex:cex-2d-flux-fw-morse-agree} and \ref{ex:cex-2d-flux-all-min-trees-coincide-as-undirected} which refer to Fig.~\ref{fig:cex-2d-flux-fw-morse-agree} and \ref{fig:cex-2d-flux-all-min-trees-coincide-as-undirected}, respectively.
The ``tilted potential'' $\tilde{U}\in \Cont^\infty(\R^2)$ is a Morse function satisfying $d\tilde{U}(x+k)=d\tilde{U}(x)$ for all $x\in \R^2$ and $k\in \Z^2$, so the exact one-form $-d\tilde{U}$ descends to a closed one-form $\cfo$ on $M = \tor^2$ identified with $\R^2/\Z^2$.
We give $M$ the Euclidean metric inherited from the standard one on $\R^2$, and we define the smooth vector field $\dft = \cfo^\sharp$ on $M$ dual to $\cfo$ via this metric.
The vector field $\dft = \cfo^\sharp$ satisfies Assumption~\ref{assump:morse-smale} in both Ex.~\ref{ex:cex-2d-flux-fw-morse-agree} and \ref{ex:cex-2d-flux-all-min-trees-coincide-as-undirected}.
In both of these examples we consider the steady-state $[\cfo]$-flux $\fluxe([\cfo])$ of the diffusion ($X^\varepsilon_t, \Prob^\varepsilon_x)$ on $M$ with generator $\dfte + \varepsilon \Delta$, where $\varepsilon > 0$ and the smooth vector fields $\dfte\to \dft$ uniformly as $\varepsilon\to 0$.

\begin{Ex}\label{ex:cex-2d-flux-fw-morse-agree}
\begin{figure}
	\centering
	\def\svgwidth{1.0\columnwidth}
	\import{figs/}{cex-2d-flux-fw-morse-agree.pdf_tex}
	\caption{An illustration of Ex.~\ref{ex:cex-2d-flux-fw-morse-agree}. Critical points of $\tilde{U}$ of index $0$, $1$, and $2$ are respectively indicated by the symbols $\ominus$, $\times$, and $\oplus$ (analogous to negative, ``mixed'', and positive charge in electromagnetism).  
	Critical points are labeled with the numerical values of $\tilde{U}$ attained there. One-dimensional stable and unstable manifolds of the flow of $-\nabla \tilde{U}$ are indicated by solid and dotted curves, respectively.}\label{fig:cex-2d-flux-fw-morse-agree}
\end{figure}
We assume the notation of Fig.~\ref{fig:cex-2d-flux-fw-morse-agree} and the description in its caption.
Note that the de Rham cohomology class $[\cfo]\in \Hdr^1(M)$ satisfies $[\cfo] = [dx] - 194[dy]$, where $(x,y)$ are the coordinates on $M$ induced from the standard coordinates on $\R^2$. 
Shown on the far left of Fig.~\ref{fig:cex-2d-flux-fw-morse-agree} are all critical points of $\tilde{U}|_{[0,1]^2}$ and the (un)stable manifolds of the index-$1$ critical points as indicated, followed by the undirected Morse graph $\comp = (\vtx, \Eu, \st)$ on the right.
Shown next to the right in Fig.~\ref{fig:cex-2d-flux-fw-morse-agree} are edges $e_1, e_2, e_3$ in the directed Morse graph $\compd = (\vtx, \Ed, \src, \tgt)$.
These edges are such that 
\begin{equation}\label{eq:cex-2d-flux-fw-morse-agree-min-rst-crst}
\begin{split}
\min_{T\in \RST(\gh)}\sum_{e\in T}\qpd(e) &= \qpd([e_1]) = \min_{T\in \RST(\compd)} \sum_{e\in T}g(e) = g(e_1) = 6\\
\min_{\substack{\edg\in \CRST(\gh)\\ \cfo(\cycle(\edg))>0}}\sum_{e\in \edg}\qpd(e) &= \qpd([e_2]) + \qpd([e_3]) = \min_{\substack{\edg\in \CRST(\compd)\\ \cfo(\cycle(\edg))>0}} \sum_{e\in T}g(e) \\
&= g(e_2) + g(e_3) = 7 + 1 = 8,
\end{split}
\end{equation}
where $[e]\in \Eh$ denotes the path homotopy class of an orientation preserving parametrization of $e\in \Ed$, and all four of these minimizers are unique.
Similarly,  $$\min_{\substack{\edg\in \CRST(\gh)\\ \cfo(\cycle(\edg))<0}}\sum_{e\in \edg}\qpd(e) = \min_{\substack{\edg\in \CRST(\compd)\\ \cfo(\cycle(\edg))<0}} \sum_{e\in T}g(e) = g(e_2) + g(\bar{e}_3) = 9 > 8 = \min_{\substack{\edg\in \CRST(\gh)\\ \cfo(\cycle(\edg))>0}}\sum_{e\in \edg}\qpd(e).$$
From this, \eqref{eq:cex-2d-flux-fw-morse-agree-min-rst-crst}, Theorem~\ref{th:flux-manifold-mc-CRST-ld}, and Lem.~\ref{lem:tqp-equals-qp-transversality} it follows that the steady-state $[\cfo]$-flux $\fluxe([\cfo]) = \fluxe([dx])$ is positive for sufficiently small $\varepsilon > 0$ and 
\begin{equation}\label{eq:cex-2d-flux-fw-morse-agree-flux-asym}
\lim_{\varepsilon\to 0}(-\varepsilon \ln \fluxe([\cfo])) = 8-6 =  2.
\end{equation}
On the other hand, since $T_* = \{e_1\}$ and $v_* = \tgt(e_1) = v_1$, if $e_4\in \Ed$ is the edge emanating from $v_1$ toward the right, we see that $h_* = h(e_4) = 5 > 2$.
Thus, the asymptotics in \eqref{eq:cex-2d-flux-fw-morse-agree-flux-asym} do not match those in the conclusion of Theorem~\ref{th:qualitative}, so the answer to Question~\ref{quest:weaken-hypotheses-flux} is negative. 

One reason for this disagreement is that, ignoring orientations, the minimizing cycle-rooted spanning tree is not obtained by adding a single edge to $T_*$, a property seen to follow from the hypotheses of Theorem~\ref{th:qualitative} during the course of its proof in \S \ref{sec:finishing-morse-flux-proof}.
It is possible for this to occur in the present example because $\cfo$ is not sufficiently $\Cont^1$-close to an exact one-form, as Theorem~\ref{th:qualitative} requires; the ``tilt'' of $\tilde{U}$ is too large.
\end{Ex}
Ex.~\ref{ex:cex-2d-flux-fw-morse-agree} provided a negative answer to Question~\ref{quest:weaken-hypotheses-flux}, and it also motivates the following two related questions.
Question~\ref{quest:do-morse-fw-min-rst-agree} is motivated by the observation that the minimizing Morse and path-homotopical rooted spanning trees ``agreed'' in Ex.~\ref{ex:cex-2d-flux-fw-morse-agree}. 
Question~\ref{quest:reformulated-q1} is a reformulation of Question~\ref{quest:weaken-hypotheses-flux} motivated by the fact that all minimizing Morse rooted spanning trees agree as undirected graphs if $\cfo$ is sufficiently $\Cont^1$-close to $-dU$, where $U$ satisfies the relevant hypotheses of Theorem~\ref{th:qualitative} (Lem.~\ref{lem:convering-morse-trees-reversal-invariant} and Rem.~\ref{rem:all-min-morse-trees-obtained-by-rev-edges}).
\begin{Quest}\label{quest:do-morse-fw-min-rst-agree}
Assume that $\dft = \cfo^\sharp$ satisfies Assumption~\ref{assump:morse-smale}.
If the respective minimizers $T^m_v\in \RST(\compd;v)$ and $T^\Pi_v \in \RST(\gh;v)$ of $\sum g(\slot)$ and $\sum \qpd(\slot)$ are unique for all $v\in \vtx$, do they always agree in the sense that $[e]\in T^\Pi_v$ and $\qpd([e])=g(e)$ for every $e\in T^m_v$? 
\end{Quest}
\begin{Quest}[Reformulation of Question~\ref{quest:weaken-hypotheses-flux}]\label{quest:reformulated-q1}
Does the conclusion of Theorem~\ref{th:qualitative} remain true if the hypothesis that $\cfo$ is sufficiently close to $-dU$ in the $C^1$ topology is replaced with the weaker set of hypotheses that $\cfo^\sharp$ satisfies Assumption~\ref{assump:morse-smale}, the minimizer $T_*$ in \eqref{eq:morse-rst-minimizer} is unique, and for any $v,w\in \vtx$ the minimizers $T_v \in \RST(\compd;v)$ and $T_w\in \RST(\compd;w)$ of $\sum g(\slot)$ are unique and coincide as undirected graphs (in the sense that $q(T_v)=q(T_w)$; cf. Rem.~\ref{rem:all-min-morse-trees-obtained-by-rev-edges} and Def.~\ref{def:morse-graph-directed})?
\end{Quest}
In Ex.~\ref{ex:cex-2d-flux-all-min-trees-coincide-as-undirected} we show that the answers to Questions~\ref{quest:do-morse-fw-min-rst-agree} and \ref{quest:reformulated-q1} are both negative.
\begin{Ex}\label{ex:cex-2d-flux-all-min-trees-coincide-as-undirected}
\begin{figure}
	\centering
	\def\svgwidth{1.0\columnwidth}
	\import{figs/}{cex-2d-flux-all-min-trees-coincide-as-undirected.pdf_tex}
	\caption{An illustration of Ex.~\ref{ex:cex-2d-flux-all-min-trees-coincide-as-undirected}. Critical points of $\tilde{U}$ of index $0$, $1$, and $2$ are respectively indicated by the symbols $\ominus$, $\times$, and $\oplus$ (analogous to negative, ``mixed'', and positive charge in electromagnetism).  
	Critical points are labeled with the numerical values of $\tilde{U}$ attained there. One-dimensional stable and unstable manifolds of the flow of $-\nabla \tilde{U}$ are indicated by solid and dotted curves, respectively. The values of $\tilde{U}$ are all approximately equal to $999$ on the horseshoe-shaped ``almost-plateau'' shown on the left (we need an almost-plateau rather than an actual flat plateau in order for $\tilde{U}$  and hence also $\cfo$ to be Morse.)}\label{fig:cex-2d-flux-all-min-trees-coincide-as-undirected}
\end{figure}
We assume the notation of Fig.~\ref{fig:cex-2d-flux-all-min-trees-coincide-as-undirected} and the description in its caption.
Note that the de Rham cohomology class $[\cfo]\in \Hdr^1(M)$ satisfies $[\cfo] = 100[dx]$, where $(x,y)$ are the coordinates on $M$ induced from the standard coordinates on $\R^2$. 
Shown on the far left of Fig.~\ref{fig:cex-2d-flux-all-min-trees-coincide-as-undirected} are all critical points of $\tilde{U}|_{[0,1]^2}$ and the (un)stable manifolds of the index-$1$ critical points as indicated.
The horseshoe-shaped ``almost-plateau'' region $H \subset (0,1)^2$ contains no critical points of $\tilde{U}$, and is such that
\begin{equation}\label{eq:delta-plateau}
\sup_{(x,y)\in H}\norm{\nabla \tilde{U}(x,y)} + |\tilde{U}(x,y)-999| < \delta_1,
\end{equation}
where $\delta_1 > 0$ can be made arbitrarily small by modifications of $\tilde{U}$ supported on a small neighborhood of $H$.
Fixing $\delta_2 > 0$, we also choose paths $\vp_j$ approximating the edges of the unique minimizers in $\RST(\gh;v_i)$ of $\sum \qpd(\slot)$ and the unique minimizers in $\CRST(\gh)$ satisfying $\cfo(\cycle(\slot)) < 0$ and $\cfo(\cycle(\slot))> 0$ such that the sum $\sum \qpd(e)$ over any of these minimizers is $\delta_2$-approximated by the sum of the actions $\af(\vp_j)$ of the corresponding paths.
From the rightmost two panels at the top of Fig.~\ref{fig:cex-2d-flux-all-min-trees-coincide-as-undirected} we see already that the answer to Question~\ref{quest:do-morse-fw-min-rst-agree} is negative.

We may assume the edges of the minimizer in $\CRST(\gh)$ satisfying $\cfo(\cycle(\slot)) < 0$ are $\delta_2$-approximated by $\af(\vp_1) + \af(\vp_4) + \af(\vp_6)$, where $\vp_6$ is a suitable orientation-preserving parametrization of $\bar{e}_3$ going $v_1$ to $v_3$. 
From this, the bottom right panel of Fig.~\ref{fig:cex-2d-flux-all-min-trees-coincide-as-undirected}, the fact that
\begin{equation*}
\begin{split}
|\af(\vp_3)+\af(\vp_5)+\af(\vp_4) -( 5 + 900 + 1099+10^6)| &< \delta_2\\
|\af(\vp_1)+\af(\vp_6)+\af(\vp_4) -(100 + 905 + 1099+10^6)| &< \delta_2,
\end{split}
\end{equation*}
and $5+900 < 100 + 905$ we see that the hypothesis \eqref{eq:min-rst-assumption} of Theorem~\ref{th:flux-manifold-mc-CRST-ld} holds if $\delta_2$ is sufficiently small.
By shrinking $\delta_1$ in \eqref{eq:delta-plateau}, we may assume that 
\begin{equation*}
|\af(\vp_2) - 1000| < \delta_2
\end{equation*}
since \eqref{eq:finite-af-iffs-l2} and the fact that $\norm{\dft} < \delta_1$ on $H$ imply that the contribution to $\af(\vp_2)$ of the portion of $\vp_2$ in $H$ can be made arbitrarily small by  shrinking $\delta_1$ and making $\vp_2$ travel sufficiently slowly through $H$.

It follows from Fig.~\ref{fig:cex-2d-flux-all-min-trees-coincide-as-undirected}, the fact that $\sum \qpd(\slot)$ over its unique minimizer in $\RST(\gh)$ (which has root $v_2$) is $\delta_2$-approximated by $\af(\vp_2) + \af(\vp_3)$, Theorem~\ref{th:flux-manifold-mc-CRST-ld}, Lem.~\ref{lem:tqp-equals-qp-transversality}, and the triangle inequality that
\begin{equation}\label{eq:cex-2d-flux-all-min-trees-coincide-as-undirected-thm4}
\begin{split}
&\left|\lim_{\varepsilon\to 0}(-\varepsilon \ln \fluxe([\cfo]))- (5+900+1099+10^6) + (1000 + 5)\right|< 5\delta_2.
\end{split}
\end{equation}
On the other hand, since $T_* = \{e_1,e_2\}$ with root $v_* = v_2$, we see from Fig.~\ref{fig:cex-2d-flux-all-min-trees-coincide-as-undirected} that $$h_* = h(e_3) = 900+10^6 < 999 + 10^6 = (5+900+1099+10^6) - (1000+5),$$
so the small-noise asymptotics of $\fluxe([\cfo]) =  \fluxe(100[dx])$ in \eqref{eq:cex-2d-flux-all-min-trees-coincide-as-undirected-thm4} do not match those in the conclusion of Theorem~\ref{th:qualitative} if $\delta_2$ is small enough, in which case $h_*$ is smaller than the right side of the conclusion \eqref{eq:flux-sde-mc-ld-expression} of Theorem~\ref{th:flux-manifold-mc-CRST-ld}.
The reason for this disagreement is that the sums $\sum g(\slot)$ and $\sum \qpd(\slot)$ over the minimizers in $\RST(\compd)$ and $\RST(\gh)$ do not coincide, a property guaranteed if $\cfo$ is sufficiently close to a generic exact one-form (Lem.~\ref{lem:converging-FW-trees-W}) and used in the proof of Theorem~\ref{th:qualitative}.
As in Ex.~\ref{ex:cex-2d-flux-fw-morse-agree}, the ``tilt'' of $\tilde{U}$ is too large. 
From Fig.~\ref{fig:cex-2d-flux-all-min-trees-coincide-as-undirected} we see that all minimizers in $\RST(\compd;v_i)$ of $\sum g(\slot)$ for $i\in \{1,2,3\}$ agree when viewed as undirected graphs, so it follows that the answer to Question~\ref{quest:reformulated-q1} is negative. 
\end{Ex}

\begin{Rem}\label{rem:h-star-neither-smaller-nor-bigger-in-general}
In Ex.~\ref{ex:cex-2d-flux-fw-morse-agree} the value of $h_*$ was larger than the right side of the conclusion \eqref{eq:flux-sde-mc-ld-expression} of Theorem~\ref{th:flux-manifold-mc-CRST-ld}, whereas $h_*$ was smaller than the right side of \eqref{eq:flux-sde-mc-ld-expression} in Ex.~\ref{ex:cex-2d-flux-all-min-trees-coincide-as-undirected}.
We therefore see that, when the conclusion of Theorem~\ref{th:qualitative} does not hold, the exponential decay rate of the flux correctly given by Theorem~\ref{th:flux-manifold-mc-CRST-ld} is neither smaller nor larger, in general, than that of the conclusion of Theorem~\ref{th:qualitative}.
\end{Rem}

Finally, the following example shows that the answer to Question \ref{quest:weaken-hypotheses-measure} is negative even in the special case that $\dim(M) = 1$.

\begin{Ex}\label{ex:cex-1d-measure}
\begin{figure}
	\centering
	\def\svgwidth{0.5\columnwidth}
	\import{figs/}{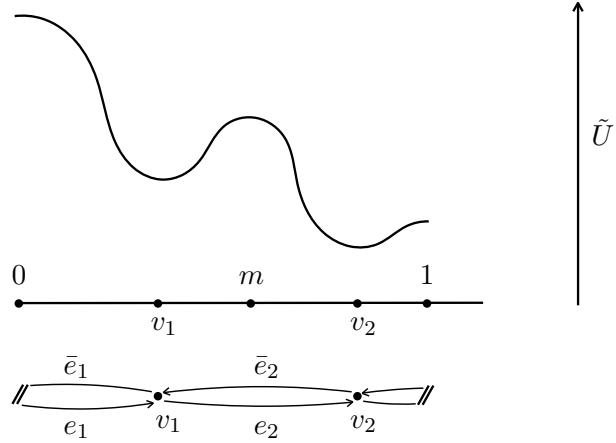}
	\caption{An illustration of Ex.~\ref{ex:cex-1d-measure}.}\label{fig:cex-1d-measure}
\end{figure}
Consider a $1$-periodic Morse function $\tilde{U}\in\Cont^\infty(\R)$ such that $0$ is a local maximizer of $\tilde{U}$ and $\tilde{U}|_{[0,1]}$ has exactly two local minima. 
See Fig.~\ref{fig:cex-1d-measure}; we use the notation therein.
We assume that 
\begin{equation}\label{eq:cex-1d-U-assumps}
\tilde{U}(0)-\tilde{U}(v_1) > \tilde{U}(m) - \tilde{U}(1) > \tilde{U}(m)-\tilde{U}(v_1) > \tilde{U}(1) - \tilde{U}(v_2).
\end{equation}
Since $\tilde{U}$ is $1$-periodic, the exact one-form $-d\tilde{U}$ descends to a closed one-form $\cfo$ on $M = \sph^1$ identified with $\R/\Z$.
We give $M$ the Euclidean metric inherited from the standard one on $\R$ and we define the vector field $\dft \coloneqq \cfo^\sharp$ on $M$.
The edges of the Morse graph $\compd = (\vtx, \Ed, \src, \tgt)$ are shown in Fig.~\ref{fig:cex-1d-measure}; in the present $1$-dimensional case, these can be identified with those edges $e$ in the path-homotopical graph $\gh = (\vtx, \Eh, \src,\tgt)$ for which $\tqpd(e)<+\infty$.
For each $\varepsilon > 0$ we let $\dfte$ be a smooth vector field on $M$ such that $\dfte\to \dft$ uniformly as $\varepsilon \to 0$.
We denote by $B_r(x)$ the closed metric ball of radius $r$ centered at $x\in M$ and we consider the diffusion process $(X^\varepsilon_t, \Prob^\varepsilon_x)$ on $M$ with generator $\dfte + \varepsilon \Delta$ and stationary density $\dme\in \Cont^\infty(M)$.
It follows from \cite[Ch.~6, Thm~4.1]{freidlin2012random} that for any $\delta > 0$ there is $k > 0$ such that, for any $v\in \vtx$ and $\varepsilon, r\in (0,k)$,
\begin{equation}\label{eq:cex-1d-meas-fw-41}
\begin{split}
-\delta + \left( \min_{\edg\in \RST(\gh;v)}\sum_{e\in \edg}\tqpd(e)\right) &- \left( \min_{\edg\in \RST(\gh)}\sum_{e\in \edg}\tqpd(e)\right) < -\varepsilon \ln \left(\int_{B_r(v)}\dme(x) dx\right)\\
  &< \delta + \left( \min_{\edg\in \RST(\gh;v)}\sum_{e\in \edg}\tqpd(e)\right) - \left( \min_{\edg\in \RST(\gh)}\sum_{e\in \edg}\tqpd(e)\right) .
 \end{split}
\end{equation}
It follows from \eqref{eq:cex-1d-U-assumps} and Lem.~\ref{lem:qp-leq-g-W} that
\begin{equation}\label{eq:cex-1d-min-trees-1}
\begin{split}
&\min_{\edg\in \RST(\gh)}\sum_{e\in \edg}\tqpd(e) = \min_{\edg\in \RST(\gh;v_1)}\sum_{e\in \edg}\tqpd(e) = \tqpd(e_1) = \tilde{U}(1)-\tilde{U}(v_2)\\
&= \min_{\edg\in \RST(\compd)} \sum_{e\in \edg}g(e) = \min_{\edg\in \RST(\compd;v_1)}\sum_{e\in \edg}g(e) = g(e_1)
\end{split}
\end{equation}
and
\begin{equation}\label{eq:cex-1d-min-trees-2}
\begin{split}
&\min_{\edg\in \RST(\gh;v_2)}\sum_{e\in \edg}\tqpd(e) = \tqpd(e_2) = \tilde{U}(m)-\tilde{U}(v_1)\\
&= \min_{\edg\in \RST(\compd;v_2)}\sum_{e\in \edg}g(e) = g(e_2).
\end{split}
\end{equation}
From \eqref{eq:cex-1d-U-assumps}, \eqref{eq:cex-1d-min-trees-1}, \eqref{eq:cex-1d-min-trees-2}, the definition \eqref{eq:morse-rst-minimizer} of $T_*$, and Def.~\ref{def:morse-heights} of $h(\slot)$ it follows that

\begin{equation}\label{eq:ex-cex-1d-measure-tstar-props}
\textnormal{$T_* = \{e_1\}$, $v_* = v_1$, $h(v_*) = h(v_1) = 0$, and $h(v_2) = \tilde{U}(0)-\tilde{U}(v_1) - (\tilde{U}(1)-\tilde{U}(v_2))$.}
\end{equation}
From \eqref{eq:cex-1d-meas-fw-41}, \eqref{eq:cex-1d-min-trees-1}, and \eqref{eq:cex-1d-min-trees-2} we see that, for any $\varepsilon, r\in (0,k)$, 
\begin{equation*}
\begin{split}
-\delta + \tilde{U}(m) - \tilde{U}(v_1) -( \tilde{U}(1)-\tilde{U}(v_2)) &< -\varepsilon \ln \left(\int_{B_r(v_2)}\dme(x) dx\right)\\
&< \delta +\tilde{U}(m) - \tilde{U}(v_1) -( \tilde{U}(1)-\tilde{U}(v_2)).
 \end{split}
\end{equation*}
But from \eqref{eq:cex-1d-U-assumps} and \eqref{eq:ex-cex-1d-measure-tstar-props} we have that
$$\tilde{U}(m) - \tilde{U}(v_1) -( \tilde{U}(1)-\tilde{U}(v_2)) < \tilde{U}(0) - \tilde{U}(v_1) -( \tilde{U}(1)-\tilde{U}(v_2)) = h(v_2),$$
so the conclusion of Theorem~\ref{th:qualitative-measure} is violated by this example; it follows that the answer to Question~\ref{quest:weaken-hypotheses-measure} is negative.
The reason for this disagreement is that the rooted spanning trees minimizing \eqref{eq:cex-1d-min-trees-1} and \eqref{eq:cex-1d-min-trees-2} do not agree as undirected graphs (cf. Rem.~\ref{rem:all-min-morse-trees-obtained-by-rev-edges}), a property used in the proof of Theorem~\ref{th:qualitative-measure}.
This occurs because $\cfo$ is not sufficiently $\Cont^1$-close to an exact one-form as Theorem~\ref{th:qualitative-measure} requires; the ``tilt'' of $\tilde{U}$ is too large.
\end{Ex}

\section{Conclusion}\label{sec:conclusion}
Motivated by problems of physics and biology, we have provided a general mathematical definition of ``flux'' and established its basic properties in \S\ref{sec:flux} for a broad class of nondegenerate diffusion processes (\S \ref{sec:general-setting}) on closed manifolds.
In the case that the noise is small, we have also calculated the small-noise  asymptotics of the flux (in the sense of large deviations \cite{varadhan2016large}) when the chain recurrent set of the limiting drift vector field $\dft$ consists of a finite number of hyperbolic zeros \cite{robinson1999dynamical}.

At this level of generality, we showed in Theorem~\ref{th:flux-manifold-mc-CRST-ld} that the small-noise flux asymptotics are governed by the difference of the sums of $\tqpd(e)$ over the edges $e$ in the minimizing cycle-rooted and rooted spanning trees in the directed graph $\gh = (\vtx,\Eh,\src,\tgt)$ of path homotopy classes based at the index-$0$ zeros $\vtx$ of $\dft$, where $\tqpd(\slot)$ is the restricted version (Def.~\ref{def:quasipotential-restricted}) of our refinement (Def.~\ref{def:quasipotential}) of the Freidlin-Wentzell quasipotential \cite[p.~150]{freidlin2012random}.
Loosely speaking, the proof of Theorem~\ref{th:flux-manifold-mc-CRST-ld} consisted of establishing various estimates to show that, in the sense of large deviations, the flux of the diffusion behaves identically to the flux that would be generated by a Markov chain on the directed graph $\gh$ with finite vertex set $\vtx$, infinite edge set $\Eh$ equipped with a suitable cocycle, transitions occurring at uniformly spaced points in time and governed by probabilities $P(e) = \exp(-\frac{1}{\varepsilon}\tqpd(e))$, and such that all but finitely many transition probabilities are negligible for large deviations (see Lem.~\ref{lem:tau-1-estimates} and \ref{lem:FW-transition-refinement}). 
Using these estimates, we then proved Theorem~\ref{th:flux-manifold-mc-CRST-ld} using the Markov chain tree formula \cite{pitman2018tree}.
In Ex.~\ref{ex:tilt-pot-circle} we showed in particular that, in the special case that $\dim(M) = 1$, this result is compatible with the closed-form formula for flux appearing in the literature.

A drawback of Theorem~\ref{th:flux-manifold-mc-CRST-ld} is that evaluation of $\tqpd(e)$ is generally a difficult problem in the calculus of variations, so Theorem~\ref{th:flux-manifold-mc-CRST-ld} seems difficult to apply for general limiting drift vector fields $\dft$ satisfying its hypotheses.
However, if $\dft = \cfo^\sharp$ is the dual of a closed one-form with respect to the metric induced by the diffusion (\S \ref{sec:general-setting}), and if $\cfo$ is sufficiently $\Cont^1$-close to a generic exact one-form, we showed in Theorems~\ref{th:qualititative-special-case-intro} and \ref{th:qualitative} that a drastic simplification occurs in the expression of the small-noise flux asymptotics.
Under these assumptions, the small-noise flux asymptotics are governed by the optimal ``height'' of certain loops $\gamma$ satisfying $\int_\gamma \cfo > 0$ (Theorem~\ref{th:qualititative-special-case-intro}) or by the optimal ``height'' of certain oriented edges in the Morse graph of $1$-dimensional unstable manifolds of $\dft$ (Theorem~\ref{th:qualitative}); these two viewpoints are equivalent (Lem.~\ref{lem:h-star-defs-coincide}) and also admit a description in terms of persistent homology (Rem.~\ref{rem:persistence}). 
We illustrated Theorems~\ref{th:qualititative-special-case-intro} and \ref{th:qualitative} along with a related result (Prop.~\ref{prop:one-minimizer}) in an example (\S \ref{sec:nr-torus} and Ex.~\ref{ex:nr-example-2}) on the $2$-torus.
In this example we rigorously proved ``by hand'' that the flux displays negative resistance (or conductance, or mobility)---a harder push in a certain direction results in a smaller flux in the same direction---analogous to the negative resistance phenomenon numerically demonstrated in an example in \cite{cecchi1996negative}.

We speculate that such mathematically rigorous results having hypotheses verifiable by ``pen and paper'' methods could prove useful for scientists and engineers by enabling efficient analysis and synthesis of ``force-flux'' (analogous to voltage-current) characteristics of ``Brownian conductors''.
Unfortunately, the small-noise flux asymptotics we have obtained are fairly coarse since they are in the sense of large deviations, and this limits their usefulness beyond identifying candidate force-flux characteristics and/or Brownian conductor designs to experimentally test.
(However, the capability to theoretically identify such candidates for experimental testing may still enable significant reductions in experimental costs by mitigating the need for trial and error.)
For example, in the $2$-torus example just mentioned (\S \ref{sec:nr-torus} and Ex.~\ref{ex:nr-example-2}) we have rigorously proved that negative resistance exists at some parameter values, but we have not obtained sharp information on the precise parameter values at which negative resistance occurs.
For this reason it would be useful and interesting to derive sharper small-noise flux asymptotics, perhaps by employing PDE estimates rather than our probabilistic methods.
Sharper versions of some asymptotic results in \cite{freidlin2012random} have been obtained in \cite{bovier2015meta} using potential theory and in \cite{lepeutrec2013precise} using the Witten Laplacian \cite{witten1982supersymmetry}, and perhaps one of these approaches may enable the derivation of sharper flux asymptotics. 
(The steady-state current $\cme$ of \eqref{eq:fokker-planck} can be expressed in terms of a generalization \cite{pazhitnov1987analytic,pajitnov2006circle} of Witten's deformation of the exterior derivative \cite{witten1982supersymmetry} applied to $\dme$ in the case that the drift vector field is the dual $\cfo^\sharp$ of a closed one-form, and this deformed exterior derivative is used in defining the Witten Laplacian.) 
Other approaches reviewed in \cite{berglund2013kramers} may also prove fruitful.

Next, we mention a possible alternative approach to formulating and proving Theorem~\ref{th:flux-manifold-mc-CRST-ld}.
While we have relied on (cycle-)rooted spanning trees for the formulation and proof of this theorem, it seems natural to instead view Theorem~\ref{th:flux-manifold-mc-CRST-ld} through the lens of ``Freidlin's cycles'' \cite[Sec.~6.6]{freidlin2012random}, \cite{cameron2013computing,gan2017graph}.
As defined in the literature, the hierarchy of Freidlin's cycles do not contain the path-homotopical (or homological) information needed to study flux, but it seems plausible that a path-homotopical (or homological) refinement of Freidlin's cycles could be defined using the path-homotopical refinement of the Freidlin-Wentzell quasipotential that we introduced in \S \ref{sec:setup-sec:drift-finite-hyperbolic-chain}.
In addition to the steady-state flux considered in the present paper, it seems that such a refinement of Freidlin's cycles might provide more detailed information concerning ``transient'' flux on different time scales (cf. \cite[p.~182, Thm~6.3]{freidlin2012random}).

In closing, we mention another suggestion for future work.
We have studied the small-noise asymptotics for flux under the assumption (in particular) that $R(\dft)$ is finite (Theorem~\ref{th:flux-manifold-mc-CRST-ld}).
It would be interesting to study these asymptotics under more general assumptions.
In \cite[p.~146]{freidlin2012random} an equivalence relation $\sim$ on $M$ is defined using the quasipotential so that $x\sim y$ if and only if $\qpd(x,y) = 0 = \qpd(y,x)$.
Under the assumption that there are finitely many (necessarily compact \cite[p.~146]{freidlin2012random}) equivalence classes $K_1,\ldots, K_\ell$ for $\sim$, small-noise asymptotics of various quantities (such as the invariant measure and exit times) are studied in \cite[Ch.~6]{freidlin2012random} for the diffusion $(X^\varepsilon_t, \Prob^\varepsilon_x)$ on $M$  with generator $\dfte + \varepsilon \Delta$ satisfying $\dfte\to \dft$ uniformly.
It seems interesting to study the small-noise asymptotics of flux under such general assumptions.
It also seems interesting to study the relationship between $\sim$ and other equivalence relations in the dynamical systems literature, such as chain equivalence \cite[Def.~2.7.3]{alongi2007recurrence}; note that Prop.~\ref{prop:qp-int-conditions-iff-connecting-piecewise-orbit} implies that $\sim$ coincides with chain equivalence in the special case that the chain recurrent set $R(\dft)$ is finite.

\section*{Acknowledgments}
This work is supported in part by the Army Research Office (ARO) under the SLICE Multidisciplinary University Research Initiatives (MURI) Program, award W911NF1810327.
The authors gratefully acknowledge helpful conversations with Maria K. Cameron, J. Diego Caporale, Wei-Hsi Chen, Matthias Heymann, Daniel E. Koditschek, and Shai Revzen.

	\bibliographystyle{amsalpha}
	\bibliography{ref}
	
\appendix	

\section{Proofs of Lem.~\ref{lem:orbits-bounded-length} and  Prop.~\ref{prop:qp-int-conditions-iff-connecting-piecewise-orbit}, \ref{prop:qp-continuity}}\label{app:proofs}
In this appendix we prove Lem.~\ref{lem:orbits-bounded-length} and Prop.~\ref{prop:qp-int-conditions-iff-connecting-piecewise-orbit}, \ref{prop:qp-continuity}; we also restate these results for convenience.
We first prove Prop.~\ref{prop:qp-int-conditions-iff-connecting-piecewise-orbit} using the following Lem.~\ref{lem:qp-zero-iff-connecting-piecewise-orbit}, which we prove using a technique from \cite[p.~146,~Lem.~1.5]{freidlin2012random}.

\begin{Lem}\label{lem:qp-zero-iff-connecting-piecewise-orbit}
Let $\dft$ be a $\Cont^1$ vector field on a closed Riemannian manifold $M$. 
Assume that $R(\dft)$ is finite.
Then for any $e\in \Pi(M)$, $\qpd(e) = 0$ if and only if $e$ contains a piecewise $\dft$-integral curve.
\end{Lem}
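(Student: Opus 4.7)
The plan is to prove the two implications separately, with the $(\Leftarrow)$ direction being an explicit construction of low-action approximating paths and the $(\Rightarrow)$ direction requiring a compactness-plus-extraction argument on a sequence of near-minimizers of the action.

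For $(\Leftarrow)$, suppose $c=(\gamma_1,\ldots,\gamma_N)$ is a piecewise $\dft$-integral curve with $[c]=e$. Fix $\varepsilon>0$. By \cite[p.~143, Lem.~1.1]{freidlin2012random}, for each zero $z\in R(\dft)$ there is a neighborhood $U_z$ of $z$ such that any two points in $U_z$ can be joined by a continuous path (remaining in a slightly larger neighborhood) with action less than $\varepsilon/N$. I would truncate each $\gamma_j$ (for $j\in\{1,N\}$ that are semi-infinite, choose finite endpoints; for intermediate $\gamma_j$ defined on $\R$, restrict to a large $[-T_j,T_j]$) so that the consecutive endpoints $\gamma_j(T_j^+)$ and $\gamma_{j+1}(T_{j+1}^-)$ both lie in $U_{z_j}$, where $z_j=\omega(\gamma_j)=\omega^*(\gamma_{j+1})$. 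The truncated pieces are still exact integral curves of $\dft$, hence have action $0$ by \eqref{eq:action-functional}, and the connecting segments provided by the cited lemma contribute a total action less than $(N-1)\varepsilon/N<\varepsilon$. A short concatenation argument (paying attention to the definition of $[c]$ in Def.~\ref{def:int-curve-homotopy-class}, and using that the $U_z$ may be taken simply connected and contractible so that the connecting segments do not change the path homotopy class) shows that the concatenated path lies in $e$. Taking $\varepsilon\to 0$ yields $\qpd(e)=0$.

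For $(\Rightarrow)$, suppose $\qpd(e)=0$ and choose a sequence $\vp_n\in\Cont_e([T_1^n,T_2^n],M)$ with $\af(\vp_n)\to 0$. The core idea is that a path of small action is ``mostly'' an integral curve of $\dft$, and can only transition between the finitely many points of $R(\dft)$ along such integral curves. Concretely, I would fix $\delta>0$ small and let $U_\delta\coloneqq \bigcup_{z\in R(\dft)}B_\delta(z)$. On the compact set $M\setminus U_\delta$ we have $\|\dft\|\geq \kappa_\delta>0$. By Lem.~\ref{lem:S-alt-expression}, any path segment $\psi$ of $\vp_n$ that enters $M\setminus U_\delta$, crosses to a neighborhood of a different zero, and does so without closely tracking an integral curve of $\dft$ must contribute a positive lower bound $\eta_\delta>0$ to the action (this uses that $\af(\psi)+\int_\psi \dft^\flat \geq \tfrac{1}{4}\int \|\dot\psi+\dft(\psi)\|^2\, dt$ and the reverse-integral curves are themselves bounded in length by Lem.~\ref{lem:orbits-bounded-length}). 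Since $\af(\vp_n)\to 0$, for each $n$ there are only finitely many such ``excursions'' outside $U_\delta$, and each one must be uniformly $C^0$-close to a segment of a true $\dft$-integral curve. Using the Arzel\`a-Ascoli theorem on these excursion segments (bounded length, reparametrized by arc length) and the finiteness of $R(\dft)$, I would extract a subsequence along which the excursions converge uniformly to a finite list of true integral curve segments $\gamma_1,\ldots,\gamma_N$ joining zeros of $\dft$ in a chain: $\omega(\gamma_j)=\omega^*(\gamma_{j+1})$. This yields a piecewise $\dft$-integral curve $c$.

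The main obstacle will be showing that $[c]=e$, i.e., that the path homotopy class is preserved in the extraction. To handle this, I would work on the universal cover $\pi\colon \tilde M\to M$, lifting each $\vp_n$ to a path $\tilde\vp_n$ with a fixed starting point; the requirement $[\vp_n]=e$ fixes the endpoint $\tilde y\in \pi^{-1}(\tgt(e))$ uniformly in $n$. Completeness of the pullback metric and Lem.~\ref{lem:univ-cover-qp-growth} (applied to the lifted quasipotential) bound the Hausdorff distance of the images $\tilde\vp_n(\slot)$ to a compact set, and the excursions extracted in $M$ lift canonically to excursions in $\tilde M$. Because only finitely many excursions occur and each is $C^0$-close to a true integral curve, concatenation of the limiting segments in $\tilde M$ produces a lifted piecewise integral curve from the fixed basepoint to $\tilde y$, which after projection yields a piecewise $\dft$-integral curve with class $e$. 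The delicate technical point in this last step is ensuring that the ``dwell'' of $\tilde\vp_n$ in lifts of the small neighborhoods $U_\delta$ does not introduce spurious homotopy (wrapping), which follows because each component of $\pi^{-1}(U_\delta)$ has small diameter on the scales at which homotopy can be generated (as $U_\delta$ is simply connected for $\delta$ small and each of its lifts is a homeomorphic copy).
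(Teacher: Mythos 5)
Your $(\Leftarrow)$ direction matches the paper's argument essentially step for step: truncate segments near the zeros, glue with low-action connectors from \cite[p.~143, Lem.~1.1]{freidlin2012random}, and use local simple connectedness to verify the homotopy class.

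Your $(\Rightarrow)$ direction takes a genuinely different route. The paper argues by contradiction: it chooses a \emph{maximal} piecewise $\dft$-integral curve $c$ with $\qpd([c]^{-1}e)=0$ and $[c]\ne e$ that admits no further forward extension with the same property, then produces a contradiction by tracking the first exit time $\tau_k$ of near-minimizers from a small ball $B_\varepsilon(\tgt([c]))$ and splitting into two cases: $(\tau_k)$ bounded (compactness of action sublevel sets on a fixed $[0,T]$ yields an integral curve segment) versus $(\tau_k)$ unbounded (a diagonal argument over $[-\ell,0]$ yields an integral curve $\gamma\colon(-\infty,0]\to M$ with $\omega^*(\gamma)=\{\tgt([c])\}$). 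Either way one obtains a forward extension of $c$, a contradiction. Your approach is a direct ``extract and chain the excursions'' argument, which is conceptually more transparent but encounters real technical difficulties that the paper's maximality-plus-escape-time structure is precisely designed to sidestep.

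The principal gap is the step where you ``reparametrize by arc length'' and then apply Arzel\`a--Ascoli to the excursion segments. The action functional $\af$ in \eqref{eq:action-functional} is \emph{not} invariant under time reparametrization, so a sequence of paths with $\af\to 0$ does not, after arc-length reparametrization, have small action, and the $C^0$ limit of the reparametrized segments has no reason to be an integral curve of $\dft$. The compactness result you need is the Freidlin--Wentzell fact that $\af_T$-sublevel sets in $\Cont([0,T],M)$ are compact \emph{for a fixed time interval} $[0,T]$, and so you must first control the durations $T_n$ of the excursions, not merely their lengths. This can be done (e.g., on an excursion confined to $M\setminus U_\delta$ where $\|\dft\|\ge\kappa_\delta$, the near-equality $\|\dot\psi_n\|_2\approx\|\dft(\psi_n)\|_2\ge\kappa_\delta\sqrt{T_n}$ together with the length bound from Lem.~\ref{lem:univ-cover-qp-growth} forces $T_n$ bounded), but it has to be argued; and even then the durations of the ``dwells'' near the zeros can be unbounded, which is exactly the case the paper handles with its diagonal argument. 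A secondary soft spot is the claim that each excursion ``not closely tracking an integral curve'' contributes at least $\eta_\delta>0$ to the action; making this precise requires the same fixed-$T$ compactness and a quantifier over durations, and as stated it does not by itself bound the number of excursions (zero-action excursions tracking heteroclinic orbits exist; you need to invoke the absence of heteroclinic cycles, which follows from $R(\dft)$ finite, to bound how many can be chained). Your universal-cover argument for preserving $[c]=e$ is sound once the rest is in place, since $\pi^{-1}(U_\delta)$ decomposes into simply connected sheets and Lem.~\ref{lem:univ-cover-qp-growth} controls drift on $\tM$.
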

\begin{proof}
Assume that $e$ contains a piecewise integral curve  in the sense of Def.~\ref{def:int-curve-homotopy-class}.
I.e., there is a finite sequence $(\gamma_j)_{j=1}^N$ of segments of $\dft$-integral curves having well-defined path homotopy classes  $[\gamma_j]\in \Pi(M)$ satisfying $[\gamma_1][\gamma_2] \cdots [\gamma_N] = e$.	
By concatenating suitable restrictions $\tilde{\gamma}_j=\gamma_j|_{[a_j,b_j]}$ with short paths $c_j\colon [0,t_j]\to M$ having arbitrarily small actions $\af_{t_j}(c_j)$ \cite[p.~143,~Lem.~1.1]{freidlin2012random} and using local simply connectedness of $M$, a path $\vp\in \Cont_e([0,T],M)$ with $\af_T(\vp)<\varepsilon$ can be constructed for every $\varepsilon > 0$ (with $T\geq 0$ depending on $\varepsilon$), so $\qpd(e) = 0$.
	
To prove the converse, let $e\in \Pi(M)$ satisfy $\qpd(e) = 0$.
Assume, to obtain a contradiction, that $e$ does not contain a piecewise $\dft$-integral curve.
Then $e$ does not contain a constant curve since otherwise the trivial piecewise integral curve  $c \colon \{0\}\to \{\src(e)\}$ satisfies $[c]=e$.
It follows that there exists a piecewise $\dft$-integral curve $c = (\gamma_1,\ldots, \gamma_N)$ such that
\begin{equation}\label{eq:lem-qp-zero-no-ext}
\textnormal{$\qpd([c]^{-1}e) = 0$, $[c]\neq e$, and there is no forward extension of $c$ satisfying this property}.
\end{equation}
Here $[c]^{-1}$ is the reversal (groupoid inverse) of the path homotopy class $[c]$.
By a forward extension of $c$ we mean (i) a forward extension of $\dom(\gamma_N)$ if $\dom(\gamma_N)$ is bounded above, or (ii) the addition to $c$ of a new integral curve segment $\gamma_{N+1}$ satisfying $\omega^*(\gamma_{N+1})= \omega(\gamma_N)$ if $\dom(\gamma_N)$ is unbounded above.
 
Let $e_c \coloneqq [c]^{-1}e$, $x\coloneqq \src(e_c) = \tgt([c])$, and let $\vp^{(k)}\in \Cont_{e_c}([0,T_k],M)$ be a sequence with $\af_{T_k}(\vp^{(k)})\to 0$.
Since $\dft^{-1}(0)$ is finite, for sufficiently small $\varepsilon > 0$ the metric ball $B_{\varepsilon}(x)$ of radius $\varepsilon$ centered at $x = \tgt([c])$ is simply connected, does not contain the entire image of any path representing $e_c$, and is disjoint from $\dft^{-1}(0)\setminus \{x\}$.
Let $S_\varepsilon(x)\coloneqq \partial B_{\varepsilon}(x)$. 
Since by continuity each $\vp^{(k)}$ must pass through $S_\varepsilon(x)$, for each $k$
\begin{equation}
\tau_k\coloneqq \inf \{t> 0 \colon \vp^{(k)}([0,t])\not \subset B_{\varepsilon}(x)\} < \infty
\end{equation}
is well-defined and $q_k\coloneqq \vp^{(k)}(\tau_k)\in S_\varepsilon(x)$ by continuity. 

First assume that $(\tau_k)$ is bounded.
Then by passing to a subsequence we may assume that $\tau_k \to T \geq 0$.
For each $k$ define $\psi^{(k)}\coloneqq \vp^{(k)}|_{[0,T]}$ if $\tau_k\geq T$ and otherwise define $\psi^{(k)}$ to be the extension of $\vp^{(k)}|_{[0,\tau_k]}$ by the constant path $[\tau_k, T]\to \{\vp^{(k)}(\tau_k)\}$.
Then $\af_T(\psi^{(k)})\to 0$ as $k\to\infty$.
Since $\af_T$-sublevel sets are compact in the compact-open topology on $\Cont([0,T],M)$ by \cite[p.~74; p.~135,~Thm~3.2]{freidlin2012random} it follows that a subsequence of $(\psi^{(k)})$ converges uniformly to an absolutely continuous path $\gamma\in \Cont([0,T],M)$ satisfying $\af_T(\gamma)= 0$.
From this and \eqref{eq:action-functional} it  follows that $\gamma$ is a $\dft$-integral curve segment such that $[c][\gamma]\not = e$, and $\qpd(([c][\gamma])^{-1}e) = \qpd([\gamma]^{-1}e_c)=0$ by continuity of $\qpd$ \cite[p.~143, Lem.~1.1]{freidlin2012random}.
This contradicts \eqref{eq:lem-qp-zero-no-ext}.

It remains only to consider the case that $(\tau_k)$ is unbounded.
In this case, by passing to a subsequence we may assume that $\tau_k \geq k$ for all $k\in \N$.
Hence for each $\ell\in \N$, $\psi^{(k,\ell)}\colon [-\ell,0]\to M$ given by $\psi^{(k,\ell)}(t)\coloneqq \vp^{(k)}(t+\tau_k)$ is well-defined for all $k\geq \ell$.
By the compactness of sublevel sets of $\af_{-\ell,0}$  and a diagonal argument, we can construct a \emph{single subsequence} of the $\vp^{(k)}$ such that, after passing to this subsequence, for each $\ell$  the paths $\psi^{(k,\ell)}_\ell$ converges uniformly as $k\to \infty$ to an absolutely continuous $\gamma^{(\ell)}\colon [-\ell,0]\to \infty$  satisfying  $\gamma^{(\ell)}(0)\in S_\varepsilon(x)$ and $\gamma^{(\ell+1)}|_{[-\ell,0]}=\gamma^{(\ell)}$ for all $\ell$.
Hence there is a nonconstant $\dft$-integral curve segment $\gamma\colon (-\infty,0]\to M$ satisfying $\gamma|_{[-\ell,0]}=\gamma^{(\ell)}$ for all $\ell$, $\gamma(0)\in S_\varepsilon(x)$, and $\gamma((-\infty,0])\subset B_\varepsilon(x)$ since the image of each $\psi^{k,\ell}$ is contained in $B_{\varepsilon}(x)$ by the definition of $\tau_k$.
Since $R(\dft)$ is finite, it follows that $\omega^*(\gamma) \subset B_\varepsilon(x) \cap \dft^{-1}(0)$.
Since $B_\varepsilon(x)$ is disjoint from $\dft^{-1}(0)\setminus \{x\}$ by construction, it follows that $x\in \dft^{-1}(0)$ and $\omega(\gamma_N) = \{x\}=\omega^*(\gamma)$, so $(\gamma_1,\ldots,\gamma_N,\gamma)$ is a piecewise $\dft$-integral curve.
Moreover, $[c][\gamma]\neq e$, and $\qpd(([c][\gamma])^{-1}e)=\qpd([\gamma]^{-1}e_c)=0$ by continuity.
This contradicts \eqref{eq:lem-qp-zero-no-ext} and completes the proof.

\end{proof}
Lem.~\ref{lem:qp-zero-iff-connecting-piecewise-orbit} now enables an easy proof of Prop.~\ref{prop:qp-int-conditions-iff-connecting-piecewise-orbit}.
For convenience we restate the proposition.

\PropQpIntIffPiecewise*
\begin{Rem}\label{rem:continuous-uniquely-integrable}
If a continuous vector field $\dft$ on a closed manifold $M$ has unique maximal integral curves, then there is a unique continuous flow $\Phi$ satisfying $\frac{d}{dt}\Phi^t(x)|_{t=0}=\dft(x)$ \cite[App.~A.1]{kvalheim2021necessary}.
In this case Def.~\ref{def:omega-limit}, \ref{def:piecewise-integral-curves}, and \ref{def:int-curve-homotopy-class} and the definition $R(\dft)\coloneqq R(\Phi)$ still make sense, and Lem.~\ref{lem:qp-zero-iff-connecting-piecewise-orbit} and Prop.~\ref{prop:qp-int-conditions-iff-connecting-piecewise-orbit} still hold for such a $\dft$ with all other hypotheses unchanged.
The proofs are identical.
\end{Rem}

\begin{proof}
Fix $e\in \Pi(M)$.
The first displayed statement is the content of Lem.~\ref{lem:qp-zero-iff-connecting-piecewise-orbit}, and the third displayed statement is immediate from the second. 
To prove the second displayed statement we observe that  Lem.~\ref{lem:S-alt-expression} implies that, for any $\gamma\in \Cont_e([T_1,T_2],M)$ satisfying $\af(\gamma)<+\infty$,
\begin{equation*}
\af(\gamma)
\geq \qp_{(-\dft)}(e) + \inf_{[\vp]=e}\left(-\int_{T_1}^{T_2}\ip{\dot{\vp}}{\dft(\vp)}dt\right).
\end{equation*}
Taking the infimum over all such $\gamma$ yields 
\begin{equation*}
\qpd(e)
\geq \qp_{(-\dft)}(e) + \inf_{[\vp]=e}\left(-\int_{T_1}^{T_2}\ip{\dot{\vp}}{\dft(\vp)}dt\right),
\end{equation*}
so the equality $$\qpd(e)=\inf_{[\vp]=e}\left(-\int_{T_1}^{T_2}\ip{\dot{\vp}}{\dft(\vp)}dt\right)$$ holds if and only if $\qp_{(-\dft)}(e) = 0$.
Since $R(\dft)$ is finite if and only if $R(-\dft)$ is finite, the second statement of the proposition now follows from Lem.~\ref{lem:qp-zero-iff-connecting-piecewise-orbit} applied to the reversed vector field $(-\dft)$.
\end{proof}

We now prove Lem.~\ref{lem:orbits-bounded-length}.
For convenience we restate the lemma.

\OrbitsBoundedLength*
\begin{proof}
    We begin by bounding the length of trajectory segments near a zero $z\in \dft^{-1}(0)$.
	Fix $z\in \dft^{-1}(0)$ and any $\varepsilon > 0$.
	By the hyperbolicity of $z$, there exists $K > 0$ and a smooth local chart $\psi\colon U \to \R^n = \R^{n_x+n_y}$ in which any integral curve of $\dft$ satisfies the ODE
	\begin{equation}
	\begin{split}
	\dot{x} &= Ax + R(x,y)x\\
	\dot{y} &= By + Q(x,y)y
	\end{split}
	\end{equation}
	on $\R^{n_x+n_y}$ with all eigenvalues of $A$ having negative real part, all eigenvalues of $B$ having positive real part, and with $\norm{R}_{a_x}, \norm{Q}_{a_y} < \varepsilon$ on the product $B_{K}^{n_x}\times B_K^{n_y}$ of balls of radius $K$ centered at the origins of $\R^{n_x}$ and $\R^{n_y}$. Here $\norm{\slot}_{a_x}, \norm{\slot}_{a_y}$ are the norms induced by an adapted inner products $\ip{\slot}{\slot}_{a_x}$, $\ip{\slot}{\slot}_{a_y}$ on $\R^{n_x},\R^{n_y}$ chosen so that $\ip{x}{Ax}_{a_x} < -k_0 \norm{x}^2_{a_x}$ and $\ip{y}{By}_{a_y}>k_0\norm{y}_{a_y}^2$ for some $k_0 > 0$ \cite[pp.~279--280]{hirschsmale1974}.
	We may and do assume that the balls $B_{K}^{n_x}$, $B_{K}^{n_y}$ are defined with respect to these adapted norm.
	We compute
	\begin{equation*}
	\begin{split}
	\frac{d}{dt}\norm{x}^2_{a_x} &= \ip{x}{Ax+R(x,y)x}_{a_x} < -(k_0-\varepsilon) \norm{x}^2_{a_x}\\
	\frac{d}{dt}\norm{y}^2_{a_y} &= \ip{y}{By + Q(x,y)y}_{a_y}>(k_0-\varepsilon)\norm{y}^2_{a_y},
	\end{split}
	\end{equation*}
    and by taking $\varepsilon$ smaller if necessary we may assume that $(k_0-\varepsilon) > 0$.
    Define $k\coloneqq (k_0-\varepsilon)/2>0$.
	Gr\"{o}nwall's inequality \cite[App.~E]{kvalheim2018aspects} implies that any integral curve segment $(x(t),y(t))$ with image contained in $B_K^{n_x}\times B_K^{n_y}$ satisfies
	\begin{equation}\label{eq:lem-orbits-bounded-length-decay-rates-local}
	\begin{split}
	\forall t\geq 0&\colon \norm{x(t)}_{a_x}\leq e^{-kt}\norm{x(0)}_{a_x} \quad \textnormal{and} \quad \norm{y(t)}_{a_y}\geq e^{kt}\norm{y(0)}_{a_y}\\
	\forall t\leq 0&\colon \norm{x(t)}_{a_x}\geq e^{-kt}\norm{x(0)}_{a_x} \quad \textnormal{and} \quad \norm{y(t)}_{a_y}\leq e^{kt}\norm{y(0)}_{a_y}.
	\end{split}
	\end{equation}
	The second and third inequalities imply that the the smallest nonnegative time $T^+\in [0,+\infty)$ an integral curve with initial condition in the region $B_K^{n_x}\times B_K^{n_y}$ exits this region through $B_{K}^{n_x}\times \partial B_K^{n_y}$ and the largest nonpositive exit time $T^{-}\in (-\infty,0]$ through $\partial B_{K}^{n_x}\times B_{K}^{n_y}$ are well-defined.
	Define $\norm{(x,y)}_a\coloneqq \norm{x}_{a_x} + \norm{y}_{a_y}$.
    If $(x,y)$ is an integral curve with initial condition $(x(0),y(0))\in B_{K}^{n_x}\times B_{K}^{n_y}$, then the first and fourth inequalities imply that the connected component of $(x(0),y(0))$ contained in $B_K^{n_x}\times B_K^{n_y}$ has length
    \begin{equation*}
    \begin{split}
    \int_{T^-}^{T^+}\norm{(x(t),y(t))}dt&\leq  K_0  \int_{T^-}^{T^+}\norm{(x(t),y(t))}_a\, dt\\
    &\leq  K_0 \int_{0}^{+\infty}\norm{x(t+T^-)}_{a_x}\, dt + K_0 \int_{-\infty}^{0}\norm{y(t+T^+)}_{a_y}dt
    \\
    &\leq 2KK_0\int_{0}^{+\infty}e^{-kt}dt = \frac{2K K_0}{k}\eqqcolon C_z,
    \end{split}
    \end{equation*}
	where $\norm{\slot}$ is induced by the Riemannian metric from the statement of the lemma and the constant $K_0$ satisfies $\frac{1}{K_0}\norm{\slot}_a \leq \norm{\slot}\leq K_0 \norm{\slot}_a$ on $B_K^{n_x}\times B_{K}^{n_y}$.\footnote{$K_0$ exists since any pair of continuous Riemannian metrics are uniformly equivalent on compact sets.}
	Defining $\tilde{U}_z$ to be the interior of the set $\psi^{-1}(B_{K}^{n_x}\times B_K^{n_y})$, it follows that every connected component of $\gamma(\R)\cap \tilde{U}_z$ for every maximal $\dft$-integral curve $\gamma$ has length smaller than $C_z$. 
	
    We now construct an open neighborhood $U_z\subset \tilde{U}_z$ of $z$ such that $\gamma(\R)\cap U_z$ has at most one connected component for any maximal $\dft$-integral $\gamma$. 
	Define $D^u\coloneqq \psi^{-1}\left(\partial B_{K}^{n_x}\times B_K^{n_y}\right)$ and $D^s\coloneqq \psi^{-1}\left(B_{K}^{n_x}\times \partial B_K^{n_y}\right)$ and let $\Phi\colon \R\times M \to M$ be the flow of $\dft$. Let $h\in \Cont^\infty(M)$ be a smooth \cite{fathi2019smoothing} complete Lyapunov function \cite{conley1978isolated} for $\dft$.\footnote{In our case this means that $\dft h< 0$ on $M\setminus \dft^{-1}(0)$, $\dft h = 0$ on $\dft^{-1}(0)$, and $h$ assumes distinct values on distinct zeros of $\dft$.
	Here $\dft h$ denotes the Lie derivative of $h$ along the flow of $\dft$.} 
	Let $N\subset \tilde{U}_z$ be a small neighborhood of $z$ and define $U_z$ to be the connected component containing $z$ of $\tilde{U}_z\cap \Phi^{\R}(N)$.
	By the first and fourth inequalities in \eqref{eq:lem-orbits-bounded-length-decay-rates-local},
	there exists $\delta > 0$ and $N$ sufficiently small such that $h|_{\partial U_z\cap D^u}$ is bounded below by $h(z)+\delta$ and $h|_{\partial U_z\cap D^s}$ is bounded above by $h(z)-\delta$.
    Fix such an $N$.
    From the same inequalities in \eqref{eq:lem-orbits-bounded-length-decay-rates-local} and the definition of $U_z$ it follows that every integral curve of $\dft$ entering $U_z$ does so at some point $p$ satisfying $h(p) > h(z)+\delta$ and leaves at some point $q$ satisfying $h(q) < h(z)-\delta$.
    Since $h$ is nonincreasing along trajectories it follows that $\gamma(\R)\cap U_z$ has at most one connected component for any maximal integral curve $\gamma$ of $\dft$, as desired.

	Note that $\dft^{-1}(0) = R(\dft)$ is a finite set by assumption.
	We construct open sets $U_z$ and associated constants $C_z$ as above for each $z\in \dft^{-1}(0)$, and we define $U\coloneqq \bigcup_{z\in \dft^{-1}(0)}U_z$ and $\bar{C}_1\coloneqq \sum_{z\in \dft^{-1}(0)}C_z$.
	Since the intersection of any maximal integral curve with each $U_z$ has at most one connected component it follows that, for any maximal integral curve $\gamma$ of $\dft$,
	\begin{equation}\label{eq:U-length-bound}
	\length(\gamma(\R)\cap U) \leq \bar{C}_1 < \infty.
	\end{equation}
	
	It remains to bound $\length(\gamma(\R)\setminus U)$.
    Since every $x\in M$ converges to $\dft^{-1}(0) = R(\dft)$ in both forward and backward time, for every $x\in M$ there exists $t_x$ such that both $\Phi^{-t_x}(x), \Phi^{t_x}(x)\in U$.
    By continuity, $x$ has a neighborhood $W_x$ such that both $\Phi^{-t_x}(W_x), \Phi^{t_x}(W_x)\subset U$.
    Since $M\setminus U$ is compact, we extract a finite subcover $W_{x_1},\ldots,W_{x_k}$ of $M\setminus U$ and define $T\coloneqq \max \{t_{x_1},\ldots t_{x_k}\}$.
    It follows that, for every $x\in M\setminus U$, both $\Phi^{[0,T]}(x)\cap U\neq \varnothing$ and $\Phi^{[-T,0]}(x)\cap U\neq \varnothing$.
    Since any maximal integral curve $\gamma$ intersects each of the $\#(\dft^{-1}(0))$ connected components of $U$ at most once, $\gamma(\R)\setminus U$ contains at most $\#(\dft^{-1}(0))$ segments, and these segments are defined on intervals of length at most $T$.
    Since 
    \begin{equation*}
    \int_{T_1}^{T_2} \norm{\dot{\gamma}(t)}\,dt = \int_{T_1}^{T_2} \norm{\dft(\gamma(t))}\, dt \leq (T_2-T_1)\max_{x\in M}\norm{\dft(x)}<\infty
    \end{equation*}
    for a $\dft$-integral curve $\gamma$, it follows that 
    \begin{equation}\label{eq:notU-length-bound}
    \length(\gamma(\R)\setminus U) \leq T \#(\dft^{-1}(0)) \max_{x\in M}\norm{\dft(x)} \eqqcolon \bar{C}_2 < \infty
    \end{equation}
    for any maximal integral curve $\gamma$.
    
    Finally, for any $\dft$-integral curve $\gamma$ we obtain from \eqref{eq:U-length-bound} and \eqref{eq:notU-length-bound}
    $$\length(\gamma(\R)) = \length(\gamma(\R) \cap U) + \length(\gamma(\R)\setminus U) \leq \bar{C}_1+ \ \bar{C}_2\eqqcolon C < \infty,$$
    as desired.
\end{proof}

We now prove a preliminary lemma in preparation for the proofs of Lem.~\ref{lem:lower-semi-abs-cont} and Prop.~\ref{prop:qp-continuity}.
The following notation will be used for the statements and proofs of the remaining results in this appendix.
Given a continuous vector field $\bw$ on $M$, we denote by $\af^\bw$ the action functional defined according to \eqref{eq:action-functional} but with the vector field $\bw$ replacing $\dft$, and we denote by $\qp_{\bw}$ the associated quasipotential.
If $\bw$ is a vector field on the universal cover $\tM$ of $M$, we still use the notations $\af^{\bw}$ and $\qp_{\bw}$ for the actional functional and quasipotential associated to a vector field $\bw$ on $\tM$, with context dictating whether the notation refers to $M$ or $\tM$.

\begin{Lem}\label{lem:qp-upper-semicontinuity}
Denote by $\vf^0(M)$ the space of $\Cont^0$ vector fields on the Riemannian manifold $M$ equipped with the $\Cont^0$ topology.
Let $\Pi(M)$ have the topology induced by its bijection with the smooth manifold $(\tM \times \tM)/\Aut(\pi)$, where $\pi\colon \tM\to M$ is the universal cover and the deck transformation group $\Aut(\pi)$ acts diagonally on $\tM\times \tM$ (Rem.~\ref{rem:manifold-structure-fundamental-groupoid}). 
Then the map
$$(\dft,e)\in \vf^0(M)\times \Pi(M) \mapsto \qpd(e)\in [0,+\infty) \quad \textnormal{is upper semicontinuous}.$$
\end{Lem}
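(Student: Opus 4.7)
My plan is to prove upper semicontinuity at a fixed $(\bv_0,e_0) \in \vf^0(M) \times \Pi(M)$ by constructing, for each $\delta > 0$, an explicit competitor path realizing $\qp_\bv(e) < \qp_{\bv_0}(e_0) + \delta$ on a neighborhood of $(\bv_0,e_0)$. The case $\qp_{\bv_0}(e_0) = +\infty$ is vacuous, so assume finiteness. First I pick an absolutely continuous near-minimizer $\vp_0 \colon [T_1,T_2] \to M$ with $[\vp_0] = e_0$ and $\af^{\bv_0}(\vp_0) < \qp_{\bv_0}(e_0) + \delta/3$, then pass to the universal cover $\pi \colon \tM \to M$ (with pullback metric), lifting $\vp_0$ to $\tilde\vp_0$ from some $x_0$ to some $y_0$ so that $e_0 = \bar F([x_0,y_0])$ in the notation of Rem.~\ref{rem:manifold-structure-fundamental-groupoid}. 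I denote by $\tilde\bv$ the lift of any $\bv$; note $\af^{\tilde\bv}(\tilde\vp_0) = \af^{\bv}(\vp_0)$.

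For the perturbation in $\bv$ alone, I fix a compact neighborhood $K \supset \vp_0([T_1,T_2])$ and use the identity $\|u\|^2 - \|w\|^2 = \langle u-w, u+w\rangle$ together with Cauchy--Schwarz and the $L^2$-characterization of finite-action paths (Rem.~\ref{rem:finite-action-iff-L2}) to obtain
\[
|\af^{\bv}(\vp_0) - \af^{\bv_0}(\vp_0)| \;\leq\; \tfrac{1}{4}\|\bv - \bv_0\|_{C^0(K)}\Bigl(4\sqrt{(T_2-T_1)\,\af^{\bv_0}(\vp_0)} + (T_2-T_1)\|\bv-\bv_0\|_{C^0(K)}\Bigr),
\]
which gives $\af^{\bv}(\vp_0) < \qp_{\bv_0}(e_0) + 2\delta/3$ once $\bv$ is close enough to $\bv_0$ in $C^0$. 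Shrinking the neighborhood further, I may arrange that $\|\tilde\bv\|$ is uniformly bounded by some constant $C$ on a precompact neighborhood of $\{x_0,y_0\}$ in $\tM$, uniformly in $\bv$.

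For the perturbation in $e$, I use the description of the topology on $\Pi(M)$ from Rem.~\ref{rem:manifold-structure-fundamental-groupoid}: any $e$ in a small neighborhood of $e_0$ admits a representative $[(x,y)]$ with $(x,y) \in \tM\times\tM$ arbitrarily close to $(x_0,y_0)$. I then prepend and append short smooth connector paths $\alpha \colon [0,S_1]\to\tM$ and $\beta \colon [0,S_2]\to\tM$ going from $x$ to $x_0$ and from $y_0$ to $y$, of lengths $\ell_i\to 0$. The key is to parameterize each at constant speed over a time interval whose length also shrinks, but more slowly than $\ell_i$ (e.g.\ $S_i = \sqrt{\ell_i}$), so that both $\ell_i^2/S_i$ and $S_i$ tend to $0$; then
\[
\af^{\tilde\bv}(\alpha) \;\leq\; \tfrac{1}{2}\int_0^{S_1}\!\bigl(\|\dot\alpha\|^2 + \|\tilde\bv(\alpha)\|^2\bigr)\,dt \;\leq\; \tfrac{1}{2}\bigl(\ell_1^2/S_1 + C^2 S_1\bigr) \;\xrightarrow[\ell_1\to 0]{}\; 0,
\]
and similarly for $\beta$, uniformly in the chosen neighborhood of $\bv_0$. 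Hence $\af^{\tilde\bv}(\alpha) + \af^{\tilde\bv}(\beta) < \delta/3$ once $(x,y)$ is close enough to $(x_0,y_0)$.

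Finally I concatenate $\alpha\cdot\tilde\vp_0\cdot\beta$ into an AC path in $\tM$ from $x$ to $y$; its projection to $M$ represents $e$, and additivity of $\af^{(\cdot)}$ over time subintervals yields $\qp_\bv(e) \leq \af^{\tilde\bv}(\alpha) + \af^{\tilde\bv}(\tilde\vp_0) + \af^{\tilde\bv}(\beta) < \qp_{\bv_0}(e_0) + \delta$, as desired. The main obstacle is the one addressed in the previous paragraph: a naive fixed-time parameterization of the connector paths would leave a residual contribution $\tfrac{1}{4}\|\bv(x_0)\|^2 S_1$ that does not vanish, so the connector time intervals must shrink jointly with their lengths, which in turn requires the uniform $C^0$-bound on $\bv$ near $\{x_0,y_0\}$ available because $\bv$ is localized in $\vf^0(M)$.
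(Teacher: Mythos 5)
Your proposal is correct and follows essentially the same strategy as the paper: lift to the universal cover, take a near-minimizer, splice in short connector paths whose action vanishes with their length, and control the drift-perturbation via a $C^0$ estimate on a compact neighborhood. The only cosmetic difference is the time-parametrization of the connectors — the paper uses unit-speed geodesics (so time equals length and the residual $\|\bv\|^2$ contribution vanishes automatically, making the $S_i=\sqrt{\ell_i}$ device unnecessary) — but your version is equally valid.
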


\begin{proof}
Our task is equivalent to proving that the map $$(\bw,x,y)\in \vf^0(\tM)\times \tM \times \tM\mapsto \qp_{\bw}(x,y)\in [0,+\infty)$$ is upper semicontinuous, where $\qp_{\bw}$ is defined with respect to the pullback metric on $\tM$.  
Fix $\bw\in \vf^0(\tM)$, let $B\subset \tM$ be a precompact open set, and let $C_0$ be an upper bound for $\norm{\bw}$ on $B$.
Then if $\bu\in \vf^0(\tM)$ satisfies $\norm{\bu-\bw}\leq 1$ on $B$ and $\vp_a$ is a unit speed length minimizing geodesic from $a\in B$ to $x\in B$, a computation using the definition \eqref{eq:action-functional} of $\af^{\bu}$ yields  $\af^{\bu}(\vp_a)\leq L\dist{a}{x}$, where $L = (1/4)(2+C_0)^2$.
Similarly, $\af^{\bu}(\vp_b)\leq L\dist{y}{b}$ if $\vp_b$ is a unit speed length minimizing geodesic from $y\in B$ to $b\in B$.
Thus, $\qp_{\bu}(a,b)\leq \qp_{\bu}(x,y) + L\dist{a}{x} + L\dist{y}{b}$ if $a,x\in B$ and $y,b\in B$ are sufficiently close.
Hence it suffices to prove that the map
\begin{equation}\label{eq:lem-qp-usc-rest-map}
\bw\in \vf^0(\tM)\mapsto \qp_{\bw}(x,y)\in [0,+\infty)
\end{equation}
is upper semicontinuous for each fixed $x,y\in \tM$.

Fix $\bw\in \tM$ and $\varepsilon > 0$.
Let $\vp\colon [0,T]\to \tM$ be a continuous path from $x$ to $y$ satisfying $\af^{\bw}_T(\vp) < \qp_{\bw}(x,y) + \varepsilon.$
It is immediate from the definition \eqref{eq:action-functional} that also $\af^{\bu}_T(\vp) < \qp_{\bw}(x,y) + \varepsilon$ if $\bu$ is sufficiently close to $\bw$ on the compact set $\vp([0,T])$.
Since $\qp_{\bu}(x,y) \leq \af^{\bu}_T(\vp)$, it follows that the map in \eqref{eq:lem-qp-usc-rest-map} is upper semicontinuous.
\end{proof}

\begin{Lem}\label{lem:lower-semi-abs-cont}
Let $(\bu_n)_{n\in \N}$ be a sequence of $\Cont^0$ vector fields on the closed Riemannian manifold $M$ converging uniformly to a continuous vector field $\dft$.
Let $k > 0$ and $(\vp^{(n)}\in \Cont([T_1,T_2],M))_{n\in \N}$ be a sequence of paths with fixed domain $[T_1,T_2]$ satisfying $\af^{\bu_n}_{T_1,T_2}(\vp^{(n)})\leq k$ for all $n$.
Then the family $(\vp^{(n)})$ is uniformly equicontinuous, and there is a subsequence $(\vp^{(n_k)})_{k\in \N}$ which converges uniformly to a path $\vp \in \Cont([T_1,T_2],M)$ satisfying $\af^{\dft}_{T_1,T_2}(\vp)\leq k$. 
\end{Lem}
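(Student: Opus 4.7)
The plan is to extract a uniformly convergent subsequence by Arzelà-Ascoli and then deduce the action bound via a weak--$L^2$ compactness plus lower-semicontinuity argument.

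First I would establish the equicontinuity. Expanding
\[
\norm{\dot{\vp}^{(n)}}^2 \leq 2\norm{\dot{\vp}^{(n)} - \bu_n(\vp^{(n)})}^2 + 2\norm{\bu_n(\vp^{(n)})}^2
\]
and integrating from $T_1$ to $T_2$, the hypothesis $\af^{\bu_n}_{T_1,T_2}(\vp^{(n)})\leq k$ together with the uniform boundedness of $\bu_n$ on the compact manifold $M$ (since $\bu_n\to \dft$ uniformly) yields a uniform $L^2$-bound $\int_{T_1}^{T_2}\norm{\dot{\vp}^{(n)}}^2\,dt \leq K$. Each $\vp^{(n)}$ is therefore absolutely continuous, and Cauchy-Schwarz gives
\[
\dist{\vp^{(n)}(s)}{\vp^{(n)}(t)} \leq \int_{s}^{t}\norm{\dot{\vp}^{(n)}}\,d\tau \leq \sqrt{K}\,\sqrt{|t-s|},
\]
so the family is uniformly $\tfrac{1}{2}$-H\"{o}lder. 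Compactness of $M$ and Arzel\`{a}-Ascoli produce a subsequence (which I relabel as $(\vp^{(n)})$) converging uniformly to some continuous $\vp\colon [T_1,T_2]\to M$.

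Next I would upgrade this to a bound on $\af^{\dft}_{T_1,T_2}(\vp)$. To handle the manifold structure cleanly, I would fix an isometric embedding $M\hookrightarrow \R^N$ (e.g.\ via Nash) and view the $\vp^{(n)}$ as $\R^N$-valued absolutely continuous curves. The uniform $L^2$-bound on $\dot{\vp}^{(n)}$ and Banach-Alaoglu yield a further subsequence along which $\dot{\vp}^{(n)}\rightharpoonup \psi$ weakly in $L^2([T_1,T_2];\R^N)$. Since $\vp^{(n)}\to \vp$ uniformly, passing to the limit in the identity $\vp^{(n)}(t)=\vp^{(n)}(T_1)+\int_{T_1}^t \dot{\vp}^{(n)}\,d\tau$ identifies $\psi=\dot{\vp}$ almost everywhere; in particular $\vp$ is absolutely continuous with $\dot{\vp}\in L^2$, and since $\vp$ stays in $M$, $\dot{\vp}(t)$ lies in $T_{\vp(t)}M$ for a.e.\ $t$.

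Finally I would expand
\[
\af^{\bu_n}_{T_1,T_2}(\vp^{(n)}) = \tfrac{1}{4}\int \norm{\dot{\vp}^{(n)}}^2\,dt - \tfrac{1}{2}\int \ip{\dot{\vp}^{(n)}}{\bu_n(\vp^{(n)})}\,dt + \tfrac{1}{4}\int \norm{\bu_n(\vp^{(n)})}^2\,dt
\]
and take $\liminf$ term by term. The first term is weakly lower semicontinuous, so $\int\norm{\dot{\vp}}^2\,dt \leq \liminf \int \norm{\dot{\vp}^{(n)}}^2\,dt$. Uniform convergence $\bu_n\to \dft$ on $M$ together with continuity of $\dft$ and $\vp^{(n)}\to \vp$ uniformly imply $\bu_n\circ \vp^{(n)}\to \dft\circ \vp$ uniformly, so $\int \norm{\bu_n(\vp^{(n)})}^2\,dt \to \int \norm{\dft(\vp)}^2\,dt$ and, by pairing a weakly convergent sequence with a strongly convergent one, $\int \ip{\dot{\vp}^{(n)}}{\bu_n(\vp^{(n)})}\,dt \to \int \ip{\dot{\vp}}{\dft(\vp)}\,dt$. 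Combining these three conclusions gives $\af^{\dft}_{T_1,T_2}(\vp) \leq \liminf_n \af^{\bu_n}_{T_1,T_2}(\vp^{(n)}) \leq k$, as required.

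The main obstacle is the middle step: carefully justifying the existence of a weak $L^2$-limit for $\dot{\vp}^{(n)}$ on a manifold and identifying it with $\dot{\vp}$ in a way compatible with the tangent-space constraint. The Nash embedding sidesteps this cleanly; alternatively one could localize using the uniform equicontinuity to get a finite cover of $[T_1,T_2]$ on which each $\vp^{(n_k)}$ lies in a single coordinate chart (after passing to subsequences) and argue chart by chart, but this is more bookkeeping-intensive. Once the weak-limit identification is in hand, the rest is the standard convex lower-semicontinuity argument for Freidlin-Wentzell-type action functionals.
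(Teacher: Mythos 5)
Your proposal is correct and follows essentially the same strategy as the paper's proof: Nash embedding, the $L^2$ bound on $\dot{\vp}^{(n)}$, Arzel\`{a}--Ascoli, Banach--Alaoglu to extract a weak $L^2$ limit for the velocities, identification of that weak limit with $\dot{\vp}$ via the fundamental theorem of calculus, and weak lower semicontinuity of the $L^2$ norm. The only substantive divergence is in the final estimate. The paper rearranges the expansion of $\af^{\dft}(\vp)$ so that a cross term of the form $\limsup_n \ip{\dot{\vp}^{(n)}}{\dft(\vp)-\bu_n(\vp^{(n)})}_2$ appears, and then bounds that limsup by zero via Cauchy--Schwarz and uniform convergence. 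You instead pass to the $\liminf$ term by term, invoking the standard ``weak $\times$ strong'' pairing lemma ($f_n \rightharpoonup f$, $g_n \to g$ in $L^2$ $\Rightarrow$ $\ip{f_n}{g_n}_2 \to \ip{f}{g}_2$, justified by uniform boundedness of $\norm{f_n}_2$) for the cross term, and strong convergence of $\bu_n\circ\vp^{(n)} \to \dft\circ\vp$ for the last term. Your organization is slightly cleaner since it avoids the intermediate rearrangement, but the mathematical content is the same. One minor point worth making explicit in the weak--strong pairing step is that the uniform $L^2$ bound on $\dot{\vp}^{(n)}$, which you already established, is exactly what controls the $\norm{f_n}\norm{g_n - g}$ term.
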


\begin{proof}
By a translation of $\R$ we may assume that $T_1 = 0$ and $T_2 = T$. 
By the Nash embedding theorem we may assume that $M$ is isometrically embedded in some $\R^N\supset M$, so the Riemannian metric on $M$ is the restriction of the Euclidean inner product \cite[Thm~2]{nash1956embedding}, and we may view $\vp, \vp^{(n)}, \dot{\vp}^{(n)}\in \Cont([0,T],\R^N)$ as $\R^N$-valued.
We may also view $\dft$ and $\bu_n$ as $\R^N$-valued, and we arbitrarily extend the $\bu_n$ and $\dft$ to $\Cont^0$ maps $\R^N\to \R^N$.

We first show uniform equicontinuity of the family $(\vp^{(n)})$ and convergence of a subsequence $\vp^{(n_k)}$ to some $\vp\in \Cont([0,T],M)$.
Given $t,h\geq 0$ satisfying $t+h \leq T$, the triangle and Cauchy-Schwarz inequalities imply that 
\begin{equation}\label{eq:vp-equi-cont-estimate}
\begin{split}
\norm{\vp^{(n)}(t+h)-\vp^{(n)}(t)}&\leq \int_t^{t+h}\norm{\dot{\vp}^{(n)}(s)}ds\leq \int_t^{t+h}\norm{\dot{\vp}^{(n)}-\bu_n(\vp^{(n)})}ds + \int_t^{t+h}\norm{\bu_n(\vp^{(n)})}ds \\
&\leq \sqrt{h\int_t^{t+h}\norm{\dot{\vp}^{(n)}-\bu_n(\vp^{(n)})}^2ds} + h\norm{\bu_n}_{\infty}\\
&\leq \sqrt{4h\af^{\bu_n}_T(\vp^{(n)})} + h\norm{\bu_n}_{\infty} \leq \sqrt{4hk} + 2hK,
\end{split}
\end{equation}
where $\norm{\slot}_\infty$ is the supremum norm on the restrictions of functions to $M\subset \R^N$ and $K>0$ is an upper bound on the convergent sequence $(\norm{\bu_n}_\infty)$.
Thus, the resulting family of paths $(\vp^{(n)})$ is uniformly equicontinuous.
Since $M\subset \R^N$ is compact and hence bounded, the Arzel\`a-Ascoli theorem and closedness of $\Cont([0,T],M)\subset \Cont([0,T],\R^N)$  imply the existence of a subsequence of $(\vp^{(n)})$ converging uniformly to some $\vp\in \Cont([0,T],M)$ as claimed.
For simplicity we relabel the subsequence so as to use the same notation $(\vp^{(n)})$ below, so that $\vp^{(n)}\to \vp$, and we continue to relabel in this same way after passing to further subsequences.

It remains to show that $\af_T^\dft(\vp)\leq k$.
Define the $L^2$ inner product $\ip{f}{g}_2\coloneqq \int_0^T \ip{f}{g} dt$ and norm $\norm{g}_2\coloneqq \sqrt{\ip{g}{g}_2}$ of measurable functions $[0,T]\to \R^m$ for some $m$.
Using the Cauchy-Schwarz inequality we note (cf. Rem.~\ref{rem:finite-action-iff-L2}) that
\begin{equation}\label{eq:af-rewrite-l2-bounded}
\begin{split}
4\af_{T}^{\bu_n}(\vp^{(n)}) &= \norm{\dot \vp^{(n)}}_2^2 - 2\ip{\dot{\vp}^{(n)}}{\bu_n(\vp^{(n)})}_2 +  \norm{\bu_n(\vp^{(n)})}_2^2\\
&\geq \norm{\dot \vp^{(n)}}_2^2 - 2\norm{\dot \vp^{(n)}}_2\norm{\bu_n(\vp^{(n)})}_2 + \norm{\bu_n(\vp^{(n)})}_2^2 \\
&= (\norm{\dot \vp^{(n)}}_2-\norm{\bu_n(\vp^{(n)})}_2)^2.
\end{split}
\end{equation}
Since $\af_{T}^{\bu_n}(\vp^{(n)})$ and $\norm{\bu_n(\vp^{(n)})}_2\leq T \norm{\bu_n}_\infty\to T\norm{\dft}_\infty$ are uniformly bounded, it follows from \eqref{eq:af-rewrite-l2-bounded} that so are the $\norm{\dot \vp^{(n)}}_2$.
Writing $\vp^{(n)} = (\vp^{(n)}_1, \ldots, \vp^{(n)}_N)$, it follows that the $L^2$ norms of the derivatives of each of the components $\dot{\vp}^{(n)}_i\in L^2([0,T],\R)$ are uniformly bounded.
Hence the Banach-Alaoglu theorem \cite[p.~23, Thm~6.2]{showalter1994hilbert} implies that, after passing to a subsequence, each component sequence $\dot \vp^{(n)}_i$ converge weakly in the Hilbert space $L^2([0,T],\R)$ to some $g_i\in L^2([0,T],\R)\subset L^1([0,T],\R)$.
This means that
\begin{equation}\label{eq:weak-l2-conv}
\forall i\in \{1,\ldots, N\}\colon \forall f\in L^2([0,T],\R)\colon \lim_{n\to\infty} \ip{\dot \vp^{(n)}_i}{f}_2 = \ip{g_i}{f}_2. 
\end{equation} 
Taking $f$ to be the indicator functions $\mathbf{1}_{[0,t]}$ in \eqref{eq:weak-l2-conv} yields, for all $1\leq i\leq N$: 
\begin{equation*}
\begin{split}
\forall t\in [0,T]\colon \vp_i(t) &= \lim_{n\to\infty}\vp_i^{(n)}(t)= \lim_{n\to\infty}\vp^{(n)}_i(0) + \int_0^t \dot\vp_i^{(n)}(s)ds = \vp_i(0) +  \lim_{n\to\infty} \ip{\dot\vp_i^{(n)}}{\mathbf{1}_{[0,t]}}_2 \\
&= \vp_i(0) + \ip{g_i}{\mathbf{1}_{[0,t]}}_2
= \vp_i(0) + \int_0^t g_i(s)ds.
\end{split}
\end{equation*}
Since each $g_i\in L^1([0,T],\R)$, it follows that $\vp$ is absolutely continuous with derivative $\dot \vp = (g_1,\ldots, g_n)$ almost everywhere \cite[Thm~3.35]{folland1999real}.

Since Hilbert space norms are weakly lower semicontinuous \cite[p.~355]{reed1980methodsi} and since the $\dot\vp^{(n)}$ converge weakly to $g = \dot \vp$, we also have $\norm{\dot \vp}_2\leq \liminf_{n\to\infty}\norm{\dot{\vp}_n}_2$. 
Thus,
\begin{equation*}
\begin{split}
\af_T^\dft(\vp) &= \norm{\dot \vp}_2^2 - 2\ip{\dot{\vp}}{\dft(\vp)}_2 + \norm{\dft(\vp)}_2
= \norm{\dot \vp}_2^2 - 2\lim_{n\to\infty}\ip{\dot{\vp}^{(n)}}{\dft(\vp)}_2 + \lim_{n\to\infty} \norm{\bu_n(\vp^{(n)})}_2\\
&= \norm{\dot \vp}_2^2 + \lim_{n\to\infty}\left[ - 2\ip{\dot{\vp}^{(n)}}{\bu_n(\vp^{(n)})}_2 +  \norm{\bu_n(\vp^{(n)})}_2 + 2\ip{\dot{\vp}^{(n)}}{\bu_n(\vp^{(n)})-\dft(\vp)}_2\right]\\
&\leq \liminf_{n\to\infty}\norm{\dot \vp^{(n)}}_2^2+ \lim_{n\to\infty}\left[ - 2\ip{\dot{\vp}^{(n)}}{\bu_n(\vp^{(n)})}_2 +  \norm{\bu_n(\vp^{(n)})}_2 + 2\ip{\dot{\vp}^{(n)}}{\dft(\vp)-\bu_n(\vp^{(n)})}_2\right]\\
&\leq \liminf_{n\to\infty} \af_T^{\bu_n}(\vp^{(n)}) + 2\limsup_{n\to\infty} \ip{\dot{\vp}^{(n)}}{\dft(\vp)-\bu_n(\vp^{(n)})}_2,
\end{split}
\end{equation*}
where the second equality follows from \eqref{eq:weak-l2-conv} and the uniform convergence of $\bu_n(\vp^{(n)})$ to $\dft(\vp)$, and the second inequality follows from the first equality in \eqref{eq:af-rewrite-l2-bounded} and the general fact that
$$\liminf_{n\to\infty}a_n+\liminf_{n\to\infty}(b_n+c_n)\leq \liminf_{n\to\infty} (a_n+b_n+c_n) \leq \liminf_{n\to\infty}(a_n+b_n)+\limsup_{n\to\infty}c_n.$$
Using $\norm{\dft(\vp)-\bu_n(\vp^{(n)})} \leq \norm{\dft(\vp)-\dft(\vp^{(n)})} + \norm{\dft(\vp^{(n)}) -\bu_n(\vp^{(n)})}$, it follows that
\begin{equation*}
\begin{split}
\af_T^\dft(\vp) &\leq \liminf_{n\to\infty} \af_T^{\bu_n}(\vp^{(n)}) + 2\sqrt{T} \limsup_{n\to\infty} \norm{\dot{\vp}^{(n)}}_2\sup_{t\in[0,T]}\norm{\dft(\vp(t))-\bu_n(\vp^{(n)}(t))}\\
&\leq \liminf_{n\to\infty} \af_T^{\bu_n}(\vp^{(n)}) + 2\sqrt{T} \limsup_{n\to\infty} \norm{\dot{\vp}^{(n)}}_2\left(\sup_{t\in[0,T]}\norm{\dft(\vp(t))-\dft(\vp^{(n)}(t))} + \norm{\dft-\bu_n}_\infty\right).
\end{split}
\end{equation*}
The $L^2$ norms $\norm{\dot \vp^{(n)}}_2$ are bounded (as noted following \eqref{eq:af-rewrite-l2-bounded}) and $\dft$ is uniformly continuous on the compact $M$, so the uniform convergence of $\vp^{(n)}$ to $\vp$ and of $\bu_n$ to $\dft$ implies that the $\limsup$ is zero.
Since $\af_T^{\bu_n}(\vp^{(n)}) \leq k$ for all $n$ by assumption, this proves the desired remaining claim 
$$\af_T^\dft(\vp)\leq \liminf_{n\to\infty} \af_T^{\bu_n}(\vp^{(n)}) \leq k.$$
\end{proof}
We now prove Prop.~\ref{prop:qp-continuity}.
For convenience we restate the proposition.
\PropQpContMS*
\begin{proof}
Our task is equivalent to proving that the map $$(\bw,x,y)\in \vf^1(\tM)\times \tM \times \tM\mapsto \qp_{\bw}(x,y)\in [0,+\infty)$$ is continuous at $(\tdft_0,x,y)$ for each $x,y\in \tM$, where $\tdft_0\in \vf^1(\tM)$ is the unique lift of $\dft_0$ to $\tM$ ($\pi_*\tdft_0 = \dft_0$) and $\qpd$ is defined with respect to the pullback metric on $\tM$. 

Fix $x,y\in \tM$.
As shown in the proof of Lem.~\ref{lem:qp-upper-semicontinuity}, if $\norm{\bu-\tdft_0}\leq 1$ on some compact neighborhood $B$ of $(x,y)$, then there exists $L > 0$ such that $\qp_{\bu}(a,b)\leq \qp_{\bu}(x,y) + L\dist{a}{x} + L\dist{y}{b}$ for all $a,b\in B$ with $(a,b)$ sufficiently close to $(x,y)$.
Reversing the roles of $(a,b)$ and $(x,y)$ yields $\qp_{\bu}(x,y)\leq \qp_{\bu}(a,b) + L\dist{a}{x} + L\dist{y}{b}$, so $$|\qp_{\bu}(x,y)-\qp_{\bu}(a,b)| \leq  L\dist{a}{x} + L\dist{y}{b} \to 0 \quad \textnormal{as} \quad \dist{a}{x} + \dist{y}{b}\to 0.$$
Since the triangle inequality yields
\begin{equation}\label{eq:triangle-yields}
|\qp_{\bw}(x,y)-\qp_{\bu}(a,b)|\leq |\qp_{\bw}(x,y)-\qp_{\bu}(x,y)| + |\qp_{\bu}(x,y) - \qp_{\bu}(a,b)|,
\end{equation}
we see it suffices to prove that the map
\begin{equation*}
\bw\in \vf^1(\tM)\mapsto \qp_{\bw}(x,y)\in [0,+\infty)
\end{equation*}
is continuous at $\tdft_0$ for each fixed $x,y\in \tM$, and this is in turn equivalent to proving that the map
\begin{equation}\label{eq:lem-qp-continuity-rest-map}
\dft\in \vf^1(M)\mapsto \qp_{\dft}(e)\in [0,+\infty)
\end{equation}
is continuous at $\dft_0$ for each fixed $e\in \Pi(M)$.

Since the $\Cont^1$ topology is finer than the $\Cont^0$ topology, upper semicontinuity at $\dft_0$ follows from Lem.~\ref{lem:qp-upper-semicontinuity}.
It remains to establish lower semicontinuity.

Since Morse-Smale vector fields are open in the $\Cont^1$ topology \cite[Thm~3.5]{palis1968ms} and structurally stable \cite[Thm~5.2]{palis1968structural}, there exists a neighborhood $\cN\subset \vf^1(M)$ of $\dft_0$ such that every $\dft\in \cN$ is Morse-Smale without nonstationary periodic orbits.
Suppose (to obtain a contradiction) that the map in \eqref{eq:lem-qp-continuity-rest-map} is not lower semicontinuous at $\dft_0$ for arbitrary $e\in \Pi(M)$.
Then there exists $e\in \Pi(M)$, $k > 0$, and a sequence $(\bu_n)_{n\in \N}\subset \cN$ with $\bu_n\to \dft_0$ in $\vf^1(M)$ such that $\qp_{\bu_n}(e) < \qp_{\dft_0}(e)-2k$ for all $n$.
Hence for each $n$ there exists a path $\vp^{(n)}\in \Cont_e([0,T_n],M)$ with
\begin{equation}\label{eq:qp-k-lb}
\af^{\bu_n}(\vp^{(n)})< \qp_{\dft_0}(e)-2k
\end{equation}
for all $n$.

If $(T_n)$ is bounded, then by passing to a subsequence we may assume that $T_n\to T\geq 0$. 
In this case we define $\psi^{(n)}\coloneqq \vp^{(n)}|_{[0,T]}$ if $T_n\geq T$ and otherwise we define $\psi^{(n)}$ to be the extension of $\vp^{(n)}|_{[0,T_n]}$ by the constant path $[T_n, T]\to \{\tgt(e)\}$.
Since $T_n\to T$ and $\norm{\bu_n-\dft_0}\to 0$, $\af_T^{\bu_n}(\psi^{(n)})< \qp_{\dft_0}(e)-2k$ for all $n$ large enough.
After passing to a subsequence if necessary, it follows from Lem.~\ref{lem:lower-semi-abs-cont} that the $\psi^{(n)}$ converge to a path $\vp\in \Cont_e([0,T],M)$ satisfying $\af_T^{\dft_0}(\vp)\leq \qp_{\dft_0}(e)-2k$, a contradiction.

It remains to consider the case that $(T_n)$ is unbounded.
We first make some preliminary observations.
Define $\edg \subset \Pi(M)$ via $\edg\coloneqq \src^{-1}(\dft^{-1}(0)) \cap \tgt^{-1}(\dft^{-1}(0))$, and define $\edg_0\subset \edg$ via  $$\edg_0\coloneqq \{e\in \edg\colon \src(e)=\tgt(e) \textnormal{ and $e$ is not a constant path homotopy class}\}.$$
The chain recurrent set $R(\dft)$ consists of a finite number of hyperbolic zeros since $\dft$ is Morse-Smale without nonstationary periodic orbits, so Prop.~\ref{prop:qp-int-conditions-iff-connecting-piecewise-orbit} and Lem.~\ref{lem:univ-cover-qp-growth} imply the existence of $C_0 > 0$ such that, for any finite sequence $e_1,\ldots, e_n \in \edg$ such that $e_1\cdots e_n\in \edg_0$,
\begin{equation}\label{eq:prop-qp-continuity-loop-lb}
\min_{i\in \{1,\ldots, n\}}\qpd(e_i) > C_0.
\end{equation}
Let $\kappa_0 > 0$ be sufficiently small that the closed metric balls $B_\kappa(z)$ of radius $\kappa$ centered at each $z\in \dft^{-1}(0)$ are geodesically convex \cite[Thm~6.17]{lee2018riemannian} and pairwise disjoint, and define $B_\kappa\coloneqq \bigcup_{z\in \dft^{-1}(0)}B_\kappa(z)$ for $\kappa\in (0,\kappa_0)$.
Given any path $\vp\in \Cont([T_1,T_2],M)$ with initial and terminal points in $B_\kappa$, we denote by $e(\vp)\in \edg$ the path homotopy class of the path defined by first following the unique minimizing geodesic in $B_\kappa$ from $\dft^{-1}(0)$ to $\vp(0)$, then following $\vp$, then following the unique minimizing geodesic in $B_\kappa$ from $\vp(T)$ to $\dft^{-1}(0)$.
By \cite[p.~143, Lem.~1.1]{freidlin2012random} we may choose $\kappa$ small enough that any pair of points in the same component of $B_\kappa$ may be joined by a path $\vp$ satisfying $\af(\vp)< \varepsilon/2$.
It follows that, for any $\varepsilon > 0$ such that $C_1\coloneqq C_0 - \varepsilon > 0$, all sufficiently small $\kappa > 0$, and any finite sequence of paths $\gamma_1,\ldots, \gamma_n$ with initial and terminal points in $B_{\kappa}$ satisfying $e(\gamma_1)\cdots e(\gamma_n)\in \edg_0$, 
\begin{equation}\label{eq:prop-qp-continuity-approx-loop-lb}
\begin{split}
\af(\gamma_1)+\cdots + \af(\gamma_n) \geq \min_{i\in\{1,\ldots,n\}}\af(\gamma_i) \geq \min_{i\in \{1,\ldots, n\}}\qpd(e(\gamma_i))-\varepsilon >  C_1 > 0.
\end{split}
\end{equation}
Fix $\varepsilon \in (0,k)$ (cf. \eqref{eq:qp-k-lb}) and $\kappa_1\in (0,\kappa_0)$ small enough that \eqref{eq:prop-qp-continuity-approx-loop-lb} holds for some $C_1 > 0$ for all $\kappa \in (0,\kappa_1)$.

Next, the implicit function theorem implies that $(\bu_n)^{-1}(0) \subset \interior(B_{\kappa})$ for all sufficiently large $n$ since $\bu_n\to \dft_0$ in $\vf^1(M)$ and $\dft^{-1}(0)$ consists of finitely many hyperbolic zeros.
Let $\Phi_{\dft}\colon \R\times M\to M$ denote the flow of $\dft\in \vf^1(M)$.
For any $\kappa \in (0,\kappa_1)$, since $R(\bu_n) = \bu_n^{-1}(0)\subset \interior(B_\kappa)$ for each $n$ and since $\bu_n\to \dft_0$, joint continuity of the map $$(t,x,\dft)\in \R\times M\times \vf^1(M)\mapsto \Phi_{\dft}^t(x) \in M$$ \cite[Thm~B.3]{duistermaat2000lie} implies the existence of  $T(\kappa) > 0$ such that, for any $x\not \in \interior(B_\kappa)$ and sufficiently large $n\in \N$, there exists $t_x, t_{x,n}\in [0,T(\kappa)/2]$ such that $\Phi_{\dft_0}^{t_x}(x),\Phi_{\bu_n}^{t_x^{(n)}}(x)\in \interior(B_\kappa)$.
Thus, $\af_{T(\kappa)}^{\dft_0}(\vp), \af_{T(\kappa)}^{\bu_n}(\vp) > 0$ for any $\vp\in ([0,T(\kappa)],M\setminus \interior(B_\kappa))$ for large enough $n$.
Since $\Cont([0,T(\kappa)],M\setminus \interior(B_\kappa))\subset \Cont([0,T(\kappa)],M)$ is closed, Lem.~\ref{lem:lower-semi-abs-cont} implies the existence of $C_2(\kappa)>0$ such that
\begin{equation}\label{eq:af-out-of-b-lb}
\af_{T(\kappa)}^{\dft_0}(\vp), \af_{T(\kappa)}^{\bu_n}(\vp^{(n)}) > C_2(\kappa) > 0
\end{equation}
for all $\vp\in \Cont([0,T(\kappa)],M\setminus \interior(B_\kappa))$, all $\kappa\in (0,\kappa_1)$, and all $n$ large enough.

Next, for each $n$ we define the interval $[a^{(n)}_1, b^{(n)}_1]$ by the properties $\vp^{(n)}(a_1^{(n)}), \vp^{(n)}(b_1^{(n)})\in \partial B_\kappa$, \\ $e(\vp^{(n)}|_{[a_1^{(n)},b_1^{(n)}]})$ is not a constant path homotopy class, and $a^{(n)}_1, b^{(n)}_1$ are the smallest numbers in $[0,T_n]$ with these properties.
For each $n$ we then recursively define the intervals $[a^{(n)}_{i+1}, b^{(n)}_{i+1}]$ so that $b^{(n)}_{i+1} > a^{(n)}_{i+1} > b^{(n)}_i$ are the smallest numbers larger than $ b^{(n)}_i$ with the same properties; it follows in particular that each $e(\vp^{(n)}|_{[b_{i}^{(n)},a_{i+1}^{(n)}]})$ is a constant path homotopy class.
Suppose (to obtain a contradiction) that the numbers $N_n$ of such intervals $[a^{(n)}_{i}, b^{(n)}_{i}]$ are unbounded; after passing to a subsequence we may assume that $N_n\to +\infty$.
Eq.~\eqref{eq:af-out-of-b-lb} implies that the numbers $(b_i^{(n)}-a_i^{(n)})$ are bounded, so after passing to a diagonal subsequence we may further assume that, for each $i$, $(b_i^{(n)}-a_i^{(n)})$ converges to some $c_i \geq 0$.
For each $i$ and large enough $n$ we define $\psi_i^{(n)}\colon [0,c_i]\to M$ via $\psi_i^{(n)}(t)\coloneqq \vp^{(n)}(a_i^{(n)}+t)$ for all $t\in [0,c_i]$ if $c_i \leq (b_i^{(n)}-a_i^{(n)})$, and otherwise we define $\psi_i^{(n)}$ to be given by this formula for $t\in [0, b_i^{(n)}-a_i^{(n)}]$ and constant on $[b_i^{(n)}-a_i^{(n)},c_i]$.
For each $i$ we have that $\af^{\bu_n}(\psi_i^{(n)})$ is bounded, so Lem.~\ref{lem:lower-semi-abs-cont} and a diagonal argument imply that, after passing to a subsequence of $(\vp^{(n)})$, the $\psi_i^{(n)}$ converge to paths $\gamma_i$ such that $\tgt(e(\gamma_i)) = \src(e(\gamma_{i+1}))$ and $\af^{\bu_n}(\psi_i^{(n)})\to \af^{\dft_0}(\gamma_i)$.
Hence
\begin{equation}\label{eq:af-le-qp-2k}
\begin{split}
\sum_{i=1}^\infty \af^{\dft_0}(\gamma_i) &= \sum_{i=1}^\infty \lim_{n\to\infty}\af^{\bu_n}(\psi_i^{(n)}) = \sum_{i=1}^\infty \lim_{n\to\infty}\af^{\bu_n}(\vp|^{(n)}_{[a_i^{(n)},b_i^{(n)}]}) \leq \liminf_{n\to\infty}\sum_{i=1}^\infty \af^{\bu_n}(\vp|^{(n)}_{[a_i^{(n)},b_i^{(n)}]})\\&\leq \liminf_{n\to\infty}\af^{\bu_n}(\vp^{(n)})\leq \qpd(e)-2k,
\end{split}
\end{equation}
where the second equality follows since $|\af^{\bu_n}(\psi_i^{(n)})-\af^{\bu_n}(\vp^{(n)}|_{[a^{(n)}_i,b^{(n)}_i]})| \to 0$, the first inequality follows from Fatou's lemma, the second inequality follows since for each $n$ the intervals $(a_i^{(n)},b_i^{(n)})$ are disjoint, and the final inequality follows from \eqref{eq:qp-k-lb}.
But since $\tgt(e(\gamma_i)) = \src(e(\gamma_{i+1}))$ for all $i$, \eqref{eq:prop-qp-continuity-approx-loop-lb} and the pigeonhole principle imply that the left side of \eqref{eq:af-le-qp-2k} is larger than $C_1 + C_1 + \dots = +\infty$, so we have arrived at a contradiction.
It follows that there is an integer $N\geq 1$ such that
\begin{equation}\label{eq:at-most-N-intervals}
\forall n\in \N\colon \textnormal{ there are at most $N$ such intervals $[a_i^{(n)},b_i^{(n)}]$.}
\end{equation}

Finally, observe that \eqref{eq:at-most-N-intervals} holds with the same constant $N$ for all $\kappa\in (0,\kappa_1)$ even though the constants $T(\kappa)$, $C_2(\kappa)$ in \eqref{eq:af-out-of-b-lb} depend on the specific value of $\kappa$.
Additionally, \eqref{eq:prop-qp-continuity-approx-loop-lb} holds with the same constant $C_1$ for all $\kappa\in(0,\kappa_1)$.
Using \cite[p.~143, Lem.~1.1]{freidlin2012random} again, there is $\kappa_2\in (0,\kappa_1)$ such that, for any $\kappa\in (0,\kappa_2)$, any pair of points in the same component of $B_\kappa$ may be joined by a short path $\vp\colon [0,\tau]\to B_{\kappa}$ satisfying $\af(\vp)< \varepsilon/N$ and $\tau< \varepsilon/N$.
For each $n$ we modify $\vp^{(n)}$ by deleting each of the $N_n \leq N$ restrictions  $\vp^{(n)}|_{[b_i^{(n)},a_{i+1}^{(n)}]}$ and replacing them with such short paths from $\vp^{(n)}(b_i^{(n)})$ to $\vp^{(n)}(a_{i+1}^{(n)})$.
Thus, after reparametrizing appropriately, we obtain well-defined paths $\theta^{(n)}\in \Cont_e([0,\tau_n],M)$ satisfying
\begin{equation}\label{eq:qp-cont-final}
\af(\theta^{(n)})\leq \qpd(e)-2k+\varepsilon < \qpd(e)-k \qquad \textnormal{and} \qquad \tau_n \leq \varepsilon + \sum_{i=1}^{N_n}(b_i^{(n)}-a_i^{(n)}).
\end{equation}
Eq.~\eqref{eq:af-out-of-b-lb} (now with new constants $C_2(\kappa), T(\kappa)$) again implies that $(b_i^{(n)}-a_i^{(n)})$ is bounded for each $i\leq N_n \leq N$, so the second inequality in \eqref{eq:qp-cont-final} implies that $(\tau_n)$ is bounded.
Hence with $k$ replacing $2k$, $\theta^{(n)}$ replacing $\vp^{(n)}$, and $\tau_n$ replacing $T_n$, we have reduced to the case of bounded $(T_n)$ which, as explained previously (following \eqref{eq:qp-k-lb}), leads to a contradiction.
This completes the proof.
\end{proof}

\end{document}